\newtheorem{theorem}{Theorem}
\newtheorem*{theoremmain}{Theorem}
\newtheorem{definition}{Definition}
\newtheorem{proposition}{Proposition}
\newtheorem{lemma}{Lemma}
\newtheorem{corollary}{Corollary}
\newtheorem{exam}{Example}
\newenvironment{example}{\begin{exam}\rm}{\end{exam}}
\newtheorem{exams}{Examples}
\newtheorem{rmk}{Remark}
\newenvironment{remark}{\begin{rmk}\rm}{\end{rmk}}
\newtheorem{notat}{Notation}
\renewcommand{\ne}{\not =}
\title[Logarithmic Models for Non-Dicritical Foliations]{Logarithmic Models for Non-Dicritical Foliations}
\author{Felipe Cano}
\author{Nuria Corral}
\dedicatory{Dedicated to Dominique Cerveau, with respect and admiration}
\subjclass[2010]{32S65 (37F75)}
\keywords{Desingularization, singular foliation, logarithmic forms, residues}
\thanks{Both authors are supported by Ministerio de Economía y Competitividad,
Spain,  MTM2016-77642-C2-1-P}
\begin{document}
\begin{abstract}{We show the existence of an essentially unique logarithmic model for any germ of non-dicritical singular holomorphic foliation of codimension one in $({\mathbb C}^n,0)$ without saddle-nodes.}
\end{abstract}
\maketitle
\tableofcontents
\section{Introduction}
This paper is devoted to show the existence and uniqueness of logarithmic models for any holomorphic foliation on $({\mathbb C}^n,0)$ of generalized hypersurface type. In the case of $n=2$, this result has been obtained by N. Corral in \cite{Cor}. The main result is  Theorem \ref{teo:main} in the last section of the paper. We state it as follows:

 \begin{theoremmain}
 %\label{teo:main}
 Every  generalized hypersurface on $({\mathbb C}^n,0)$ has a logarithmic model.
 \end{theoremmain}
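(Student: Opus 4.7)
The strategy is to construct the logarithmic $1$-form
$\omega_{\log} = \sum_{i=1}^{r} \lambda_i \, df_i/f_i$,
where $f_1,\ldots,f_r$ are reduced equations for the irreducible components of the total separatrix set $\mathrm{Sep}(\mathcal F)$ of the given generalized hypersurface $\mathcal F$. The non-dicritical hypothesis ensures that $\mathrm{Sep}(\mathcal F)$ is a genuine analytic hypersurface germ, so the $f_i$ are well defined. The scalars $\lambda_i$ are to be determined by matching intrinsic residues of $\mathcal F$ along the components of its resolution, and the core task is to show that such a matching is combinatorially consistent and that the resulting $\omega_{\log}$ defines a foliation with the same reduction of singularities as $\mathcal F$, making it a \emph{model}.

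The first step is to invoke the reduction of singularities $\pi: (M,D) \to (\mathbb{C}^n,0)$ of $\mathcal F$. Since $\mathcal F$ is a generalized hypersurface, after reduction only simple non-saddle-node corners appear along the normal crossings divisor $D \cup \widetilde{\mathrm{Sep}}(\mathcal F)$. At each such corner, $\pi^*\mathcal F$ is locally of logarithmic type, and this attaches to every irreducible component $H$ of $D \cup \widetilde{\mathrm{Sep}}(\mathcal F)$ a well-defined residue $\rho_H(\mathcal F) \in \mathbb{C}$ (the appropriate eigenvalue ratio), independent of the corner chosen.

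Next comes the combinatorial step: set up a linear system whose unknowns are the sought $\lambda_i$ together with auxiliary residues $\mu_j$ attached to the exceptional components $E_j$, and whose equations at each corner declare that the residues of $\omega_{\log}$ must match $\rho_H(\mathcal F)$ for every $H$ through that corner. The non-dicritical hypothesis makes the dual graph of $D$ a forest attached to $\widetilde{\mathrm{Sep}}(\mathcal F)$, so the system can be solved by induction, propagating from the separatrices inwards along the graph. This is the higher-dimensional extension of the two-dimensional argument of Corral in \cite{Cor}.

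The main obstacle is to glue these locally prescribed data into a global section of $\Omega^1_M\bigl(\log(D\cup\widetilde{\mathrm{Sep}}(\mathcal F))\bigr)$ and to show that it descends to $(\mathbb{C}^n,0)$ as a logarithmic form, not merely a meromorphic one; this requires a Hartogs-type extension across the codimension-two strata where no residue datum has been prescribed, together with a verification that the ambiguities left by the linear system are controlled by the tree structure, with no global monodromy obstruction. Once $\omega_{\log}$ is produced, equality of residues at every corner yields the coincidence of the reductions of $\mathcal F$ and of the foliation defined by $\omega_{\log}$. Essential uniqueness comes for free: the residues of any logarithmic model are forced to equal the $\rho_H(\mathcal F)$, so $\omega_{\log}$ is determined up to a nowhere-vanishing holomorphic multiplier.
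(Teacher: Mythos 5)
Your outline has the right flavor---resolve the singularities, read off residues along the components of the total transform, and write down a closed logarithmic $1$-form---but it passes over the two points where the actual work lies. The first is the consistency, not the solvability, of your ``linear system''. The residues $\mu_j$ you attach to the exceptional components are not free unknowns: once the $\lambda_i$ are fixed, the residue of $\pi^*\omega_{\log}$ along each exceptional component is \emph{forced} by the total-transform formula $\mu_j=\sum_i\nu_{Y_j}(\cdot)\lambda_i+\cdots$, and the whole difficulty is to show that these forced values agree with the intrinsic residues $\rho_{E_j}({\mathcal F})$ read from the simple corners. This is the content of Theorem \ref{teo:pilogarithmicmodel} (Equation \ref{eq:lambdauno}), and the paper proves it by choosing a $\pi$-equireduction point $p$ on each blow-up center, cutting with a Mattei--Moussu transversal to reduce to the two-dimensional statement, and then transporting the residue ratios from $p$ back to the germification set via connectedness of the dual graph (Lemma \ref{lema.csindices}). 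Your substitute---``the dual graph is a forest, propagate inwards''---does not establish this compatibility, and in ambient dimension $n\geq 3$ the incidence structure of the exceptional components need not be a forest at all. Note also that even in dimension two the existence of a solution with \emph{all} coefficients nonzero (needed so that no component drops out of the support and no blow-up becomes dicritical) is a nontrivial rank statement about the Camacho--Sad intersection matrix (Theorem \ref{th:existenciayunicidadendimensiondos}), proved by induction on the length of the reduction; it does not come from a triangular propagation.

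The second gap concerns what ``model'' means and how it is verified. In this paper a logarithmic model is defined by two-dimensional tests: $\phi^*{\mathcal L}$ must be a logarithmic model of $\phi^*{\mathcal F}$ for \emph{every} map $\phi:({\mathbb C}^2,0)\rightarrow({\mathbb C}^n,0)$ transverse to the separatrix set. Matching residues over one fixed resolution does not obviously yield this; the paper's final step lifts an arbitrary such $\phi$ through the resolution (Proposition \ref{prop:appdos}), desingularizes the list of pulled-back coordinate functions (Proposition \ref{prop:appuno}), and checks the simple-corner normal forms, using non-resonance and the absence of saddle-nodes. Your closing claim---that equal residues give ``the same reduction of singularities'', hence a model---is neither the definition used here nor clearly sufficient. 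Conversely, the Hartogs-type gluing you identify as the main obstacle is a red herring: no global closed logarithmic form is ever constructed on the resolved space. The paper works only with the ${\mathbb C}$-divisor of residues of the pullback of a (not necessarily closed) logarithmic form $\omega/f$ fully associated to ${\mathcal F}$, whose residues are automatically globally consistent by Saito's theory, and the closed form $\sum_i\lambda_i\,df_i/f_i$ is written down on $({\mathbb C}^n,0)$ only at the very end.
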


A germ $\mathcal L$ of singular codimension one foliation  on $({\mathbb C}^n,0)$ is {\em  logarithmic} when it is given by a closed logarithmic $1$-form
$$
\eta=\sum_{i=1}^s\lambda_i\frac{df_i}{f_i},\quad f_i\in {\mathcal O}_{{\mathbb C}^n,0}.
$$
In other words the foliation $\mathcal L$ has the multivaluated first integral $f_1^{\lambda_1}f_2^{\lambda_2}\cdots f_s^{\lambda_s}$. Up to reduction of singularities of the germ of hypersurface
$$
H=(f_1f_2\cdots f_s=0),
$$
the transform of $\eta$ is a global closed logarithmic $1$-form  and  the total transform of $H$ has normal crossings. In this situation all the local holonomies are linear in terms of the coordinates given by $H$. We can say that the holonomy of $\mathcal L$ is ``globally linearizable''. Of course this picture needs to be specified, mainly by asking that we are not in a ``dicritical'' situation.

 Roughly speaking, a ``logarithmic model'' for a codimension one foliation $\mathcal F$ should be a logarithmic foliation $\mathcal L$ such that the local holonomies of $\mathcal L$ coincide with the linear parts of the local holonomies of $\mathcal F$. In this way, the logarithmic model is an object that can be considered as ``the linear part of the holonomy of $\mathcal F$" or, in some sense, a ``holonomic initial part'' of $\mathcal F$.

 The logarithmic models in ambient dimension two may be described in a more precise way. We do it for foliations $\mathcal F$ on $({\mathbb C}^2,0)$ without saddle nodes in their reduction of singularities (hidden saddle nodes) and that are also ``non-dicritical'', in the sense that we encounter only invariant exceptional divisors in the
 sequence of blowing-ups desingularizing $\mathcal F$. We  give the name {\em generalized curves} to such foliations, following a terminology that comes from the foundational paper of Camacho, Lins Neto and Sad \cite{Cam-LN-S}. In this situation, after desingularization, the foliation at a singular point is given by a local $1$-form
 $$
  (\lambda+\cdots)ydx+(\mu+\cdots)xdy,\quad \lambda\mu\ne0,\;\lambda/\mu\notin {\mathbb Q}_{\leq 0}.
 $$
 The quotient $-\lambda/\mu$ is the Camacho-Sad index of the foliation with respect to $y=0$ and it also determines the coefficient of the linear part of the holonomy. Up to multiply the 1-form by a scalar and to adapt the coordinates, a local logarithmic foliation, having holonomy with the same linear part as $\mathcal F$, is locally given by
 $$
 \lambda\frac{dx}{x}+\mu\frac{dy}{y}.
 $$
This is the way we take for approaching a germ of generalized curve $\mathcal F$ on $({\mathbb C}^2,0)$ by a logarithmic foliation $\mathcal L$:
 \begin{quote}
 A logarithmic foliation $\mathcal L$ is a {\em logarithmic model} for a generalized curve $\mathcal F$ on $({\mathbb C}^2,0)$ if it has the same invariant branches as $\mathcal F$ and the same Camacho-Sad indices after reduction of singularities.
 \end{quote}
This provides a precise definition.  With no effort one realizes that the property is independent of the chosen reduction of singularities; indeed, in dimension two we have a well defined minimal reduction of singularities and any other one is obtained by additional blowing-ups from it.

In the paper \cite{Cor} there is a proof of the existence of logarithmic models in dimension two for any generalized curve.
Logarithmic models in dimension two have been particularly useful for describing
the properties of the generic polar of a given foliation, because the main Newton Polygon parts coincide for the foliation and the logarithmic model, see \cite{Cor}. Let us also note that some results in dimension two may be stated in the dicritical case \cite{Can-Co}, anyway in this paper we consider always the non-dicritical situation.

We have two possible ways for extending the concept of logarithmic models to higher dimension. The first one is to use reduction of singularities as in the two-dimensional case. Since we are considering generalized hypersurfaces, we know the existence of reduction of singularities for our foliations, more precisely, any reduction of singularities of the finite set of invariant hypersurfaces provides a reduction of singularities of the foliation \cite{Fer-M}. The second way is to perform two-dimensional tests. It is known that certain properties in algebraic geometry are tested by valuative criteria, for instance integral dependence, or properness. In the theory of codimension one foliations, there are remarkable properties detected by testing with a two-dimensional map. The existence of holomorphic first integral is one of them, as exhibited in the paper of Mattei-Moussu \cite{Mat-Mou}. The dicriticalness and the existence of hidden-saddle nodes are also properties of this kind:
\begin{itemize}
\item Dicriticalness: A  codimension one foliation $\mathcal F$ on $({\mathbb C}^n,0)$ is {\em dicritical} if and only if there is a holomorphic map $\phi: ({\mathbb C}^2,0)\rightarrow ({\mathbb C}^n,0)$ such that $\phi^*{\mathcal F}=(dx=0)$ and  the image of $y=0$ is invariant for $\mathcal F$.
\item Existence of hidden saddle nodes:  A  codimension one foliation $\mathcal F$ on $({\mathbb C}^n,0)$ has a {\em hidden saddle-node} if there is a holomorphic map $\phi: ({\mathbb C}^2,0)\rightarrow ({\mathbb C}^n,0)$ such that $\phi^*{\mathcal F}$ is a saddle-node.
\end{itemize}
When the foliation $\mathcal F$ is non-dicritical and without hidden saddle-nodes, we say that $\mathcal F$ is a {\em generalized hypersurface}. In this context, we take a definition of logarithmic model as follows:
\begin{quote} Let $\mathcal F$ be a generalized hypersurface and consider a logarithmic foliation $\mathcal L$, both on $({\mathbb C}^n,0)$. We say that $\mathcal L$ is a {\em logarithmic model for $\mathcal F$}  if and only if $\phi^*{\mathcal L}$ is a logarithmic model for $\phi^*{\mathcal F}$, for any holomorphic map $\phi: ({\mathbb C}^2,0)\rightarrow ({\mathbb C}^n,0)$ such that $\phi^*{\mathcal F}$ exists.
\end{quote}

In this paper we show that the two above ways are confluent. The uniqueness of logarithmic models is a consequence of the same result in dimension two. We show the existence of logarithmic models for generalized hypersurfaces by working throughout a particular reduction of singularities of the foliation $\mathcal F$.

From the technical viewpoint, we develop the theory of logarithmic models in terms of $\mathbb C$-divisors. We introduce the concept of {\em divisorial model} and we state the existence in the main technical result Theorem \ref{teo:main}.  There is a relationship between $\mathbb C$-divisors and logarithmic foliations, that provides the bridge between the divisorial models and the logarithmic models as explained in the last Section \ref{Logarithmic Models}.

Consider a non-singular complex analytic space $M$.
A {\em ${\mathbb C}$-divisor\/} $\mathcal D$ on $M$ is a formal finite sum
$$
{\mathcal D}=\sum_{i=1}^s\lambda_i H_i, \quad 0\ne \lambda_i\in {\mathbb C},
$$
where the $H_i\subset M$ are hypersurfaces. The support of the divisor is the union of the hypersurfaces $H_i$. We can make the usual operations with $\mathbb C$-divisors, in particular the pull-back $\phi^*{\mathcal D}$ under a holomorphic map $\phi:N\rightarrow M$, when the image is not locally contained in the support of ${\mathcal D}$. Working locally, if we take a reduced equation $f_i=0$ of  $H_i$, we can consider the closed logarithmic $1$-form $\eta$ given by
$$
\eta=\sum_{i=1}^s\lambda_i\frac{df_i}{f_i}.
$$
The logarithmic foliation induced by $\eta$ will be called {\em $\mathcal D$-logarithmic.}

In  dimension two, we give a proof of the existence of logarithmic model in terms of $\mathbb C$-divisors, that is  divisorial models, in a more explicit way than in the paper \cite{Cor}. When the foliation $\mathcal F$ is desingularized, at a singular point we have exactly two invariant curves, $\Gamma_1$ and $\Gamma_2$, given by the equations $\Gamma_1=(x_1=0)$ and $\Gamma_2=(x_2=0)$. We know that the foliation is given by a differential $1$-form $\omega$ as
$$
\omega=(\lambda_1+\cdots)\frac{dx_1}{x_1}+ (\lambda_2+\cdots)\frac{dx_2}{x_2},
$$
where $-\lambda_i/\lambda_j$ are the Camacho-Sad indices. We say that the ${\mathbb C}$-divisor
$$
{\mathcal D}=\lambda_1\Gamma_1+\lambda_2\Gamma_2,
$$
 is a divisorial model for $\mathcal F$. Of course, the logarithmic foliation $\mathcal L$ defined by
 $$
 \eta=\lambda_1dx_1/x_1+\lambda_2dx_2/x_2
 $$
fulfils the definition of being a logarithmic model for $\mathcal F$. We pass to the general case in dimension two through the stability under blowing-ups. More precisely, we recover the general definition of Camacho-Sad indices for foliations in dimension two (see \cite{Bru} and \cite{LNet}) and we establish a similar one for $\mathbb C$-divisors. Both are compatible with the blowing-ups and in this way we obtain logarithmic models once we have proven the existence of divisorial models in Theorem \ref{th:existenciayunicidadendimensiondos}.

The above arguments pass in higher dimension, hence we have a definition of divisorial model that automatically gives a logarithmic foliation that is a logarithmic model. In this way, the main difficulty in the paper is the proof of the existence of a divisorial model for any generalized hypersurface $\mathcal F$ on $({\mathbb C}^n,0)$. We state this result in Theorem \ref{teo:main} in Section \ref{Logarithmic Models For Generalized Hypersurfaces}.

The first sections are devoted to present the theory of $\mathbb C$-divisors, the relationship between $\mathbb C$-divisors, closed logarithmic 1-forms and logarithmic foliations, the dicriticalness condition for $\mathbb C$-divisors and foliations, the property of being a generalized hypersurface, the existence of logarithmic models in dimension two throughout the generalized Camacho-Sad indices, the reduction of singularities and the properties of generic equireduction and relative transversality that are useful in the final proofs.

The proofs of the main results are given in Section \ref{Logarithmic Models For Generalized Hypersurfaces}. We first show the existence of a ${\mathbb C}$-divisor compatible with a given reduction of singularities in Theorem \ref{teo:pilogarithmicmodel} and finally we prove the main  Theorem \ref{teo:main} on the existence of divisorial models.

 All the paper has been developed having in mind that a logarithmic model is a foliation; anyway, from the technical view point, we have stated and prove results in terms of divisorial models. In the last Section \ref{Logarithmic Models}, we quickly summarize how the existence and uniqueness results on divisorial models are translated to logarithmic models. In this way, we obtain the proof of the main Theorem stated in this Introduction.

\section{$\mathbb C$-Divisors}
Let $M$ be a non singular complex analytic variety. The space of {\em generalized divisors $\operatorname{Div}_{\mathbb C}(M)$}, also called {\em $\mathbb C$-divisors},  is defined to be the ${\mathbb C}$-vector space having as a basis the set of irreducible hypersurfaces of $M$. They have been introduced in \cite{Can-Co} for the purpose of describing logarithmic models of foliations in ambient dimension two. Thus, a {\em $\mathbb C$-divisor ${\mathcal D}$ } in $M$ is a finite expression
$$
{\mathcal D}=\sum_{H}\lambda_HH,
$$
where $H$ runs over the irreducible hypersurfaces of $M$ and the
coefficients $\lambda_H$ are complex numbers, such that only finitely many of them are nonzero. The {\em support $\operatorname{Supp}({\mathcal D})$ of $\mathcal D$} is the union of the $H$ such that $\lambda_H\ne0$. We say that two nonzero $\mathbb C$-divisors ${\mathcal D}_1, {\mathcal D}_2\in \operatorname{Div}_{\mathbb C}(M)$ with connected support are {\em projectively equivalent} if and only if there is a nonnull scalar $\lambda\in {\mathbb C}^*$ such that ${\mathcal D}_2=\lambda{\mathcal D}_1$. If the support is not connected, we say that they are projectively equivalent when the condition holds at each connected component of the support.

Consider a function $f:M\rightarrow {\mathbb C}$ that is not constant at any connected component of $M$. As usual, we define the divisor $\operatorname{Div}(f)$ by
$$
\operatorname{Div}(f)=\sum_H \mu_HH,
$$
where $\mu_H\ne 0$ if and only if $H$ is an irreducible component of $f=0$ and $\mu_H$ is the multiplicity of a local reduced equation of $H$ as a factor of $f$.

Let us consider a closed hypersurface $S$ of $M$, not necessarily irreducible. Let $S=\cup_{i=1}^s H_i$ be the decomposition of $S$ into a union of irreducible components. The divisor $\operatorname{Div}(S)$ is defined to be
$$
\operatorname{Div}(S)=\sum_{i=1}^s H_i.
$$
In particular, if ${\mathcal D}=\sum_H\lambda_H H$, we also have that ${\mathcal D}=\sum_H\lambda_H\operatorname{Div}(H)$. This simple remark allows us to define the restriction of a $\mathbb C$-divisor to an open set $U\subset M$, by means of the formula
$$
{\mathcal D}\vert_U=\sum_H\lambda_H\operatorname{Div}(H\cap U).
$$
 In this way, we can also interpret the germ ${\mathcal D}_p$ at a point $p\in M$ of a $\mathbb C$-divisor ${\mathcal D}$ in $M$ as being a ${\mathbb C}$-divisor on the germified space $(M,p)$.  Of course, these ones are particular cases of the inverse image of a $\mathbb C$-divisor by a morphism to be introduced below.

\begin{remark} Most of the complex analytic varieties in this paper are germs over compact sets. In this case any hypersurface has only finitely  many irreducible components. Anyway, we consider only hypersurfaces with this property, even if we are in an analytic variety not necessarily a germ over a compact. The reader will appreciate at each statement the limits of this implicit assumption.
\end{remark}

Consider a morphism $\phi: N\rightarrow M$ between connected non singular complex analytic varieties and a hypersurface $S\subset M$.  We say that $\phi$ is {\em $S$-transverse} if and only if the image of $\phi$ is not contained in $S$. In this case $\phi$ is $H$-transverse for any irreducible component $H$ of $S$ and the inverse image is a hypersurface $\phi^{-1}(H)\subset N$.
When $\phi:N\rightarrow M$ is $H$-transverse, we define the $\mathbb C$-divisor $\phi^*(1\cdot H)$ of $N$ by the following property: for any point $q\in N$ the divisor $\phi^*(1\cdot H)$ germified at $q$ is equal to
$
\operatorname{Div} (f\circ \phi)
$,
where $f=0$ is a local reduced equation of $H$ at $\phi(q)$.

Consider a $\mathbb C$-divisor ${\mathcal D}=\sum_H\lambda_HH$. We say that the morphism
 $\phi:N\rightarrow M$ is
{\em ${\mathcal D}$-transverse} if it is $S$-transverse where
$S =\operatorname{Supp}(\mathcal{D})$.
Otherwise, we say that $\phi$ is {\em $\mathcal D$-invariant}. When $\phi$ is ${\mathcal D}$-transverse, the {\em inverse image} is defined by
$$
\phi^{*}{\mathcal D}=\sum_H\lambda_H\phi^{*}(1\cdot H).
$$
We are particularly interested in the case of
blowing-ups $\pi:M'\rightarrow M$ with irreducible non-singular center $Y\subset M$.
The blowing-up $\pi$ being a surjective morphism is
 ${\mathcal D}$-transverse for any $\mathbb C$-divisor ${\mathcal D}$. The inverse image  $\pi^{*}{\mathcal D}$, or {\em transform of $\mathcal D$ by $\pi$},  is given by
$$
\pi^{*}{\mathcal D}=\mu E+\sum_{H}\lambda_HH',
\quad E=\pi^{-1}(Y), \quad \mu= \sum_{H}\nu_Y(H)\lambda_H,
$$
where $H'$ are the strict transforms of the irreducible hypersurfaces $H\subset M$ and $\nu_Y(H)$ denotes the generic multiplicity of $H$ along $Y$.
The blowing-up $\pi$  is said to be {\em $\mathcal D$-admissible} when $Y\subset \operatorname{Supp}({\mathcal D})$. This is equivalent to say that
$$
\sum_{H\subset \operatorname{Supp}({\mathcal D})}\nu_{Y}(H)\geq 1.
$$
 A ${\mathcal D}$-admissible blowing-up $\pi$ is called {\em ${\mathcal D}$-dicritical} when
$
\sum_{H}\nu_Y(H)\lambda_H=0
$;
that is, the exceptional divisor $E$ is not contained in the support of $\pi^*{\mathcal D}$.
\begin{remark} Let us recall that for any germ of function
$f\in {\mathcal O}_{{\mathbb C}^n,0}$,
the generic multiplicity $\nu_Y(f)$ along $Y\subset ({\mathbb C}^n,0)$ is defined to be the minimum of the multiplicity of $f$ at the points of $Y$ near the origin (the multiplicity is an upper semi-continuous function). The generic multiplicity $\nu_Y(H)$ is the generic multiplicity along $Y$ of a reduced germ $f$, such that $H$ is given by $f=0$.
\end{remark}

\begin{remark}
 \label{rk:transversemaps}
 Let us consider two morphisms $\phi_2:N_2\rightarrow N_1$ and $\phi_1:N_1\rightarrow M$ and a $\mathbb C$-divisor ${\mathcal D}$ on $M$. If $\phi_1\circ\phi_2$ is ${\mathcal D}$-transverse, then $\phi_1$ is ${\mathcal D}$-transverse, the morphism
$\phi_2$ is $\phi_1^*{\mathcal D}$-transverse and
$$
(\phi_1\circ\phi_2)^*{\mathcal D}=\phi_2^*(\phi_1^*{\mathcal D}).
$$
The converse is not true. It is possible to have that $\phi_1$ is ${\mathcal D}$-transverse and
$\phi_2$ is $\phi_1^*{\mathcal D}$-transverse, but $\phi_1\circ\phi_2$ is ${\mathcal D}$-invariant. The typical example of this situation is the inclusion
$$
\pi^{-1}(Y)\stackrel{\phi_2}{\subset} M'\stackrel{\phi_1}{\rightarrow }M,
$$
where $\phi_1$ is a $\mathcal D$-dicritical blowing-up with center $Y$. The $\mathbb C$-divisor $\phi_2^*(\phi_1^*{\mathcal D})$ cannot be obtained directly from  $\pi^{-1}(Y)\rightarrow M$ (this phenomenon is an essential fact for the transcendence of leaves of singular foliations studied in \cite{Can-L-M}).
\end{remark}

If no confusion arises, we write
$
\pi:(M',{\mathcal D}')\rightarrow (M,{\mathcal D})
$
to denote a ${\mathcal D}$-transverse holomorphic map $\pi:M'\rightarrow M$, where ${\mathcal D}'=\pi^*{\mathcal D}$.

The rest of this section is devoted to characterize the dicriticalness of a $\mathbb C$-divisor. We take the following definition, which is inspired in the corresponding one for foliations:

\begin{definition}  Consider a $\mathbb C$-divisor $\mathcal D$ on a non-singular complex analytic variety $M$. We say that  $\mathcal D$ is {\em dicritical at a point $p\in M$} if and only if there is a $\mathcal D$-transverse holomorphic map
$
\phi:({\mathbb C}^2,0)\rightarrow M
$
such that
$$
\phi(0)=p,\quad \phi(y=0)\subset \operatorname{Supp} ({\mathcal D}),\quad \phi^*{\mathcal D}=0.
$$
We say that $\mathcal D$ is {\em dicritical} if there is a point $p\in M$ such that it is dicritical at $p$. In a consonant way, we say that $\mathcal D$ is {\em non-dicritical} if and only if it is non-dicritical at each point $p\in M$ (in the case of germs $(M,K)$ we ask the conditions for the points $p\in K$).
\end{definition}

\begin{proposition}
 \label{pro:divdicrunblowinup}
 Consider a $\mathbb C$-divisor ${\mathcal D}$ on $M=({\mathbb C}^n,0)$ and a non-dicritical admissible blowing-up
$
\pi:((M,\pi^{-1}(0)),{\mathcal D}')\rightarrow (({\mathbb C}^n,0),{\mathcal D}).
$
Then, the $\mathbb C$-divisor ${\mathcal D}$ is dicritical if and only if there is a point $p'\in\pi^{-1}(0)$ such that ${\mathcal D}'$ is dicritical at $p$.
\end{proposition}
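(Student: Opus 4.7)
The plan is to prove the two directions of the equivalence separately. The backward direction is a direct composition argument, while the forward direction requires lifting $\phi$ through $\pi$, and this needs a resolution-of-indeterminacies on the source surface $(\mathbb{C}^{2},0)$.

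For the backward direction, suppose $\mathcal{D}'$ is dicritical at some $p'\in\pi^{-1}(0)$, witnessed by $\phi'\colon(\mathbb{C}^{2},0)\to M'$, and set $\phi=\pi\circ\phi'$. Non-dicriticality of $\pi$ places $E$ in $\operatorname{Supp}(\mathcal{D}')$, so every irreducible component of $\operatorname{Supp}(\mathcal{D}')$ projects into $\operatorname{Supp}(\mathcal{D})$: the exceptional divisor $E$ maps onto $Y\subset\operatorname{Supp}(\mathcal{D})$ by $\mathcal{D}$-admissibility, while the strict transforms $H'_i$ map into the corresponding $H_i$. Thus $\pi(\operatorname{Supp}(\mathcal{D}'))\subset\operatorname{Supp}(\mathcal{D})$. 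This yields $\phi(y=0)\subset\operatorname{Supp}(\mathcal{D})$ and $\phi(0)=0$ immediately, and $\phi$ is $\mathcal{D}$-transverse: were its image in $\operatorname{Supp}(\mathcal{D})$, then $\phi'$ would take values in $\pi^{-1}(\operatorname{Supp}(\mathcal{D}))\subset\operatorname{Supp}(\mathcal{D}')$, contradicting the $\mathcal{D}'$-transversality of $\phi'$. Remark~\ref{rk:transversemaps} concludes: $\phi^{*}\mathcal{D}=(\phi')^{*}\pi^{*}\mathcal{D}=(\phi')^{*}\mathcal{D}'=0$.

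For the forward direction, let $\phi\colon(\mathbb{C}^{2},0)\to M$ witness the dicriticality of $\mathcal{D}$ at $0$. Since $Y\subset\operatorname{Supp}(\mathcal{D})$ and the image of $\phi$ avoids $\operatorname{Supp}(\mathcal{D})$, the ideal $\phi^{-1}(\mathcal{I}_Y)\cdot\mathcal{O}_{(\mathbb{C}^{2},0)}$ is nonzero. By the standard principalisation of ideals on a smooth surface, a finite composition of point blow-ups $\sigma\colon\tilde N\to(\mathbb{C}^{2},0)$ renders this ideal locally principal, and the universal property of the blow-up $\pi$ then produces a holomorphic lift $\tilde\phi\colon\tilde N\to M'$ with $\pi\circ\tilde\phi=\phi\circ\sigma$. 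The composition $\phi\circ\sigma$ is $\mathcal{D}$-transverse (same image as $\phi$, since $\sigma$ is surjective), so Remark~\ref{rk:transversemaps} applied to $\pi\circ\tilde\phi$ shows that $\tilde\phi$ is globally $\mathcal{D}'$-transverse and $\tilde\phi^{*}\mathcal{D}'=\sigma^{*}\phi^{*}\mathcal{D}=0$.

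I would then extract the required two-dimensional test at a point of $\pi^{-1}(0)$ as follows. Pick any irreducible component $E_0\subset\sigma^{-1}(0)$, a smooth point $q\in E_0$, and local coordinates $(u,v)$ centred at $q$ with $E_0=(v=0)$; let $p'=\tilde\phi(q)$ and take $\phi'$ to be the germ of $\tilde\phi$ at $q$ written in these coordinates. Then $\pi(p')=\phi(\sigma(q))=\phi(0)=0$, so $p'\in\pi^{-1}(0)$, and $\phi'(v=0)\subset\tilde\phi(E_0)\subset\pi^{-1}(0)\subset E\subset\operatorname{Supp}(\mathcal{D}')$ by non-dicriticality of $\pi$. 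Because $\tilde\phi^{*}\mathcal{D}'=0$ on the smooth surface $\tilde N$, the locus $\tilde\phi^{-1}(\operatorname{Supp}(\mathcal{D}'))$ is a proper analytic subset of $\tilde N$, hence its germ at $q$ is still proper; consequently the germ $\phi'$ is $\mathcal{D}'$-transverse and $(\phi')^{*}\mathcal{D}'=0$, making $\phi'$ a valid witness at $p'$.

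The main obstacle is the construction of the lift $\tilde\phi$, which rests on principalising $\phi^{-1}(\mathcal{I}_Y)\cdot\mathcal{O}$ by a finite sequence of point blow-ups of $(\mathbb{C}^{2},0)$. This is classical on smooth surfaces but intrinsically two-dimensional, and it is precisely where the two-variable nature of the dicriticality test is exploited. Once the lift is in place, everything else reduces to the functoriality of $\mathbb{C}$-divisor pullbacks recalled in Remark~\ref{rk:transversemaps}, combined with the two hypotheses that $\pi$ is $\mathcal{D}$-admissible and non-dicritical.
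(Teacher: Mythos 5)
Your proof is correct and follows essentially the same route as the paper: the backward direction by composing with $\pi$ and using that non-dicriticality gives $\operatorname{Supp}({\mathcal D}')=\pi^{-1}(\operatorname{Supp}({\mathcal D}))$, and the forward direction by lifting $\phi$ through $\pi$ after a finite sequence of point blow-ups on the source, which is exactly the content of Proposition \ref{prop:appdos} in Appendix I (you re-derive it via principalization of $\phi^{-1}({\mathcal I}_Y)\cdot{\mathcal O}$). The only difference is cosmetic: at the final step the paper takes the strict transform of $(y=0)$ as the distinguished invariant branch of the new witness, whereas you take an exceptional component of $\sigma$, whose image lies in $\pi^{-1}(0)\subset E\subset\operatorname{Supp}({\mathcal D}')$ again by non-dicriticality of $\pi$; both choices satisfy the definition.
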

\begin{proof} Let us assume that ${\mathcal D}'$ is dicritical at a  point $p'\in \pi^{-1}(0)$. Then, there is a ${\mathcal D}'$-transverse map
$\phi': ({\mathbb C}^2,0)\rightarrow (M',p')$ such that
$${\phi'}^*{\mathcal D'}=0,\quad \phi'(y=0)\subset \operatorname{Supp} ({\mathcal D}').
$$
Since $\pi$ is non-dicritical, we have that
$
\operatorname{Supp} ({\mathcal D}')=\pi^{-1} (\operatorname{Supp} ({\mathcal D}))
$. This implies that $\phi=\pi\circ\phi'$ is also a ${\mathcal D}$-transverse map and moreover, we have
$$
\phi(y=0)\subset \operatorname{Supp} ({\mathcal D}), \quad \phi^*{\mathcal D}={\phi'}^*{\mathcal D}'=0.
$$
Hence, the $\mathbb C$-divisor ${\mathcal D}$ is dicritical.

Conversely, let us assume that ${\mathcal D}$ is dicritical. Consider a $\mathcal D$-transverse map $\phi: ({\mathbb C}^2,0)\rightarrow ({\mathbb C}^n,0)$, such that $\phi(y=0)\subset \operatorname{Supp}{\mathcal D}$ and $\phi^*{\mathcal D}=0$. In view of Proposition \ref{prop:appdos} in the Appendix I, there is a morphism
$$
\sigma: (N,\sigma^{-1}(0))\rightarrow ({\mathbb C}^2,0)
$$
that is a composition of blowing-ups and a morphism $\psi: (N,\sigma^{-1}(0))\rightarrow (M,\pi^{-1}(0))$ such that $\pi\circ\psi=\phi\circ\sigma$.
Note that $\phi\circ\sigma$ is $\mathcal D$-transverse, since $\phi$ is $\mathcal D$-transverse and $\sigma$ is a surjective map. By Remark \ref{rk:transversemaps}, we have that $\psi$ is $\pi^*{\mathcal D}$-transverse and
$$
0= \sigma^*(\phi^*{\mathcal D})=(\phi\circ\sigma)^*{\mathcal D}=(\pi\circ\psi)^*{\mathcal D}=\psi^*({\mathcal D}').
$$
Now, let $(\Gamma,q)\subset (N,q)$ be the strict transform of $y=0$ by $\sigma$. We have that $\pi(\psi(\Gamma))=\phi(y=0)\subset \operatorname{Supp}({\mathcal D})$. In other words
$$
\psi(\Gamma)\subset \pi^{-1}(\operatorname{Supp}({\mathcal D}))=\operatorname{Supp}({\mathcal D}').
$$
Select local coordinates $x',y'$ at $q$ such that $\Gamma=(y'=0)$ and let
$$
\phi':(N,q)=({\mathbb C}^2,0)\rightarrow (M,p),\quad p=\psi(q),
$$
be the map between germs induced by $\psi$. Thanks to $\phi'$, we see that ${\mathcal D}'$ is dicritical at $p$.
\end{proof}
The following corollary is a direct consequence of Proposition  \ref{pro:divdicrunblowinup}:
\begin{corollary}
\label{dicriticidadexplosionnodicritica}
Consider a morphism
$
\pi:(M', {\mathcal D}')\rightarrow (M,{\mathcal D})
$
that is the composition of a sequence of non-dicritical admissible blowing-ups.
Then, the $\mathbb C$-divisor $\mathcal D$ is dicritical in $M$ if and only if ${\mathcal D}'$ is dicritical in $M'$.
\end{corollary}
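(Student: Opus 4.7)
My plan is to induct on the length $k$ of the sequence of blowing-ups, reducing the corollary to iterated applications of Proposition~\ref{pro:divdicrunblowinup}. I will write $\pi = \pi_k \circ \cdots \circ \pi_1$ with each $\pi_i : (M_i, {\mathcal D}_i) \to (M_{i-1}, {\mathcal D}_{i-1})$ a non-dicritical admissible blowing-up, and set $({\mathcal D}_0, {\mathcal D}_k) = ({\mathcal D}, {\mathcal D}')$. It then suffices to prove the one-step equivalence
$$
{\mathcal D}_{i-1} \text{ dicritical in } M_{i-1} \iff {\mathcal D}_i \text{ dicritical in } M_i,
$$
for each $i$, and chain the equivalences.

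For the one-step statement, let $\pi : M' \to M$ denote a single non-dicritical admissible blowing-up with irreducible non-singular center $Y \subset \operatorname{Supp}({\mathcal D})$. Since dicriticalness is detected at a single point, I will localize. For a witness point $p \in M$ lying off $Y$, or symmetrically a witness point $p' \in M'$ lying off $\pi^{-1}(Y)$, the map $\pi$ is a local isomorphism there, so any ${\mathcal D}$-transverse test map $\phi : ({\mathbb C}^2,0) \to (M,p)$ transports bijectively to a ${\mathcal D}'$-transverse test map at the corresponding point of $\pi^{-1}(p)$; this handles those points directly. For a witness point on $Y$ or on $\pi^{-1}(Y)$, I will germify at the relevant $p \in Y$, pick local coordinates so that $(M,p) = ({\mathbb C}^n,0)$, and then invoke Proposition~\ref{pro:divdicrunblowinup} at each such $p$ to obtain the equivalence between dicriticalness of ${\mathcal D}$ at $p$ and dicriticalness of ${\mathcal D}'$ at some point of $\pi^{-1}(p)$.

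The main subtlety I foresee is that Proposition~\ref{pro:divdicrunblowinup} is stated with the center reduced to the origin of $({\mathbb C}^n,0)$, whereas the centers $Y$ allowed in the corollary may have positive dimension. I expect this to be cosmetic rather than substantive: the proof of the proposition invokes only the factorization lemma of Appendix~I (which is insensitive to the dimension of the center) together with the identity $\operatorname{Supp}({\mathcal D}') = \pi^{-1}(\operatorname{Supp}({\mathcal D}))$ supplied by the non-dicritical hypothesis. Both ingredients remain available at every point of $Y$ whatever its dimension, so the proposition's argument transports verbatim. Once the one-step equivalence is secured, the induction on $k$ closes the proof.
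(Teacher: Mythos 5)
Your proof is correct and is exactly the route the paper takes: the paper simply declares this corollary a direct consequence of Proposition~\ref{pro:divdicrunblowinup}, and your induction on the length of the sequence together with localization at a witness point (germifying at $p\in Y$ or using that $\pi$ is an isomorphism off the center) is the intended unwinding. The subtlety you flag is in fact absent: Proposition~\ref{pro:divdicrunblowinup} is already stated for an arbitrary admissible blowing-up of the germ $({\mathbb C}^n,0)$, whose center $Y$ may have positive dimension (the notation $\pi^{-1}(0)$ refers to the fiber over the origin, not to a point center), so no transport of its proof is needed.
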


Now, we characterize the dicriticalness in terms of admissible blowing-ups. We start with the normal crossings case. We say that a $\mathbb C$-divisor $\mathcal D$ on a complex analytic variety $M$ has a {\em non-negative resonance} at a point $p\in M\cap \operatorname{Supp}({\mathcal D})$ if the germ of the divisor is written
$$
{\mathcal D}_p=\sum_{i=1}^s\lambda_i H_i,\quad \lambda_i\ne 0\mbox{ for } i=1,2,\ldots,s
$$
and there is $\mathbf{m}=(m_1,m_2,\ldots,m_s)\in {\mathbb Z}_{\geq 0}^s$, with $\mathbf{m}\ne\mathbf{0}$ such that
\begin{equation}
\label{eq:resonance}
\sum_{i=1}^sm_i\lambda_i=0,\quad (m_1,m_2,\ldots,,m_s)\in {\mathbb Z}^s_{\geq 0}\setminus\{\mathbf{0}\}.
\end{equation}

\begin{lemma}
  \label{lema:normalcrossings} Let $(M,K)$ be a non-singular complex analytic variety that is a germ over a compact subset $K\subset M$. Consider a $\mathbb C$-divisor $\mathcal D$ in $(M,K)$ whose support has normal crossings. Assume that there is a point $p\in K\cap\operatorname{Supp}(\mathcal D)$ in which $\mathcal D$  has a non-negative resonance. Then, there are  morphisms
  $
  \pi':(M',{\mathcal D}')\rightarrow (M,{\mathcal D})$ and
  $\pi'':(M'',{\mathcal D}'')\rightarrow (M',{\mathcal D}')$,
  such that
  $\pi'$ is the composition of a sequence of non-dicritical admissible blowing-ups and $\pi''$ is a dicritical admissible blowing-up.
\end{lemma}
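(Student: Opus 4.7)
The natural approach is induction on the weight $|\mathbf{m}|=\sum_{i=1}^{s}m_i$ of a non-negative resonance at $p$. Shrinking $M$ around $p$, I would choose coordinates $(x_1,\ldots,x_n)$ for which $H_i=(x_i=0)$ for $i=1,\ldots,s$, and after discarding the components with $m_i=0$ from the relation (\ref{eq:resonance}), assume every $m_i$ is a positive integer.

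In the base case $|\mathbf{m}|=s$ we have $m_i=1$ for each $i$ and $\sum_{i=1}^{s}\lambda_i=0$. The smooth irreducible center $Y=H_1\cap\cdots\cap H_s$ lies in $\operatorname{Supp}(\mathcal{D})$ and has $\nu_Y(H_i)=1$, so the exceptional divisor of the blowing-up along $Y$ receives coefficient $\sum_i\lambda_i=0$: this single admissible blowing-up is itself dicritical, so we take $\pi'=\mathrm{id}$ and $\pi''$ equal to it.

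In the inductive step some $m_j\geq 2$; I would pick indices $i\neq j$ in the support with $m_i\leq m_j$, and let $\sigma$ be the admissible blowing-up of $M$ along $Y=H_i\cap H_j$. If $\lambda_i+\lambda_j=0$ then $\sigma$ is already dicritical and we finish with $\pi'=\mathrm{id}$ and $\pi''=\sigma$. Otherwise $\sigma$ is non-dicritical, and in the chart $x_i=u,\;x_j=uv$ (other coordinates unchanged) the point $q$ over $p$ with $u=v=x_{s+1}=\cdots=x_n=0$ carries the divisor germ $(\lambda_i+\lambda_j)E+\lambda_j H_j'+\sum_{k\neq i,j}\lambda_k H_k$, whose support is still normal-crossing. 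A direct computation shows that $\mathbf{m}'=(m_i,\,m_j-m_i,\,(m_k)_{k\neq i,j})$ is a non-negative resonance at $q$ with strictly smaller weight $|\mathbf{m}|-m_i$. Applying the induction hypothesis at $q$ yields a sequence $\tilde\pi'$ of non-dicritical admissible blowing-ups followed by a dicritical one $\tilde\pi''$, and we conclude by taking $\pi'=\sigma\circ\tilde\pi'$ and $\pi''=\tilde\pi''$.

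The delicate technical point — and the reason to order the indices so that $m_i\leq m_j$ — is to make sure the chosen chart realizes a non-negative transformed resonance (the complementary chart would give $m_i-m_j$, which can be negative). The remaining verifications, namely that at each step $Y$ is smooth and irreducible in the germ at the relevant point (which follows from the normal-crossings hypothesis, preserved by admissible blowing-ups along intersections of the $H_i$), that $\sigma$ is $\mathcal{D}$-admissible, and that $\mathbf{m}'$ is nonzero (since $m_i\geq 1$), are routine.
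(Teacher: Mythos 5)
Your proof is correct and follows essentially the same Euclidean-algorithm strategy as the paper's: blow up the codimension-two intersection of two components chosen with $m_i\leq m_j$, pass to the chart where the transformed resonance $(m_i,\,m_j-m_i,\ldots)$ remains non-negative, and either hit a dicritical blowing-up ($\lambda_i+\lambda_j=0$) or descend. The only cosmetic differences are that you induct on the total weight $\vert\mathbf{m}\vert$ where the paper uses the lexicographic pair $(t,\delta)$ with $\delta=\min\{m_i+m_j\}$, and that you dispatch the all-ones case by one dicritical blowing-up with the codimension-$s$ center $H_1\cap\cdots\cap H_s$ where the paper simply continues with codimension-two centers.
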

\begin{proof}
This result, in another context, is proven in \cite{FDu}. Let us give a quick idea of a proof. Choose local coordinates $(x_1,x_2,\ldots,x_n)$ at the origin such that
$$H_i=(x_i=0), \quad i=1,2,\ldots,s.
$$ Up to a reordering, we assume that $\prod_{i=1}^tm_i \ne 0$ and
$m_i=0$ for $t+1\leq i\leq s$. We proceed by induction on the lexicographical invariant $(t,\delta)$, where
$$
\delta=\min_{1\leq i<j\leq t}\{m_i+m_j\}.
$$
Assume, up to a new reordering, that $\delta=m_1+m_2$ and $m_1\leq m_2$. Choose $Y=(x_1=x_2=0)$ as a center of blowing-up. The first chart of this blowing-up gives a morphism
$$
\phi: ({\mathbb C}^n,0)\rightarrow ({\mathbb C}^n,0)
$$
defined by the equations $x_1=x'_1, x_2=x'_1x'_2$ and $x_i=x'_i$ for $i=3,4,\ldots,n$. The transform $\phi^*{\mathcal D}$ is given at the origin of this chart by
$$
\phi^*{\mathcal D}=(\lambda_1+\lambda_2)E+\sum_{i=2}^s\lambda_i H'_i,
$$
where $E=(x'_1=0)$ and $H'_i=(x'_i=0)$ for $i=2,3,\ldots,s$. If $\lambda_1+\lambda_2=0$ we are done, since then we have an admissible dicritical blowing-up. Otherwise, we obtain a resonance
$$
\mathbf{m}'=(m_1, m_2-m_1,m_3,\ldots,m_s)
$$
and the invariant $(t',\delta')$ is strictly smaller than $(t,\delta)$. In spite of the local presentation, the above procedure is in fact a global one. This ends the proof.
\end{proof}

\begin{proposition}
\label{pro:divdicritico}
  Let us consider a $\mathbb C$-divisor ${\mathcal D}$ on $M=({\mathbb C}^n,0)$. The $\mathbb C$-divisor $\mathcal D$ is dicritical if and only if there are morphisms
  $$
  \pi':(M',{\mathcal D}')\rightarrow (M,{\mathcal D}),\quad
  \pi'':(M'',{\mathcal D}'')\rightarrow (M',{\mathcal D}'),
  $$
  such that
  $\pi'$ is the composition of a sequence of non-dicritical admissible blowing-ups and $\pi''$ is a dicritical admissible blowing-up.
\end{proposition}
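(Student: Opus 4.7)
For the direction $(\Leftarrow)$, I would construct explicitly a $\mathcal{D}$-transverse map $\phi:(\mathbb{C}^2,0)\to M$ witnessing the dicriticalness of $\mathcal{D}$, starting from the hypothetical pair $(\pi',\pi'')$. Let $Y\subset \operatorname{Supp}(\mathcal{D}')$ be the center of $\pi''$ and $E=(\pi'')^{-1}(Y)$ its exceptional divisor. Since $\pi''$ is dicritical, $E\not\subset \operatorname{Supp}(\mathcal{D}'')$, so I pick a point $p''\in E\setminus \operatorname{Supp}(\mathcal{D}'')$ and local coordinates $(u_1,\ldots,u_n)$ at $p''$ with $E=(u_1=0)$. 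Defining $\phi''(x,y)=(y,x,0,\ldots,0)$ and $\phi=\pi'\circ\pi''\circ\phi''$, and using the identity $\operatorname{Supp}(\mathcal{D}')=(\pi')^{-1}\operatorname{Supp}(\mathcal{D})$ (valid because $\pi'$ is a composition of non-dicritical blowing-ups), one checks that $\phi(y=0)\subset \pi'(Y)\subset \operatorname{Supp}(\mathcal{D})$, that $\phi$ is $\mathcal{D}$-transverse (the image off $\{y=0\}$ stays outside $\operatorname{Supp}(\mathcal{D}'')$ hence outside $\operatorname{Supp}(\mathcal{D})$), and that $\phi^{*}\mathcal{D}=(\phi'')^{*}\mathcal{D}''=0$ near $p''$.

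For $(\Rightarrow)$, I would start from a finite sequence of admissible blowing-ups $\pi_N:M_N\to M$ realizing an embedded normal-crossings reduction of the hypersurface $\operatorname{Supp}(\mathcal{D})$, which exists by standard embedded desingularization with non-singular centers inside the successive supports. If any blowing-up in this sequence is $\mathcal{D}$-dicritical, I take $\pi''$ to be the first such and $\pi'$ the composition of its predecessors, which are non-dicritical by minimality, and we are done. Otherwise every blowing-up is non-dicritical, so by Corollary \ref{dicriticidadexplosionnodicritica} the pulled-back divisor $\mathcal{D}_N$ is again dicritical, now with normal-crossings support, and the task reduces to exhibiting a non-negative resonance for $\mathcal{D}_N$ in order to apply Lemma \ref{lema:normalcrossings} above $M_N$. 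Let $\phi_N:(\mathbb{C}^2,0)\to(M_N,p)$ be a witness of the dicriticalness of $\mathcal{D}_N$, write $\mathcal{D}_{N,p}=\sum_{i=1}^{s}\lambda_i H_i$ in coordinates $(x_1,\ldots,x_n)$ with $H_i=(x_i=0)$, put $\phi_N=(\phi_1,\ldots,\phi_n)$, and set $m_i=\operatorname{ord}_y\phi_i$. The inclusion $\phi_N(y=0)\subset \operatorname{Supp}(\mathcal{D}_N)$ forces $m_i\geq 1$ for at least one $i$, while the vanishing of the coefficient of $\{y=0\}$ in $\phi_N^{*}\mathcal{D}_N=\sum_i\lambda_i\operatorname{Div}(\phi_i)$ reads $\sum_{i=1}^{s}m_i\lambda_i=0$, which is exactly a non-negative resonance in the sense of \eqref{eq:resonance}. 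Applying Lemma \ref{lema:normalcrossings} at $p\in M_N$ produces a composition $\pi'_1$ of non-dicritical admissible blowing-ups above $M_N$ followed by a single dicritical admissible blowing-up $\pi''$; setting $\pi'=\pi_N\circ\pi'_1$ yields the required maps.

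The main obstacle I anticipate is the bookkeeping in the $(\Rightarrow)$ direction: one must ensure that the chosen embedded resolution of $\operatorname{Supp}(\mathcal{D})$ uses only non-singular centers contained in the support, so that each blowing-up is admissible, and that the dichotomy \emph{first dicritical center} versus \emph{all non-dicritical} is compatible with this choice. The remaining points---lifting the witness $\phi$ of dicriticalness to $M_N$ through the Proposition \ref{prop:appdos}-style arguments that underlie Corollary \ref{dicriticidadexplosionnodicritica}, the elementary order computation producing the resonance, and the routine verifications in the easy direction---pose no real difficulty once the framework is in place.
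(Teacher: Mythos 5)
Your proposal is correct and follows essentially the same route as the paper's proof: the easy direction by composing an immersed $({\mathbb C}^2,0)$ through a point of the dicritical exceptional divisor away from the support, and the hard direction by desingularizing $\operatorname{Supp}({\mathcal D})$, reducing to the normal-crossings case via Corollary \ref{dicriticidadexplosionnodicritica}, extracting a non-negative resonance from the vanishing of $\phi^*{\mathcal D}$, and invoking Lemma \ref{lema:normalcrossings}. The only (harmless) variation is that you read the resonance off the coefficient of the branch $y=0$ itself, whereas the paper uses an irreducible component of $\tilde\phi_1=0$.
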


\begin{proof} Let us assume first the existence of $\pi',\pi''$ with the stated properties.
Since $\pi'$ is a composition of non-dicritical admissible blowing-ups, we have that
  $$
  E'\subset\operatorname{Supp}({\mathcal D}'),
 $$
 where $E'$ is the exceptional divisor of $\pi'$. Let $Y\subset M'$ be the center of $\pi''$ and denote $\pi=\pi'\circ\pi''$. The
exceptional divisor of $\pi$ is $E''={\tilde E}\cup D$, where $D={\pi''}^{-1}(Y)$ and
$\tilde E$ is the strict transform of $E'$ by $\pi''$. Since $\pi''$ is dicritical, we have that
$$
\tilde E\subset\operatorname{Supp}({\mathcal D}''),\quad D\not\subset\operatorname{Supp}({\mathcal D}'').
$$
Take a point $p\in D\setminus\operatorname{Supp}({\mathcal D}'')$. Let us identify the germ $(M'',p)$ with $({\mathbb C}^n,0)$, with coordinates $x_1,x_2,\ldots,x_n$, where $D$ is locally given at $p$ by the equation $x_n=0$. Consider the morphism
$$
\psi:({\mathbb C}^2,0)\rightarrow ({\mathbb C}^n,0)=(M'',p)\hookrightarrow M''
$$
defined by $x_1=x$, $x_n=y$ and $x_i=0$, for $2\leq i\leq n-1$.
We have that $\psi(y=0)\subset D$ and $\operatorname{Im}(\psi)\not\subset D$. Since the support of ${\mathcal D}''$ is empty around $p$, we have that $\psi^*{\mathcal D}''=0$ and we also have
$$
\operatorname{Im}(\psi)\not\subset D\cup\operatorname{Supp}({\mathcal D}'')=E''\cup \operatorname{Supp}({\mathcal D}'').
$$
Noting that $\pi^{-1}(\operatorname{Supp}({\mathcal D}))=\tilde E\cup \operatorname{Supp}({\mathcal D}'')$, we conclude that $\phi$ is ${\mathcal D}$-transverse, where $\phi=\pi\circ \psi$. Then, we have
$$
\phi^*{\mathcal D}=\psi^*(\pi^*{\mathcal D})=\psi^*({\mathcal D''})=0,\quad
\phi(y=0)\subset \pi'(Y)\subset\operatorname{Supp}({\mathcal D}).
$$
This implies that $\mathcal D$ is dicritical.

Assume now that ${\mathcal D}$ is dicritical. Let us perform a Hironaka reduction of singularities of the support of $\mathcal D$ by means of admissible blowing-ups (see
\cite{Aro-H-V, Hir}). We can assume that none of the blowing-ups in the reduction of singularities is dicritical, since then we are done. Hence, we have a morphism
$$
\tilde\pi: ((\tilde M,\tilde\pi^{-1}(0)),\tilde{\mathcal D})\rightarrow (({\mathbb C}^n,0),{\mathcal D})
$$
that is a composition of non-dicritical admissible blowing-ups such that $\operatorname{Supp}({\tilde{\mathcal D}})$ has normal crossings. Now, in view of Lemma \ref{lema:normalcrossings}, it is enough to find a point $p$ in $\tilde\pi^{-1}(0)\cap \operatorname{Supp}(\tilde{\mathcal D})$ such that $\tilde{\mathcal D}$ has a non-negative resonance at $p$.

By Proposition \ref{pro:divdicrunblowinup}, there is a point $p \in \tilde\pi^{-1}(0)$ such that $\tilde {\mathcal D}_{p}$ is dicritical. Then, there is a $\tilde{\mathcal D}$-transverse map
$$
\tilde\phi: ({\mathbb C}^2,0)\rightarrow (\tilde M,p)
$$
such that $\tilde\phi^*{\tilde{\mathcal D}}=0$ and $\tilde\phi(y=0)\subset \operatorname{Supp} (\tilde{\mathcal D})$. Let us identify $(\tilde M,p)$ with $({\mathbb C}^n,0)$ by means of a choice of local coordinates $x_1,x_2,\ldots,x_n$ at $p$ such that
$$
\tilde{\mathcal D}_p=\sum_{i=1}^s\lambda_iH_i,\quad H_i=(x_i=0),\; i=1,2,\ldots,s.
$$
Put $\tilde\phi_i=x_i\circ\tilde\phi$, for $i=1,2,\ldots, n$. We know that $\tilde\phi_i(0)=0$ for $i=1,2,\ldots,n$ and that $\tilde\phi_\ell\ne 0$, for $1\leq\ell\leq s$. Let $\Gamma\subset ({\mathbb C}^2,0)$ be an irreducible component of $\tilde\phi_1=0$, that is $\nu_\Gamma(\tilde\phi_1)\geq 1$. The coefficient of $\Gamma$ in $\tilde\phi^*{\tilde{\mathcal D}}=0$ is
$$
\sum_{i=1}^s\lambda_i\nu_\Gamma(\tilde\phi_i)=0.
$$
This is the desired non-negative resonance.
\end{proof}

\begin{remark} In other words, the $\mathbb C$-divisor $\mathcal D$ is dicritical if and only if there is a sequence of non-dicritical admissible blowing-ups that can be followed by a dicritical admissible blowing-up.
\end{remark}

The nonnegative resonances characterize dicriticalness in the case of normal crossings support, as we show in the following result:
\begin{corollary}
\label{cor:dicriticonormalcrossings}
Consider a $\mathbb C$-divisor ${\mathcal D}=\sum_{i=1}^s\lambda_iH_i$ on $M=({\mathbb C}^n,0)$ whose support $S=\cup_{i=1}^sH_i$ has normal crossings. The following statements are equivalent:
\begin{enumerate}
\item The $\mathbb C$-divisor $\mathcal D$ is dicritical.
\item There is a nonnegative resonance $\sum_{i=1}^sm_i\lambda_i=0$, with
$m_i\geq 0$ not all zero integer numbers.
\end{enumerate}
\end{corollary}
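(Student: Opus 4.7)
The plan is to observe that the corollary is a direct repackaging of tools already at hand, specialized to the normal-crossings setting, where the intermediate Hironaka step in the proof of Proposition \ref{pro:divdicritico} becomes unnecessary.

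For the implication $(2) \Rightarrow (1)$, I would simply feed the hypothesis into Lemma \ref{lema:normalcrossings}: a non-negative resonance at $0 \in \operatorname{Supp}(\mathcal{D})$ produces morphisms $\pi'$ and $\pi''$, where $\pi'$ is a sequence of non-dicritical admissible blowing-ups and $\pi''$ is a dicritical admissible blowing-up. Plugging these into the ``if'' direction of Proposition \ref{pro:divdicritico} immediately yields that $\mathcal{D}$ is dicritical. No further work is needed here.

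For the implication $(1) \Rightarrow (2)$, I would mimic the last paragraph of the proof of Proposition \ref{pro:divdicritico}, but without the Hironaka reduction, since $\operatorname{Supp}(\mathcal{D})$ is already in normal crossings. Choose a point $p$ near $0$ at which $\mathcal{D}$ is dicritical; by restricting to germs at $p$, we may assume $p = 0$ (padding coefficients at missing hypersurfaces with zero only strengthens the resonance statement at the origin). Pick local coordinates $(x_1, \ldots, x_n)$ so that $H_i = (x_i = 0)$ and a $\mathcal{D}$-transverse map $\phi : (\mathbb{C}^2, 0) \to (\mathbb{C}^n, 0)$ with $\phi^*\mathcal{D} = 0$ and $\phi(y=0) \subset \operatorname{Supp}(\mathcal{D})$. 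Setting $\phi_i = x_i \circ \phi$, the $\mathcal{D}$-transversality forces $\phi_i \not\equiv 0$ for $1 \le i \le s$, since $\mathcal{D}$-transverse implies $H_i$-transverse for each component. Pick any irreducible component $\Gamma \subset (\mathbb{C}^2,0)$ of $\phi_1 = 0$; then $m_i := \nu_\Gamma(\phi_i) \in \mathbb{Z}_{\geq 0}$ with $m_1 \geq 1$, and the vanishing of the coefficient of $\Gamma$ in $\phi^*\mathcal{D} = 0$ reads
$$
\sum_{i=1}^s m_i \lambda_i = 0,
$$
which is exactly the required non-negative resonance.

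I do not anticipate any real obstacle: both directions are extractions from arguments already carried out. The only small point to verify carefully is that $\mathbf{m} \ne \mathbf{0}$, which is handled by choosing $\Gamma$ as a component of $\phi_1 = 0$ so that $m_1 \geq 1$; and that the possible displacement of the dicriticality point $p$ away from $0$ does not matter, because any resonance relation among a subfamily of the $\lambda_i$ extends trivially to a resonance relation among all of them by taking the remaining $m_i$ to be zero.
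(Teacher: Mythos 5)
Your proof is correct and takes essentially the same route as the paper, whose entire proof is a pointer to the second part of the proof of Proposition \ref{pro:divdicritico}: the implication $(1)\Rightarrow(2)$ is exactly the computation of the coefficient of a component $\Gamma$ of $\phi_1=0$ in $\phi^*{\mathcal D}=0$ carried out there (with the Hironaka step rightly omitted since the support is already normal crossings), and $(2)\Rightarrow(1)$ is Lemma \ref{lema:normalcrossings} combined with the first implication of that proposition. Your worry about the dicriticality point being displaced away from $0$ is in fact moot, since for the germ $({\mathbb C}^n,0)$ dicriticalness is by definition tested at the origin itself, but your zero-padding remark handles it correctly in any case.
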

\begin{proof} See the second part of the proof of Proposition \ref{pro:divdicritico}.
\end{proof}

\section{Logarithmic Foliations and Dicriticalness}

Let $\mathcal F$ be a codimension one singular holomorphic foliation on a non-singular complex analytic variety  $M$. Given a point $p \in M$, we recall that the germ of $\mathcal F$ at $p$ is generated by an integrable meromorphic germ $\eta$ of differential $1$-form. Moreover two such differential $1$-forms $\eta$ and $\eta'$ generate the same germ of foliation if and only if $\eta'=\phi\eta$, where $\phi$ is the germ at $p$ of a meromorphic function.

We recall from \cite{Sai} that a meromorphic germ of differential $1$-form $\eta$ at a point $p\in M$ is  {\em logarithmic} when both $\eta$ and $d\eta$ have at most simple poles.  The set $\operatorname{Pol}(\eta)$ of poles of a meromorphic differential $1$-form $\eta$ is the hypersurface $g=0$, where $g\eta$ is holomorphic and $g$ divides any other $g'$ such that $g'\eta$ is holomorphic; the poles are simple when we can take $g$ to be reduced.

\begin{remark}
 \label{rk:logaritmicgenerator}
 Assume that $\mathcal F$ is a germ of foliation on $({\mathbb C}^n,0)$ locally generated by a germ of holomorphic integrable $1$-form $\omega$, without common factors in its coefficients. Let $f=0$ be a reduced equation for a (maybe non-irreducible) invariant hypersurface of $\mathcal F$; this means that $f$ divides $df\wedge\omega$. Then, the meromorphic $1$-form $\omega/f$ is  logarithmic. Indeed, we have that
$$
d(\omega/f)= (1/f)\left(-(df\wedge \omega)/f+d\omega\right)
$$
and hence $f d(\omega/f)$ is holomorphic.
\end{remark}

The following result is well known:
\begin{proposition}
  Let $\eta$ be the germ of a closed logarithmic $1$-form on $({\mathbb C}^n,0)$. There is a multivaluated function $F$ such that $\eta=dF/F$. More precisely, if we decompose the set of poles as a union $\operatorname{Pol}(\eta)=\cup_{i=1}^sH_i$ of irreducible hypersurfaces, there are  $\lambda_i\ne 0$  and reduced local equations $f_i=0$ for each   $H_i$  such that
  $$
  \eta=\sum_{i=1}^s\lambda_i\frac{df_i}{f_i}.
  $$
  Moreover, the coefficients $\lambda_i$ are unique. In the case that $\operatorname{Pol}(\eta)=\emptyset$ and hence $\eta$ is holomorphic, the statement must be interpreted by saying that there is a unit $U$ such that $\eta=dU/U$.
\end{proposition}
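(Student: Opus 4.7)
The plan is to construct the scalars $\lambda_i$ as Poincaré residues of $\eta$ along each component $H_i\subset\operatorname{Pol}(\eta)$, subtract the resulting logarithmic form from $\eta$, show the difference is a closed holomorphic 1-form by a Hartogs-type extension across the codimension-two singular locus of the divisor, and finally absorb its (necessarily exact) primitive into one of the $f_i$.

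First I would work near a smooth point of $H_i$ lying outside the other components. In local coordinates $(z_1,\ldots,z_n)$ with $H_i=(z_1=0)$, the simple-pole condition lets me write $\eta = g(z)\,dz_1/z_1+\alpha$ with $g$ holomorphic and $\alpha$ holomorphic involving only $dz_2,\ldots,dz_n$. Closedness gives $dg\wedge dz_1=-z_1\,d\alpha$, so along $z_1=0$ one has $\partial g/\partial z_j=0$ for $j\geq 2$, meaning $g|_{z_1=0}$ is locally constant. Because $H_i$ is irreducible, its smooth locus outside the remaining $H_j$'s is connected, so this locally constant value is a single scalar $\lambda_i\in\mathbb{C}$. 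Moreover $\lambda_i\neq 0$: otherwise $g$ would vanish on $z_1=0$, making $\eta$ holomorphic at a generic point of $H_i$ and contradicting $H_i\subset\operatorname{Pol}(\eta)$.

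Next consider $\eta'=\eta-\sum_{i=1}^s\lambda_i\,df_i/f_i$, which is again closed and logarithmic, now with vanishing residue along each $H_i$. At a generic smooth point of $H_i$ the coefficient of $dz_1/z_1$ in $\eta'$ vanishes along $z_1=0$, hence factors through $z_1$, so $\eta'$ is holomorphic there. Therefore the pole locus of $\eta'$ is contained in the singular locus of $\bigcup H_i$, which has codimension at least two in $(\mathbb{C}^n,0)$. By a Hartogs-type extension theorem for meromorphic forms with poles in codimension at least two, $\eta'$ extends to a closed holomorphic 1-form on the contractible germ, and the Poincaré lemma then yields $\eta'=dg$ for some holomorphic germ $g$. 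If $\operatorname{Pol}(\eta)\neq\emptyset$, I absorb $dg$ into $f_1$ by replacing $f_1$ with $\tilde f_1=f_1\exp(g/\lambda_1)$, which is another reduced local equation for $H_1$ since it differs from $f_1$ by a unit; then $\lambda_1\,d\tilde f_1/\tilde f_1=\lambda_1\,df_1/f_1+dg$, giving the stated expression for $\eta$. If $\operatorname{Pol}(\eta)=\emptyset$, then $\eta$ is already closed and holomorphic, so $\eta=dg$ and we set $U=\exp(g)$.

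Uniqueness of the $\lambda_i$ follows from the residue construction: any other reduced equation for $H_i$ has the form $u_if_i$ with $u_i$ a unit, and $d(u_if_i)/(u_if_i)-df_i/f_i=du_i/u_i$ is holomorphic, so the residue along $H_i$ is independent of the choice and equals $\lambda_i$. The main obstacle is the extension step: one must verify that after subtracting off the principal parts, the simple-pole coefficients of $\eta'$ vanish to first order along every smooth point of every $H_i$, so that the actual pole locus drops to codimension at least two and Hartogs applies. The remaining manipulations are essentially formal.
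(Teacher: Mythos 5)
Your proposal is correct and follows essentially the same route as the paper's proof: define the $\lambda_i$ as residues, subtract $\sum\lambda_i\,df_i/f_i$ to get a residue-free closed logarithmic form, show it is holomorphic, and absorb its primitive into one of the local equations as a unit factor. The only difference is that the paper delegates the key facts (constancy and non-vanishing of the residues, and holomorphy of a residue-free closed logarithmic form) to Saito's residue theory, whereas you prove them directly via the local normal form at smooth points of the polar divisor and a codimension-two extension argument.
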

\begin{proof}(See \cite{Cer-M, Sai}) If $\eta$ is holomorphic, by Poincaré Lemma, there is a holomorphic function $G$ such that $\eta=dG$, taking $U=\exp(G)$, we have that $\eta=dU/U$. We know that the residue of $\eta$ along $H_i$ is non zero and constant (see (2.6) and the proof of Theorem (2.9) in \cite{Sai}), let us call $\lambda_i\in {\mathbb C}$ this residue. Taking local reduced equations $g_i=0$ of $H_i$ we have that
$$
\alpha=\eta- \sum_{i=1}^s\lambda_i\frac{dg_i}{g_i}
$$
is a closed logarithmic differential $1$-form without residues. Hence $(1/\lambda_1)\alpha$ is a closed holomorphic $1$-form and thus $(1/\lambda_1)\alpha=dU/U$, where $U$ is a unit.  Put $f_1=Ug_1$ and $f_i=g_i$, for $i=2,3,\ldots,s$. We conclude that $\eta=\sum_{i=1}^s \lambda_idf_i/f_i$.
\end{proof}

 Given a closed logarithmic differential $1$-form $\eta$ on $M$, we attach to it the $\mathbb C$-divisor $\operatorname{Div}{\eta}$ given by
 $$
 \operatorname{Div}{\eta}=\sum_{H}\lambda_HH,
 $$
 where $\lambda_H=\operatorname{Res}_H(\eta)$ is the residue of $\eta$ along $H$, that we know to be constant by \cite{Sai}. When ${\mathcal D}=\operatorname{Div}(\eta)$, we say that the closed $1$-form $\eta$ is {\em ${\mathcal D}$-logarithmic} .

\begin{definition}
 A codimension one singular holomorphic foliation $\mathcal F$ on $M$ is {\em $\mathcal D$-logarithmic} when it is locally generated by a closed $\mathcal D$-logarithmic differential $1$-form.
\end{definition}
Let us note that $\operatorname{Div}(\mu\eta)=\mu\operatorname{Div}(\eta)$, when $\mu\in{\mathbb C}$. Thus,
 if ${\mathcal F}$ is ${\mathcal D}$-logarithmic and ${\mathcal D}'$ is a $\mathbb C$-divisor projectively equivalent to ${\mathcal D}$, then ${\mathcal F}$ is also ${\mathcal D}'$-logarithmic.
\begin{remark}
\label{rk:invarianciasoporte}
If $\mathcal F$ is a $\mathcal D$-logarithmic foliation, the irreducible components of the support of $\mathcal D$ are invariant for $\mathcal F$.  This may be verified locally, assuming that $\eta=\sum_{i=1}^s\lambda_i df_i/f_i$ generates $\mathcal F$. We have that $f_i$ does not divide the coefficients of the holomorphic $1$-form $\omega=f\eta$, where $f=\prod_{i=1}^sf_i$, indeed  $f_i$ does not divide $df_i$, since $f_i$ is reduced; in this situation, we have only to verify that $f_i$ divides $df_i\wedge \omega$, and this condition is visible.
\end{remark}
\begin{remark} Consider the radial foliation ${\mathcal R}$ on $({\mathbb C}^2,0)$ defined by $\omega=0$ where $\omega=ydx-xdy$. Note that $\mathcal R$ is defined both by $\eta_0$ and $\eta_1$, where
$$
\eta_0=\frac{dx}{x}-\frac{dy}{y},\quad \eta_1=\frac{d(x+y)}{x+y}- \frac{d(x-y)}{x-y}.
$$
Put $H^0_1=(x=0)$, $H^0_2=(y=0)$, $H^1_1=(x+y=0)$ and $H^1_2=(x-y=0)$.
The $\mathbb C$-divisors
$$
\operatorname{Div}(\eta_0)= H^0_1-H^0_2,\quad \operatorname{Div}(\eta_1)=H^1_1-H^1_2
$$ are different and not proportional. Hence a codimension one foliation can be logarithmic with respect to several non projectively equivalent $\mathbb C$-divisors.
\end{remark}

\subsection{Dicriticalness}
The word dicritical comes from ancient works of Autom, following Mattei \cite{Cer-M}. The general definition of dicritical foliation, suggested by D. Cerveau, may be found in \cite{Can-RV-S}:
\begin{definition}
\label{def:foldicr} Let $\mathcal F$ be a codimension one holomorphic foliation on a non-singular complex analytic variety $M$. We say that $\mathcal F$ is {\em dicritical at a point $p\in M$} if and only if there is a holomorphic map
$$
\phi:({\mathbb C}^2,0)\rightarrow (M,p)
$$
such that $\phi^*{\mathcal F}=(dx=0)$ and $\phi(y=0)$ is invariant for $\mathcal F$.
We say that $\mathcal F$ is {\em dicritical} if there is a point $p$ such that it is dicritical at $p$. When $M$ is a germ $(M,K)$ over a compact set $K\subset M$, we ask the condition just for the points in $K$.
\end{definition}

As in the case of $\mathbb C$-divisors, we adopt the notation
$$
\pi:(M',{\mathcal F}')\rightarrow (M,{\mathcal F})
$$
to indicate a morphism $\pi:M'\rightarrow M$, a foliation $\mathcal F$ on $M$ and the transform ${\mathcal F}'=\pi^*{\mathcal F}$. When $\pi$ is a blowing-up with non-singular center $Y$, we say that $\pi$ is {\em $\mathcal F$-admissible} if the center $Y$ is invariant for $\mathcal F$, we say that $\pi$ is a {\em dicritical blowing-up} if the exceptional divisor $\pi^{-1}(Y)$ is not invariant for ${\mathcal F}'$ and it is {\em non-dicritical} when the exceptional divisor is invariant for ${\mathcal F}'$.
\begin{proposition}
\label{prop:dicriticidaddeunaexplosion}
Let $\mathcal F$ be a codimension one singular foliation on $({\mathbb C}^n,0)$ and assume that $Y$ is a non-singular invariant subvariety of $({\mathbb C}^n,0)$. If the blowing-up
$$
\pi:((M,\pi^{-1}(0)),{\mathcal F}')\rightarrow (({\mathbb C}^n,0),{\mathcal F})
$$
centered at  $Y$ is a dicritical blowing-up, then $\mathcal F$ is a dicritical foliation.
\end{proposition}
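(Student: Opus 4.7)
The plan is to construct the test map $\phi$ as a composition $\phi=\pi\circ\phi'$, where $\phi':({\mathbb C}^2,0)\rightarrow (M,p')$ is built at a well-chosen point $p'$ of the exceptional divisor $E=\pi^{-1}(Y)$. The underlying idea is that, since $\pi$ is dicritical, the divisor $E$ is not invariant for ${\mathcal F}'$, so generically the leaves of ${\mathcal F}'$ cut $E$ transversely. A small bi-disc parametrising one transverse leaf and one direction inside $E$ will then project, via $\pi$, to a surface germ in $({\mathbb C}^n,0)$ which is foliated by $({\mathcal F}'$-)leaves and whose distinguished axis lands in $Y$.

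First, I would locate $p'\in E$ with good properties. The singular set $\operatorname{Sing}({\mathcal F}')$ is analytic of codimension at least two, so its trace on $E$ is a proper analytic subset of $E$. The tangency locus $\operatorname{Tang}({\mathcal F}',E)\subset E$ (the set where the leaf through a non-singular point of ${\mathcal F}'$ is tangent to $E$) is also analytic; since $E$ is \emph{not} invariant for ${\mathcal F}'$, this tangency locus is a proper analytic subset of $E$. Therefore one may choose $p'\in E\setminus(\operatorname{Sing}({\mathcal F}')\cup \operatorname{Tang}({\mathcal F}',E))$. At such a $p'$, the foliation ${\mathcal F}'$ admits a local holomorphic first integral $f$ with $df(p')\ne 0$, the divisor $E$ has a reduced local equation $g=0$ with $dg(p')\ne 0$, and the transversality condition forces $df(p')$ and $dg(p')$ to be linearly independent.

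Next, I would choose local coordinates $(z_1,z_2,\ldots,z_n)$ at $p'$ in which $z_1=f$ and $z_2=g$, and define
\[
\phi':({\mathbb C}^2,0)\rightarrow (M,p'),\qquad \phi'(x,y)=(x,y,0,\ldots,0).
\]
Setting $\phi=\pi\circ\phi'$ one obtains
\[
\phi^*{\mathcal F}=(\phi')^*(\pi^*{\mathcal F})=(\phi')^*{\mathcal F}'=(\phi')^*(df=0)=(dx=0).
\]
Moreover $\phi'(y=0)\subset (z_2=0)=E$, so
\[
\phi(y=0)=\pi(\phi'(y=0))\subset \pi(E)=Y.
\]
Since $Y$ is invariant for ${\mathcal F}$, any defining $1$-form of ${\mathcal F}$ restricts to zero on $Y$ and therefore on the curve $\phi(y=0)$. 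Hence $\phi$ witnesses the dicriticalness of ${\mathcal F}$ at $p=\pi(p')$, after identifying the germ $({\mathbb C}^n,0)$ with $({\mathbb C}^n,p)$ via a translation, or by noting directly that $Y\subset ({\mathbb C}^n,0)$ passes through $0$ and the whole construction extends to a neighbourhood of the compact $\pi^{-1}(0)$.

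The only delicate point is the existence of the good point $p'$: it rests on the genericity of transversality of ${\mathcal F}'$ and $E$, which in turn rests on the non-invariance of $E$ (that is, the very definition of a dicritical blowing-up). Once that is granted, the construction of $\phi'$ and the verification of the two defining properties of dicriticalness are a straightforward coordinate computation using Remark \ref{rk:transversemaps} applied to the composition $\pi\circ\phi'$.
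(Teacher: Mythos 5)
Your strategy --- pick a generic point $p'$ of $E$ where $\mathcal F'$ is regular and transverse to $E$, straighten the foliation and the divisor simultaneously, and push the obvious bidisc down by $\pi$ --- is a genuinely different route from the paper's: the paper works at an \emph{arbitrary} point $p\in E$, uses only that some smooth curve $\Gamma\subset E$ through $p$ is non-invariant (which holds at every point of $E$ because the restriction to the irreducible divisor $E$ of a defining $1$-form of $\mathcal F'$ is not identically zero), and then invokes the two-dimensional reduction of singularities of $\psi^*{\mathcal F}'$ along $\Gamma$ to reach the normal form $(dx=0)$. Most of your individual steps are sound: $\operatorname{Tang}({\mathcal F}',E)$ is a proper analytic subset of $E$ precisely because $E$ is not invariant, transversality yields the coordinates $(f,g,\ldots)$, and the identity $\phi^*{\mathcal F}=(\phi')^*(\pi^*{\mathcal F})$ does hold here because the image of $\phi'$ is not contained in $E$ --- though the relevant justification is Remark \ref{rk:pullbackdicriticao}, not Remark \ref{rk:transversemaps}, which concerns $\mathbb C$-divisors.

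The gap is the location of the base point. By Definition \ref{def:foldicr}, dicriticalness of the germ $(({\mathbb C}^n,0),{\mathcal F})$ must be witnessed at a point of the germification set $K=\{0\}$, i.e.\ by a map with $\phi(0)=0$; your map is centered at $\pi(p')\in Y$, and nothing guarantees that $p'$ can be chosen in the fiber $\pi^{-1}(0)$. When $\dim Y\geq 1$, the fiber $\pi^{-1}(0)$ is a proper subvariety of $E$ and may be entirely contained in $\operatorname{Sing}({\mathcal F}')\cup\operatorname{Tang}({\mathcal F}',E)$, so your genericity argument only places $p'$ over some nearby $q\in Y$ with $q\ne 0$, and what you obtain is dicriticalness of $\mathcal F$ at $q$, not at $0$; neither of your proposed fixes (translating the germ, or ``extending to a neighbourhood of $\pi^{-1}(0)$'') repairs this. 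This is exactly what the paper's extra desingularization step buys: it works at every point of $E$, in particular at one lying over the origin, replacing your transversality hypothesis by Seidenberg's theorem. As written, your argument is complete only in the case $Y=\{0\}$, where $\pi^{-1}(0)=E$ and the generic point can be taken in the fiber over the origin.
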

\begin{proof} Choose a point $p\in E=\pi^{-1}(Y)$ and consider local coordinates
$x_1,x_2,\ldots,x_n$
at $p$ such that $E=(x_1=0)$ and $x_1=x_2=\ldots=x_{n-1}=0$ is not invariant for ${\mathcal F}'$. This is possible, since not all the non-singular branches through $p$ contained in $E$ are invariant for ${\mathcal F}'$ (this should imply that $E$ itself is invariant). Now, let
$
\psi:
({\mathbb C}^2,0)\rightarrow (M,p)\hookrightarrow (M,\pi^{-1}(0))
$
be the map given by
$$
 x_1 \circ \psi=v,\;  x_n \circ \psi=u,\;  x_i \circ \psi=0,\, i=2,3,\ldots,n-1,
$$
where $u,v$ are local coordinates in $({\mathbb C}^2,0)$. We know that  $\Gamma=(v=0)$ is not invariant for $\psi^*{\mathcal F}'$. Let $\sigma: ({\mathbb C}^2,0)\rightarrow ({\mathbb C}^2,0)$ be the composition of a sequence of local blowing-ups following the infinitely near points of $\Gamma$ such that the strict transform of $\Gamma$ is $y=0$ and $\sigma^*(\psi^*{\mathcal F}')$ is the foliation $dx=0$. This is possible, since we do the reduction of singularities both of $\Gamma$ and $\psi^*{\mathcal F}'$. We end by considering $\phi=\pi \circ \psi\circ\sigma$, where $\phi^*({\mathcal F})=(dx=0)$ and $\phi(y=0)\subset Y$ is invariant.
\end{proof}

\begin{remark} When $M$ has dimension two, we have that $\mathcal F$ is dicritical at $p$ if and only if there are infinitely many germs of invariant branches of $\mathcal F$ at $p$ and this is also equivalent to say that we can find a sequence of blowing-ups ended by a dicritical one. This property is the classical definition of dicritical foliation in dimension two. Nevertheless, the direct generalization to higher dimension is not evident, as Jouanolou's example \cite{Jou} show: a germ of foliation $\mathcal F$ in $({\mathbb C}^3,0)$ without invariant surface, but such that the blowing-up of the origin is dicritical. See \cite{Can-C, Can-C-D} for more details.
\end{remark}

\begin{proposition}
 \label{prop:estabilidddicriticidad}
 Let $\pi:(M',{\mathcal F}')\rightarrow(M,{\mathcal F})$ be an admissible non-dicritical  blowing-up. Then ${\mathcal F}$ is a dicritical foliation if and only if ${\mathcal F}'$ is so.
\end{proposition}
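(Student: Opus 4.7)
The plan is to mirror the structure of the proof of Proposition \ref{pro:divdicrunblowinup}, using Definition \ref{def:foldicr} in place of the $\mathbb{C}$-divisor criterion and invoking the two-dimensional reduction-of-singularities argument already employed in Proposition \ref{prop:dicriticidaddeunaexplosion}.

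For the implication ``$\mathcal F'$ dicritical $\Rightarrow$ $\mathcal F$ dicritical'', start with a witness $\phi':(\mathbb{C}^2,0)\to(M',p')$, $p'\in\pi^{-1}(0)$, satisfying $(\phi')^*\mathcal F'=(dx=0)$ and with $\phi'(y=0)$ invariant for $\mathcal F'$. Set $\phi=\pi\circ\phi'$. By functoriality of pullback,
\[
\phi^*\mathcal F=(\phi')^*(\pi^*\mathcal F)=(\phi')^*\mathcal F'=(dx=0).
\]
The image $\phi(y=0)=\pi(\phi'(y=0))$ is $\mathcal F$-invariant: if $\phi'(y=0)\subset E$, then its image is contained in the $\mathcal F$-invariant center $Y$ by admissibility of $\pi$; otherwise $\pi$ is generically a biholomorphism on the invariant curve $\phi'(y=0)$, so the image is an $\mathcal F$-invariant analytic set.

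For the converse, let $\phi:(\mathbb{C}^2,0)\to(\mathbb{C}^n,0)$ witness dicriticalness of $\mathcal F$. Apply Proposition \ref{prop:appdos} to obtain a composition of blowing-ups $\sigma:(N,\sigma^{-1}(0))\to(\mathbb{C}^2,0)$ and a morphism $\psi:(N,\sigma^{-1}(0))\to(M',\pi^{-1}(0))$ with $\pi\circ\psi=\phi\circ\sigma$. Then
\[
\psi^*\mathcal F'=\sigma^*(\phi^*\mathcal F)=\sigma^*(dx=0),
\]
a foliation on $N$ admitting $x\circ\sigma$ as a holomorphic first integral. Let $\Gamma\subset N$ be the strict transform of $y=0$ by $\sigma$. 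Because $\pi(\psi(\Gamma))=\phi(y=0)$ is $\mathcal F$-invariant, because $\pi$ being non-dicritical makes $E$ $\mathcal F'$-invariant, and because strict transforms by $\pi$ of $\mathcal F$-invariant hypersurfaces are $\mathcal F'$-invariant, we conclude that $\psi(\Gamma)\subset\pi^{-1}(\phi(y=0))$ is $\mathcal F'$-invariant.

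To finish, I would select a point $q\in\Gamma$ with $\psi(q)\in\pi^{-1}(0)$ together with local coordinates $(\tilde x,\tilde y)$ at $q$ such that $\Gamma=(\tilde y=0)$ and $\sigma^*(dx=0)=(d\tilde x=0)$; the germ of $\psi$ at $q$ then witnesses dicriticalness of $\mathcal F'$ at $\psi(q)$. The main obstacle is the production of such a point $q$: at the intersection points $\Gamma\cap\sigma^{-1}(0)$, which are the only candidates guaranteeing $\psi(q)\in\pi^{-1}(0)$, the pullback $\sigma^*(dx=0)$ may be singular. This is resolved by performing additional blowing-ups in $N$ following the infinitely near points of $\Gamma\cap\sigma^{-1}(0)$ in order to simultaneously reduce the singularities of $\sigma^*(dx=0)$—which is possible thanks to the existence of the holomorphic first integral $x\circ\sigma$—and to place the strict transform of $y=0$ along a coordinate axis. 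This is exactly the two-dimensional technique already used in the last paragraph of the proof of Proposition \ref{prop:dicriticidaddeunaexplosion}.
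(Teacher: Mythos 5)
Your proof is correct and follows essentially the same route as the paper: composition with $\pi$ (using the non-dicriticalness of $\pi$ to get functoriality of the pullbacks) for one implication, and the lift through Proposition \ref{prop:appdos} followed by a normalization at the strict transform of $y=0$ for the converse. The additional blowing-ups you perform at the end are a harmless, slightly more cautious way of justifying the paper's terse claim that coordinates $u,v$ with $\Gamma'=(v=0)$ and $\sigma^*(dx=0)=(du=0)$ exist at the point where the strict transform of $y=0$ meets $\sigma^{-1}(0)$.
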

\begin{proof} Assume that ${\mathcal F}'$ is dicritical. Take a holomorphic map $\phi':({\mathbb C}^2,0)\rightarrow M'$ such that ${\phi'}^*{\mathcal F}'=(dx=0)$ and ${\phi'(y=0)}\subset M'$ is invariant. Put $\phi=\pi \circ \phi'$. Since $\pi$ is non dicritical, we have that
$$
\phi^*{\mathcal F}={\phi'}^*(\pi^*{\mathcal F})
$$
(the non-dicriticalness of $\pi$ is necessary here, see the Remark \ref{rk:pullbackdicriticao}
below) and moreover $\phi(y=0)=\pi(\phi'(y=0))$ is invariant. Hence ${\mathcal F}$ is also a dicritical foliation.

Conversely, let us assume that ${\mathcal F}$ is dicritical and take a holomorphic map
$$
\phi:({\mathbb C}^2, 0)\rightarrow M
 $$
 such that $\phi^*{\mathcal F}=(dx=0)$ and $\phi(y=0)$ is invariant for ${\mathcal  F}$. In view of Proposition \ref{prop:appdos} in the Appendix I, there is a commutative diagram of morphisms
  $$
 \begin{array}{ccc}
 ({\mathbb C}^2,0)&\stackrel{\sigma}{\longleftarrow}&N\\
 \phi\downarrow\;\; &&\;\;\downarrow\psi\\
 M&\stackrel{\pi}{\longleftarrow}&M'
 \end{array},
 $$
where $\sigma$ is the composition of a finite sequence of blowing-ups. Let $(\Gamma',p')$ be the strict transform of $(y=0)$ by $\sigma$. We know that there are local coordinates $u,v$ at $p'$ such that $\Gamma'=(v=0)$ and $\sigma^*{(dx=0)}=(du=0)$. Note that
$$
(\phi\circ\sigma)^*{\mathcal F}=(\pi\circ\psi)^*{\mathcal F}.
$$
Since $\pi$ is non-dicritical, we have that $(\pi\circ\psi)^*{\mathcal F}=\psi^*{\mathcal F}'$. Moreover, since $\sigma$ is a sequence of blowing-ups centered at points, we have that
$$
(\phi\circ\sigma)^*{\mathcal F}=\sigma^*(\phi^*{\mathcal F})=(du=0).
$$
 Hence $\psi^*{\mathcal F}'=(du=0)$ and $\mathcal F'$ is a dicritical foliation.
\end{proof}
\begin{remark}
\label{rk:pullbackdicriticao}
Let $\mathcal F$ be a codimension one singular foliation of $M$ and consider two morphisms
$\phi:M'\rightarrow M$ and $\psi:M''\rightarrow M'$. The foliation $\phi^*{\mathcal F}$ is
defined locally by the pullback $\phi^*\omega$ of a differential $1$-form $\omega$ defining $\mathcal F$. The pull-back foliation $\phi^*{\mathcal F}$ {\em exists}, or {\em is defined}, if and only if $\phi^*\omega\ne 0$, when $\omega$ is chosen to be without common factors in its coefficients. When $(\phi\circ\psi)^*{\mathcal F}$, $\phi^*{\mathcal F}$ and $\psi^*(\phi^*{\mathcal F})$ exist, we have that
$$
(\phi\circ\psi)^*{\mathcal F}=\psi^*(\phi^*{\mathcal F}),
$$
but it is possible for  $\phi^*{\mathcal F}$ and $\psi^*(\phi^*{\mathcal F})$ to be well defined, whereas $(\phi\circ\psi)^*{\mathcal F}$ does not exist. An important case of this situation is the immersion $\psi: M''\rightarrow M'$ of the exceptional divisor of a dicritical blowing-up $\phi:M'\rightarrow M$, see \cite{Can-L-M}.

Anyway, when $\phi$ is a non-dicritical blowing-up, and hence the exceptional divisor is invariant for $\phi^*{\mathcal F}$, we have that $\phi^*{\mathcal F}$ exists (this is always true because a blowing-up is an isomorphism in a dense open set) and moreover $\psi^*(\phi^*{\mathcal F})$ is defined if and only if $(\phi\circ\psi)^*{\mathcal F}$ is defined; hence we have the equality.

On the other hand, when $\psi$ is a blowing-up or a sequence of blowing-ups, we also have that $\phi^*{\mathcal F}$ is defined if and only if $(\phi\circ\psi)^*{\mathcal F}$ is defined and, if this is the case, we also have that
$
(\phi\circ\psi)^*{\mathcal F}=\psi^*(\phi^*{\mathcal F})
$.

\end{remark}

\subsection{Non-dicritical Logarithmic Foliations} In this Subsection we relate the non-dicriticalness of a ${\mathcal D}$-logarithmic foliation with the same property for the $\mathbb C$-divisor $\mathcal D$.

\begin{lemma}
\label{lema:unaexplosion}
 Let $\mathcal F$ be a $\mathcal D$-logarithmic foliation. Assume that
$
\pi:(M',{\mathcal D}')\rightarrow (M,{\mathcal D})
$
is a non-dicritical ${\mathcal D}$-admissible blowing-up. Then $
\pi:(M',{\mathcal F}')\rightarrow (M,{\mathcal F})
$
is an admissible non-dicritical blowing-up
and ${\mathcal F}'$ is ${\mathcal D}'$-logarithmic.
\end{lemma}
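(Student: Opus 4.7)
The plan is to work locally at an arbitrary point $p\in Y$, where $Y$ is the center of $\pi$, and compute $\pi^{*}\eta$ for a $\mathcal{D}$-logarithmic generator $\eta=\sum_{i=1}^{s}\lambda_{i}\,df_{i}/f_{i}$ of $\mathcal{F}$, with $H_{i}=(f_{i}=0)$ the irreducible components of $\operatorname{Supp}(\mathcal{D})$ meeting a neighborhood of $p$ and $\lambda_{i}=\operatorname{Res}_{H_{i}}(\eta)$. The first step is to observe that $\pi$ is $\mathcal{F}$-admissible: since $\pi$ is $\mathcal{D}$-admissible we have $Y\subset \operatorname{Supp}(\mathcal{D})$, so $Y$ is contained in some irreducible component $H_{i}$ of the support, and by Remark \ref{rk:invarianciasoporte} every such $H_{i}$ is $\mathcal{F}$-invariant; this is what is required to blow-up.

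Next I would carry out the local pullback. Fix a point $p'\in E=\pi^{-1}(Y)$ and choose coordinates at $p'$ in which $E=(e=0)$. For each component we have
\[
f_{i}\circ\pi \;=\; e^{\nu_{i}}\,\tilde f_{i},\qquad \nu_{i}=\nu_{Y}(H_{i})\ge 0,
\]
where $\tilde f_{i}=0$ is a reduced equation of the strict transform $H_{i}'$ at $p'$ (with $\tilde f_{i}$ a unit when $H_{i}'$ misses $p'$). Hence
\[
\pi^{*}\eta \;=\;\Bigl(\sum_{i=1}^{s}\lambda_{i}\nu_{i}\Bigr)\frac{de}{e}\;+\;\sum_{i=1}^{s}\lambda_{i}\frac{d\tilde f_{i}}{\tilde f_{i}},
\]
which is closed and has at worst simple poles, hence is logarithmic. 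Writing $\mu=\sum_{i}\lambda_{i}\nu_{i}$, this is precisely the coefficient of $E$ in the transform $\mathcal{D}'=\pi^{*}\mathcal{D}$ as given in the formula for the blowing-up of a $\mathbb C$-divisor. The non-dicriticalness hypothesis on $\pi$ as a $\mathcal{D}$-admissible blowing-up is exactly the statement $\mu\ne 0$, so $E\subset\operatorname{Supp}(\mathcal{D}')$, and by the residue-unicity part of the preceding proposition the coefficients appearing in $\pi^{*}\eta$ are the residues along the corresponding components. Thus $\operatorname{Div}(\pi^{*}\eta)=\mathcal{D}'$ near $p'$, which exhibits $\mathcal{F}'=\pi^{*}\mathcal{F}$ as $\mathcal{D}'$-logarithmic at $p'$. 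Since $p'$ was arbitrary and the residues patch up, $\mathcal{F}'$ is globally $\mathcal{D}'$-logarithmic.

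It remains to see that $\pi$ is non-dicritical as an $\mathcal{F}$-admissible blowing-up. But $E\subset\operatorname{Supp}(\mathcal{D}')$ and $\mathcal{F}'$ is $\mathcal{D}'$-logarithmic, so Remark \ref{rk:invarianciasoporte} applied to $\mathcal{F}'$ shows that $E$ is $\mathcal{F}'$-invariant; equivalently, $\pi$ is a non-dicritical blowing-up for $\mathcal{F}$. The only real subtlety is the bookkeeping in the pullback: one needs that the holomorphic factor $\tilde f_{i}$ is genuinely reduced on $H_{i}'$, which follows because $f_{i}$ is reduced and blowing-up with smooth center preserves this on the strict transform; and that the coefficient $\mu$ is truly the residue of $\pi^{*}\eta$ along $E$, which is immediate from the explicit expression above. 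This is the main (and essentially only) point where something could go wrong, but both facts are standard once the local setup is fixed.
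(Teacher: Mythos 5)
Your argument is correct and follows essentially the same route as the paper: admissibility from the invariance of $\operatorname{Supp}(\mathcal D)$, the local computation $f_i\circ\pi=e^{\nu_i}\tilde f_i$ giving $\pi^*\eta=\mu\,de/e+\sum_i\lambda_i\,d\tilde f_i/\tilde f_i$ with $\mu=\sum_i\lambda_i\nu_i\ne0$, and the conclusion that $E$ is invariant. The only (harmless) difference is at the end: you identify $\operatorname{Div}(\pi^*\eta)=\mathcal D'$ directly via residues, whereas the paper additionally absorbs the unit terms $\lambda_i\,d\tilde f_i/\tilde f_i$ (for components missing the point) into a modified reduced equation $h'=Uh$ of $E$ so as to display $\pi^*\eta$ in the normal form $\mu\,dh'/h'+\sum\lambda_i\,df_i'/f_i'$.
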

\begin{proof} Let $Y$ be the center of $\pi$. We know that $Y\subset\operatorname{Supp}({\mathcal D})$ and hence $Y$ is ${\mathcal F}$-invariant, since
the support is $\mathcal F$-invariant, in view of Remark \ref{rk:invarianciasoporte}. Then $\pi$ is $\mathcal F$-admissible.
Put ${\mathcal D}=\sum_{i=1}^s\lambda_i H_i$ and assume that $\mathcal F$ is generated by
$$
\eta=\sum_{i=1}^s\lambda_i\frac{df_i}{f_i},
$$
 where $f_i=0$ is a reduced local equation of $H_i$ for $i=1,2,\ldots, s$. Then ${\mathcal F}'$ is generated by $\pi^*\eta$, where
 $$
 \pi^*\eta= \sum_{i=1}^s\lambda_i\frac{d(f_i\circ\pi)}{f_i\circ \pi}.
 $$
 Moreover, we have  that
 $$
 {\mathcal D}'=\pi^{*}{\mathcal D}=\sum_{i=1}^s\lambda_i \pi^*(1\cdot H_i)=\mu E+\sum_{i=1}^s\lambda_i H'_i,
 $$
 where $E=\pi^{-1}(Y)$, $\mu=\sum_{i=1}^s\lambda_i\nu_i$, with $\nu_i=\nu_Y(H_i)$ and $H'_i$ stands for the strict transform of $H_i$. By hypothesis, we have that $\mu\ne 0$.
 We can do the necessary verifications locally at the points in $E$. Take one such $q\in E$ and let $h=0$ be a local reduced equation of $E$ at $q$. We have that
 $$
 f_i\circ \pi=h^{\nu_i}f'_i,
 $$
 where $f'_i=0$ is a local reduced equation for the strict transform $H'_i$ of $H_i$ at $q$. Let us show that $\pi^*\eta$ can be written as
 \begin{equation}
 \label{eq:transformado}
 \pi^*\eta=\mu\frac{dh'}{h'}+\sum_{q\in H'_i}\lambda_{i}\frac{df'_i}{f'_i},
 \end{equation}
 where $h'=0$ is a reduced local equation of $\pi^{-1}(Y)$ at $q$.
 Recalling that $\mu\ne 0$, we see that $\pi^{-1}(Y)$ is invariant for ${\mathcal F}'$ and hence $\pi:( M', {\mathcal F}')\rightarrow (M,{\mathcal F})$ is non dicritical; moreover Equation \ref{eq:transformado} also shows that ${\mathcal F}'$ is ${\mathcal D}$-logarithmic.

 It remains to find $h'$ satisfying Equation \ref{eq:transformado}. Note that $f'_i$ is a unit if and only if $q\notin H'_i$.  In this situation,  there is a unit $U$ such that
 $$
 \mu\frac{dU}{U}= \sum_{q\notin H'_i}\lambda_{i}\frac{df'_i}{f'_i}.
 $$
 Now, it is enough to take $h'=Uh$.
\end{proof}

\begin{remark}  It is possible to have a dicritical ${\mathcal D}$-admissible blowing-up
$$
\pi:(M',{\mathcal D}')\rightarrow (M,{\mathcal D})
$$
and a ${\mathcal D}$-logarithmic foliation ${\mathcal F}$ such that $\pi$ induces a non-dicritical admissible blowing-up $
\pi:(M',{\mathcal F}')\rightarrow (M,{\mathcal F})
$. The following example may be found in  \cite{Can-Co}: take the foliation $\mathcal F$ on $({\mathbb C}^2,0)$ given by $\eta=0$ where
$$
\eta=\frac{d(y-x^2)}{y-x^2}-\frac{d(y+x^2)}{y+x^2}.
$$
Then $\mathcal F$ is $\mathcal D$-logarithmic for ${\mathcal D}=(y-x^2=0)-(y+x^2=0)$.
Note that $\mathcal F$ is also ${\mathcal D}_1$-logarithmic, where ${\mathcal D}_1=(y=0)-2(x=0)$.  The  first blowing-up $\pi$ is ${\mathcal D}$-dicritical, but the exceptional divisor is invariant for the transformed foliation. Anyway, we know that in ambient dimension two, this situation implies that ${\mathcal F}$ is actually a dicritical foliation, although the blowing-up $\pi$ could be non dicritical. In general, it is an open question to know if given a logarithmic foliation $\mathcal F$ there is a $\mathbb C$-divisor that ``faithfully'' represents the dicriticalness of the foliation, for instance in terms of blowing-ups. In this paper, we concentrate ourselves in the non-dicritical case.
\end{remark}

\begin{proposition}
 \label{pro:hipersuperficiesinvariantes}
 Let $\mathcal F$ be a $\mathcal D$-logarithmic foliation, where $\mathcal D$ is non dicritical and take a point $p\in \operatorname{Supp}({\mathcal D})$.  The only irreducible germs of hypersurface at $p$ invariant for $\mathcal F$ are the irreducible components of the germ at $p$ of support of $\mathcal D$.
\end{proposition}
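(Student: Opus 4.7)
The plan is to argue by contradiction: suppose $H^{*}$ is an irreducible germ of hypersurface at $p$ invariant for $\mathcal F$ but distinct from every irreducible component of the germ $\operatorname{Supp}(\mathcal D)_{p}$, and derive that $\mathcal D$ must be dicritical. The route is to pass to a normal crossings model of $\operatorname{Supp}(\mathcal D)$, localize at a point of the strict transform of $H^{*}$ sitting in the fiber over $p$, and extract from the invariance of that strict transform a non-negative resonance of the pulled back divisor. Corollary \ref{cor:dicriticonormalcrossings} will then give dicriticalness of that divisor, and Corollary \ref{dicriticidadexplosionnodicritica} will carry it back to $\mathcal D$, contradicting the hypothesis.

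The first step is a Hironaka reduction of $\operatorname{Supp}(\mathcal D)$ by admissible blowing-ups. Since $\mathcal D$ is non-dicritical, Proposition \ref{pro:divdicritico} forces every blowing-up in the process to be non-dicritical, producing a morphism $\pi\colon(M',\mathcal D')\to(M,\mathcal D)$ composed of non-dicritical admissible blowing-ups with $\operatorname{Supp}(\mathcal D')$ of normal crossings. Iterating Lemma \ref{lema:unaexplosion}, $\mathcal F'=\pi^{*}\mathcal F$ is $\mathcal D'$-logarithmic and each exceptional divisor created along $\pi$ lies in $\operatorname{Supp}(\mathcal D')$, so $\pi^{-1}(p)\subset\operatorname{Supp}(\mathcal D')$. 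Let $\tilde H^{*}$ be the strict transform of $H^{*}$. The map $\pi|_{\tilde H^{*}}\colon\tilde H^{*}\to H^{*}$ is proper and surjective, and together with $p\in H^{*}$ it provides a point $p'\in\tilde H^{*}\cap\pi^{-1}(p)$, which lies in $\operatorname{Supp}(\mathcal D')$ and where the support has normal crossings. At least one irreducible branch of the germ of $\tilde H^{*}$ at $p'$ must be distinct from every local component of $\operatorname{Supp}(\mathcal D')$: otherwise $\tilde H^{*}$ would be locally contained in a union of smooth hypersurfaces of the same dimension, forcing it by irreducibility and analytic continuation to globally coincide with one of them, in contradiction with $H^{*}\neq H_j$ for every $j$ and with $\tilde H^{*}$ not being an exceptional divisor.

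At $p'$ I choose coordinates $(u_1,\dots,u_n)$ with $\operatorname{Supp}(\mathcal D')_{p'}=\bigcup_{i=1}^{r}\{u_i=0\}$, so that $\mathcal F'$ is locally generated by $\eta'=\sum_{i=1}^{r}\mu_i\,du_i/u_i$ with $\mu_i\in\mathbb C^{*}$. I then pick an irreducible analytic arc $\gamma\colon(\mathbb C,0)\to(\tilde H^{*},p')$ along such a branch, whose image is contained in no $\{u_i=0\}$. Writing $u_i\circ\gamma=t^{m_i}v_i(t)$ with $v_i(0)\neq 0$ gives $1\le m_i<\infty$ for every $i=1,\ldots,r$, because $p'\in\{u_i=0\}$. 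Invariance of $\tilde H^{*}$ for $\mathcal F'$ amounts to $\gamma^{*}\omega'=0$, where $\omega'=\bigl(\prod_{i=1}^{r}u_i\bigr)\eta'$ is a holomorphic generator of $\mathcal F'$, and a direct computation gives
\[
\gamma^{*}\omega'= V(t)\,t^{M-1}\Bigl[\sum_{i=1}^{r}m_i\mu_i + t\sum_{i=1}^{r}\mu_i\,v_i'(t)/v_i(t)\Bigr]\,dt,
\]
with $V=\prod_{i=1}^{r}v_i$ a unit and $M=\sum_{i=1}^{r}m_i$. Vanishing forces $\sum_{i=1}^{r}m_i\mu_i=0$ with $m_i\ge 1$, a non-negative resonance of $\mathcal D'$ at the normal crossings point $p'$. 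Corollary \ref{cor:dicriticonormalcrossings} then gives that $\mathcal D'$ is dicritical, and Corollary \ref{dicriticidadexplosionnodicritica} lifts this to dicriticalness of $\mathcal D$, the desired contradiction.

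The main obstacle I anticipate is securing the point $p'$ with all the required properties: one needs $\tilde H^{*}\cap\pi^{-1}(p)\neq\emptyset$ (properness of $\pi|_{\tilde H^{*}}$), $\pi^{-1}(p)\subset\operatorname{Supp}(\mathcal D')$ (inductive use of the non-dicriticality built into Lemma \ref{lema:unaexplosion}), and also that the germ of $\tilde H^{*}$ at $p'$ still carries an irreducible branch distinct from every local component of the normal crossings support, which is the content of the analytic continuation argument sketched above. Once $p'$ is secured, the rest is the single-line resonance computation together with a direct appeal to the two corollaries.
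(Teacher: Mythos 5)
Your proof is correct and follows essentially the same route as the paper's: desingularize $\operatorname{Supp}(\mathcal D)$ to normal crossings, localize at a point of the strict transform of the extra invariant hypersurface over $p$, select an arc in it not contained in the support, and read off a non-negative resonance from the vanishing of the pulled-back form, contradicting non-dicriticalness. The only cosmetic differences are that you make the residue computation explicit with $\omega'=(\prod_i u_i)\eta'$ and route the contradiction through Corollaries \ref{cor:dicriticonormalcrossings} and \ref{dicriticidadexplosionnodicritica} rather than invoking directly that non-dicriticalness excludes resonances.
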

\begin{proof} (See also \cite{Cer-M}). We can assume that $M=({\mathbb C}^n,0)$,  $0\ne{\mathcal D}=\sum_{i=1}^s\lambda_iH_i$ and $\mathcal F$ is generated by
$$
\eta=\sum_{i=1}^s\lambda_i\frac{df_i}{f_i},
$$
where $f_i=0$ are reduced equations of $H_i$, for $i=1,2,\ldots,s$. By Remark \ref{rk:invarianciasoporte}, we already know that each $H_i$ is invariant for $\mathcal F$.
Let us suppose now that $S$, given by $g=0$, is another germ of irreducible hypersurface invariant for $\mathcal F$. Up to make a desingularization of the support of $\mathcal D$ and by choosing a point where the strict transform of $S$ intersects the exceptional divisor, we restrict ourselves to the case when the $H_i$ are coordinate hyperplanes.  There is at least one of the components of the support, because of the non dicriticalness of ${\mathcal D}$, that makes us to add each time we blow-up the exceptional divisor to the support of $\mathcal D$. Then, we can assume that
$$
\eta=\sum_{i=1}^s\lambda_i\frac{dx_i}{x_i}.
$$
The non-dicriticalness of ${\mathcal D}$ implies in this situation that $\sum_{i=1}^sm_i\lambda_i\ne 0$ for any $0\ne \mathbf{m}\in {\mathbb Z}_{\geq 0}^s$, in view of Lemma \ref{lema:normalcrossings}. By the curve selection lemma, there is a parameterized curve
$$
\gamma:t\mapsto (\gamma_i(t))_{i=1}^n
$$
contained in $S$ and not contained in the support $\prod_{i=1}^sx_i=0$, in particular $\gamma_i(t)\ne 0$, for any $i=1,2,\ldots,s$. Let us write
$$
\gamma_i(t)=\mu_{im_i}t^{m_i}+\mu_{i,m_i+1}t^{m_i+1}+\cdots,\quad \mu_{im_i}\ne 0, \quad i=1,2,\ldots,s.
$$
 Since $S$ is invariant, we have that $\gamma^*\eta=0$. Looking at the residue of $\gamma^*\eta$, we have that
 $$
 \sum_{i=1}^sm_i\lambda_i=0.
 $$
 This is not possible.
\end{proof}

\begin{corollary} Let $\mathcal F$ be a $\mathcal D$-logarithmic foliation, where $\mathcal D$ is non dicritical. Then ${\mathcal F}$ is also non dicritical. Moreover, if $\mathcal F$ is ${\mathcal D}'$-logarithmic, then ${\mathcal D}'$ and ${\mathcal D}$ are projectively equivalent $\mathbb C$-divisors.
\end{corollary}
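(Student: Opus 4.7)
My plan is to establish both assertions by first reducing to the case in which $\operatorname{Supp}(\mathcal D)$ has normal crossings, where Corollary \ref{cor:dicriticonormalcrossings} provides an algebraic criterion for (non-)dicriticalness. Hironaka's desingularization of $\operatorname{Supp}(\mathcal D)$ yields a morphism $\pi$ that is a composition of $\mathcal D$-admissible blowing-ups achieving this; by Corollary \ref{dicriticidadexplosionnodicritica} each of them is $\mathcal D$-non-dicritical, hence by Lemma \ref{lema:unaexplosion} also $\mathcal F$-admissible non-dicritical, and $\pi^*\mathcal F$ remains $\pi^*\mathcal D$-logarithmic; by Proposition \ref{prop:estabilidddicriticidad} the dicriticalness of $\mathcal F$ is transferred between $\mathcal F$ and $\pi^*\mathcal F$. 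Since this lifting applies equally well to a hypothetical second presentation $\mathcal F=(\eta'=0)$, by Lemma \ref{lema:unaexplosion}, I may work throughout with $\operatorname{Supp}(\mathcal D)$ having normal crossings.

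For the first assertion I would argue by contradiction: suppose $\mathcal F$ is dicritical at a point $q$ via $\phi:(\mathbb C^2,0)\to(M,q)$ with $\phi^*\mathcal F=(dx=0)$ and $\phi(y=0)$ invariant. If $q\notin\operatorname{Supp}(\mathcal D)$ then $\mathcal F$ is smooth at $q$ and a direct check rules out the existence of such a $\phi$, since no function of the form $f(x)$ can be both non-constant and constant on $y=0$. So $q\in\operatorname{Supp}(\mathcal D)$; write $\eta=\sum_{i=1}^s\lambda_i\,dx_i/x_i+dU/U$ in normal crossings coordinates at $q$ with $H_i=(x_i=0)$. The map $\phi$ must be $\mathcal D$-transverse because otherwise $\phi^*\omega=0$ for the holomorphic generator $\omega=x_1\cdots x_s\cdot\eta$. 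Then $\phi^*\eta$ is closed, logarithmic, and proportional to $dx$, which forces $\phi^*\eta=h(x)\,dx$ for some meromorphic $h$. Taking residues along $(y=0)$ on both sides of $\phi^*\eta=\sum_i\lambda_i\,d\phi_i/\phi_i$ with $\phi_i=x_i\circ\phi$ gives $\sum_i\lambda_i\,\nu_{y=0}(\phi_i)=0$. If some $\nu_{y=0}(\phi_i)>0$, this is a non-negative resonance, contradicting Corollary \ref{cor:dicriticonormalcrossings}. If all of them vanish, then $\phi(y=0)$ lies generically off $\operatorname{Supp}(\mathcal D)$, where $\mathcal F$ is smooth; invariance of $\phi(y=0)$ imposes $\gamma^*\eta\equiv 0$ for $\gamma=\phi|_{y=0}$, which coincides with $h(x)\,dx$, so $h\equiv 0$ and $\phi^*\mathcal F$ is not defined.

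For the second assertion the key ingredient is Proposition \ref{pro:hipersuperficiesinvariantes}: since $\mathcal D$ is non-dicritical, every irreducible invariant hypersurface germ at a point of $\operatorname{Supp}(\mathcal D)$ is a component of $\operatorname{Supp}(\mathcal D)$. Combined with Remark \ref{rk:invarianciasoporte}, which provides the $\mathcal F$-invariance of each component of $\operatorname{Supp}(\mathcal D')$, this yields $\operatorname{Supp}(\mathcal D')\subseteq\operatorname{Supp}(\mathcal D)$ locally at any point of $\operatorname{Supp}(\mathcal D)$. After the reduction to normal crossings, at each double intersection $H_i\cap H_j$ I restrict $\mathcal F$, $\eta$, and $\eta'$ to a generic two-dimensional transversal; the dimension-two uniqueness of divisorial models (Theorem \ref{th:existenciayunicidadendimensiondos}) then forces $\lambda'_i/\lambda_i=\lambda'_j/\lambda_j$ whenever both are nonzero, and rules out the case in which exactly one of them vanishes. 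Chaining such equalities across $\operatorname{Supp}(\mathcal D)$ produces a common scalar $\mu\in\mathbb C^*$ with $\mathcal D'=\mu\,\mathcal D$.

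I expect the most delicate step to be the second branch in Part (a): converting the geometric invariance of $\phi(y=0)$ into the analytic identity $h\equiv 0$ relies on the observation that $\phi^*\eta=h(x)\,dx$ depends only on $x$, so its restriction to $\{y=0\}$ remains $h(x)\,dx$; invariance of the image curve forces this restriction to vanish on an open set and hence identically, giving $h\equiv 0$ and the contradiction with $\phi^*\mathcal F=(dx=0)$.
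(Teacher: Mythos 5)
Your proposal is correct and follows the paper's own strategy in its skeleton: reduce to the normal-crossings case by desingularizing $\operatorname{Supp}({\mathcal D})$ (via Lemma \ref{lema:unaexplosion} and Proposition \ref{prop:estabilidddicriticidad}) and then detect dicriticalness through a non-negative resonance. Where you genuinely diverge is in how the two assertions are finished. For the first one the paper only records ``if ${\mathcal F}$ is dicritical we get a nonnegative resonance''; you supply the mechanism (residue of $\phi^*\eta=h(x)\,dx$ along $y=0$) and, usefully, treat the case where $\phi(y=0)$ is not contained in the support, which the paper's phrasing elides. (In that case one can also conclude more quickly: the invariance gives $\gamma^*\eta=0$ with $\gamma=\phi|_{y=0}$, and its residue at $t=0$ is $\sum_i\lambda_i\nu_t(\gamma_i)=0$ with every $\nu_t(\gamma_i)\geq 1$, again a non-negative resonance.) For the second assertion the paper asserts that the support equals the union of invariant hypersurfaces and that ``the coefficients are given by the residues''; you instead pass to two-dimensional transversals at the double points of the normal-crossings model and invoke Theorem \ref{th:existenciayunicidadendimensiondos} to propagate the ratio $\lambda_i'/\lambda_i$ along each connected component of the support. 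This is a more explicit route that buys a clear reason why only the projective class of the residues is intrinsic. Two small repairs: Lemma \ref{lema:unaexplosion} does not literally apply to the second presentation $\eta'$, since the blowing-ups are not known in advance to be ${\mathcal D}'$-admissible or ${\mathcal D}'$-non-dicritical; what you actually need (and what is true by a direct computation of residues) is that the pullback of a closed logarithmic generator by a blowing-up is again a closed logarithmic generator of the transformed foliation whose divisor is the pullback divisor. Also, ${\mathcal F}$ need not be \emph{smooth} off $\operatorname{Supp}({\mathcal D})$ (it only has the holomorphic first integral $U$ there), but the argument you actually run at such points — a function of $x$ alone cannot be non-constant yet constant on $y=0$ — is the correct one and does not use smoothness.
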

\begin{proof}
Let us desingularize $\operatorname{Supp}{\mathcal D}$ by means of non-dicritical admissible blowing-ups. Note that, by Lemma \ref{lema:unaexplosion}, the above blowing ups are non-dicritical for $\mathcal F$. Invoking Proposition  \ref{prop:estabilidddicriticidad}, we reduce the problem to the case when $\operatorname{Supp}({\mathcal D})$ has normal crossings. Now, if ${\mathcal F}$ is dicritical we get a nonnegative resonance in the coefficients of ${\mathcal D}$ and hence ${\mathcal D}$ should be dicritical, by Lemma \ref{lema:normalcrossings}.

We can also do the above reduction in order to prove the second part of the statement. Assume thus that $\operatorname{Supp}({\mathcal D})$ has normal crossings, the foliation is ${\mathcal D}$-logarithmic and non dicritical. Now, the support of ${\mathcal D}$ coincides with the invariant hypersurfaces of ${\mathcal F}$ by Proposition \ref{pro:hipersuperficiesinvariantes};
moreover, the coefficients (up to projective equivalence) are given by the residues. Hence ${\mathcal D}'$ is determined up to projective equivalence from ${\mathcal D}$.
\end{proof}

\begin{remark} Let $\mathcal F$ be a $\mathcal D$-logarithmic foliation. We know that if $\mathcal D$ is non-dicritical, then the foliation $\mathcal F$ is also non-dicritical. The converse is a natural question that has positive answer. That is, if $\mathcal F$ is non-dicritical, then $\mathcal D$ is also non-dicritical. This is a consequence of the theorem on existence and non-dicriticalness of the logarithmic models, that we prove in this paper.
\end{remark}

\section{Generalized Hypersurfaces and Logarithmic Forms}
We recall here some facts useful for the sequel, concerning generalized hypersurfaces and the more general case of non-dicritical codimension one singular foliations. For more details on generalized hypersurfaces the reader can look at \cite{Fer-M}. We end the section by associating logarithmic forms to the generalized hypersurfaces that are stable under blowing-ups.

We take the following definition:
\begin{definition}[\cite{Can-RV-S}]
Given a complex analytic variety $M$, a foliation $\mathcal F$ on $M$ and a point $p\in M$, we say that $\mathcal F$ is {\em complex hyperbolic} at $p$, or that $\mathcal F$ {\em has no hidden saddle-nodes at $p$},  if and only if there is no holomorphic map $\phi:({\mathbb C}^2,0)\rightarrow M$, with $\phi(0)=p$, such that $\phi^*{\mathcal F}$ is a saddle-node. We say that $\mathcal F$ is a {\em generalized hypersurface at $p$} if, in addition, it is non-dicritical at $p$. We say that $\mathcal F$ is a generalized hypersurface at $M$ when the property holds at each point of $M$.
\end{definition}

The origin of the terminology {\em generalized curve} is in the paper \cite{Cam-LN-S}, where the authors made an extensive consideration of the condition of being complex hyperbolic, in the two dimensional case. In some cases, the above name is used also for the dicritical situation. We fix the word {\em generalized hypersurface} for denoting both properties: non-dicriticalness and no hidden-saddle-nodes. Of course, in the case of ambient dimension two, the expression {\em generalized curve} also means for us to be non-dicritical and without hidden saddle-nodes.

\begin{remark} Some ``ramified'' saddle-nodes have the property of being generalized hypersurfaces. For instance, take the saddle-node given by the meromorphic $1$-form
$$
\frac{du}{u}+ u\frac{dv}{v}
$$
in dimension two. Let us consider the ramification $u=x^py^q$, $v=y$; we obtain by pull back a differential $1$-form
$$
\left(p\frac{dx}{x}+q\frac{dy}{y}\right)+ x^py^q\frac{dy}{y}
$$
that defines a generalized curve on $({\mathbb C}^2,0)$; it is an example of Martinet-Ramis resonant case \cite{Mar-R}. Note that it has no holomorphic first integral. The reader can see \cite{Can-C-D} for more details.
\end{remark}

One of the important features of generalized hypersurfaces is the following result:
\begin{proposition}[See \cite{Can,Can-M-RV,Fer-M}]
\label{prop:redsinghypgeneralizada}
Let $\mathcal F$ be a generalized hypersurface on $({\mathbb C}^n,0)$. There are only finitely many irreducible invariant hypersurfaces of $\mathcal F$, its union $S$ is non empty and any reduction of singularities of $S$ provides a reduction of singularities of $\mathcal F$. In particular, the singular locus of $\mathcal F$ is contained in $S$.
\end{proposition}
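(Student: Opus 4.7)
The plan is to combine a Hironaka embedded resolution of $S$ with two-dimensional testing that exploits the two defining hypotheses of generalized hypersurface. I would organize the argument in three stages.

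\emph{First (nonemptiness and finiteness of $S$).} I would cut by a generic two-dimensional plane $\phi:\Pi\hookrightarrow(\mathbb C^n,0)$ through $0$, transverse enough that $\phi^*\mathcal F$ is defined and inherits both hypotheses: non-dicriticalness and absence of hidden saddle-nodes are both defined via pullback by arbitrary 2-dim maps, hence pass to $\phi^*\mathcal F$. So $\phi^*\mathcal F$ is a generalized curve on $(\mathbb C^2,0)$; by Camacho--Lins Neto--Sad it has at least one invariant branch, and the classical local theory of \cite{Cam-LN-S} provides only finitely many. Each irreducible invariant hypersurface of $\mathcal F$ cuts $\Pi$ in a nonempty union of invariant branches of $\phi^*\mathcal F$, with distinct hypersurfaces yielding disjoint cuts for generic $\Pi$. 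This delivers finiteness and nonemptiness of $S$.

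\emph{Second (reducing $S$ reduces $\mathcal F$).} Given a Hironaka reduction of singularities $\pi=\pi_N\circ\cdots\circ\pi_1:M'\to(\mathbb C^n,0)$ of $S$, each successive center $Y_k$ lies in the current total transform of $S$, a union of hypersurfaces invariant for the current transform $\mathcal F_k$; hence $Y_k$ is $\mathcal F_k$-invariant and $\pi_k$ is $\mathcal F_k$-admissible. The non-dicriticalness of $\mathcal F$ propagates to each $\mathcal F_k$ by the pullback definition (any 2-dim test map for $\mathcal F_k$ composed with $\pi_1\circ\cdots\circ\pi_k$ becomes a test map for $\mathcal F$), along the lines of Proposition \ref{prop:estabilidddicriticidad}, so each exceptional divisor remains invariant. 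The no-hidden-saddle-node property propagates for the same reason.

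\emph{Third (simple form at each point of $M'$).} At any $p'\in M'$, the divisor $\pi^{-1}(S)$ has normal crossings with components $\{x_i=0\}_{i=1}^r$ in local coordinates at $p'$, all $\mathcal F'$-invariant. Applying the first stage to $\mathcal F'$ at the germ $(M',p')$ yields that these are \emph{all} the irreducible invariant germs of hypersurface through $p'$. A local generator of $\mathcal F'$ then factors as $x_1\cdots x_r\cdot\eta$ with $\eta$ logarithmic (cf.\ Remark \ref{rk:logaritmicgenerator}), so
$$\eta=\sum_{i=1}^r\bigl(\lambda_i+\epsilon_i(x)\bigr)\frac{dx_i}{x_i},\quad \epsilon_i(0)=0.$$
Cutting by suitable two-dimensional slices transverse to the codimension-two strata and invoking the no-hidden-saddle-node hypothesis forces the linear residues $\lambda_i$ to be nonzero and to avoid nonnegative rational resonances: a vanishing $\lambda_i$ or a nonnegative resonance would produce, on a 2-dim slice, either a dicritical component or a saddle-node (cf.\ Corollary \ref{cor:dicriticonormalcrossings} for the resonance detection). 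Hence $p'$ is a simple point of $\mathcal F'$ in the sense of foliation reduction of singularities, so $\pi$ is simultaneously a reduction of singularities for $\mathcal F$. The containment $\operatorname{Sing}(\mathcal F)\subset S$ follows at once: at $p\notin S$ no $\mathcal F$-invariant hypersurface passes through $p$, so the first stage applied at $p$ (a separatrix would exist at any singular point of a generalized hypersurface germ) forces $\mathcal F$ to be regular at $p$.

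\emph{Main obstacle.} The delicate step is the third one: translating the normal-crossings resolution of $S$ into a simple logarithmic form at every point of $M'$. The no-hidden-saddle-node hypothesis does the essential work, ruling out the residual obstructions (vanishing residues, nonnegative resonances among the $\lambda_i$) by two-dimensional testing. The argument leverages precisely the 2-dim generalized curve theory of \cite{Cam-LN-S}, now lifted to higher dimension via the pullback definition of the two hypotheses.
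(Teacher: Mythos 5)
The paper offers no proof of this proposition at all --- it is imported from \cite{Can,Can-M-RV,Fer-M} --- so your proposal must be judged on its own, and its first stage contains the essential gap. The existence of an invariant branch of $\phi^*{\mathcal F}$ on a generic plane section does not give you a nonempty $S$: an invariant branch $\gamma$ of the slice yields an invariant \emph{curve} $\phi(\gamma)$ of $\mathcal F$, and the passage from an invariant curve to an invariant \emph{hypersurface} containing it is precisely the separatrix existence theorem in ambient dimension $n$, which is the deepest content of the references being cited and is itself established there by first desingularizing the foliation. In the paper this step is Lemma \ref{lema:curvainvariante}, whose justification explicitly presupposes that $\mathcal F$ admits a reduction of singularities --- the very object your argument is trying to produce --- so invoking it here would be circular. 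Jouanolou's example \cite{Jou} already shows that an invariant curve of a foliation of $({\mathbb C}^3,0)$ need not lie on any invariant surface; excluding this behaviour under the non-dicritical and complex-hyperbolic hypotheses is the hard point, and your slice argument assumes it silently. (The finiteness half of the first stage, and the second stage on admissibility of the centers and stability of the two hypotheses under non-dicritical blowing-ups, are fine and agree with Propositions \ref{prop:dicriticidaddeunaexplosion}, \ref{prop:estabilidddicriticidad} and \ref{prop:blowingupgeneralizedhip}.)

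The third stage is also under-argued. A logarithmic generator with poles on $\prod_{i=1}^r x_i=0$ has the general shape $\sum_{i=1}^r a_i\,dx_i/x_i+\sum_{j>r}b_j\,dx_j$; suppressing the terms $b_j\,dx_j$ amounts to assuming that the dimensional type at $p'$ equals the number $r$ of invariant components through $p'$, which is not automatic and is in fact the crux. When the dimensional type exceeds the number of divisor components by one (the trace situation, case ii) of the paper's Appendix II), non-resonance of the residues $\lambda_i$ is not sufficient: one must \emph{construct} an additional non-singular invariant hypersurface having normal crossings with the divisor, which the paper does through a Krull-limit argument producing a formal and then convergent separatrix. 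Likewise, your assertion that the components of $\pi^{-1}(S)$ exhaust the invariant germs at $p'$ does not follow from ``the first stage'', which only yields finiteness. The standard way to close this stage is the multiplicity identity $\nu_Y({\mathcal F})=\nu_Y(S)-1$ of Proposition \ref{prop:multiplicidadgenericagh} combined with the characterization of simple points by logarithmic order in Proposition \ref{prop:simplepointsandlogorder}; your proposal gestures at the right ingredients but does not assemble them.
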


Let us state some other useful results concerning generalized hypersurfaces:
\begin{lemma}
\label{lema:curvainvariante}
Consider a generalized hypersurface  $\mathcal F$ on $({\mathbb C}^n, 0)$ and take an invariant analytic branch $(\Gamma,0)\subset ({\mathbb C}^n,0)$ not contained in the singular locus of $\mathcal F$. There is a single irreducible hypersurface $H$ invariant for $\mathcal F$ such that $\Gamma\subset H$.
\end{lemma}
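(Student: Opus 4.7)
The plan is to establish uniqueness by a local argument at a regular point of $\mathcal{F}$ lying on $\Gamma$, and then produce the desired $H$ by descending an invariant hypersurface found in a reduction of singularities of $\mathcal{F}$.

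For uniqueness, the hypothesis $\Gamma \not\subset \operatorname{Sing}(\mathcal{F})$ lets me pick a point $q \in \Gamma\setminus\{0\}$ at which $\mathcal{F}$ is regular. In a flow-box $dx_1=0$ around $q$, the only germ of invariant analytic hypersurface is the leaf $\{x_1=0\}$, which contains $\Gamma$ locally since $\Gamma$ is invariant and passes through $q$. Any irreducible invariant hypersurface $H$ with $\Gamma\subset H$ must therefore agree with this leaf as a germ at $q$; if $H_1,H_2$ were two such, the intersection $H_1\cap H_2$ would contain an $(n-1)$-dimensional germ at $q$, hence an $(n-1)$-dimensional irreducible analytic component $C$ passing through both $q$ and $0$, and irreducibility of the $H_i$ at $0$ then forces $H_1=C=H_2$.

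For existence, take a reduction of singularities $\pi\colon M' \to ({\mathbb C}^n,0)$ of the finite union $S$ of invariant hypersurfaces of $\mathcal{F}$, which by Proposition \ref{prop:redsinghypgeneralizada} is simultaneously a reduction of singularities of $\mathcal{F}$. The strict transform $\Gamma'$ is an invariant analytic branch of $\mathcal{F}'=\pi^*\mathcal{F}$ meeting the exceptional divisor at a single point $p'$, and $\Gamma'\not\subset\pi^{-1}(0)$. If $\mathcal{F}'$ is regular at $p'$, its leaf there is a smooth invariant hypersurface $H'$ containing $\Gamma'$. Otherwise $\mathcal{F}'$ has a reduced simple singularity at $p'$ which cannot contain a saddle-node, since $\mathcal{F}$ has no hidden saddle-nodes; the normal-crossings divisor $\pi^{-1}(S)$ at $p'$ is then a union of coordinate hyperplanes, all invariant for $\mathcal{F}'$ by non-dicriticalness. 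It will suffice to show that $\Gamma'$ at $p'$ lies in one of these invariant coordinate hyperplanes: that hyperplane cannot be an exceptional component (since $\Gamma'\not\subset\pi^{-1}(0)$), so it is the strict transform of a unique irreducible component $H\subset S$, and pushing down gives $\Gamma\subset H$.

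The key step, which I expect to be the main obstacle, is this last claim: \emph{at a reduced simple singularity without saddle-nodes of a non-dicritical codimension one foliation, every invariant analytic branch is contained in a component of the normal-crossings divisor}. The approach is a two-dimensional test. Choose a holomorphic $\phi\colon({\mathbb C}^2,0)\to(M',p')$ whose image contains $\Gamma'$ and is generically transverse to the divisor (for instance, a 2-plane section through $\Gamma'$). Then $\phi^*\mathcal{F}' = (\pi\circ\phi)^*\mathcal{F}$, and the generalized hypersurface hypothesis on $\mathcal{F}$ forces $\phi^*\mathcal{F}'$ to be a generalized curve on $({\mathbb C}^2,0)$: absence of hidden saddle-nodes descends immediately, and dicriticalness of the 2D pullback would combine with $\phi$ to produce a dicriticalness witness for $\mathcal{F}$. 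The classical two-dimensional statement (Camacho--Sad / Mattei--Moussu type) then asserts that the invariant analytic branches of a generalized curve at a reduced singular point are exactly the separatrices of its normal-crossings divisor, so the branch corresponding to $\Gamma'$ lies in the pull-back of some coordinate hyperplane, which forces $\Gamma'$ itself into that coordinate hyperplane at $p'$.
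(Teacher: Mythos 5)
Your overall strategy — uniqueness via a regular point of $\Gamma$ away from the origin, existence by descending an invariant hypersurface found at the point $p'$ where the strict transform $\Gamma'$ meets the exceptional divisor of a reduction of singularities of $S$ — is sound, and it is essentially the argument that the paper delegates to the reference \cite{Can} (the paper's own ``proof'' is a one-line citation). The uniqueness half is correct. The problem is the step you yourself flag as the main obstacle: that at a simple point of the desingularized foliation every invariant branch lies in a component of the normal crossings divisor. Your two-dimensional test does not establish it.

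Concretely: you invoke ``the invariant analytic branches of a generalized curve \emph{at a reduced singular point} are exactly the separatrices of its normal-crossings divisor'' for $\phi^*{\mathcal F}'$ at the origin of $({\mathbb C}^2,0)$. But $\phi$ is forced to contain the (possibly singular, possibly non-planar) branch $\Gamma'$ in its image, so it cannot be taken generic; $\phi^*{\mathcal F}'$ need not have a reduced singularity at $0$, and $\phi^{-1}(D')$ need not be normal crossings there, so the classical statement simply does not apply. Nor does knowing that $\phi^*{\mathcal F}'$ is a generalized curve admitting $\phi^{-1}(D')$ as an invariant curve force the branch $C$ over $\Gamma'$ into $\phi^{-1}(D')$: a generalized curve can have separatrices outside any prescribed invariant curve (already $d(xy)=0$ with the invariant curve $(x=0)$ shows this). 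To exclude that, you would need to know that $\phi^{-1}(D')$ exhausts the separatrices of $\phi^*{\mathcal F}'$ — which is Proposition \ref{prop:pullbackgeneralizedcurve}, whose proof in the paper uses the present lemma, so this route is circular. What actually closes the argument is the residue computation at the simple point $p'$, exactly as in the paper's remark following Equation \ref{simplecorners} and in the proof of Proposition \ref{pro:hipersuperficiesinvariantes}: if $\gamma$ parametrizes $\Gamma'$ and $\Gamma'$ is not contained in any local component $x_i=0$ of the divisor, write $x_i\circ\gamma=t^{m_i}U_i(t)$ with $U_i(0)\ne 0$ and $m_i\geq 1$; the residue of $\gamma^*\eta'=0$ gives $\sum_i m_i\lambda_i=0$, contradicting the non-resonance condition defining a simple point. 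A smaller remark: in your ``regular at $p'$'' case, $p'$ lies on an exceptional component, which is invariant (non-dicriticalness) and hence coincides with the leaf through $p'$; your $H'$ is therefore exceptional and does not descend to a component of $S$ — in fact that case cannot occur for a strict transform $\Gamma'$, so it should be discharged as a contradiction rather than used to produce $H$.
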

\begin{proof} This is true for any non-dicritical foliation that admits a reduction of singularities, in particular for generalized hypersurfaces, see \cite{Can}.
\end{proof}

\begin{proposition}
\label{prop:pullbackgeneralizedcurve}
Consider a generalized hypersurface  $\mathcal F$ on $({\mathbb C}^n, 0)$ and let $S$ be the union of the invariant hypersurfaces of $\mathcal F$. For any $S$-transverse holomorphic map
$
\phi:({\mathbb C}^2,0)\rightarrow ({\mathbb C}^n,0)
$, the pull-back $\phi^*{\mathcal F}$ is a generalized curve.
\end{proposition}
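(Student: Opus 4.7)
The plan is to verify, by contradiction, the two defining properties of a generalized curve for $\phi^*\mathcal F$: non-dicriticalness and absence of hidden saddle-nodes. The basic tool throughout is the functoriality of pull-back recorded in Remark \ref{rk:pullbackdicriticao}: whenever $\psi:({\mathbb C}^2,0)\to({\mathbb C}^2,0)$ is such that $\psi^*(\phi^*\mathcal F)$ is defined, then $(\phi\circ\psi)^*\mathcal F$ is also defined and equals $\psi^*(\phi^*\mathcal F)$. (That $\phi^*\mathcal F$ itself exists follows from $S$-transversality combined with the non-dicriticalness of $\mathcal F$: otherwise the image of $\phi$ would be tangent to $\mathcal F$ and hence, by Proposition \ref{prop:redsinghypgeneralizada} and Lemma \ref{lema:curvainvariante}, forced inside $S$.)

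The hidden saddle-node case is essentially immediate. If $\phi^*\mathcal F$ had a hidden saddle-node witnessed by some $\psi:({\mathbb C}^2,0)\to({\mathbb C}^2,0)$, then $(\phi\circ\psi)^*\mathcal F=\psi^*(\phi^*\mathcal F)$ would also be a saddle-node, contradicting the hypothesis that $\mathcal F$ has no hidden saddle-nodes.

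For non-dicriticalness, suppose to the contrary that $\phi^*\mathcal F$ is dicritical at some point $q\in({\mathbb C}^2,0)$, and pick a witnessing $\psi:({\mathbb C}^2,0)\to(({\mathbb C}^2,0),q)$ with $\psi^*(\phi^*\mathcal F)=(dx=0)$ and $\Gamma=\psi(y=0)$ invariant for $\phi^*\mathcal F$. Functoriality gives $(\phi\circ\psi)^*\mathcal F=(dx=0)$; the delicate point is to show that $(\phi\circ\psi)(y=0)=\phi(\Gamma)$ is invariant for $\mathcal F$, which would exhibit $\mathcal F$ as dicritical. Writing $\omega$ for a local generator of $\mathcal F$ free of common factors and parametrizing $\Gamma$ by $\gamma:({\mathbb C},0)\to\Gamma$, the invariance of $\Gamma$ for $\phi^*\mathcal F$ gives $(\phi\circ\gamma)^*\omega=\gamma^*(\phi^*\omega)=0$, so $\phi\circ\gamma$ parametrizes an invariant analytic branch $C\subset({\mathbb C}^n,0)$ of $\mathcal F$ (the case where $\phi\circ\gamma$ is constant is trivial, since the image point must then lie in $S$). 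By Proposition \ref{prop:redsinghypgeneralizada} the singular locus of $\mathcal F$ is contained in $S$, so if $C$ lies in the singular locus then $C\subset S$; otherwise Lemma \ref{lema:curvainvariante} places $C$ inside a unique irreducible invariant hypersurface, which is again a component of $S$. In either case $\phi(\Gamma)\subset S$ is invariant for $\mathcal F$, contradicting the hypothesis that $\mathcal F$ is non-dicritical.

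The principal obstacle is precisely this last step, namely upgrading invariance of $\Gamma$ for $\phi^*\mathcal F$ to invariance of $\phi(\Gamma)$ for $\mathcal F$. It is here that the generalized hypersurface hypothesis is used essentially, through the combination of Lemma \ref{lema:curvainvariante} (every invariant branch outside the singular locus lies in an invariant hypersurface) and Proposition \ref{prop:redsinghypgeneralizada} (the singular locus sits inside $S$); without these two ingredients one would have no way to capture $\phi(\Gamma)$ inside the union $S$ of invariant hypersurfaces.
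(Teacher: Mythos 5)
Your overall strategy is sound and rests on the same two ingredients as the paper's proof, namely Lemma \ref{lema:curvainvariante} together with the inclusion $\operatorname{Sing}({\mathcal F})\subset S$ from Proposition \ref{prop:redsinghypgeneralizada}; your handling of the ``delicate point'' (upgrading invariance of $\Gamma$ for $\phi^*{\mathcal F}$ to invariance of $\phi(\Gamma)$ for ${\mathcal F}$ and then trapping $\phi(\Gamma)$ inside $S$) is exactly right. There is, however, a genuine gap in the step you dispatch as ``functoriality''. You claim that whenever $\psi^*(\phi^*{\mathcal F})$ is defined, $(\phi\circ\psi)^*{\mathcal F}$ is also defined and equal to it, and you cite Remark \ref{rk:pullbackdicriticao} for this. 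The remark asserts the opposite: it warns explicitly that $\phi^*{\mathcal F}$ and $\psi^*(\phi^*{\mathcal F})$ can both exist while $(\phi\circ\psi)^*{\mathcal F}$ does not, and the two special cases it does cover ($\phi$ a non-dicritical blowing-up, or $\psi$ a sequence of blowing-ups) do not apply here, since your $\psi$ is an arbitrary holomorphic map and your $\phi$ is not a blowing-up. Concretely, if $\omega$ is a reduced generator of ${\mathcal F}$ then $\phi^*\omega=g\,\omega'$ with $\omega'$ reduced, and $(\phi\circ\psi)^*\omega=(g\circ\psi)\,\psi^*\omega'$ can vanish identically even when $\psi^*\omega'\neq 0$, namely when $\operatorname{Im}(\psi)$ is contained in $(g=0)$. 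Both your saddle-node argument and your dicriticalness argument lean on this unproved existence.

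The gap is repairable with material you already have, and the repair is precisely what the paper's proof makes explicit. The argument you use for $\Gamma$ shows more generally that any branch $\Gamma_0$ with $\gamma_0^*(\phi^*\omega)=0$ (so in particular any invariant branch of $\phi^*{\mathcal F}$ and any branch of $(g=0)$) satisfies $\phi(\Gamma_0)\subset S$, i.e.\ $\Gamma_0\subset\phi^{-1}(S)$. Hence the invariant curves of $\phi^*{\mathcal F}$ are exactly the components of the curve $\phi^{-1}(S)$, and any witnessing map $\psi$ for which $\psi^*(\phi^*{\mathcal F})$ is defined must have image not contained in $\phi^{-1}(S)$; therefore $\phi\circ\psi$ is $S$-transverse and $(\phi\circ\psi)^*{\mathcal F}$ exists by the same argument that gave the existence of $\phi^*{\mathcal F}$. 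Once this is inserted, your proof closes. Note also that the paper sidesteps the dicriticalness witness altogether: having identified the invariant branches of $\phi^*{\mathcal F}$ with the finitely many components of $\phi^{-1}(S)$, it concludes non-dicriticalness from the two-dimensional characterization by finiteness of invariant branches. Your route through the definition is legitimate, but it is the one that forces you to confront the existence of $(\phi\circ\psi)^*{\mathcal F}$, which is where the missing argument lives.
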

\begin{proof} Let $\omega$ be a reduced holomorphic generator of $\mathcal F$. We have first  to show that $\phi^*\omega\ne 0$. Assume that $\phi^*\omega=0$, since $\phi$ is $S$-transverse, there is an irreducible branch $(\Gamma,0)\subset ({\mathbb C}^2,0)$ such that $\phi(\Gamma)\not\subset S$. In this situation, the curve $\phi(\Gamma)$ is an invariant curve of $\mathcal F$ not contained in $S$; this contradicts Lemma \ref{lema:curvainvariante}. Then, we have that $\phi^*{\mathcal F}$ exists. More precisely, the invariant curves for $\phi^*{\mathcal F}$ are precisely the irreducible components of $\phi^{-1}(S)$. In particular, $\phi^*{\mathcal F}$  has only finitely many invariant branches and then it is non-dicritical. Finally, let us find a contradiction if $\phi^*{\mathcal F}$ is not complex hyperbolic.
Take
$$
\varphi:({\mathbb C}^2,0)\rightarrow ({\mathbb C}^2,0)
$$
such that $\varphi^*(\phi^*{\mathcal F})$ is a saddle-node. We have that $\operatorname{Im}(\varphi)\not\subset \phi^{-1}(S)$ and hence $\phi\circ\varphi$ is $S$-transverse. We conclude that $(\phi\circ\varphi)^*{\mathcal F}$ exists and
$$
(\phi\circ\varphi)^*{\mathcal F}=\varphi^*(\phi^*{\mathcal F})
$$
is a saddle-node, contradiction, since $\mathcal F$ is complex hyperbolic.
\end{proof}

Next, we show the stability under permissible blowing-ups of being generalized hypersurface

\begin{proposition}
\label{prop:blowingupgeneralizedhip}
Consider a generalized hypersurface  $\mathcal F$ on $({\mathbb C}^n, 0)$ and let $Y$ be a non-singular subvariety of $({\mathbb C}^n,0)$ contained in the union $S$
of the invariant hypersurfaces of $\mathcal F$. Consider the admissible blowing-up
$$
\pi:((M,\pi^{-1}(0)), {\mathcal F}')\rightarrow (({\mathbb C}^n,0),{\mathcal F})
$$
with center $Y$. Then $\pi$ is non-dicritical and ${\mathcal F}'$ is a generalized hypersurface.
\end{proposition}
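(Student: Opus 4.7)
The plan is to establish the three assertions of the proposition in succession, each one reducing to a previously stated result. Throughout the argument I treat $\pi$ as an admissible blowing-up (the center $Y$ lies inside an invariant hypersurface, so it is $\mathcal{F}$-invariant and the notation $\pi:(M,\pi^{-1}(0),\mathcal F')\to((\mathbb C^n,0),\mathcal F)$ makes sense).

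First I would check that $\pi$ itself is non-dicritical. Suppose, aiming at a contradiction, that $\pi$ were dicritical. Then Proposition \ref{prop:dicriticidaddeunaexplosion} would immediately force $\mathcal F$ to be dicritical. But by hypothesis $\mathcal F$ is a generalized hypersurface, hence non-dicritical. Therefore $\pi$ must be non-dicritical.

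Next I would show that $\mathcal F'$ is non-dicritical at every point of $M$. Having just established that $\pi$ is a non-dicritical admissible blowing-up, Proposition \ref{prop:estabilidddicriticidad} applies and provides the equivalence between the dicriticalness of $\mathcal F$ and that of $\mathcal F'$. Since $\mathcal F$ is non-dicritical, so is $\mathcal F'$.

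Finally, I would rule out hidden saddle-nodes for $\mathcal F'$. Suppose, on the contrary, that there exist a point $p'\in M$ and a holomorphic map $\phi':(\mathbb C^2,0)\to (M,p')$ such that ${\phi'}^*\mathcal F'$ is a saddle-node. Since $\pi$ is a non-dicritical blowing-up, the relevant part of Remark \ref{rk:pullbackdicriticao} ensures that the pull-back $(\pi\circ\phi')^*\mathcal F$ is defined and satisfies
$$
(\pi\circ\phi')^*\mathcal F={\phi'}^*(\pi^*\mathcal F)={\phi'}^*\mathcal F'.
$$
Hence $(\pi\circ\phi')^*\mathcal F$ is a saddle-node, which would mean that $\mathcal F$ admits a hidden saddle-node at $\pi(p')$, contradicting the hypothesis that $\mathcal F$ is a generalized hypersurface.

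The only delicate point is the very first step: everything else is a formal consequence of the stability results for non-dicritical blowing-ups and of the definition of hidden saddle-nodes via two-dimensional pull-backs. The genuine content lies in Proposition \ref{prop:dicriticidaddeunaexplosion}, which makes the dicriticalness of a single admissible blowing-up strong enough to force the dicriticalness of the ambient foliation; without it one could not exclude that $\pi$ creates a spurious dicritical exceptional component while $\mathcal F$ itself remains non-dicritical.
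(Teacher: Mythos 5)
Your proposal is correct and follows essentially the same route as the paper: Proposition \ref{prop:dicriticidaddeunaexplosion} for the non-dicriticalness of $\pi$, Proposition \ref{prop:estabilidddicriticidad} for the non-dicriticalness of ${\mathcal F}'$, and the pull-back compatibility of Remark \ref{rk:pullbackdicriticao} (valid because the exceptional divisor is invariant) to exclude hidden saddle-nodes. No gaps.
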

\begin{proof} By Proposition \ref{prop:dicriticidaddeunaexplosion}, we see that the blowing-up $\pi$ is non-dicritical, since  $\mathcal F$ is a non-dicritical foliation. Moreover, the transformed foliation
 ${\mathcal F}'$ is non-dicritical in view of Proposition \ref{prop:estabilidddicriticidad}. Let us show that ${\mathcal F}'$ is complex-hyperbolic. Assume by contradiction that it is not and thus there is a point $p\in \pi^{-1}(0)$ and a morphism
 $$
 \phi: ({\mathbb C}^2,0)\rightarrow (M,p)
 $$
 such that $\phi^*{\mathcal F}'$ is a saddle-node. Since $\pi$ is non-dicritical, we have that the exceptional divisor $E=\pi^{-1}(Y)$ is invariant and hence the image of $\phi$ is not contained in $E$; this implies that $(\pi\circ\phi)^*{\mathcal F}$ exists and, in view of Remark \ref{rk:pullbackdicriticao}, we have
 $$
 (\pi\circ\phi)^*{\mathcal F}=\phi^*(\pi^*{\mathcal F})=\phi^*{\mathcal F}'.
 $$
 It is a saddle-node, contradiction.
\end{proof}

\subsection{Transversality} We consider here the concepts of generic multiplicity, equimultiplicity and Mattei-Moussu transversality, that we need in the proof of existence of logarithmic model for generalized hypersurfaces.

Let $Y$ be a  non-singular irreducible subvariety of $({\mathbb C}^n,0)$ and consider a
holomorphic $1$-form $\omega$ on $({\mathbb C}^n,0)$. The {\em generic multiplicity $\nu_Y(\omega)$ of $\omega$ along $Y$} is the minimum of the generic multiplicity of the coefficients of $\omega$ along $Y$. When $\omega$ is a reduced (no common factors in the coefficients) generator of a codimension one singular foliation $\mathcal F$ on $({\mathbb C}^n,0)$ we say that $\nu_Y(\omega)$ is the generic multiplicity of $\mathcal F$ along $Y$ and we denote $\nu_Y({\mathcal F})=\nu_Y(\omega)$.

Let $S$ be a hypersurface of $({\mathbb C}^n,0)$ with reduced equation $f=0$; recall that
$\nu_Y(S)=\nu_Y(f)$. We say that $Y$ is {\em equimultiple at the origin} for $\omega$, $\mathcal F$ or $S$, if we respectively have that
$$
\nu_Y(\omega)=\nu_0(\omega),\quad \nu_Y({\mathcal F})=\nu_0(\mathcal F),\quad \nu_Y(H)=\nu_0(H).
$$
By taking appropriate representatives of the germs, we know that the points of equimultiplicity define a dense open set in $Y$.
\begin{remark}
  If $S$ is given by the reduced equation $f=0$, where $f$ is reduced, and we consider the foliation ${\mathcal F}=(df=0)$, we have  $\nu_Y({\mathcal F})=\nu_Y(S)-1$.
\end{remark}

Let us recall that the {\em singular locus $\operatorname{Sing}({\mathcal F})$} of a foliation $\mathcal F$ coincides locally with the singular locus of a holomorphic generator of $\mathcal F$ without common factor in its coefficients. In particular, we have that  $\operatorname{Sing}({\mathcal F})\subset M$ is an analytic subset of codimension at least two.

Take a holomorphic germ of $1$-form $\omega$ on $({\mathbb C}^n,0)$ such that $\operatorname{codim}(\operatorname{Sing}(\omega))\geq 2$.
Following \cite{Mat-Mou}, we say that a closed immersion $\phi:({\mathbb C}^2,0)\rightarrow ({\mathbb C}^n,0)$ is a {\em Mattei-Moussu transversal} for $\omega$ when the following properties hold
$$
\operatorname{Sing}(\phi^*\omega)=\phi^{-1}(\operatorname{Sing}(\mathcal F))\subset \{0\},\quad \nu_0(\phi^*\omega)=\nu_0(\omega).
$$
If $\mathcal F$ is a codimension one singular foliation on $({\mathbb C}^n,0)$ we say that $\phi$ is a {\em Mattei-Moussu transversal} for $\mathcal F$ when it is a Mattei-Moussu transversal for a holomorphic generator of $\mathcal F$. Let $S$ be a hypersurface given by a reduced equation $f=0$; we say that $\phi$ is a {\em Mattei-Moussu transversal} for $S$ when it is a Mattei-Moussu transversal for the foliation $df=0$. In this paper, we consider the following version of the Transversality Theorem of Mattei-Moussu:
\begin{theorem} Let $\mathcal F$ be a non-dicritical holomorphic foliation on $({\mathbb C}^n,0)$. There is a Zariski nonempty open set $W$ in the space of linear two-planes such that any closed immersion $\phi:({\mathbb C}^2,0)\rightarrow ({\mathbb C}^n,0)$ with tangent plane in $W$ is a Mattei-Moussu transversal for $\mathcal F$.
\end{theorem}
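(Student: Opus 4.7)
The plan is to construct $W$ as an intersection of Zariski open conditions on the Grassmannian $G(2,n)$, each enforcing one ingredient in the Mattei-Moussu transversal condition. Fix a reduced holomorphic generator $\omega$ of $\mathcal F$ near $0$, so that $\operatorname{Sing}(\omega)=\operatorname{Sing}(\mathcal F)$ has codimension at least two. Set $\nu=\nu_0(\omega)$ and write $\omega=\omega_\nu+R$ with $\omega_\nu\ne 0$ the initial homogeneous part of degree $\nu$.

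Let $W_1$ consist of those $\Pi\in G(2,n)$ on which the $\mathbb C^n$-valued polynomial form $\omega_\nu$ does not restrict identically to $0$; the non-vanishing of finitely many polynomials in Plücker coordinates makes $W_1$ Zariski open, and it is non-empty because $\omega_\nu\ne 0$. Let $W_2$ consist of those $\Pi$ such that $\mathbb P(\Pi)$ is disjoint from the projectivised tangent cone $C_0\subset\mathbb P^{n-1}$ of $\operatorname{Sing}(\mathcal F)$ at $0$; since $\dim C_0\le n-3$, $W_2$ is Zariski open and non-empty by an incidence dimension count. For any immersion $\phi$ with tangent $\Pi\in W_1$, the initial form of $\phi^*\omega$ equals $\omega_\nu\circ d\phi_0\ne 0$, giving $\nu_0(\phi^*\omega)=\nu=\nu_0(\omega)$; for $\Pi\in W_2$, a tangent cone comparison yields $\phi^{-1}(\operatorname{Sing}(\mathcal F))\subset\{0\}$ as germs at $0$.

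The main obstacle is showing that for $\Pi\in W_1\cap W_2$ and any immersion $\phi$ with tangent $\Pi$, the pullback $\phi^*\omega$ has no common factor in its coefficients; this is the step where the non-dicriticalness of $\mathcal F$ is essential. Suppose for contradiction that $\phi^*\omega=g\omega'$ with $g\in\mathcal O_{{\mathbb C}^2,0}$ a non-unit and $\omega'$ reduced. Along $\{g=0\}$ the $1$-form $\phi^*\omega$ vanishes, so for $p\in\{g=0\}$ with $\phi(p)\notin\operatorname{Sing}(\mathcal F)$ the plane $\operatorname{Im}(d\phi_p)\subset T_{\phi(p)}{\mathbb C}^n$ is contained in the leaf hyperplane of $\mathcal F$ at $\phi(p)$. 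Hence the curve $C=\phi(\{g=0\})$ is invariant for $\mathcal F$ through $0$. Using $C$, one produces, by selecting a smooth branch of $\{g=0\}$, parametrising the corresponding leaf of $\mathcal F$ transversally, and invoking the blow-ups on the source furnished by Proposition \ref{prop:appdos} if needed, a holomorphic map $\psi:({\mathbb C}^2,0)\rightarrow({\mathbb C}^n,0)$ with $\psi^*\mathcal F=(dx=0)$ and $\psi(y=0)\subset C$ invariant for $\mathcal F$, realising exactly the configuration forbidden by Definition \ref{def:foldicr} under the non-dicriticalness hypothesis. This contradiction yields the desired absence of common factors.

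Finally, a further Zariski open condition $W_3$ on $\Pi$, obtained by dimension counting applied to the tangency locus $\{x\in\Pi:\omega(x)|_\Pi=0\}$, ensures that for any immersion $\phi$ with tangent $\Pi$ no tangency between $\operatorname{Im}(d\phi)$ and the leaves of $\mathcal F$ occurs on $({\mathbb C}^2,0)\setminus\{0\}$. Taking $W=W_1\cap W_2\cap W_3$, a non-empty Zariski open subset of $G(2,n)$, any closed immersion $\phi$ with tangent in $W$ satisfies $\nu_0(\phi^*\omega)=\nu_0(\omega)$ by $W_1$, the inclusion $\phi^{-1}(\operatorname{Sing}(\mathcal F))\subset\{0\}$ by $W_2$, and the absence of common factors in $\phi^*\omega$ forces $\operatorname{Sing}(\phi^*\omega)$ to have codimension two in $({\mathbb C}^2,0)$, hence to be $0$-dimensional; the automatic inclusion $\phi^{-1}(\operatorname{Sing}(\mathcal F))\subset\operatorname{Sing}(\phi^*\omega)$ combined with $W_3$ gives the reverse inclusion, so $\phi$ is a Mattei-Moussu transversal for $\mathcal F$.
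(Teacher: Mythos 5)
The paper does not actually prove this statement: it is quoted as a known result with the proof replaced by citations to Mattei--Moussu and to Cano's earlier works, so your attempt has to be judged on its own. Your constructions of $W_1$ (non-vanishing of the restriction of the initial form $\omega_\nu$, giving $\nu_0(\phi^*\omega)=\nu_0(\omega)$) and $W_2$ (the line $\mathbb P(\Pi)$ avoiding the projectivised tangent cone of $\operatorname{Sing}(\mathcal F)$, giving $\phi^{-1}(\operatorname{Sing}(\mathcal F))\subset\{0\}$) are correct and standard. The problem is the step you yourself identify as the main obstacle: the absence of a common factor in $\phi^*\omega$.

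Your argument there is: a non-unit common factor $g$ yields an invariant curve $C=\phi(\{g=0\})$, and from $C$ one ``produces'' a map $\psi$ with $\psi^*{\mathcal F}=(dx=0)$ and $\psi(y=0)\subset C$, contradicting non-dicriticalness. This does not work, and the premises you actually use cannot imply the conclusion. Take ${\mathcal F}=(d(x-y^2)=0)$ on $({\mathbb C}^3,0)$, a regular (hence non-dicritical) foliation, and the closed immersion $\phi(u,v)=(u^2+v^2,\,v,\,u)$. Then $\phi^*\omega=2u\,du$ has the common factor $u$, and $\phi(\{u=0\})=\{(v^2,v,0)\}$ is an invariant curve through $0$ not contained in $\operatorname{Sing}({\mathcal F})=\emptyset$ --- exactly the configuration your paragraph starts from --- yet ${\mathcal F}$ is certainly not dicritical. (This $\phi$ is excluded by $W_1$, but your common-factor argument never invokes $W_1$ or the genericity of $\Pi$, so whatever it proves it would prove here too.) The point the argument misses is the dichotomy: a common factor $g$ contradicts Definition \ref{def:foldicr} only when $\{g=0\}$ is \emph{not} invariant for the reduced pullback foliation $(\omega'=0)$; when $\{g=0\}$ is invariant for $\phi^*{\mathcal F}$ (as in the example above, where $u=0$ is a leaf of $du=0$), there is no dicritical configuration at all, since one cannot manufacture a transversal to $\phi^*{\mathcal F}$ whose image is invariant. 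Ruling out this second case for generic planes is precisely the content of the Mattei--Moussu transversality theorem, and it requires a genuine argument (in the literature it goes through the integrability condition, the structure of $\operatorname{Sing}(\omega)$ and an analysis of how the tangency/common-factor locus varies with $\Pi$), not merely the existence of the invariant curve $C$. As written, the phrase ``parametrising the corresponding leaf of $\mathcal F$ transversally'' does not describe a construction: a map into a leaf has $\psi^*\omega=0$, and a map transverse to the leaves has no reason to pull $\mathcal F$ back to the trivial fibration with $y=0$ collapsing onto $C$. A minor additional remark: once no-common-factor is established, $\operatorname{Sing}(\phi^*\omega)$ has codimension two, hence is $\subset\{0\}$ automatically, and $W_1$ already handles the regular case, so your $W_3$ is redundant; the real work is entirely concentrated in the step that is missing.
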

\begin{proof} See \cite{Mat-Mou} and \cite{Can3,Can2, Can-M}.
\end{proof}

We have the following consequence:
\begin{proposition}
\label{prop:multiplicidadgenericagh}
  Let $\mathcal F$ be a generalized hypersurface of $({\mathbb C}^n,0)$ and denote by $S$ the union of the invariant hypersurfaces of $\mathcal F$. Consider a non-singular subvariety $(Y,0)$ of $({\mathbb C}^n,0)$ with $Y\subset S$. Then $\nu_Y({\mathcal F})=\nu_Y(S)-1$.
\end{proposition}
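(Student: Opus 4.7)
The plan is to reduce to the dimension-two result for generalized curves already available in \cite{Cam-LN-S}, by slicing with a generic two-plane through a generic point of $Y$ that realises the equimultiplicity both for the foliation and for the hypersurface $S$.

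First I would pick a point $p\in Y$ arbitrarily close to the origin at which simultaneously $\nu_p({\mathcal F})=\nu_Y({\mathcal F})$ and $\nu_p(S)=\nu_Y(S)$; upper semicontinuity of the multiplicity (applied to a reduced generator $\omega$ of $\mathcal F$ and to a reduced equation $f$ of $S$) implies that such points form a dense open subset of $Y$, so this choice is possible. After germifying at $p$, I identify $(({\mathbb C}^n,p),{\mathcal F},S,Y)$ with the corresponding germ at the origin, so that the problem is translated into computing ordinary multiplicities at a single point.

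Next I would apply the Mattei-Moussu Transversality Theorem both to $\mathcal F$ and to the foliation $df=0$. The linear $2$-planes for which a closed embedding $\phi:({\mathbb C}^2,0)\rightarrow({\mathbb C}^n,p)$ is simultaneously a Mattei-Moussu transversal for $\mathcal F$ and for $S$ form a nonempty Zariski open set of the Grassmannian; imposing in addition that the plane be transverse to $Y$ at $p$ is a further nonempty open condition, so such a $\phi$ exists. Then $\phi$ is $S$-transverse, so by Proposition \ref{prop:pullbackgeneralizedcurve} the pullback $\phi^*{\mathcal F}$ is a generalized curve on $({\mathbb C}^2,0)$ whose invariant branches are precisely the components of $\phi^{-1}(S)$. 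The defining properties of a Mattei-Moussu transversal yield
\[
\nu_0(\phi^*{\mathcal F})=\nu_p({\mathcal F})=\nu_Y({\mathcal F}), \qquad \nu_0(\phi^{-1}(S))=\nu_p(S)=\nu_Y(S).
\]

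Finally, I would invoke the two-dimensional equality for generalized curves from \cite{Cam-LN-S}: if $\mathcal G$ is a generalized curve on $({\mathbb C}^2,0)$ and $C$ is the union of its invariant branches, then $\nu_0({\mathcal G})=\nu_0(C)-1$. Applied to ${\mathcal G}=\phi^*{\mathcal F}$ and $C=\phi^{-1}(S)$ this produces exactly $\nu_Y({\mathcal F})=\nu_Y(S)-1$, finishing the proof. The main obstacle is to guarantee that a single $\phi$ can meet all three genericity requirements (Mattei-Moussu for $\mathcal F$, Mattei-Moussu for $S$, and transversality to $Y$); this reduces to the fact that a finite intersection of nonempty Zariski open subsets of a Grassmannian is nonempty, and once it is settled the rest of the argument is essentially a transparent combination of the dimension-two Camacho-Lins Neto-Sad equality with Proposition \ref{prop:pullbackgeneralizedcurve} and the Mattei-Moussu Transversality Theorem.
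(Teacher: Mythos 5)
Your proposal is correct and follows essentially the same route as the paper: reduce to a generic equimultiplicity point of $Y$, take a Mattei-Moussu transversal simultaneously adapted to $\mathcal F$ and to $S$, use Proposition \ref{prop:pullbackgeneralizedcurve} to see the pullback is a generalized curve, and conclude by the two-dimensional equality of \cite{Cam-LN-S}. The only difference is that you make explicit the intersection of the Zariski open conditions on the Grassmannian (and add a harmless, unneeded transversality-to-$Y$ requirement), which the paper leaves implicit.
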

\begin{proof} We first reduce the problem to the case $Y=\{0\}$ as follows. Taking appropriate representatives of the germs, there is a dense open subset $U$ of $Y$ such that both $S$ and $\mathcal F$
are equimultiple along $Y$ at the points in $U$, that is, we have
$$
\nu_p(S)=\nu_Y(S),\quad  \nu_p({\mathcal F})=\nu_Y({\mathcal F}),
$$
for any $p\in U$. Thus, working at a point of equimultiplicity, we can assume that $Y=\{0\}$.
Now, we apply Mattei-Moussu Transversality Theorem to get a closed immersion $\phi:({\mathbb C}^2,0)\rightarrow ({\mathbb C}^n,0)$ such that
$$
\nu_0(\phi^*{\mathcal F})=\nu_0({\mathcal F}); \quad \nu_0(\phi^{-1}(S))=\nu_0(S).
$$
Since $\phi^*{\mathcal F}$ is a generalized curve (see Proposition \ref{prop:pullbackgeneralizedcurve}) we reduce the problem to the two dimensional case. In this case, the result is known from \cite{Cam-LN-S}.
\end{proof}

\subsection{Logarithmic Forms Fully Associated to Generalized Hypersurfaces}
Let us consider a generalized hypersurface  $\mathcal F$ on $({\mathbb C}^n, 0)$. We know that there exists at least a germ of invariant hypersurface and that there are finitely many of them
$H_i$, $i=1,2,\ldots, s$. Let us select a germ of reduced function
$$
f=f_1f_2\cdots f_s\in {\mathcal O}_{{\mathbb C}^n,0}
$$
such that $f_i=0$ is a local equation for $H_i$, for $i=1,2,\ldots,s$. Thus $f=0$ gives  a reduced equation of the union $S$ of the invariant hypersurfaces of $\mathcal F$. Take a local holomorphic generator $\omega$ of $\mathcal F$ without common factors in its coefficients.
The meromorphic  $1$-form $\eta=\omega/f$ also defines $\mathcal F$. In view of Remark \ref{rk:logaritmicgenerator}, we know that $\eta$ is a logarithmic differential $1$-form, although it is not necessarily closed. Such an integrable logarithmic $1$-form $\eta=\omega/f$ will be called {\em fully associated to $\mathcal F$}.

\begin{remark} Assume that $\eta=\omega/f$ and $\eta'=\omega'/f'$ are two integrable  logarithmic
$1$-forms fully associated to $\mathcal F$. There are units $U,V\in {\mathcal O}_{{\mathbb C}^n,0}$,
such that $\omega'=U\omega$ and $f'=Vf$ and hence there is a unit $W=U/V$ such that $\eta'=W\eta$.
\end{remark}

\begin{proposition}
\label{prop:pullbackoflogaritmicformsfullyassociated}
Consider an integrable logarithmic $1$-form $\eta$ fully associated to a
generalized hypersurface  $\mathcal F$ on $({\mathbb C}^n, 0)$.  Take a non-singular irreducible subvariety $Y$ of $({\mathbb C}^n,0)$ invariant for $\mathcal F$ and let us perform the blowing-up centered at $Y$
$$
\pi:((M,\pi^{-1}(0)),{\mathcal F}')\rightarrow (({\mathbb C}^n,0),{\mathcal F}).
$$
 The pullback $\pi^*\eta$ is an integrable logarithmic $1$-form fully associated to ${\mathcal F}'$.
\end{proposition}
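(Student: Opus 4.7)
The plan is to reduce everything to a local computation at a point $p\in E=\pi^{-1}(Y)$ and track the powers of the exceptional equation carefully. Write $\eta=\omega/f$ where $\omega$ is a local reduced generator of $\mathcal F$ without common factors in its coefficients and $f=f_1f_2\cdots f_s$ is a reduced equation of the union $S$ of invariant hypersurfaces. Fix $p\in E$ and let $h=0$ be a local reduced equation of $E$ at $p$. By the very definition of the transform $\mathcal F'=\pi^*\mathcal F$ and of generic multiplicity, we have factorisations
\begin{equation*}
\pi^{*}\omega=h^{\nu}\omega',\qquad f\circ\pi=h^{m}\tilde f,
\end{equation*}
where $\omega'$ is a local reduced generator of $\mathcal F'$ at $p$, $\tilde f$ is a local reduced equation of the strict transform $\tilde S$ of $S$ at $p$, $\nu=\nu_Y(\omega)=\nu_Y(\mathcal F)$, and $m=\nu_Y(f)=\nu_Y(S)$.

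The crucial numerical input is Proposition \ref{prop:multiplicidadgenericagh}, which gives $\nu_Y(\mathcal F)=\nu_Y(S)-1$, i.e.\ $\nu=m-1$. Substituting,
\begin{equation*}
\pi^{*}\eta=\frac{\pi^{*}\omega}{f\circ\pi}=\frac{h^{\nu}\omega'}{h^{m}\tilde f}=\frac{\omega'}{h\,\tilde f}.
\end{equation*}
To finish, I must identify $h\,\tilde f=0$ as a reduced local equation of the union $S'$ of invariant hypersurfaces of $\mathcal F'$ at $p$. By Proposition \ref{prop:blowingupgeneralizedhip}, the blowing-up $\pi$ is non-dicritical and $\mathcal F'$ is again a generalized hypersurface, so the exceptional divisor $E$ is invariant for $\mathcal F'$ and hence $E\subset S'$. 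Conversely, any irreducible invariant hypersurface $H'$ of $\mathcal F'$ distinct from $E$ projects by $\pi$ onto an analytic set of dimension $n-1$ which is an irreducible invariant hypersurface of $\mathcal F$ (using Lemma \ref{lema:curvainvariante} and the fact that $\pi$ is a biholomorphism outside $E$); thus $H'$ is the strict transform of some $H_i$. It follows that, as germs at $p$, $S'=E\cup\bigcup_{i}\tilde H_i$ and $h\,\tilde f$ is a reduced equation of it. Hence $\pi^{*}\eta=\omega'/(h\tilde f)$ is fully associated to $\mathcal F'$ at $p$. Finally, integrability is automatic since $d(\pi^{*}\eta)\wedge\pi^{*}\eta=\pi^{*}(d\eta\wedge\eta)=0$, and being logarithmic then follows from Remark \ref{rk:logaritmicgenerator} applied to the reduced generator $\omega'$ and the reduced invariant equation $h\tilde f$.

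The main obstacle is the matching of generic multiplicities $\nu_Y(\omega)=\nu_Y(f)-1$, which makes the $h$-factors cancel exactly to a simple pole; this is precisely the content of Proposition \ref{prop:multiplicidadgenericagh} and ultimately rests on the Mattei--Moussu transversality theorem together with the two-dimensional result from \cite{Cam-LN-S}. A secondary subtlety is the passage from the pointwise local identifications above to a statement on the germ $(M,\pi^{-1}(0))$: the hypersurfaces $E$ and $\tilde H_i$ and the foliation $\mathcal F'$ are globally defined there, and the computation above holds at every point of $\pi^{-1}(0)$, so $\pi^{*}\eta$ is fully associated to $\mathcal F'$ throughout $(M,\pi^{-1}(0))$, as required.
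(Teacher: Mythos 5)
Your proof is correct and follows essentially the same route as the paper: write $\eta=\omega/f$, use Proposition \ref{prop:multiplicidadgenericagh} to get $\nu_Y(\omega)=\nu_Y(f)-1$ so the exceptional factors cancel to a simple pole, and identify $h\tilde f=0$ as the reduced equation of the invariant hypersurfaces of $\mathcal F'$ via non-dicriticalness of $\pi$. The only cosmetic remark is that the exactness of the factorisation $\pi^*\omega=h^{\nu}\omega'$ with $\nu=\nu_Y(\omega)$ already uses the non-dicriticalness of $\pi$ (for a dicritical blowing-up one extra power of $h$ would divide $\pi^*\omega$), so that input should be invoked before, not after, the factorisation; since you establish it independently from Proposition \ref{prop:blowingupgeneralizedhip}, this is an ordering issue, not a gap.
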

\begin{proof} (See \cite{Cer} for the two dimensional case). Let $\omega$ be a holomorphic generator of $\mathcal F$ without common factors in its coefficients, take a reduced equation $f=0$ of the union $S$ of the invariant hypersurfaces of $\mathcal F$ and put $\eta=\omega/f$. By Proposition \ref{prop:multiplicidadgenericagh}, we know that $$
\nu_Y(\omega)=\nu_Y(f)-1
.
$$ Put $m=\nu_Y(\omega)$. Working locally at a point $p$ of the exceptional divisor $\pi^{-1}(Y)$, where $x'=0$ is a reduced equation of $\pi^{-1}(Y)$, we know that $f\circ\pi=x'^{m+1}f'$, where $f'=0$ is a reduced local equation of the strict transform of $S$.
By Proposition
\ref{prop:dicriticidaddeunaexplosion},
we know that $\pi$ is a non-dicritical blowing-up for $\mathcal F$. Hence $x'f'=0$ is a local reduced equation of the union of the invariant hypersurfaces of ${\mathcal F}'$ at the point $p$. On the other hand, ${\mathcal F}'$ is generated by $\pi^*\omega$ and
 $$
 \pi^*\omega=x'^{m}\omega',
 $$
 where $\omega'$ has no common factors in its coefficients; for this, we use the fact that $\pi$ is a non dicritical blowing-up and thus we can divide $\pi^*\omega$ exactly by ${x'}^m$. Hence, we have that
 $$
\pi^*\eta=\frac{\pi^*\omega}{f\circ\pi}=\omega'/(x'f')
 $$
is an integrable  logarithmic $1$-form fully associated to $\pi^*{\mathcal F}$.
\end{proof}

\section{Divisorial Models in Dimension Two}
Consider a foliation $\mathcal F$ on $({\mathbb C}^2,0)$. From the work of A. Seidenberg \cite{Sei}, we know that there is an essentially unique reduction of singularities of $\mathcal F$. When there are no saddle-nodes after reduction of singularities and all the irreducible components of the exceptional divisor are invariant ones, we say that $\mathcal F$ is a {\em generalized curve} \cite{Cam-LN-S}. In this case, the Camacho-Sad indices at the singular points, after reduction of singularities, are all nonzero and they determine locally the linear part of the holonomy. This motivates the quest of a foliation with linear holonomy, with the same reduction of singularities and such that the linear part of the holonomy is the same as the one for $\mathcal F$ after reduction of singularities. Such foliations are the logarithmic ones and hence we look for a ``logarithmic model'' of a given generalized curve. This problem has been solved in dimension two by N. Corral in \cite{Cor}. In this section we recover Corral results in the language of $\mathbb C$-divisors and the indices with respect to singular invariant curves.

\subsection{Indices for $\mathbb C$-divisors in dimension two}
We develop here a notion of index for ${\mathbb C}$-divisors, directly inspired in the behavior of Camacho-Sad index in the case of holomorphic foliations in dimension two.

Let $M$ be a non-singular complex variety of dimension two. Take  a $\mathbb C$-divisor
\begin{equation}
\label{eq:escrituraparaindice}
 {\mathcal D}=\mu \operatorname{Div}(T)+\sum_{i=2}^s\lambda_i\operatorname{Div}(H_i)
\end{equation}
on it, where $T\subset M$ and $H_i\subset M$ are curves in $M$, not necessarily irreducible, such that none of the irreducible components of $T$ is an irreducible component of an $H_i$, for $i=2,3,\ldots,s$. We assume that the support of $\mathcal D$ contains $T$, that is $\mu\ne 0$. Let us take a point $p\in T$. We define the {\em Camacho-Sad index $I_p({\mathcal D},T)$ at $p$ of $\mathcal D$ with respect to $T$} by the expression
\begin{equation}
\label{eq:indice}
I_p({\mathcal D},T)=-\frac{\sum_{i=2}^s\lambda_i (T,H_i)_p}{\mu},
\end{equation}
where $(T,H_i)_p$ stands for the intersection multiplicity of $T$ and $H_i$ at $p$.

Let us note that if ${\mathcal D}_p$ denotes the germ of $\mathcal D$ at $p$ we have that
$$
I_p({\mathcal D}_p,T)=I_p({\mathcal D},T).
$$
Let us remark that the germ of $T$ at $p$ may not be irreducible, even when we choose $T$ to be irreducible as a curve in $M$.
\begin{remark} It is possible to extend the above definition in order to define the index of $\mathcal D$ with respect to any union of curves in the support passing through $p$, by using the formula
$$
I_p({\mathcal D}, T_1\cup T_2)= I_p({\mathcal D}, T_1)+I_p({\mathcal D}, T_2)+2(T_1,T_2)_p.
$$
In this way, we could recover the complete definition of the Camacho-Sad index with respect to a not necessarily irreducible invariant curve, see \cite{Bru}. Anyway, we only need the definition for the case where the coefficients of all the irreducible components of $T$ in $\mathcal D$ are equal, as defined above.
\end{remark}

\begin{proposition} Let $\mathcal D$ be a $\mathbb C$-divisor on a non-singular two dimensional complex analytic variety $M$, that we write as ${\mathcal D}=\mu \operatorname{Div}(T)+\sum_{i=2}^s\lambda_i\operatorname{Div}(H_i)$, where $\mu\ne 0$ and $T$ and $H_i$ have no common irreducible components, for any $i=2,3,\ldots,s$.
Let $\pi:(M',{\mathcal D}')\rightarrow (M,{\mathcal D})$ be the blowing-up centered at a point $p\in T$. Denote by $T'$ the strict transform of $T$ by $\pi$ and by $E=\pi^{-1}(p)$ the exceptional divisor of $\pi$. The following equality holds:
\begin{equation}
\label{eq:indiceexplosion1}
\sum_{p'\in T'\cap E}I_{p'}({\mathcal D}',T')=I_p({\mathcal D},T)-\nu_p(T)^2.
\end{equation}
Moreover, if $\pi$ is non-dicritical, we have $\sum_{p'\in E}I_{p'}({\mathcal D}',E)=-1$.
\end{proposition}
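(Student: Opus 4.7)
The plan is to unwind the definition (\ref{eq:indice}) of the index directly and reduce both identities to the classical Noether decomposition of intersection multiplicities under a point blowing-up, namely $(C_1,C_2)_p = \nu_p(C_1)\nu_p(C_2) + \sum_{p'\in E}(C_1',C_2')_{p'}$ for any two plane curve germs at $p$ without common irreducible components. First I fix notation: write $\nu = \nu_p(T)$ and $\nu_i = \nu_p(H_i)$. The transformation law for $\mathbb{C}$-divisors recorded in the previous section yields
$$
\mathcal{D}' \;=\; \mu' E + \mu\operatorname{Div}(T') + \sum_{i=2}^s \lambda_i \operatorname{Div}(H_i'),\qquad \mu' = \mu\nu + \sum_{i=2}^s \lambda_i \nu_i,
$$
and the curves $E$, $T'$, $H_i'$ are pairwise without common irreducible components (since $E$ is exceptional and $T$, $H_i$ had none).

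For the first identity I apply the index definition at each $p' \in T' \cap E$ viewing $\mathcal{D}'$ as $\mu\operatorname{Div}(T')+(\text{rest})$; the coefficient $\mu$ in the denominator is unchanged because $T'$ still carries the coefficient $\mu$. Summing gives
$$
\sum_{p'\in T'\cap E} I_{p'}(\mathcal{D}',T') \;=\; -\frac{1}{\mu}\left[\mu'\sum_{p'}(T',E)_{p'} + \sum_{i=2}^s \lambda_i \sum_{p'}(T',H_i')_{p'}\right].
$$
A transverse count yields $\sum_{p'}(T',E)_{p'} = \nu$, while the Noether decomposition gives $\sum_{p'}(T',H_i')_{p'} = (T,H_i)_p - \nu\nu_i$. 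Substituting these and expanding $\mu' = \mu\nu+\sum_i\lambda_i\nu_i$, the cross-term $\nu\sum_i\lambda_i\nu_i$ cancels, leaving $-\nu^2 - \mu^{-1}\sum_i \lambda_i (T,H_i)_p = I_p(\mathcal{D},T) - \nu^2$.

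The second identity follows at once once one observes that non-dicriticality of $\pi$ is exactly the condition $\mu' \ne 0$. This allows me to apply the definition (\ref{eq:indice}) to $\mathcal{D}'$ with $E$ playing the role of the distinguished curve $T$ and $\mu'$ playing the role of $\mu$, so that for each $p' \in E$ one has $I_{p'}(\mathcal{D}',E) = -(\mu(T',E)_{p'} + \sum_i \lambda_i (H_i',E)_{p'})/\mu'$. Summing over $p' \in E$ and using $\sum_{p'}(T',E)_{p'} = \nu$, $\sum_{p'}(H_i',E)_{p'} = \nu_i$, the numerator collapses to $\mu\nu + \sum_i \lambda_i \nu_i = \mu'$, giving $-1$.

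There is no genuine obstacle in the argument; it is a telescoping calculation. The only point requiring care is that the same divisor $\mathcal{D}'$ is tested against two different curves ($T'$ and then $E$), so the denominator in the index formula switches from $\mu$ to $\mu'$ between the two parts, and one should verify that the ``no common irreducible components'' hypothesis required by (\ref{eq:escrituraparaindice}) is automatic in each case because the exceptional $E$ does not appear as a component of any strict transform.
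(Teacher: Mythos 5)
Your proof is correct and follows essentially the same route as the paper's: both arguments unwind the definition of the index and reduce everything to Noether's formulas $(H_i,T)_p=\sum_{p'}(H_i',T')_{p'}+\nu_p(T)\nu_p(H_i)$ and $\nu_p(T)=\sum_{p'}(E,T')_{p'}$, with the same coefficient $\mu'=\alpha=\mu\nu_p(T)+\sum_i\lambda_i\nu_p(H_i)$ on the exceptional divisor. The only difference is cosmetic bookkeeping (you solve for the sum of indices directly, the paper massages $\mu I_p(\mathcal{D},T)$ into that sum), and your closing remarks on the change of denominator and on the no-common-component hypothesis are exactly the points that need checking.
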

\begin{proof} If $\alpha=\mu\nu_p(T)+\sum_{i=2}^s\lambda_i\nu_p(H_i)$, we have
 ${\mathcal D}'=\alpha E+\mu T'+\sum_{i=2}^s\lambda_iH'_i$, where
we denote by $H'_i$ the strict transforms of the $H_i$ by $\pi$.
Recall Noether's formulas:
\begin{equation*}
  (H_i,T)_p=\sum_{p'\in E\cap T'}(H'_i,T')_{p'}
+\nu_p(T)\nu_p(H_i);\quad
  \nu_p(T)=\sum_{p'\in E\cap T'}(E,T')_{p'} .
\end{equation*}
Let us show that $
\mu I_p({\mathcal D}, T)=\mu\nu_p(T)^2+\sum_{p'\in E}\mu I_{p'}({\mathcal D}',T')
$,
in order to verify the identity in Equation \ref{eq:indiceexplosion1}:
\begin{eqnarray*}
\mu I_p({\mathcal D},T)&=&-\sum_{i=2}^s\lambda_i(T,H_i)_p=
-\sum_{i=2}^s\lambda_i\nu_p(T)\nu_p(H_i)-\sum_{p'\in E}\sum_{i=2}^s\lambda_i(T',H'_i)_{p'}=
\\
&=&\mu\nu_p(T)^2-\nu_p(T)\alpha-\sum_{p'\in E}\sum_{i=2}^s\lambda_i(T',H'_i)_{p'}=
\\
&=&\mu\nu_p(T)^2-\sum_{p'\in E}\left(\alpha(E,T')_{p'}+\sum_{i=2}^s\lambda_i(T',H'_i)_{p'}\right)=\\
&=&\mu\nu_p(T)^2+\sum_{p'\in E}\mu I_{p'}({\mathcal D}',T').
\end{eqnarray*}
Assume now that $\pi$ is non-dicritical, hence $\alpha\ne 0$. We have
$$
-\alpha=-\sum_{p'\in E}\left(\mu(E,T')_{p'}+\sum_{i=2}^s\lambda_i(E,H'_i)_{p'}
\right)=\alpha\sum_{p'\in E}I_{p'}({\mathcal D}', E).
$$
This ends the proof.
\end{proof}

\begin{corollary} If $T$ is non singular at $p$, there is only one point $p'\in E\cap T'$ and we have that
$
I_{p'}({\mathcal D}',T')=I_p({\mathcal D},T)-1
$.
\end{corollary}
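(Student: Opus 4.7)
The plan is to apply the identity in Equation \ref{eq:indiceexplosion1} from the preceding proposition directly, specializing to the non-singular case.

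First I would note that if $T$ is non-singular at $p$, then $\nu_p(T)=1$. The strict transform $T'$ meets the exceptional divisor $E=\pi^{-1}(p)$ transversely at a single point $p'$, since by Noether's formula
\[
1=\nu_p(T)=\sum_{q\in E\cap T'}(E,T')_{q},
\]
which forces $E\cap T'$ to consist of exactly one point $p'$ with $(E,T')_{p'}=1$.

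Second, I would substitute these two facts into the identity
\[
\sum_{p'\in T'\cap E}I_{p'}({\mathcal D}',T')=I_p({\mathcal D},T)-\nu_p(T)^2
\]
established in the proposition. The left-hand side collapses to the single term $I_{p'}({\mathcal D}',T')$, while on the right we have $\nu_p(T)^2=1$. This yields $I_{p'}({\mathcal D}',T')=I_p({\mathcal D},T)-1$, as required.

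There is no real obstacle here; the corollary is a direct instantiation of the proposition in the smooth case. The only thing worth checking is that the hypotheses of the proposition remain in force, namely that $T'$ and the $H'_i$ still share no irreducible components (clear, because strict transforms preserve this) and that $\mu\ne 0$ remains the coefficient of the strict transform $T'$ in ${\mathcal D}'$ (which is the content of the formula ${\mathcal D}'=\alpha E+\mu T'+\sum_i\lambda_i H'_i$ recalled in the proof of the proposition), so that the index $I_{p'}({\mathcal D}',T')$ is well-defined.
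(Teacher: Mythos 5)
Your proof is correct and is exactly the intended argument: the paper states this corollary without proof as an immediate specialization of Equation (4) (the blow-up formula for the index) to the case $\nu_p(T)=1$, which is what you carry out, including the correct use of Noether's formula to see that $E\cap T'$ is a single point.
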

\subsection{Camacho-Sad indices}
Let us recall here the notion of generalized Camacho-Sad index introduced by A. Lins Neto in \cite{LNet}, in the spirit of the residue theory of Saito \cite{Sai}.  A good presentation of this results  may be found in Brunella \cite{Bru, Bru2} and \cite{Lem-S,Suw}.

\begin{definition}[\cite{LNet}]
Let $\mathcal F$ be a germ of foliation on $({\mathbb C}^2,0)$ generated by a holomorphic $1$-form $\omega$ without common factors in its coefficients. Consider an invariant branch $\Gamma$ of $\mathcal F$ given by an irreducible equation $f=0$. There is an expression
$$
g\omega=hdf+f\alpha,
$$
where $\alpha$ is a holomorphic $1$-form and $f$ does not divide $g$. The {\em Camacho-Sad index $\operatorname{CS}_0({\mathcal F},\Gamma)$ of $\mathcal F$ with respect to $\Gamma$} is defined by
$$
\operatorname{CS}_0({\mathcal F},\Gamma)=\frac{-1}{2\pi i}\int_{\gamma(f)}\frac{\alpha}{h},
$$
where $\gamma(f)$ is the homological class of the image of the standard loop $z\mapsto \exp(2\pi i)$ under a Puiseux parametrization of $\Gamma$.
\end{definition}
\begin{remark}
\label{rk:indicessingsimples}
 If the origin is a simple point that is not a saddle-node, we can take $\Gamma=(y=0)$ and $\omega=(\lambda+\cdots)ydx+(\mu+\cdots)xdy$, with $\lambda\mu \ne 0$. In this case we see that
 \begin{equation}\label{eq:indicereducido}
 \operatorname{CS}_0({\mathcal F},\Gamma)=-\lambda/\mu.
 \end{equation}
 \end{remark}
We are mainly interested in the behavior of the above index under non-dicritical blowing-ups. Let us summarize those results in the following proposition:
\begin{proposition}
Let $\mathcal F$ be a germ of foliation on $({\mathbb C}^2,0)$ and let
$$
\pi:( (M,E), {\mathcal F}')\rightarrow (({\mathbb C}^2,0),{\mathcal F}),\quad E=\pi^{-1}(0),
$$
be the blowing-up of the origin of ${\mathbb C}^2$. The following properties hold:
\begin{enumerate}
\item[a)] For any invariant branch $(\Gamma,0)$ we have that
$$
\operatorname{CS}_{p'}({\mathcal F}',\Gamma')= \operatorname{CS}_{p}({\mathcal F},\Gamma)-\nu_0(\Gamma)^2,
$$
where $p'$ is the only point in $E$ belonging to the strict transform $\Gamma'$ of $\Gamma$.
\item[b)] If $\pi$ is non-dicritical, then $\sum_{q\in E}\operatorname{CS}_q({\mathcal F}',E)=-1$.
\end{enumerate}
\end{proposition}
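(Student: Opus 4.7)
The statement has two parts; both follow from standard residue computations in the style of \cite{LNet, Bru}. My plan is to prove (a) by a direct calculation in a Puiseux chart adapted to $\Gamma$, and then invoke the global Camacho--Sad index formula to derive (b).

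For (a), I would fix local coordinates $(x, y)$ at the origin so that $\Gamma$ admits a Puiseux parametrization $\phi(t) = (t^\nu, \psi(t))$ with $\nu = \nu_0(\Gamma)$ and $\operatorname{ord}_t \psi(t) \geq \nu$, and work in the chart $\pi(u, v) = (u, uv)$, in which $\Gamma'$ is parametrized by $\phi'(t) = (t^\nu, \psi(t)/t^\nu)$, so that $\pi \circ \phi' = \phi$ and $u \circ \phi'(t) = t^\nu$. Writing $f \circ \pi = u^\nu \tilde f$ with $\tilde f = 0$ a reduced local equation of $\Gamma'$ at $p'$, and pulling the decomposition $g\omega = h\,df + f\alpha$ back through $\pi^*$, a direct expansion using
$$d(f\circ \pi) = \nu u^{\nu-1}\tilde f\,du + u^\nu\,d\tilde f$$
yields, after absorbing the common power of $u$ into a reduced generator $\tilde\omega$ of $\mathcal F'$, a decomposition $\tilde g\,\tilde\omega = \tilde h\,d\tilde f + \tilde f\,\tilde\alpha$ satisfying the key identity
$$\frac{\tilde\alpha}{\tilde h} \;=\; \nu\,\frac{du}{u} \;+\; \pi^*\!\left(\frac{\alpha}{h}\right).$$
Integrating along the Puiseux loop for $\Gamma'$, the second term contributes $\operatorname{CS}_0(\mathcal F, \Gamma)$, while the first contributes $-\nu^2$, since $u \circ \phi'(t) = t^\nu$ makes $du/u$ pull back to $\nu\,dt/t$. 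Summing the two pieces gives the formula in (a).

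The main technical point to watch in (a) is the bookkeeping of reduced generators: one must verify that $\tilde\omega$ is coefficient-coprime and that the condition $\tilde f \nmid \tilde g$ survives the factorization, so that the index definition applies to the new decomposition. The case where $\Gamma$ is tangent to the exceptional direction is handled symmetrically in the chart $(u,v)\mapsto (uv,v)$, and the argument is independent of whether $\pi$ is $\mathcal F$-dicritical, since dicriticalness only shifts the power of $u$ to be stripped from $\pi^*\omega$.

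For (b), since $\pi$ is non-dicritical, $E \cong \mathbb P^1$ is invariant for $\mathcal F'$, so Camacho--Sad indices $\operatorname{CS}_q(\mathcal F', E)$ are defined at each singular point of $\mathcal F'$ on $E$. The global Camacho--Sad index formula for compact smooth invariant curves \cite{LNet, Bru} then gives
$$\sum_{q \in E}\operatorname{CS}_q(\mathcal F', E) \;=\; E \cdot E \;=\; -1,$$
using the standard self-intersection $E\cdot E = -1$ of the exceptional divisor of a point blow-up on a smooth surface. This proves (b).
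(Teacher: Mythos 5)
Your proposal is correct. Note, however, that the paper does not actually prove this proposition: its ``proof'' is the single line ``See Brunella [Bru, Bru2]'', so there is no argument in the text to compare against, and what you have written is essentially the standard proof from that literature, supplied in full. Your part a) is sound: pulling back $g\omega=h\,df+f\alpha$ through $\pi$ and using $d(f\circ\pi)=\nu u^{\nu-1}\tilde f\,du+u^{\nu}\,d\tilde f$ gives the valid decomposition $\tilde g\,\tilde\omega=\tilde h\,d\tilde f+\tilde f\,\tilde\alpha$ with $\tilde g=(g\circ\pi)u^{m}$, $\tilde h=u^{\nu}(h\circ\pi)$ and $\tilde\alpha=\nu u^{\nu-1}(h\circ\pi)\,du+u^{\nu}\pi^{*}\alpha$ (all holomorphic, with $\tilde f\nmid\tilde g$ because $f\nmid g$ and $\tilde f$ is not a unit times $u$), so the bookkeeping you flag does work out and yields exactly $\tilde\alpha/\tilde h=\nu\,du/u+\pi^{*}(\alpha/h)$; integrating along $u\circ\phi'(t)=t^{\nu}$ then produces the $-\nu^{2}$ correction, and your observation that the argument is insensitive to dicriticalness of $\pi$ is right, since the power $u^{m}$ stripped from $\pi^{*}\omega$ is absorbed into $\tilde g$ and never enters the quotient $\tilde\alpha/\tilde h$. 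For part b) you invoke the global Camacho--Sad index theorem $\sum_{q}\operatorname{CS}_{q}({\mathcal F}',E)=E\cdot E=-1$; this is a genuinely nontrivial input, but it is precisely the content of the references the paper itself cites, so your proof sits at the same level of rigor as the paper's citation while being more informative. The only thing I would add is an explicit remark that Lins Neto's index is independent of the chosen decomposition $g\omega=h\,df+f\alpha$, since your computation exhibits one particular decomposition for ${\mathcal F}'$ and relies on that well-definedness to conclude.
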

\begin{proof} See Brunella \cite{Bru,Bru2}.
\end{proof}
 As a consequence of the above results we obtain the following proposition:
\begin{proposition} Let $\mathcal L$ be a ${\mathcal D}$-logarithmic foliation on $({\mathbb C}^2,0)$, where $\mathcal D$ is a non-dicritical $\mathcal D$-divisor. Then
$$
\operatorname{CS}_0({\mathcal L},\Gamma)=I_0({\mathcal D},\Gamma),
$$
for any irreducible invariant branch $\Gamma$ of $\mathcal L$.
\end{proposition}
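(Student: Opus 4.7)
My plan is to compute both sides of the identity $\operatorname{CS}_0(\mathcal L,\Gamma)=I_0(\mathcal D,\Gamma)$ directly from a local logarithmic generator of $\mathcal L$ and observe that the same expression falls out of both.

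\emph{Setup.} Because $\Gamma$ is invariant and contributes nonzero coefficient to $\mathcal D$, we have $0\in\operatorname{Supp}(\mathcal D)$, and Proposition \ref{pro:hipersuperficiesinvariantes}, applied in dimension two, says $\Gamma$ is one of the local irreducible branches of $\operatorname{Supp}(\mathcal D)$ at $0$. Enumerate the local branches as $\Delta_1,\Delta_2,\ldots,\Delta_r$, choose reduced equations $g_j=0$, and let $\mu_j$ be the coefficient of $\Delta_j$ in $\mathcal D_0$ (distinct local branches of the same global irreducible curve share a common coefficient). After reordering, $\Gamma=\Delta_1$; write $u:=g_1$ and $\mu:=\mu_1$. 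Splitting each $df_H/f_H$ into $\sum_j dg_{H,j}/g_{H,j}$ over the local branches of $f_H=0$, the global expression $\eta=\sum_H\lambda_H\,df_H/f_H$ becomes
$$\eta=\sum_{j=1}^r\mu_j\,\frac{dg_j}{g_j}$$
locally at the origin.

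\emph{Extracting the part along $\Gamma$.} Put $G=g_1g_2\cdots g_r$, so that $G\eta$ is a holomorphic generator of $\mathcal L$. Because $G/g_j$ contains the factor $u$ for each $j\ge 2$, we obtain the decomposition
$$G\eta=A\,du+u\,\alpha,\qquad A:=\mu\,g_2\cdots g_r,\qquad \alpha:=\sum_{j=2}^r\mu_j\Bigl(\prod_{k\neq 1,j}g_k\Bigr)dg_j.$$
Since $u$ is coprime with each $g_j$ for $j\ge 2$, we have $u\nmid A$. If $\omega$ denotes a reduced generator of $\mathcal L$ and we write $G\eta=P\omega$ with $P$ the gcd of the coefficients of $G\eta$, then $u\nmid P$ (otherwise $u$ would divide $A$). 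Thus the decomposition is admissible in the definition of the Camacho-Sad index along $\Gamma=(u=0)$, and a direct simplification yields
$$\frac{\alpha}{A}=\sum_{j=2}^r\frac{\mu_j}{\mu}\,\frac{dg_j}{g_j}.$$

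\emph{Residue computation and comparison.} Fix a Puiseux parametrization $\varphi\colon(\mathbb C,0)\to(\Gamma,0)$. For each $j\ge 2$ we have $g_j\circ\varphi(t)=t^{(\Gamma,\Delta_j)_0}\cdot\text{unit}$, whence $\int_{\gamma(u)}\varphi^*(dg_j/g_j)=2\pi i\,(\Gamma,\Delta_j)_0$. Integrating term by term gives
$$\operatorname{CS}_0(\mathcal L,\Gamma)=-\frac{1}{2\pi i}\int_{\gamma(u)}\frac{\alpha}{A}=-\sum_{j=2}^r\frac{\mu_j}{\mu}(\Gamma,\Delta_j)_0.$$
Writing $\mathcal D_0=\mu\operatorname{Div}(\Gamma)+\sum_{j\ge 2}\mu_j\operatorname{Div}(\Delta_j)$ and applying the formula \eqref{eq:indice} gives $I_0(\mathcal D,\Gamma)=-\mu^{-1}\sum_{j\ge 2}\mu_j(\Gamma,\Delta_j)_0$, which is the same expression. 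The only delicate point is the justification for using $G\eta$ (possibly non-reduced) rather than a reduced generator in the Camacho-Sad formula; this is settled by the observation $u\nmid A$ above.
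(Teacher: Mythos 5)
Your proof is correct, but it takes a genuinely different route from the paper's. The paper proves the identity by desingularization: it has already established that the foliation index $\operatorname{CS}$ and the divisor index $I$ obey the \emph{same} transformation law under blowing-up (each drops by $\nu_p(\Gamma)^2$ along the strict transform of $\Gamma$, and each sums to $-1$ over a non-dicritical exceptional divisor), so it reduces to checking the equality at a simple point after reduction of singularities, where both indices are read off from the linear part as in Equation~\ref{eq:indicereducido}; the equality then descends through the sequence of blowing-ups. You instead compute both sides in closed form directly at the origin: writing the local logarithmic generator as $\eta=\sum_j\mu_j\,dg_j/g_j$, clearing denominators to get $G\eta=A\,du+u\alpha$, and evaluating the Lins Neto residue integral term by term to obtain $-\sum_{j\ge 2}(\mu_j/\mu)(\Gamma,\Delta_j)_0$ on both sides. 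The key verifications are all present: $u\nmid A$ (so the decomposition is admissible for the Camacho--Sad definition even though $G\eta$ need not be reduced), the identification of $\operatorname{ord}_t(g_j\circ\varphi)$ with the intersection number $(\Gamma,\Delta_j)_0$, and the appeal to Proposition~\ref{pro:hipersuperficiesinvariantes} (which is where the non-dicriticality hypothesis enters) to ensure $\Gamma$ lies in $\operatorname{Supp}(\mathcal D)$ so that $\mu\ne 0$ and $I_0(\mathcal D,\Gamma)$ is defined. Your argument is more elementary and self-contained — it avoids reduction of singularities and the two blow-up formulas entirely — while the paper's argument buys economy by reusing machinery it needs anyway; note that your computation in fact re-derives, in the logarithmic case, the residue-theoretic content that makes the paper's one-line projection argument work.
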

\begin{proof} The behavior of the indices is the same one after a sequence of blowing-ups  that desingularizes $\mathcal L$ and $\Gamma$ along $\Gamma$. When we have a simple point, the indices coincide by Equation \ref{eq:indicereducido}. This equality projects by the sequence of blowing-ups and we are done.
\end{proof}

\subsection{Existence of Divisorial Models in Dimension Two}
In this section we present the definitions and main properties of logarithmic models in dimension two, in terms of ${\mathbb C}$-divisors. The existence of logarithmic models for generalized curves in dimension two has been proved in \cite{Can-Co, Cor}, without an extensive use of $\mathbb C$-divisors.

The particularization to the ambient dimension two of the concept of generalized hypersurface is the one of {\em generalized curve}. To avoid possible confusion with other uses of this terminology in the literature, we note that in this paper a generalized curve is given by the following definition:

\begin{definition}
\label{def:curvageneralizada}
A foliation ${\mathcal F}$ on $({\mathbb C}^2,0)$ is a {\em generalized curve} if and only if it is non-dicritical and there are no saddle-nodes in a reduction of singularities of $\mathcal F$.
\end{definition}
\begin{remark}  If there are no saddle-nodes in a reduction of singularities, we find no saddle-nodes after any finite sequence of blowing-ups. In particular the definition is independent of the choice of a reduction of singularities (note that in dimension two we can speak of a {\em minimal reduction of singularities}).  For more details, see \cite{Can-C-D}.
\end{remark}

\begin{definition}
\label{def:modelodimensiondos}
Consider a generalized curve ${\mathcal F}$ and a  let $\mathcal D$ be a ${\mathbb C}$-divisor on a two-dimensional non-singular complex analytic variety $M$. We say that $\mathcal D$ is a {\em divisorial model for ${\mathcal F}$ at a point $p$ in $M$} if the following conditions hold:
\begin{enumerate}
   \item The support $\operatorname{Supp}({\mathcal D}_p)$  of the germ ${\mathcal D}_p$ of $\mathcal D$ at $p$ is the union of the germs at $p$ of the invariant branches of $\mathcal F$.
   \item The indices of ${\mathcal D}_p$ with respect to the irreducible branches of $\operatorname{Supp}({\mathcal D}_p)$ coincide with the Camacho-Sad indices of $\mathcal F$.
   \end{enumerate}
   We say that $\mathcal D$ is a {\em divisorial model for $\mathcal F$} if it fulfils the above conditions at every point $p\in M$. In the case of a germ $(M,K)$ we ask the property at each point of the germification set $K$.
\end{definition}
\begin{remark} Let us note that if $\mathcal D$ is a divisorial model for a generalized curve $\mathcal F$ on $(M,K)$ then we necessarily have that the ``germification set'' $K$ satisfies that $K\subset \operatorname{Supp}({\mathcal D})$. Indeed, if there is a point $p\in K\setminus \operatorname{Supp}({\mathcal D})$, we know that there is at least one invariant branch (this is a general fact that does not need of the hypothesis generalized curve, see \cite{Cam-S}, also \cite{Ort-R-V}) at $p$ that obviously is not contained in the support of the divisor.
\end{remark}
\begin{example}
\label{ex:integral primera}
The first example is a foliation $\mathcal F$ of $({\mathbb C}^2,0)$ with a holomorphic first integral. That is, we take a germ of function $f=f_1^{r_1}f_2^{r_2}\cdots f_s^{r_s}$ and the foliation given by $df/f$. The divisorial model is
 $$
\mathcal D=r_1\operatorname{Div}(f_1)+r_2\operatorname{Div}(f_2)+\cdots+
r_s\operatorname{Div}(f_s).
$$
The verification of this statement can be done, first in the normal crossings situation and second after Corollary \ref{cor:modlogsucblowing} and reduction of singularities.
\end{example}

Our objective in this subsection is to give a proof of the following result, in terms of $\mathbb C$-divisors:

\begin{theorem}
 \label{th:existenciayunicidadendimensiondos}
 Given a generalized curve ${\mathcal F}$ on $({\mathbb C}^2,0)$, there is a divisorial model $\mathcal D$ for $\mathcal F$. Moreover, if $\tilde {\mathcal D}$ is another divisorial model for $\mathcal F$, then $\tilde {\mathcal D}$ is projectively equivalent to $\mathcal D$; conversely, any ${\mathbb C}$-divisor projectively equivalent to $\mathcal D$ is also a divisorial model for $\mathcal F$.
\end{theorem}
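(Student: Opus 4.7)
The plan is to prove existence and uniqueness simultaneously by induction on the length $N$ of the minimal reduction of singularities of $\mathcal F$, bootstrapping from the simple (reduced) singularity case via the blow-up formulas for the Camacho--Sad indices and for the $\mathbb{C}$-divisor indices proved earlier in this section. In the base case $N=0$, after suitable coordinates $\omega=(\lambda+\cdots)y\,dx+(\mu+\cdots)x\,dy$ with $\lambda\mu\neq 0$, the divisor $\mathcal D=\lambda(x=0)+\mu(y=0)$ realizes $I_0(\mathcal D,(x=0))=-\mu/\lambda=\operatorname{CS}_0(\mathcal F,(x=0))$ and the analogous equality for $(y=0)$; uniqueness up to scalar follows because the two index conditions collapse to one (linked by $\operatorname{CS}_0(\mathcal F,\Gamma_1)\operatorname{CS}_0(\mathcal F,\Gamma_2)=1$, automatic at simple points), pinning down the ratio of the two coefficients.

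For the inductive step, let $\pi:(M_1,E_1)\to(({\mathbb C}^2,0),\mathcal F)$ be the blow-up of the origin. Then $\mathcal F_1=\pi^*\mathcal F$ is again a generalized curve (non-dicritical and complex hyperbolic by the earlier stability propositions), whose reduction is strictly shorter at every point of $E_1$. The induction hypothesis yields, at each $p\in E_1\cap\operatorname{Sing}(\mathcal F_1)$, a germ divisorial model $\mathcal D^{(p)}$ unique up to a scalar $c_p\in\mathbb{C}^*$. Since $\pi$ is non-dicritical, $E_1$ is invariant and sits in $\operatorname{Supp}(\mathcal D^{(p)})$ with nonzero local coefficient $\mu_p^{(0)}$; fixing a global $\mu\neq 0$ and setting $c_p=\mu/\mu_p^{(0)}$ makes all rescaled coefficients on $E_1$ agree. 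The coefficients on the strict transforms $\Gamma_j'$ of the invariant branches $\Gamma_j$ of $\mathcal F$ through the origin glue without further compatibility, since each $\Gamma_j'$ meets $E_1$ at only one (necessarily singular) point. This produces a global $\mathcal D_1=\mu E_1+\sum_j a_j\Gamma_j'$ on $(M_1,E_1)$. Setting $\mathcal D:=\sum_j a_j\Gamma_j$, the identity $\pi^*\mathcal D=\mathcal D_1$ reduces to the single scalar equation $\mu=\sum_j\nu_0(\Gamma_j)a_j$, which I would establish by summing the index equalities $I_p(\mathcal D_1,E_1)=\operatorname{CS}_p(\mathcal F_1,E_1)$ over $p\in E_1\cap\operatorname{Sing}(\mathcal F_1)$: Noether's formula rewrites the left side as $-\sum_j\nu_0(\Gamma_j)a_j/\mu$, while the Camacho--Sad sum rule evaluates the right side to $-1$, yielding exactly the desired identity. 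The blow-up formulas for both index types then transfer $I_p=\operatorname{CS}_p$ from the points on $E_1$ down to $I_0(\mathcal D,\Gamma_i)=\operatorname{CS}_0(\mathcal F,\Gamma_i)$ at the origin, completing existence.

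For uniqueness, if $\mathcal D$ and $\tilde{\mathcal D}$ are two divisorial models, their pullbacks to $(M_1,E_1)$ are divisorial models for $\mathcal F_1$ (using the blow-up formulas in reverse), hence projectively equivalent by induction, and the single scalar descends. The converse is immediate since $I_p$ depends only on ratios of coefficients and is invariant under global rescaling of $\mathcal D$. The main obstacle I anticipate is the descent step: the locally chosen scalars $c_p$ must not only glue $E_1$ consistently, but must also yield a coefficient $\mu$ compatible with $\pi^*$ of the downstairs coefficients $a_j$, and the fact that these two a priori independent requirements are reconciled rests entirely on the Camacho--Sad sum rule supplying the one numerical identity that closes the system.
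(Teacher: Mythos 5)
Your existence argument follows essentially the same route as the paper's: one blow-up, the inductive local models at the points $p_1,\ldots,p_t$ of $E_1\cap\operatorname{Sing}({\mathcal F}_1)$ glued by normalizing the coefficient of $E_1$, and the descent identity $\mu=\sum_j\nu_0(\Gamma_j)a_j$ obtained by summing the $E_1$-index equalities against $\sum_{p}\operatorname{CS}_p({\mathcal F}_1,E_1)=-1$; this is exactly Equation \ref{eq:lambdamultiplicidadesmenosuno} inside the paper's Lemma \ref{lema:matrices}, and that half of your proof is sound (modulo the trivial non-singular base case, which you omit).

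The uniqueness step has a genuine gap. You claim that the pullback of an \emph{arbitrary} divisorial model ${\mathcal D}=\sum_j a_j\Gamma_j$ is a divisorial model for ${\mathcal F}_1$ ``using the blow-up formulas in reverse''. The blow-up formula does transfer the index condition along each $\Gamma_j'$ (each strict transform meets $E_1$ at a single point), but for $E_1$ itself it only controls the sum $\sum_{p}I_p(\pi^*{\mathcal D},E_1)=-1$; it does not give the individual equalities $I_{p_j}(\pi^*{\mathcal D},E_1)=\operatorname{CS}_{p_j}({\mathcal F}_1,E_1)$, which are part of the definition of divisorial model at $p_j$. Nor is it clear a priori that $\alpha=\sum_j a_j\nu_0(\Gamma_j)\ne 0$, i.e.\ that $E_1$ lies in the support of $\pi^*{\mathcal D}$ at all. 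The paper closes exactly this hole by carrying an extra invariant through the induction: the invertibility of the reduced index matrix $B_{p_j}({\mathcal F}_1)$ (part (2) of Lemma \ref{lema:matrices}, deduced from the corank-one property via the symmetric-matrix Lemma \ref{lema:matrizsimetrica}). Non-degeneracy of $B_{p_j}$ forces the block $a^{(j)}$ of any solution of the downstairs index equations to equal $\alpha\lambda^{(j)}$, which simultaneously yields $\alpha\ne0$, the pointwise $E_1$-conditions, and the global proportionality. Note also that the pullback statement you invoke is the paper's Proposition \ref{pro:modelostrasunaexplosion}, which is proved only \emph{after} Theorem \ref{th:existenciayunicidadendimensiondos} and whose proof uses uniqueness; so as written your uniqueness induction is circular unless you supply this linear-algebra input (or argue directly that the index matrix $A_0({\mathcal F})$ has rank $s-1$).
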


Let us work in a matricial way. First of all, we recall a basic fact of linear algebra:
\begin{lemma}
 \label{lema:matrizsimetrica}
 Let $A=(\alpha_{ij})$ be a $s\times s$ symmetric matrix of rank $s-1$, having coefficients  in a field $k$. Assume that there is a vector $\lambda=(\lambda_1,\lambda_2,\ldots,\lambda_s)\in k^s$ such that
$\lambda A= 0$ and $\lambda_i\ne 0$ for any $i=1,2,\ldots,s$. Consider the diagonal minors
$$
\Delta_\ell=\det B_\ell; \quad B_\ell=(\alpha_{ij})_{i,j\in \{1,2,\ldots,s\}\setminus \{\ell\}}.
$$
Then $\Delta_\ell\ne 0$, for all $\ell=1,2,\ldots,s$.
\end{lemma}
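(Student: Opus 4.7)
The plan is to use the classical adjugate matrix argument. Let $C = \operatorname{adj}(A)$ be the adjugate of $A$, whose $(i,j)$-entry is $(-1)^{i+j}$ times the minor obtained by deleting row $i$ and column $j$. In particular, the diagonal entry $C_{\ell\ell}$ equals $\Delta_\ell$, since $(-1)^{2\ell}=1$.

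First I would use the fundamental identity $A\cdot C = \det(A)\, I_s = 0$, which holds because $\det(A)=0$ by the rank hypothesis. This forces every column of $C$ to lie in $\ker(A)$, which is the one-dimensional line spanned by $\lambda$. Since $A$ is symmetric, $C$ is also symmetric, so every row of $C$ is likewise a scalar multiple of $\lambda$. Combining both observations yields a scalar $c\in k$ with
$$
C = c\,\lambda^{\mathsf T}\lambda,
$$
so in particular $C_{\ell\ell} = c\,\lambda_\ell^2$ for every $\ell$.

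Next I would invoke the standard rank formula for the adjugate: if $\operatorname{rank}(A)=s$ then $\operatorname{rank}(C)=s$; if $\operatorname{rank}(A)=s-1$ then $\operatorname{rank}(C)=1$; and if $\operatorname{rank}(A)\le s-2$ then $C=0$. In our situation $\operatorname{rank}(A)=s-1$, so $\operatorname{rank}(C)=1$, which in particular guarantees $C\neq 0$, hence $c\neq 0$. Combining this with $\lambda_\ell\neq 0$ from the hypothesis gives
$$
\Delta_\ell = C_{\ell\ell} = c\,\lambda_\ell^2 \neq 0,
$$
for every $\ell=1,2,\dots,s$, as required.

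The only step that requires any care is the rank formula for the adjugate, which is a classical result in linear algebra over any field. Everything else is a direct computation, so I do not anticipate a real obstacle; the lemma is essentially a repackaging of the fact that for a symmetric matrix of corank $1$ with nowhere-vanishing null vector, the adjugate is a nonzero rank-one symmetric matrix determined by that null vector.
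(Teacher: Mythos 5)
Your proof is correct, but it follows a genuinely different route from the paper. The paper argues by elementary row and column operations: using $\lambda A=0$ and symmetry, it replaces the $\ell$-th row and then the $\ell$-th column of $A$ by suitable linear combinations that annihilate them, producing a rank-preserving transformation into a block matrix whose only nonzero block is $B_\ell$; since the rank is still $s-1$, the $(s-1)\times(s-1)$ block $B_\ell$ must be invertible. Your adjugate argument replaces this with the identity $A\,\operatorname{adj}(A)=\det(A)I=0$, the one-dimensionality of $\ker A$, and symmetry, to get $\operatorname{adj}(A)=c\,\lambda^{\mathsf T}\lambda$ with $c\ne 0$ (for the nonvanishing of $c$ you only need the weak part of the rank formula, namely that a matrix of rank $s-1$ has some nonzero $(s-1)\times(s-1)$ minor, which is immediate from the definition of rank). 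Your approach buys strictly more than the lemma asks: it yields the explicit relation $\Delta_\ell=c\,\lambda_\ell^2$ and, more generally, identifies all cofactors of $A$ in terms of $\lambda$, which makes the role of the hypothesis $\lambda_\ell\ne0$ completely transparent. The paper's approach is more pedestrian but self-contained, using nothing beyond Gaussian elimination and the invariance of rank. One cosmetic remark: what you describe entrywise is the cofactor matrix rather than its transpose, but since $A$ is symmetric the two coincide, and on the diagonal there is no difference in any case, so nothing is affected.
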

\begin{proof} Up to reordering, we may assume that $\ell=1$. Let $F_i$ be the files and $C_i$ the columns of $A$. We know that
$$
F_1=(-1/\lambda_1)\sum_{i=2}^s\lambda_iF_i,\quad C_1=(-1/\lambda_1)\sum_{i=2}^s\lambda_iC_i.
$$
Let $A'$ be the matrix obtained by changing the first row of $A$ by $F_1$ plus the linear combination $(1/\lambda_1)\sum_{i=2}^s\lambda_iF_i$ and let $A''$ be obtained from $A'$ by changing the first column $C'_1$ of $A'$ by $C'_1$ plus the linear combination $(1/\lambda_1)\sum_{i=2}^s\lambda_iC'_i$, where the $C'_i$ are the columns of $A'$. We have that $\operatorname{rank}(A'')=s-1$ and
$$
A''=
\left(
\begin{array}{c|c}
0& 0\\
\hline
0&B_1
\end{array}
\right).
$$
We conclude that $\Delta_1\ne 0$.
\end{proof}

Denote by $H=\cup_{i=1}^sH_i$ the union of invariant branches of $\mathcal F$, where we fix an ordering $H_1,H_2,\ldots,H_s$. We define the $s\times s$ symmetric matrix $A_0({\mathcal F})=(\alpha_{ij})$ by
$$
\alpha_{ij}=
\left\{
\begin{array}{ccc}
\operatorname{CS}_0({\mathcal F},H_i)&\text{ if }& i=j,\\
(H_i,H_j)_0 &\text{ if }& i\ne j.
\end{array}
\right.
$$
Let us denote $B_0({\mathcal F})=(\alpha_{ij})_{2\leq i,j\leq s}$, that is, we have
\begin{equation}
\label{eq:matrizacero}
A_0({\mathcal F})=
\left(
\begin{array}{c|ccc}
\operatorname{CS}_0({\mathcal F},H_1)&(H_1,H_2)_0&\cdots&(H_1,H_s)_0\\
\hline
(H_2,H_1)_0&&&\\
\vdots&&B_0({\mathcal F})&\\
(H_s,H_1)_0&&&
\end{array}
\right).
\end{equation}
\begin{lemma}
\label{rk:matriciallogaritmicmodel} Let $\mathcal F$ be a generalized curve on $({\mathbb C}^2,0)$ and consider a $\mathbb C$-divisor of the form ${\mathcal D}=\sum_{i=1}^s\lambda_i H_i$. The following statements are equivalent:
\begin{enumerate}
\item The divisor $\mathcal D$ is a divisorial model for $\mathcal F$.
\item We have that $\lambda A_0({\mathcal F})=0$ and $\lambda_i\ne 0$ for any $i=1,2,\ldots,s$.
\end{enumerate}
\end{lemma}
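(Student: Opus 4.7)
The strategy is to unwrap both conditions into statements about the coefficients $\lambda_i$ and match them directly. Writing $\mathcal D$ with one branch singled out, namely $\mathcal D = \lambda_k \operatorname{Div}(H_k) + \sum_{i\neq k}\lambda_i\operatorname{Div}(H_i)$, the definition given in Equation \ref{eq:indice} yields, whenever $\lambda_k\neq 0$,
$$
I_0(\mathcal D, H_k) = -\frac{1}{\lambda_k}\sum_{i\neq k}\lambda_i\, (H_k, H_i)_0.
$$
Multiplying by $\lambda_k$ and rearranging, the equality $I_0(\mathcal D, H_k) = \operatorname{CS}_0(\mathcal F, H_k)$ becomes
$$
\lambda_k \operatorname{CS}_0(\mathcal F, H_k) + \sum_{i\neq k}\lambda_i\,(H_i,H_k)_0 = 0,
$$
where I have used the symmetry $(H_k,H_i)_0 = (H_i,H_k)_0$. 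This is precisely the $k$-th coordinate of the row-vector equation $\lambda A_0(\mathcal F) = 0$, in view of the description of $A_0(\mathcal F)$ given by Equation \ref{eq:matrizacero}.

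For the implication $(1)\Rightarrow(2)$, I will first observe that, since the germ of the support of $\mathcal D$ at $0$ must coincide with the full union of invariant branches $\bigcup_{i=1}^s H_i$, each $\lambda_i$ is forced to be nonzero. Then the $s$ index equalities, one for each $k$, translate into the $s$ coordinates of $\lambda A_0(\mathcal F) = 0$ by the calculation above. For $(2)\Rightarrow(1)$ the argument is reversed: the hypothesis $\lambda_i\neq 0$ guarantees that $\operatorname{Supp}(\mathcal D_0) = \bigcup_{i=1}^s H_i$, which by construction is the union of invariant branches of $\mathcal F$, while the matrix equation $\lambda A_0(\mathcal F) = 0$, read row-by-row, returns the index equalities that define a divisorial model.

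I do not expect a real obstacle here, since the proof reduces to transcribing the definition of $I_0(\mathcal D, H_k)$ into matrix form. The only point deserving minor care is that the denominator $\lambda_k$ in the index formula is the coefficient of the branch being tested, which is why the condition $\lambda_i\neq 0$ for all $i$ must appear as an explicit hypothesis in $(2)$ and is automatic from the support condition in $(1)$.
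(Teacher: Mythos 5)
Your proof is correct and follows essentially the same route as the paper: both arguments simply transcribe the index identity $I_0({\mathcal D},H_k)=-\lambda_k^{-1}\sum_{i\ne k}\lambda_i(H_k,H_i)_0$ into the $k$-th coordinate of $\lambda A_0({\mathcal F})=0$ and observe that the support condition is equivalent to all $\lambda_i$ being nonzero. Nothing further is needed.
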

\begin{proof} Assume that $\mathcal D$ is a divisorial model for $\mathcal F$. We have that  $\lambda_i\ne 0$ for any $i=1,2,\ldots,s$, since the support of $\mathcal D$ is the union $H=\cup_{i=1}^sH_i$ of the invariant curves of $\mathcal F$. Moreover, the indices of $\mathcal D$ coincide with the Camacho-Sad indices of $\mathcal F$. That is, for any $H_i$ we have that $\operatorname{CS}_0({\mathcal F},H_i)=I_0({\mathcal D},H_i)$; noting that
$$
I_0({\mathcal D},H_i)=\frac{-\sum_{j\ne i}\lambda_j(H_i,H_j)_0}{\lambda_i},
$$
we conclude that $\lambda A_0({\mathcal F})=0$.
Conversely, let us assume that $\lambda A_0({\mathcal F})=0$ and $\lambda_i\ne 0$ for any $i=1,2,\ldots,s$. Then, the support of ${\mathcal D}$ is equal to $H$. Moreover, the fact that $\lambda A_0({\mathcal F})=0$ implies that
$$
\operatorname{CS}_0({\mathcal D},H_i)=\frac{-\sum_{j\ne i}\lambda_j(H_i,H_j)_0}{\lambda_i}=
I_0({\mathcal D},H_i),\quad i=1,2,\ldots,s,
$$
and we are done.
\end{proof}
\begin{lemma}
\label{lema:matrices}
Let $\mathcal F$ be a generalized curve on $({\mathbb C}^2,0)$. Then, we have:
\begin{enumerate}
\item The rank $\operatorname{rk}(A_0({\mathcal F}))$ of $A_0({\mathcal F})$ is equal to $s-1$.
\item The determinant $\det B_0({\mathcal F})$ of $B_0({\mathcal F})$ is nonzero.
\item There is a vector $\lambda=(\lambda_1,\lambda_2,\ldots,\lambda_s)$ such that $\lambda_i\ne 0$ for any $i=1,2,\ldots,s$ and
 $\lambda A_0({\mathcal F})=0$.
\end{enumerate}
\end{lemma}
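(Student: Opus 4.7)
The plan is to prove (1) and (3) together by induction on the length $N$ of the minimal reduction of singularities of $\mathcal{F}$; once these are in hand, (2) is an immediate consequence of Lemma \ref{lema:matrizsimetrica}. In the base case $N=0$, the foliation $\mathcal{F}$ is itself a simple non-saddle-node singularity, so $s=2$ with two smooth transverse invariant branches $H_1$, $H_2$; then $A_0(\mathcal{F})$ has diagonal entries $-\mu/\lambda$, $-\lambda/\mu$ (with $\lambda\mu\ne 0$) and off-diagonal entry $(H_1,H_2)_0=1$, so its rank is one, its kernel is spanned by $(\lambda,\mu)$ with both entries nonzero, and $B_0(\mathcal{F})=(-\lambda/\mu)\ne 0$.

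For the inductive step I would blow up the origin to obtain $\pi:((M',E),\mathcal{F}')\to(({\mathbb C}^2,0),\mathcal{F})$, with $\mathcal{F}'$ again a generalized curve at each $q\in E$ by the stability of this condition under admissible blowing-ups. For each invariant branch $H_i$ let $p_i\in E$ be the unique intersection of its strict transform $H_i'$ with $E$. The inductive hypothesis at each singular point $q\in E$ of $\mathcal{F}'$ provides a kernel vector $\tilde\lambda^{(q)}=(\tilde\mu_q,(\tilde\lambda^{(q)}_i)_{p_i=q})$ of $A_q(\mathcal{F}')$ with all entries nonzero (the first coordinate corresponding to $E$), together with $\operatorname{rk} A_q(\mathcal{F}')=s_q-1$. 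After rescaling so that $\tilde\mu_q=1$ for every $q$, set $\lambda_i:=\tilde\lambda^{(p_i)}_i$, which is automatically nonzero. Using Noether's formula
\[
(H_i,H_j)_0=\nu_0(H_i)\nu_0(H_j)+(H_i',H_j')_{p_i}\quad\bigl(\text{second term is }0\text{ if }p_i\ne p_j\bigr),
\]
the identity $(H_i',E)_{p_i}=\nu_0(H_i)$, and the index transformation $\operatorname{CS}_0(\mathcal{F},H_i)=\operatorname{CS}_{p_i}(\mathcal{F}',H_i')+\nu_0(H_i)^2$, each row of $\lambda A_0(\mathcal{F})$ reduces algebraically to the $H_i'$-row of $\tilde\lambda^{(p_i)}A_{p_i}(\mathcal{F}')=0$ plus a correction equal to $\nu_0(H_i)\bigl(\sum_j\tilde\lambda^{(p_j)}_j\nu_0(H_j)-1\bigr)$. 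Summing the $E$-row equations of $\tilde\lambda^{(q)}A_q(\mathcal{F}')=0$ over all $q\in E$ and invoking Camacho-Sad's index theorem $\sum_q\operatorname{CS}_q(\mathcal{F}',E)=-1$ forces $\sum_j\tilde\lambda^{(p_j)}_j\nu_0(H_j)=1$, so the correction vanishes and $\lambda A_0(\mathcal{F})=0$, proving (3).

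For (1) I would run the construction in reverse: given any $\lambda\in\ker A_0(\mathcal{F})\setminus\{0\}$, set $\mu:=\sum_j\lambda_j\nu_0(H_j)$ and $\lambda^{(q)}:=(\mu,(\lambda_i)_{p_i=q})$; the same three formulas show that $\lambda^{(q)}$ satisfies the $s_q-1$ equations indexed by the $H_i'$-rows of $A_q(\mathcal{F}')$, and since $\operatorname{rk} A_q(\mathcal{F}')=s_q-1$ by induction, the remaining $E$-row is then automatically satisfied, so $\lambda^{(q)}$ lies in the one-dimensional kernel $\ker A_q(\mathcal{F}')$ whose distinguished generator has nonzero first coordinate. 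This forces $\mu\ne 0$ and then $\lambda$ proportional to the vector produced in the existence part, giving $\operatorname{rk} A_0(\mathcal{F})=s-1$; statement (2) follows directly by applying Lemma \ref{lema:matrizsimetrica} to $A=A_0(\mathcal{F})$. The main obstacle is the simultaneous matching of the rescalings $\tilde\mu_q=1$ across all $q\in E$, which reduces precisely to the residue-type identity supplied by the Camacho-Sad index theorem on the exceptional component; all other ingredients are purely local computations with Noether's and the blowing-up transformation formulas.
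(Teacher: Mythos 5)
Your proposal is correct and follows essentially the same route as the paper's proof: induction on the length of the reduction of singularities, a single blow-up, normalization of the coefficient of $E$ to $1$ at each point $p_j=H_i'\cap E$, and the combination of Noether's formulas, the transformation law $\operatorname{CS}_0({\mathcal F},H_i)=\operatorname{CS}_{p_i}({\mathcal F}',H_i')+\nu_0(H_i)^2$, and the index theorem $\sum_{q\in E}\operatorname{CS}_q({\mathcal F}',E)=-1$ to force $\sum_i\lambda_i\nu_0(H_i)=1$ and hence $\lambda A_0({\mathcal F})=0$. Two small points should be tightened. First, your base case omits the possibility $s=1$ (the foliation regular with a single smooth invariant branch), where $A_0({\mathcal F})=(0)$ and the statement holds trivially; the paper treats this case explicitly. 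Second, in your rank argument you claim that a vector annihilating the $s_q-1$ columns of $A_q({\mathcal F}')$ indexed by the $H_i'$ automatically annihilates the $E$-column ``since $\operatorname{rk}A_q({\mathcal F}')=s_q-1$''; this does not follow from the rank alone, but it does follow from $\det B_q({\mathcal F}')\ne 0$ (item (2) of the induction hypothesis), which guarantees that deleting the $E$-column does not drop the rank, so the left kernel of the truncated matrix is still one-dimensional. The paper avoids this by instead deriving $\operatorname{rk}A_0({\mathcal F})\geq s-1$ from the decomposition $A_0({\mathcal F})=B'+\operatorname{Diag}(\nu_1,\ldots,\nu_s)N$, with $B'$ block-diagonal invertible and $N$ having all rows equal to $\nu$; both arguments are valid once the above is supplied.
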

\begin{proof} We work by induction on the length of a reduction of singularities of $H$.
 If this length is zero, either $H$ is non singular or $H=H_1\cup H_2$ is the union of two transverse non singular branches. When $H$ is non singular, we have that $\mathcal F$ is non singular too, since it is a generalized curve; then we have $\operatorname{CS}_0({\mathcal F},H)=0$ and we are done.  If $H=H_1\cup H_2$ is the union of two transverse non singular branches, the origin is a simple point which is not a saddle-node. In this case we have
 $$
 \operatorname{CS}_0({\mathcal F},H_1)\operatorname{CS}_0({\mathcal F},H_2)=1.
 $$
We are done since
$$
A_0({\mathcal F})=
\left(
\begin{array}{cc}
\operatorname{CS}_0({\mathcal F},H_1)&1\\
1&1/\operatorname{CS}_0({\mathcal F},H_1)
\end{array}
\right).
$$
In order to prove the induction step, let us do the blowing-up centered at the origin
$$
\pi:(M,E)\rightarrow ({\mathbb C}^2,0);\quad E=\pi^{-1}(0).
$$
Denote by $p_1,p_2, \ldots,p_t$ the points of intersection between $E$ and the strict transform $H'$ of $H$. Up to a reordering in $H_2,H_3,\ldots,H_s$, we may assume that $p_j\in H'_i$ if and only if $i\in I_j$, where
\begin{equation}
\label{eq:indicesjota}
I_j=\{n_{j-1}+1,n_{j-1}+2,\ldots,n_j\};\quad n_0=0, n_t=s.
\end{equation}
Put $\nu_i=\nu_0(H_i)$, for $i=1,2,\ldots,s$. Note that $\nu_i=(E,H'_i)_{p_j}$ when $i\in I_j$. Denote by $\underline{\nu}$ the vector $\underline{\nu}=(\nu^{(1)},\nu^{(2)},\ldots,\nu^{(t)})$, where
$$
\nu^{(j)}=(\nu_{n_{j-1}+1}, \nu_{n_{j-1}+2},\ldots,\nu_{n_{j}}),\quad j=1,2,\ldots,t.
$$
The matrices $A_{p_j}({\mathcal F}')$ are given by
$$
A_{p_j}({\mathcal F}')=
\left(
\begin{array}{c|c}
\operatorname{CS}_{p_j}({\mathcal F}',E)&\nu^{(j)}\\
\hline
(\nu^{(j)})^t&B_{p_j}({\mathcal F}')
\end{array}
\right),
$$
where $B_{p_j}({\mathcal F}')$ is the matrix
$$
\left(
\begin{array}{cccc}
\operatorname{CS}_{p_j}({\mathcal F},H'_{n_{j-1}+1})&(H'_{n_{j-1}+1},H'_{n_{j-1}+2})_{p_j}&\cdots&(H'_{n_{j-1}+1},H'_{n_{j}})_{p_j}\\
(H'_{n_{j-1}+2},H'_{n_{j-1}+1})_{p_j}&\operatorname{CS}_{p_j}({\mathcal F},H'_{n_{j-1}+2})&\cdots&(H'_{n_{j-1}+2},H'_{n_{j}})_{p_j}\\
\vdots&\vdots&&\vdots\\
(H'_{n_{j}},H'_{n_{j-1}+1})_{p_j}&(H'_{n_{j}},H'_{n_{j-1}+2})_{p_j}&\cdots&\operatorname{CS}_{p_j}({\mathcal F},H'_{n_{j}})
\end{array}
\right).
$$
We can apply the induction hypothesis at the points $p_j$. Thus, for any $j=1,2,\ldots,t$ we have:
\begin{enumerate}
\item $\det B_{p_j}({\mathcal F}')\ne 0$.
\item There are vectors $
    \lambda^{(j)}=(\lambda_{n_{j-1}+1}, \lambda_{n_{j-1}+2}, \ldots, \lambda_{n_j} ),
    $ with nonzero entries such that
   \begin{equation}
   \label{eq:lambdaapj}
    (1,\lambda^{(j)})A_{p_j}({\mathcal F}')=0.
   \end{equation}
\end{enumerate}
Now, let us define the matrix $A'$ by
$$
A'=
\left(
\begin{array}{c|c|c|c|c}
-1&\nu^{(1)}&\nu^{(2)}&\cdots&\nu^{(t)}\\
\hline
({\nu^{(1)}})^t&B_{p_1}({\mathcal F}')&0&\cdots&0\\
\hline
({\nu^{(2)}})^t&0&B_{p_2}({\mathcal F}')&\cdots&0\\
\hline
\vdots&\vdots&\vdots&\cdots&\vdots\\
\hline
({\nu^{(t)}})^t&0&0&\cdots&B_{p_t}({\mathcal F}')
\end{array}
\right)
=
\left(
\begin{array}{c|c}
-1&\nu\\
\hline
(\nu)^t& B'
\end{array}
\right).
$$
Denote $\lambda=(\lambda_1,\lambda_2,\ldots,\lambda_s)=
(\lambda^{(1)},\lambda^{(2)},\ldots,\lambda^{(t)})$. Let us show that
\begin{equation}
\label{eq:lambdaaprima}
(1,\lambda)A'=0.
\end{equation}
Recall that $(1,\lambda^{(j)})A_{p_j}({\mathcal F}')=0$. Looking at the first column of $(1,\lambda^{(j)})A_{p_j}({\mathcal F}')$, we have that
$$
\operatorname{CS}_{p_j}({\mathcal F}',E)+\sum_{i=n_{j-1}+1}^{n_j}\lambda_i \nu_i=0, \quad j=1,2,\ldots,t.
$$
Noting that $\sum_{j=1}^t\operatorname{CS}_{p_j}({\mathcal F}',E)=-1$, we conclude that
\begin{equation}
\label{eq:lambdamultiplicidadesmenosuno}
-1+\sum_{i=1}^s\lambda_i \nu_i= \sum_{j=1}^t\left(\operatorname{CS}_{p_j}({\mathcal F}',E)+\sum_{i=n_{j-1}+1}^{n_j}\lambda_i \nu_i\right)=0.
\end{equation}
Hence, the first column of $(1,\lambda)A'$ is zero. The $(1+i)$-th column of $(1,\lambda)A'$, where $i=\ell+n_{j-1}\in I_j$ coincides with the $(1+\ell)$-th column of $ (1,\lambda^{(j)})A_{p_j}({\mathcal F}')$. This shows that $(1,\lambda)A'=0$.

Note that $\det B'\ne 0$, since $\det B_{p_j}({\mathcal F}')\ne 0$ for any $j=1,2,\ldots,t$. On the other hand, $(1,\lambda)A'=0$ implies that
\begin{equation}
\label{eq:lambdabprima}
\nu+\lambda B'=0.
\end{equation}
Moreover, recall the equalities
\begin{eqnarray*}
\operatorname{CS}_0({\mathcal F},H_i)&=& \operatorname{CS}_{p_j}({\mathcal F}',H'_i)+\nu_i^2,\quad i\in I_j\\
(H_i,H_\ell)_0&=&
\left\{
\begin{array}{ccc}
\nu_i\nu_\ell+(H'_i,H'_\ell)_{p_j}&\mbox{ if }& i,\ell\in I_j.
\\
\nu_i\nu_\ell&\mbox{ if }& i\in I_j,\ell\notin I_j.
\end{array}
\right.
\end{eqnarray*}
 Then, we have
$$
A_0({\mathcal F})= B'+\operatorname{Diag}(\nu_1,\nu_2,\ldots,\nu_s)N,
$$
where $N$ is the matrix that has all the rows equal to $\nu$. We conclude that
$$\operatorname{rank} A_0({\mathcal F})\geq s-1,$$ since $\operatorname{rank} B'=s$ and the rows of $A_0({\mathcal F})$ are obtained from the ones of $B'$ by adding vectors that are proportional to the single vector $\nu$.

Let us show that $\lambda A_0({\mathcal F})=0$, having in mind equations (\ref{eq:lambdabprima}) and  (\ref{eq:lambdamultiplicidadesmenosuno}). We have
\begin{eqnarray*}
\lambda A_0({\mathcal F})&=&\lambda B'+\lambda\operatorname{Diag}(\nu_1,\nu_2,\ldots,\nu_s)N=\\
&=&-\nu+(\lambda_1\nu_1,\lambda_2\nu_2,\ldots,\lambda_s\nu_s)N =\\
&=& -\nu+(\sum_{i=1}^s\lambda_i\nu_i)\nu=(-1+\sum_{i=1}^s\lambda_i\nu_i)\nu=0.
\end{eqnarray*}
Since $\lambda\ne 0$ and $\operatorname{rank}(A_0({\mathcal F}))\geq s-1$,  we conclude that $\operatorname{rank}(A_0({\mathcal F}))=s-1$, this shows property (1) of the statement. By construction, we have that $\lambda_i\ne 0$ for all $i=1,2,\ldots,s$, this shows property (3). Finally, property (2) follows from properties (1) and (3) in view of Lemma \ref{lema:matrizsimetrica}.
\end{proof}
\begin{remark}
Note that the above proof implies that $\operatorname{CS}_0({\mathcal F},H)=0$ when there is only one invariant branch $H$, even if $H$ is singular.
\end{remark}

Let us end the proof of Theorem \ref{th:existenciayunicidadendimensiondos}.
 Take the matrix $A_0({\mathcal F})$ as in Equation \ref{eq:matrizacero}. We have a vector $\lambda=(\lambda_1,\lambda_2,\ldots,\lambda_s)$ with only nonzero entries such that $\lambda A_0({\mathcal F})=0$. Define the ${\mathbb C}$-divisor ${\mathcal D}$ as
 $$
 {\mathcal D}= \lambda_1H_1+\lambda_2H_2+\cdots+\lambda_sH_s.
 $$
Applying Lemma \ref{rk:matriciallogaritmicmodel}, we see that $\mathcal D$ is a divisorial model for $\mathcal F$. If $\tilde {\mathcal D}=\sum_{i=1}^s\tilde\lambda H_i$ is projectively equivalent to ${\mathcal D}$, there is a constant $c\in {\mathbb C}^*$ such that $\tilde\lambda=c\lambda$. Hence we also have that $\tilde\lambda A_0({\mathcal F})=0$ and  $\tilde{\mathcal D}$ is also a divisorial model for $\mathcal F$.  Assume now that ${\mathcal D}'=\sum_{i=1}^s\lambda'_i H_i$ is another divisorial model for $\mathcal F$, by Lemma \ref{rk:matriciallogaritmicmodel}, we have that $\lambda' A_0({\mathcal F})=0$.
Since $A_0({\mathcal F})$ has rank $s-1$, there is a constant $c\in {\mathbb C}^*$ such that $\lambda'=c\lambda$ and thus the ${\mathbb C}$-divisor ${\mathcal D}'$ is projectively equivalent to ${\mathcal D}$. This ends the proof of Theorem \ref{th:existenciayunicidadendimensiondos}.
\subsection{Stability under Morphisms} In this Subsection, we characterize the two-dimensional divisorial models in terms of blowing-ups and also in terms of transverse maps. This properties are the essential facts we need for extending to higher dimension the concept of divisorial model.

\begin{proposition}
  \label{pro:modelostrasunaexplosion}
  Consider a generalized curve ${\mathcal F}$ on $({\mathbb C}^2,0)$ and a ${\mathbb C}$-divisor ${\mathcal D}$. Let $\pi:(M,\pi^{-1}(0))\rightarrow ({\mathbb C}^2,0)$ be the blowing-up of the origin and denote by ${\mathcal F}'$ the transform of $\mathcal F$ by $\pi$. The following statements are equivalent:
 \begin{enumerate}
 \item The $\mathbb C$-divisor $\mathcal D$ is a divisorial model for $\mathcal F$.
 \item The transform $\pi^*{\mathcal D}$ of $\mathcal D$ is a divisorial model for ${\mathcal F}'$.
 \end{enumerate}
 \end{proposition}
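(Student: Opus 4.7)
The plan is to use the matricial characterization of divisorial models from Lemma~\ref{rk:matriciallogaritmicmodel} combined with the block decomposition of $A_0(\mathcal F)$ in terms of the $A_{p_j}(\mathcal F')$ that was established in the proof of Lemma~\ref{lema:matrices}. First I would set up notation: let $H_1,\ldots,H_s$ be the invariant branches of $\mathcal F$ at $0$ with multiplicities $\nu_i=\nu_0(H_i)$, and let $p_1,\ldots,p_t\in E=\pi^{-1}(0)$ be the infinitely near points where the strict transforms $H'_i$ meet $E$, grouped by index sets $I_j$ as in (\ref{eq:indicesjota}). Since $\mathcal F$ is a generalized curve, Proposition~\ref{prop:blowingupgeneralizedhip} guarantees that $\pi$ is non-dicritical for $\mathcal F$ and that $\mathcal F'$ is again a generalized curve; thus the germs of invariant hypersurfaces of $\mathcal F'$ at $p_j$ are exactly $E$ together with the $H'_i$ for $i\in I_j$, and at points of $E$ away from the $p_j$ the only invariant branch locally is $E$ itself.

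For the direction $(2)\Rightarrow(1)$, write $\pi^*\mathcal D=\mu E+\sum\lambda_i H'_i$. Since $\pi^*\mathcal D$ is a divisorial model, $\mu,\lambda_i\ne 0$ and Lemma~\ref{rk:matriciallogaritmicmodel} gives $(\mu,\lambda^{(j)})A_{p_j}(\mathcal F')=0$ at each $p_j$. Reading off the last $|I_j|$ columns of this identity yields $\lambda^{(j)}B_{p_j}(\mathcal F')=-\mu\nu^{(j)}$, i.e.\ $\lambda B'=-\mu\nu$, where $B'$ is the block-diagonal matrix of the $B_{p_j}(\mathcal F')$. The first column gives $\mu\operatorname{CS}_{p_j}(\mathcal F',E)+\sum_{i\in I_j}\lambda_i\nu_i=0$, and summing over $j$ while using $\sum_j\operatorname{CS}_{p_j}(\mathcal F',E)=-1$ produces $\sum_i\lambda_i\nu_i=\mu$. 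Plugging these into the decomposition $A_0(\mathcal F)=B'+\operatorname{Diag}(\nu_1,\ldots,\nu_s)N$ (where $N$ has each row equal to $\nu=(\nu_1,\ldots,\nu_s)$) I obtain
\[
\lambda A_0(\mathcal F)=\lambda B'+\Bigl(\sum_i\lambda_i\nu_i\Bigr)\nu=-\mu\nu+\mu\nu=0,
\]
so Lemma~\ref{rk:matriciallogaritmicmodel} applies and $\mathcal D=\sum\lambda_i H_i$ is a divisorial model for $\mathcal F$.

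For the direction $(1)\Rightarrow(2)$ I would exploit the uniqueness up to projective equivalence of Theorem~\ref{th:existenciayunicidadendimensiondos}. The inductive construction in the proof of Lemma~\ref{lema:matrices} produces a specific divisorial model $\tilde{\mathcal D}=\sum\tilde\lambda_i H_i$ whose slices $(1,\tilde\lambda^{(j)})$ lie in the kernel of each $A_{p_j}(\mathcal F')$ and for which $\tilde\mu:=\sum\tilde\lambda_i\nu_i=1$ by equation~(\ref{eq:lambdamultiplicidadesmenosuno}). Any other divisorial model $\mathcal D=\sum\lambda_i H_i$ must satisfy $\lambda=c\tilde\lambda$ for some $c\in\mathbb C^{*}$, whence $\mu=c\ne 0$ and $\pi^*\mathcal D=c\,\pi^*\tilde{\mathcal D}$ reads $c\bigl(E+\sum_{i\in I_j}\tilde\lambda_i H'_i\bigr)$ at each $p_j$: a nonzero scalar multiple of a divisorial model for $\mathcal F'_{p_j}$, hence itself a divisorial model there. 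At the remaining points $q\in E\setminus\{p_1,\ldots,p_t\}$ the germ of $\mathcal F'$ is regular along $E$, so $I_q(\pi^*\mathcal D,E)=0=\operatorname{CS}_q(\mathcal F',E)$ and the divisorial model condition is automatic.

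The main obstacle is really only bookkeeping: one has to verify carefully that the matrix identity at $0$ and the matrix identities at the $p_j$'s assemble into each other in both directions, and that the coefficient $\mu$ of $E$ in $\pi^*\mathcal D$ is automatically nonzero when $\mathcal D$ is a divisorial model (which ensures the supports match). Both points are contained in the block decomposition already produced in the proof of Lemma~\ref{lema:matrices}; here it is simply run forwards in one direction and backwards in the other.
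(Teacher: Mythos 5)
Your proposal is correct and follows essentially the same route as the paper: both directions rest on Lemma \ref{rk:matriciallogaritmicmodel} together with the block decomposition $A_0(\mathcal F)=B'+\operatorname{Diag}(\nu_1,\ldots,\nu_s)N$ and the identities $\sum_j\operatorname{CS}_{p_j}({\mathcal F}',E)=-1$ and $\lambda B'=-\mu\nu$ already established in the proof of Lemma \ref{lema:matrices}. The only (harmless) divergence is in the direction $(2)\Rightarrow(1)$, where you run the matrix identities backwards explicitly, while the paper instead normalizes the coefficient of $E$ to $1$ and invokes the uniqueness up to projective equivalence of divisorial models for ${\mathcal F}'$ coming from Theorem \ref{th:existenciayunicidadendimensiondos}.
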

\begin{proof} Take notations as in the proof of Lemma \ref{lema:matrices}
and  put ${\mathcal D}=\sum_{i=1}^s\mu_i H_i$. Assume first that $\mathcal D$ is a divisorial model for $\mathcal F$. We have to prove that $\pi^*{\mathcal D}$ is a divisorial model for $\pi^*{\mathcal F}$ at any point $p\in \pi^{-1}(0)=E$. In view of the proof of Lemma \ref{lema:matrices}, we find vectors $(1,\lambda^{(j)})$ for any $j=1,2,\ldots,t$ such that
$$
(1,\lambda^{(j)})A_{p_j}({\mathcal F}')=0,
$$
with $\lambda^{(j)}=(\lambda_{n_{j-1}+1}, \lambda_{n_{j-1}+2},\ldots,\lambda_{n_{j}})$ and $\lambda_i\ne 0$ for $i=n_{j-1}+1, n_{j-1}+2,\ldots,n_{j}$. By Lemma~\ref{rk:matriciallogaritmicmodel}, this means that
$$
{\mathcal D}^{(j)}=E+ \sum_{i=n_{j-1}+1}^{n_j}\lambda_i H'_i
$$
is a divisorial model  for ${\mathcal F}'$ at the point $p_j$. Let us take
$$
{\mathcal D}'= E+ \sum_{i=1}^s\lambda_i H'_i.
$$
We have that ${\mathcal D}'$ is a divisorial model for ${\mathcal F}'$ at each of the points $p_j$, since the germ of ${\mathcal D}'$ at $p_j$ is equal to ${\mathcal D}^{(j)}$. Moreover, the $\mathbb C$-divisor ${\mathcal D}'$ is also a divisorial model for ${\mathcal F}'$ at any point $p\in E\setminus\{p_1,p_2,\ldots,p_t\}$, since the germ of ${\mathcal D}'$ at such points $p$ is just the ${\mathbb C}$-divisor $1\cdot E$. On the other hand, by  Equation \ref{eq:lambdamultiplicidadesmenosuno} we have
\begin{equation*}
\sum_{i=1}^s\lambda_i \nu_i=1.
\end{equation*}
This implies that ${\mathcal D}'=\pi^*{\mathcal D}_0$, where ${\mathcal D}_0=\sum_{i=1}^s\lambda_i H_i$. Since ${\mathcal D}_0$ is a divisorial model for ${\mathcal F}$ at the origin $0\in {\mathbb C}^2$, we have that ${\mathcal D}=c{\mathcal D}_0$ for a nonzero constant $c\in {\mathbb C}^*$. Hence $\pi^*{\mathcal D}=c{\mathcal D}'$ and it is a divisorial model for ${\mathcal F}'$.

Conversely, write ${\mathcal D}=\sum_{i=1}^s\mu_i H_i$ and assume that $\pi^*{\mathcal D}$ is a divisorial model for ${\mathcal F}'$. The exceptional divisor $E$ is invariant for ${\mathcal F}'$ and thus $\sum_{i=1}^s\mu_i\nu_i\ne 0$. Up to change ${\mathcal D}$ by a proportional $\mathbb C$-divisor, we can assume that $\sum_{i=1}^s\mu_i\nu_i=1$. This implies that $\pi^*{\mathcal D}={\mathcal D}'$ since they are both divisorial models for ${\mathcal F}'$ with  fixed coefficient equal to $1$ for the exceptional divisor $E$. This implies also that ${\mathcal D}={\mathcal D}_0=\sum_{i=1}^s\lambda_i H_i$. We are done.
\end{proof}
Next corollary is a direct consequence of the preceding proposition:
\begin{corollary}
\label{cor:modlogsucblowing}
  Consider a generalized curve ${\mathcal F}$ on $({\mathbb C}^2,0)$ and a ${\mathbb C}$-divisor ${\mathcal D}$. Let $\pi:(M,\pi^{-1}(0))\rightarrow ({\mathbb C}^2,0)$ be the composition of a finite sequence of blowing-ups. The following statements are equivalent:
 \begin{enumerate}
 \item $\mathcal D$ is a divisorial model for $\mathcal F$.
 \item $\pi^*{\mathcal D}$ is a divisorial model for the transform ${\mathcal F}'$ of $\mathcal F$ by $\pi$.
 \end{enumerate}
\end{corollary}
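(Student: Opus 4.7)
My plan is to proceed by induction on the number $N$ of point blowing-ups in the sequence composing $\pi$. The case $N=0$ is trivial, and the case $N=1$ is precisely Proposition \ref{pro:modelostrasunaexplosion}.

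For the inductive step with $N\geq 2$, I would factor $\pi=\sigma\circ\pi_1$, where $\pi_1:(M_1,\pi_1^{-1}(0))\to({\mathbb C}^2,0)$ is the blowing-up of the origin and $\sigma:(M,\pi^{-1}(0))\to(M_1,\pi_1^{-1}(0))$ is the composition of the remaining $N-1$ point blowing-ups. The key observation is that the divisorial model property is defined pointwise (see Definition \ref{def:modelodimensiondos}), so $\pi^{*}{\mathcal D}$ is a divisorial model for $\pi^{*}{\mathcal F}$ on $(M,\pi^{-1}(0))$ if and only if, for every $p_1\in\pi_1^{-1}(0)$, the germ of $\sigma^{*}(\pi_1^{*}{\mathcal D})$ along $\sigma^{-1}(p_1)$ is a divisorial model for the germ of $\sigma^{*}(\pi_1^{*}{\mathcal F})$ there.

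Fix such a point $p_1$. The restriction of $\sigma$ to the germ above $p_1$ is a composition of at most $N-1$ point blowing-ups, and by Proposition \ref{prop:blowingupgeneralizedhip} the germ $(\pi_1^{*}{\mathcal F})_{p_1}$ is a generalized curve (in dimension two). The inductive hypothesis therefore applies at $p_1$ and yields the equivalence: the germ of $\sigma^{*}(\pi_1^{*}{\mathcal D})$ at $\sigma^{-1}(p_1)$ is a divisorial model for $\sigma^{*}(\pi_1^{*}{\mathcal F})$ if and only if the germ of $\pi_1^{*}{\mathcal D}$ at $p_1$ is a divisorial model for $\pi_1^{*}{\mathcal F}$. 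Collecting this equivalence over all $p_1\in\pi_1^{-1}(0)$ gives that $\pi^{*}{\mathcal D}$ is a divisorial model for $\pi^{*}{\mathcal F}$ if and only if $\pi_1^{*}{\mathcal D}$ is a divisorial model for $\pi_1^{*}{\mathcal F}$ on $(M_1,\pi_1^{-1}(0))$, which by Proposition \ref{pro:modelostrasunaexplosion} is in turn equivalent to ${\mathcal D}$ being a divisorial model for ${\mathcal F}$.

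All the substantial content already lies in the one-blowing-up case Proposition \ref{pro:modelostrasunaexplosion}, so I do not expect any genuine obstacle. The only care required is bookkeeping with germs: localizing $\sigma$ above each $p_1$ to obtain a legitimate object for the inductive hypothesis, and invoking stability of the generalized curve property under blowing-up of invariant centers. Both items are already at hand from the material developed earlier in the paper.
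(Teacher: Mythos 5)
Your argument is correct and is exactly what the paper intends: the paper states the corollary as a direct consequence of Proposition \ref{pro:modelostrasunaexplosion}, and your induction on the length of $\pi$, localizing at each point of the first exceptional divisor and using the pointwise nature of Definition \ref{def:modelodimensiondos} together with stability of the generalized curve property under blowing-up, is the obvious way to spell that out. (Only a notational slip: with $\pi_1:M_1\to({\mathbb C}^2,0)$ first and $\sigma:M\to M_1$ the remaining blowing-ups, the factorization is $\pi=\pi_1\circ\sigma$, not $\sigma\circ\pi_1$.)
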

In particular, when $\pi$ is a reduction of singularities of ${\mathcal F}$, we have that  ${\mathcal D}$ is a divisorial model for $\mathcal F$ if and only if $\pi^*{\mathcal D}$ is a divisorial model for $\pi^*{\mathcal F}$ . This is the point of view taken in \cite{Cor} in the construction of divisorial models in dimension two.

The property stated in next Proposition \ref{prop:pullbacklogmod} is the starting point for defining divisorial models in higher ambient dimension.

\begin{proposition}
 \label{prop:pullbacklogmod}
 Let $\mathcal F$ be a generalized curve on $({\mathbb C}^2,0)$ and consider a $\mathbb C$-divisor ${\mathcal D}$ on $({\mathbb C}^2,0)$. The following statements are equivalent:
\begin{enumerate}
\item The $\mathbb C$-divisor $\mathcal D$ is a divisorial model for $\mathcal F$.
\item For any $\mathcal D$-transverse holomorphic map $\phi: ({\mathbb C}^2,0)\rightarrow ({\mathbb C}^2,0)$, we have that $\phi^*{\mathcal D}$ is a divisorial model for $\phi^*{\mathcal F}$.
\end{enumerate}
\end{proposition}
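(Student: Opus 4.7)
The implication $(2) \Rightarrow (1)$ is immediate by specializing to $\phi = \mathrm{id}_{(\mathbb{C}^2, 0)}$, which is trivially $\mathcal D$-transverse; I would dispatch this in a single line. The content of the proposition lies in $(1) \Rightarrow (2)$, for which my plan is first to reduce to the equality of indices, then to climb up to a common reduction of singularities, and finally to perform a local calculation at simple singularities.

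I would begin by observing that since $\mathcal D$ is a divisorial model for $\mathcal F$, $\operatorname{Supp}(\mathcal D)$ equals the union $S$ of invariant branches of $\mathcal F$, so $\mathcal D$-transversality coincides with $S$-transversality. Proposition~\ref{prop:pullbackgeneralizedcurve} then produces $\phi^*\mathcal F$ as a generalized curve whose invariant branches are exactly the irreducible components of $\phi^{-1}(S)=\operatorname{Supp}(\phi^*\mathcal D)$, which handles the support clause of Definition~\ref{def:modelodimensiondos} and leaves only the equality of indices to verify.

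To attack the indices, my plan is to invoke Proposition~\ref{prop:appdos} from Appendix~I, taking $\pi\colon M'\to (\mathbb{C}^2,0)$ to be a reduction of singularities of $\mathcal F$, and thereby to obtain a composition of point blowing-ups $\sigma\colon N\to (\mathbb{C}^2,0)$ and a morphism $\psi\colon N\to M'$ with $\pi\circ\psi=\phi\circ\sigma$. Setting $\mathcal F':=\pi^*\mathcal F$ and $\mathcal D':=\pi^*\mathcal D$, Corollary~\ref{cor:modlogsucblowing} gives that $\mathcal D'$ is a divisorial model for $\mathcal F'$. Using Remark~\ref{rk:transversemaps} to identify $\sigma^*(\phi^*\mathcal D)=\psi^*\mathcal D'$ and $\sigma^*(\phi^*\mathcal F)=\psi^*\mathcal F'$, a second application of Corollary~\ref{cor:modlogsucblowing}, this time along $\sigma$, will reduce the problem to the purely local statement that $\psi^*\mathcal D'$ is a divisorial model for $\psi^*\mathcal F'$ at every $q\in N$.

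At this point everything is local at $p':=\psi(q)\in M'$, where $\mathcal F'$ is either regular or a simple non-saddle-node singularity. In the singular case I would use coordinates so that $\mathcal F'$ is generated by $\omega=x_1x_2\,\eta$ with $\eta=\tilde P\,dx_1/x_1+\tilde Q\,dx_2/x_2$, $\tilde P(0)=\lambda_1$, $\tilde Q(0)=\lambda_2$, both nonzero, and $\mathcal D'_{p'}=\lambda_1(x_1=0)+\lambda_2(x_2=0)$. For an irreducible component $\Gamma$ of $\psi^{-1}(x_1x_2=0)$ at $q$ I would read the coefficient of $\Gamma$ in $\psi^*\mathcal D'$ as $\nu_\Gamma(\psi_1)\lambda_1+\nu_\Gamma(\psi_2)\lambda_2$, where $\psi_i:=x_i\circ\psi$, and express its divisor index as a weighted sum of intersection numbers with the remaining components of $\psi^{-1}(x_1x_2=0)$; since $\psi^*\omega=\psi_1\psi_2\,\psi^*\eta$, the Camacho-Sad index of $\psi^*\mathcal F'$ along $\Gamma$ would be read off the residue of $\psi^*\eta$ along $\Gamma$, and a direct computation along a Puiseux parametrization of $\Gamma$ should show the two coincide. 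The hard part will be precisely this local calculation: disentangling the holomorphic tails of $\tilde P,\tilde Q$ (contributing nothing to the residue) from their values $\lambda_1,\lambda_2$ (which give the indices), while handling the possibly high ramification of $\psi$ and the eventuality that $\Gamma$ lies in both $\psi_1=0$ and $\psi_2=0$; working with the fully-associated logarithmic representative of Remark~\ref{rk:logaritmicgenerator} should make this manageable and close the proof.
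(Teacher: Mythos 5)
Your reduction is exactly the paper's: the identity map for $(2)\Rightarrow(1)$, then Proposition~\ref{prop:pullbackgeneralizedcurve} for existence and the support clause, Proposition~\ref{prop:appdos} to lift $\phi$ over a reduction of singularities $\pi$ of $\mathcal F$, and two applications of Corollary~\ref{cor:modlogsucblowing} to reduce to showing that $\psi^*(\pi^*{\mathcal D})$ is a divisorial model for $\psi^*(\pi^*{\mathcal F})$ locally at a simple point. Where you diverge is the final local verification, and this is precisely the step you leave as a sketch (``the hard part''). The paper does not attempt your direct residue computation along Puiseux parametrizations of the branches of $\psi^{-1}(x_1x_2=0)$; instead it applies Proposition~\ref{prop:appuno} once more, to the \emph{list of functions} $(x_1\circ\psi,\,x_2\circ\psi)$, obtaining a further composition of point blowing-ups $\sigma'$ after which both functions are monomial times a unit, $x_i\circ\psi\circ\sigma'=U_iu^{a_i}v^{b_i}$. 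Corollary~\ref{cor:modlogsucblowing} legitimizes this extra reduction, and in the monomial situation the pulled-back form is $\lambda'\,du/u+\mu'\,dv/v+\alpha$ with $\alpha$ holomorphic and $(\lambda',\mu')=(a\lambda+c\mu,\,b\lambda+d\mu)$; non-resonance of $(\lambda,\mu)$ forces $(\lambda',\mu')\neq(0,0)$, the absence of saddle-nodes rules out the remaining degenerate case, and the conclusion is immediate from Remark~\ref{rk:indicessingsimples}. This buys you a trivial computation on smooth coordinate axes in place of your calculation, which would have to define and evaluate the Camacho--Sad index along possibly \emph{singular} branches $\Gamma$, handle branches common to $\psi_1=0$ and $\psi_2=0$, and justify that the non-closed correction terms of $\tilde P,\tilde Q$ do not contribute.

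As written, your argument is therefore incomplete at exactly the point where the real work lies: asserting that ``a direct computation \ldots should show the two coincide'' is not a proof, and the identification of the Camacho--Sad index of $\psi^*{\mathcal F}'$ along a singular branch with a residue of $\psi^*\eta$ needs an argument (the definition involves the decomposition $g\omega=h\,df+f\alpha$, not a bare residue of a non-closed logarithmic form). The fix is cheap and already available to you: one more round of two-dimensional desingularization of the pair $(x_1\circ\psi, x_2\circ\psi)$ turns every branch into a coordinate axis and collapses your computation to the monomial case.
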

\begin{proof}
We see that (2) implies (1) by choosing the identity morphism. Let us show now that (1) implies (2). We have to test that $\phi^*{\mathcal D}$ is a divisorial model for $\phi^*{\mathcal G}$.

Note that the existence of the pull-back $\phi^*{\mathcal F}$ is guarantied by Proposition
\ref{prop:pullbackgeneralizedcurve}
and we also know that $\phi^*{\mathcal F}$ is a generalized curve. Let $\pi:(M,\pi^{-1}(0))\rightarrow ({\mathbb C}^2,0)$ be  a reduction of singularities of $\mathcal F$ by blowing-ups centered at points. In view of Proposition \ref{prop:appdos} in the Appendix I, there is a commutative diagram of morphisms
  $$
 \begin{array}{ccc}
 ({\mathbb C}^2,0)&\stackrel{\sigma}{\longleftarrow}&(N,\sigma^{-1}(0))\\
 \phi\downarrow\;\; &&\;\;\downarrow\psi\\
 ({\mathbb C}^2,0)&\stackrel{\pi}{\longleftarrow}&(M,\pi^{-1}(0))
 \end{array},
 $$
where $\sigma$ is the composition of a finite sequence of blowing-ups. Let us recall that $S=\operatorname{Supp}({\mathcal D})$ is the union of the invariant curves of $\mathcal F$.

Note that $\phi^*{\mathcal F}$ exists, since $\phi$ is $S$-transverse and we also have that the pullback $\phi^*{\mathcal D}$ exists by the same reason.
We have that $\phi\circ\sigma$ is $S$-transverse, since $\phi$ is $S$-transverse, by hypothesis, and $\sigma$ is a composition of blowing-ups. Hence $\pi\circ\psi$ is $S$-transverse, because we have $\pi\circ\psi=\phi\circ\sigma$. This implies that $\psi$ is $\pi^{-1}(S)$-transverse and then it is $\pi^*{\mathcal D}$-transverse, since
$$
\operatorname{Supp}(\pi^*{\mathcal D})\subset \pi^{-1}(S).
$$
In this situation, we have that
\begin{eqnarray}
\label{eq:transformados1}
 \sigma^*(\phi^*{\mathcal F})=(\phi\circ\sigma)^*{\mathcal F}&=& (\pi\circ\psi)^*{\mathcal F}
 = \psi^*(\pi^*{\mathcal F}),
 \\
 \label{eq:transformados2}
  \sigma^*(\phi^*{\mathcal D})=(\phi\circ\sigma)^*{\mathcal D}&=& (\pi\circ\psi)^*{\mathcal D}= \psi^*(\pi^*{\mathcal D}).
\end{eqnarray}
 We know that $\pi^*{\mathcal D}$ is a divisorial model for $\pi^*{\mathcal F}$ in view of Corollary \ref{cor:modlogsucblowing}. Also by Corollary \ref{cor:modlogsucblowing}, we have that $\phi^*{\mathcal D}$ is a divisorial model of $\phi^*{\mathcal F}$ if and only if $\sigma^*(\phi^*{\mathcal D})$ is a divisorial model of $\sigma^*(\phi^*{\mathcal F})$. In view of Equations \ref{eq:transformados1} and \ref{eq:transformados2}, it is enough to show that $\psi^*(\pi^*{\mathcal D})$ is a divisorial model for
$\psi^*(\pi^*{\mathcal F})$.

Recalling that $\pi^*{\mathcal F}$ is desingularized,  that $\pi^*{\mathcal D}$ is a divisorial model for $\pi^*{\mathcal F}$ and that the desired verification may be done in a local way, we have reduced the problem to the case when $\mathcal F$ is desingularized. More precisely:
\begin{quote}
In order to prove that (1) implies (2), it is enough to consider only the case when $\mathcal F$ is desingularized.
\end{quote}
Then, we assume that $\mathcal F$ is desingularized. If $\mathcal F$ is non-singular, we have that ${\mathcal F}=(dx=0)$ and (up to multiply by a constant) ${\mathcal D}=\operatorname{Div}(x)=1\cdot (x=0)$. In this case $\phi^*({\mathcal D})=\operatorname{Div}(x\circ \phi)$ and $\phi^*{\mathcal F}=(d(x\circ\phi=0))$, we are done by Example \ref{ex:integral primera}. Assume that $\mathcal F$ has a simple singular point at the origin, then it is generated by a logarithmic $1$-form
$$
\eta=(\lambda+f(x,y))\frac{dx}{x}+(\mu+g(x,y))\frac{dy}{y},\quad f(0,0)=g(0,0)=0,
$$
where $(\lambda,\mu)$ is non resonant, in the sense that $m\lambda+n\mu\ne 0$ for any pair of non-negative integer numbers $n,m$ such that $n+m\geq 1$. Moreover, the divisor $\mathcal D$ is given by
$${
\mathcal D}=\lambda \operatorname{Div}(x)+\mu \operatorname{Div}(y).
$$
Now, we apply Proposition \ref{prop:appuno} to desingularize the list of functions $(x\circ \phi, y\circ \phi)$, by means of a sequence of blowing-ups $\sigma':(N',{\sigma'}^{-1}(0))\rightarrow ({\mathbb C}^2,0)$. It is enough to verify that ${\sigma'}^*(\phi^*{\mathcal D})$ is a divisorial model for ${\sigma'}^*(\phi^*{\mathcal F})$ at the points $p\in {\sigma'}^{-1}(0)$. This reduces the problem to the case in which $\phi$ has the form
$$
x\circ\phi=Uu^av^b,\quad y\circ\phi=Vu^cv^d, \quad U(0,0)\ne 0\ne V(0,0),
$$
where $a+b\geq 1$ and $c+d\geq 1$ (note that none of these functions is identically zero, since $\phi$ is $S$-transverse). Put $(\lambda',\mu')=(\lambda a+\mu c, \lambda b+\mu d)$, we have
\begin{eqnarray*}
\phi^*\eta&=& \lambda'\frac{du}{u}+\mu'\frac{dv}{v}+\alpha,
\quad \mbox{ $\alpha$ holomorphic}, \\
\phi^*{\mathcal D}&=& \lambda'\operatorname{Div}(u)+\mu'\operatorname{Div}(v).
\end{eqnarray*}
Now, we see that $\phi^*{\mathcal D}$ is a divisorial model of ${\phi^*{\mathcal F}}$. Note that either $\lambda'\ne 0$ or $\mu'\ne 0$, since $a+b+c+d\geq 2$ and there are no resonances between $\lambda,\mu$. If $\lambda'\ne 0=\mu'$, we have a non singular foliation with $u=0$ the only invariant curve and $\phi^*{\mathcal D}=\lambda'\operatorname{Div}(u)$, we are done. If $\lambda'\ne 0\ne \mu'$ we have a simple singularity and $\phi^*{\mathcal D}=\lambda'\operatorname{Div}(u)+\mu'\operatorname{Div}(v)$ is a divisorial model, as we know by Remark \ref{rk:indicessingsimples}.
\end{proof}

\begin{corollary} Let $\mathcal F$ be a generalized curve on $({\mathbb C}^2,0)$ and consider a divisorial model ${\mathcal D}$ of ${\mathcal F}$. Then the $\mathbb C$-divisor $\mathcal D$ is non-dicritical.
\end{corollary}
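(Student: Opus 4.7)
The plan is to argue by contradiction, exploiting the stability of divisorial models under pullback (Proposition \ref{prop:pullbacklogmod}) to turn a hypothetical dicriticalness of $\mathcal D$ into a contradiction about the support of a divisorial model. Assume $\mathcal D$ is dicritical at some point $p$ of its support. Then by definition there exists a $\mathcal D$-transverse holomorphic map
$$
\phi:({\mathbb C}^2,0)\longrightarrow (({\mathbb C}^2,0),p)
$$
with $\phi(y=0)\subset \operatorname{Supp}(\mathcal D)$ and $\phi^{*}\mathcal D=0$.

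The first step is to unpack what $\mathcal D$-transversality means in the present setting. Because $\mathcal D$ is a divisorial model for $\mathcal F$, its support $S=\operatorname{Supp}(\mathcal D)$ is precisely the union of the invariant branches of $\mathcal F$ at $p$, by Definition \ref{def:modelodimensiondos}. Therefore $\phi$ is $S$-transverse, and Proposition \ref{prop:pullbackgeneralizedcurve} (in its dimension-two incarnation) guarantees that $\phi^{*}\mathcal F$ exists and is itself a generalized curve on $({\mathbb C}^2,0)$. Since $\phi(y=0)\subset S$ and $S$ is $\mathcal F$-invariant, the branch $\{y=0\}$ is an invariant curve of $\phi^{*}\mathcal F$ passing through the origin.

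Next I would apply Proposition \ref{prop:pullbacklogmod}: the $\mathcal D$-transversality of $\phi$ means that $\phi^{*}\mathcal D$ is a divisorial model for $\phi^{*}\mathcal F$. By the support clause in Definition \ref{def:modelodimensiondos}, the germ of $\operatorname{Supp}(\phi^{*}\mathcal D)$ at the origin must contain every invariant branch of $\phi^{*}\mathcal F$ at the origin, in particular $\{y=0\}$. But $\phi^{*}\mathcal D=0$ has empty support, which is the desired contradiction. Hence $\mathcal D$ is non-dicritical.

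The proof is essentially formal once the two ingredients are identified, and I expect no serious obstacle. The only subtlety to verify carefully is that the $\mathcal D$-transversality supplied by the definition of dicriticalness is exactly the $S$-transversality needed to invoke Propositions \ref{prop:pullbackgeneralizedcurve} and \ref{prop:pullbacklogmod}; but this is immediate since $S=\operatorname{Supp}(\mathcal D)$ and $\mathcal D$-transversality is by definition $\operatorname{Supp}(\mathcal D)$-transversality. No case analysis or computation with resonances is needed: the support obstruction alone forces the conclusion.
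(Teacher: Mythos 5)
Your proof is correct and follows essentially the same route as the paper: both assume a dicritical transversal $\phi$, invoke Proposition \ref{prop:pullbacklogmod} to conclude that $\phi^*{\mathcal D}=0$ would be a divisorial model for $\phi^*{\mathcal F}$, and derive a contradiction from the fact that a divisorial model must have nonempty support. The only cosmetic difference is that you exhibit $\{y=0\}$ as an explicit invariant branch of $\phi^*{\mathcal F}$, whereas the paper simply appeals to the existence of at least one invariant branch for any foliation on $({\mathbb C}^2,0)$.
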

\begin{proof} Assume that there is a $\mathcal D$-transverse map
$\phi:({\mathbb C}^2,0)\rightarrow ({\mathbb C}^2,0)$ such that $\phi^*{\mathcal D}=0$
 and $\phi(y=0)\subset \operatorname{Supp}({\mathcal D})$. By Proposition \ref{prop:pullbacklogmod}, we know that $\phi^*{\mathcal D}$ is a divisorial model of $\phi^*{\mathcal F}$ at the origin. But this is not possible, since we know that $\phi^*{\mathcal F}$ exists and hence the divisorial model at the origin  cannot be zero.
 \end{proof}

\section{Reduction of Singularities of Foliated Spaces}
Before considering reduction of singularities, let us precise what we mean by a {\em desingularized foliated space } in the case of generalized hypersurfaces. This concept is developed for any foliated space in \cite{Can}.
\begin{definition}
\label{def:foliatedspace}
A {\em foliated space} is just a data
$((M,K),E,{\mathcal F})$, where
\begin{enumerate}
\item The {\em ambient space} $(M,K)$ is a germ of non-singular complex analytic variety along a connected and compact analytic subset $K\subset M$.
\item  The {\em divisor} $E\subset M$ is a normal crossings divisor on $M$. More precisely, it is a germ along $E\cap K$.
\item  The {\em foliation} $\mathcal F$ is a germ of holomorphic foliation on $M$ along the germification set $K$.
\end{enumerate}
We say that a foliated space $((M,K),E,{\mathcal F})$ is of {\em generalized hypersurface type} if and only if $\mathcal F$ is a generalized hypersurface and all the irreducible components of $E$ are invariant for $\mathcal F$.
\end{definition}

We say that a foliated space $((M,K),E,{\mathcal F})$ is {\em desingularized} if it is {\em simple } at any point $p\in K$,  in the sense that we detail in Subsection \ref{definicionsimplepoint}. The property of being a simple point is an open property and hence it is also satisfied in an open neighborhood of $K$.

\subsection{Simple Points}
 \label{definicionsimplepoint}
 The definition of ``simple point'' in any dimension has been introduced in \cite{Can-C, Can}. Here we recall this concept
particularized to the case of foliated spaces of generalized hypersurface type.

Consider a foliated space $((M,K), E, {\mathcal F})$ of generalized hypersurface type.
Let us define when a point $p\in K$ is a simple point for the foliated space.

Denote by $\tau$ the {\em dimensional type} of ${\mathcal F}$ at $p$ (see \cite{Can,Can-C, Can-M-RV}). Roughly speaking, the dimensional type $\tau$ is the minimum number of local coordinates needed to describe ${\mathcal F}$ at $p$. Denote by $e$ the number of irreducible components of $E$ through $p$. The first request for $p$ to be simple is that $\tau-1\leq e \leq \tau$. In this way we have two categories of simple points:
\begin{enumerate}
\item[a)] {\em Simple corner points:} the simple points where $e=\tau$.
\item[b)]{\em Simple trace points:} the simple points where $e=\tau-1$.
\end{enumerate}
Assume that $e=\tau$. Then, there are coordinates $(x_1,x_2,\ldots,x_n)$ at $p$ such that $E=(\prod_{i=1}^\tau x_i=0)$ and  ${\mathcal F}$ is locally defined at $p$ by a meromorphic differential $1$-form $\eta$ written as
\begin{equation}
\label{simplecorners}
\eta=
\sum_{i=1}^\tau (\lambda_i+a_i(x_1,x_2,\ldots,x_\tau))\frac{dx_i}{x_i},\quad a_i\in {\mathcal O}_{M,p},
\end{equation}
where $a_i(0)=0$ for $i=1,2,\ldots,\tau$. We say that $p$ is a {\em simple corner} if the following {\em non resonance property} holds:
\begin{quote}
\label{quote:resonance}
 ``For any $\mathbf{0}\ne \mathbf{m}=(m_i)_{i=1}^\tau\in {\mathbb Z}_{\geq 0}^\tau$, we have that $\sum_{i=1}^\tau{m_i}\lambda_i\neq 0$.''
\end{quote}
Let us note that $\prod_{i=1}^\tau\lambda_i\ne0$.
 \begin{remark} It is known that the germs at $p$ of the irreducible components of $E$ are the only invariant germs of hypersurface for $\mathcal F$ at a simple corner $p$. One way of verifying this is as follows.  First of all, we can assume that $\tau=n$, because of the ``cylindric shape'' of the foliation over its projection on the first $\tau$ coordinates. Assume now that there is another invariant hypersurface. Then we should have an invariant curve $t\mapsto \gamma(t)$ as follows:
 $$
 \gamma(t)=(t^{m_1}U_1(t),t^{m_2}U_2(t),\ldots,t^{m_n}U_n(t)),\quad U_i(0)\ne 0, \; i=1,2,\ldots,n.
 $$
 Let $\eta$ be as in Equation \ref{simplecorners}. The fact that $\gamma^*\eta=0$ implies that $\sum_{i=1}^n m_i\lambda_i=0$ and this contradicts the property  of non-resonance.
 \end{remark}

Assume now that $e=\tau-1$. The point $p$ is a {\em simple trace point} if and only if there is a invariant germ of non-singular hypersurface $H_p$ at $p$, not contained in $E$ and having normal crossings with $E$, in such a way that the germ of $\mathcal F$ at $p$ is a simple corner with respect to the normal crossings divisor $E\cup H_p$.

\begin{remark}
 \label{rk:regular implicasimple}
 Given a foliated space $((M,K),E,{\mathcal F})$ of generalized hypersurface type, any point $p\in M\setminus\operatorname{Sing}({\mathcal F})$ is a simple point. In this case the dimensional type is $\tau=1$. If $e=0$, we have an ``improper'' trace point and the foliation is locally given by $dx=0$, where $x$ is a local coordinate, we write it as $dx/x=0$ and we see that $(M,\emptyset,\mathcal F)$ fulfils the definition of simple point for generalized hypersurfaces. If $e\geq 1$ we necessarily have that $e=1$, since all the components of $E$ are invariant and we have only one of them; we can choose an appropriate coordinate such that $x=0$ is the divisor $E$ and $\mathcal F$ is given by $dx/x=0$; hence it satisfies the definition of simple point.

The above property is true in the general case when all the components of $E$ are invariant. In presence of dicritical components, we have to assure the normal crossings property between $\mathcal F$ and the divisor. See \cite{Can}.
\end{remark}

In our current case of generalized hypersurface type foliated spaces, simple points may be described by means of the {\em logarithmic order} as follows. Consider a foliated space $((M,K), E, {\mathcal F})$  of generalized hypersurface type. Take a point $p\in K$. There are
 local coordinates $(x_1,x_2,\ldots,x_n)$ such that $E=(\prod_{i=1}^ex_i=0)$ and ${\mathcal F}$ is generated locally at $p$ by an integrable meromorphic  $1$-form
$$
\eta=\sum_{i=1}^e a_i(x)\frac{dx_i}{x_i}+\sum_{i=e+1}^na_i(x)dx_i,\quad a_i\in {\mathcal O}_{M,p},
$$
where the coefficients $a_i$ do not have a common factor, for $i=1,2,\ldots,n$. The {\em logarithmic order $\operatorname{LogOrd}_p(\eta,E)$} of $\eta,E$ at the origin is defined by
$$
\operatorname{LogOrd}_p(\eta,E)=\min\{\nu_0(a_i);\; i=1,2,\ldots,n\}.
$$
We also put $\operatorname{LogOrd}_p({\mathcal F},E)=\operatorname{LogOrd}_p(\eta,E)$, when $\eta$ generates ${\mathcal F}$ as above.
\begin{proposition}
 \label{prop:simplepointsandlogorder}
 Assume that $((M,K), E, {\mathcal F})$ is a foliated space of generalized hypersurface type. Take a point $p\in K$. The following statements are equivalent
\begin{enumerate}
\item The point $p$ is a simple point for $((M,K), E, {\mathcal F})$.
\item $\operatorname{LogOrd}_p({\mathcal F},E)=0$.
\end{enumerate}
\end{proposition}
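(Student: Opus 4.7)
The plan is to prove both implications, using a direct computation for $(1) \Rightarrow (2)$ and reducing to the two-dimensional case via Mattei-Moussu transversality for $(2) \Rightarrow (1)$.

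For $(1) \Rightarrow (2)$, I exhibit an explicit $E$-logarithmic generator with log order zero at $p$. In the simple corner case $e = \tau$, the normal form of Equation \ref{simplecorners}, namely $\eta = \sum_{i=1}^\tau (\lambda_i + a_i)\, dx_i/x_i$ with $\lambda_i \neq 0$ and $a_i(0) = 0$, is already $E$-logarithmic, its first coefficient is a unit, and so $\operatorname{LogOrd}_p({\mathcal F},E) = 0$. In the simple trace case $e = \tau - 1$, let $H_p = (x_\tau = 0)$ be the auxiliary invariant hypersurface, and multiply the $(E \cup H_p)$-logarithmic generator $\eta' = \sum_{i=1}^\tau (\lambda_i + a_i)\, dx_i/x_i$ by $x_\tau$ to obtain
\[
\eta = \sum_{i=1}^{\tau-1}(\lambda_i + a_i)\, x_\tau\, \frac{dx_i}{x_i} + (\lambda_\tau + a_\tau)\, dx_\tau,
\]
whose $dx_\tau$-coefficient is a unit at $p$ since $\lambda_\tau \neq 0$. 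The other coefficients $(\lambda_i + a_i)\, x_\tau$ share no factor with this unit, so $\operatorname{LogOrd}_p({\mathcal F}, E) = 0$.

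For $(2) \Rightarrow (1)$, fix a generator $\eta = \sum_{i=1}^e a_i\, dx_i/x_i + \sum_{i > e} a_i\, dx_i$ with $\gcd(a_i) = 1$ and $a_{j_0}$ a unit at $p$ for some $j_0$. I would apply the Mattei-Moussu Transversality Theorem to choose a generic closed immersion $\phi : ({\mathbb C}^2, 0) \hookrightarrow ({\mathbb C}^n, 0)$ along which multiplicities (hence log order) are preserved and the relevant invariant structure is faithfully recorded. By Proposition \ref{prop:pullbackgeneralizedcurve}, $\phi^*{\mathcal F}$ is a generalized curve, and $\phi^*\eta$ is a $\phi^{-1}(E)$-logarithmic generator of log order zero. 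Simpleness of $p$ for $({\mathcal F}, E)$ is equivalent to simpleness of the origin for $(\phi^*{\mathcal F}, \phi^{-1}(E))$, as is visible from the explicit standard forms used in the forward direction, since a generic transversal detects both the dimensional type $\tau$ and the auxiliary hypersurface $H_p$.

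In the resulting two-dimensional problem, either $\mathcal F$ is regular at $p$---in which case $p$ is simple by Remark \ref{rk:regular implicasimple}---or it is singular at $p$. In the singular case, the presence of a unit coefficient in $\eta$ together with the invariance of each irreducible component of $E$ forces both eigenvalues of the linear part of the reduced generator $\omega = f_E \eta$ to be nonzero, since a zero eigenvalue would produce a saddle-node at $p$ or after a single blow-up, contradicting the no-hidden-saddle-node hypothesis. The non-resonance $\sum m_i \lambda_i \neq 0$ for $\mathbf{m} \in {\mathbb Z}_{\geq 0}^\tau \setminus \{\mathbf 0\}$ then follows from non-dicriticality of $\mathcal F$ via Corollary \ref{cor:dicriticonormalcrossings}: any such resonance would, by Lemma \ref{lema:normalcrossings}, yield a dicritical blow-up. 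The main obstacle is this 2D analysis, which must simultaneously control the dimensional type, the non-degeneracy of the linear part, and the absence of resonances from the single datum $\operatorname{LogOrd}_p({\mathcal F},E) = 0$, drawing only on the generalized-hypersurface hypothesis.
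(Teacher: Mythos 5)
Your direction $(1)\Rightarrow(2)$ is fine and agrees with the paper, which simply records it as immediate from the definitions. The gap is in $(2)\Rightarrow(1)$. The pivot of your argument is the claim that simpleness of $p$ for $({\mathcal F},E)$ is equivalent to simpleness of the origin for $(\phi^*{\mathcal F},\phi^{-1}(E))$ on a generic Mattei--Moussu transversal, and that such a transversal ``detects'' both the dimensional type $\tau$ and the auxiliary hypersurface $H_p$. This is false in general. If $p$ is a simple corner with $e=\tau\geq 3$, a generic two-plane through $p$ meets $E$ in $\tau\geq 3$ smooth branches through the origin, so $(\phi^*{\mathcal F},\phi^{-1}(E))$ can never be simple at $0$ (a two-dimensional simple point has at most two invariant branches), even though $p$ is simple upstairs. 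The equivalence the paper does use (Remark \ref{rk:sectiondimdos}) is only for sections transverse to the singular locus at equireduction points, where the local dimensional type is at most two; it is not available at an arbitrary point where all you know is $\operatorname{LogOrd}_p({\mathcal F},E)=0$. Moreover, the hardest part of the implication --- producing, in the trace case $e=\tau-1$, a germ of invariant hypersurface $H_p$ not contained in $E$ and having normal crossings with $E$ --- is an $n$-dimensional existence statement that a two-plane section cannot deliver: a slice at best gives an invariant curve in the slice, not a hypersurface in the ambient space.

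The paper's actual route stays in dimension $n$: it first shows $\tau-1\leq e\leq\tau$ by exhibiting an explicit nonsingular vector field in the kernel of $\eta$ whenever $e\leq\tau-2$ (so the dimensional type would drop); it then rules out resonances $\sum_i m_i\lambda_i=0$ by pulling back along explicit monomial maps $({\mathbb C}^2,0)\rightarrow({\mathbb C}^n,0)$ --- two-dimensional tests used as maps \emph{into} $M$, not as sections --- where a resonance would force a saddle-node or a dicritical behaviour, contradicting the generalized hypersurface hypothesis; and in the trace case it constructs $H_p$ degree by degree as a formal series $z^*=z-f(\mathbf{y})$, solving $p_m\omega_0+\bar\omega+dp_m=0$ at each degree via the Frobenius integrability condition and the non-resonance Lemma \ref{lema:noresonanciatotal}, finally invoking convergence of formal invariant hypersurfaces of generalized hypersurfaces. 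None of these three steps is supplied, or replaceable, by your transversality reduction, so the proposal as written does not close the implication $(2)\Rightarrow(1)$.
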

\begin{proof} See also \cite{Can-M-RV, Moli}.
We provide a direct proof in Appendix II. \end{proof}

Thus, the locus of non simple points coincides with the {\em log-singular locus\/}:
$$\operatorname{LogSing}({\mathcal F}, E)=\{p\in M;\quad \operatorname{LogOrd}_p({\mathcal F},E)\geq 1 \}.$$

%\section{Reduction of Singularities}
%\label{Reduction of Singularities}
\subsection{Reduction of Singularities}
\label{reductionofsingularities}
Let us recall now what we mean by a reduction of singularities of a foliated space of generalized hypersurface type. The existence of reduction of singularities for germs of codimension one holomorphic foliations is know from the paper of Seidenberg \cite{Sei} in ambient dimension two; when the ambient dimension is three, it has been proven in \cite{Can}. In general ambient dimensions it is still an open problem, but there is reduction of singularities for foliated spaces of generalized hypersurface type \cite{Fer-M}.

Take a foliated space $((M,K), E,{\mathcal F})$ of generalized hypersurface type. A {\em reduction of singularities of $((M,K), E,{\mathcal F})$}
is a transformation of foliated spaces
\begin{equation}
\label{eq:reducciondesingularidades}
\pi: ((M',K'),E',{\mathcal F}')\rightarrow ((M,K),E,{\mathcal F})
\end{equation}
obtained by composition of a finite sequence of admissible blowing-ups of foliated spaces in such a way that $((M',K'),E',{\mathcal F}')$ is desingularized.

A non-singular and connected closed analytic subset $(Y,Y\cap K)\subset (M,K)$ is an {\em admissible center} for $((M,K), E,{\mathcal F})$ when it is invariant for $\mathcal F$  and it has normal crossings with $E$. In this situation, we can perform the admissible blowing-up with center $Y$:
$$
\pi_1:((M_1,K_1), E^1,{\mathcal F}_1)\rightarrow ((M,K), E,{\mathcal F}),\quad  K_1=\pi_1^{-1}(K),
$$
where ${\mathcal F}_1=\pi_1^*{\mathcal F}$ is the transform of ${\mathcal F}$ and $E^1=\pi_1^{-1}(E\cup Y)$. Such transformations may be composed. Then, a reduction of singularities $\pi$ as in Equation  \ref{eq:reducciondesingularidades} is a finite composition $$
\pi=\pi_1\circ\pi_2\circ\cdots\circ\pi_s,
$$
where each $\pi_i$ is an admissible blowing-up of foliated spaces, for $i=1,2,\ldots,s$. The number $s$ is called the {\em length } of $\pi$ and it will be important in order to perform inductive arguments.
\begin{remark}
We recall that a reduction of singularities of the (finite) union of invariant hypersurfaces induces a reduction of singularities of the foliated space, in the framework of generalized hypersurfaces, see \cite{Fer-M, Can-M-RV}. Then, we can assure the existence of reduction of singularities for a given foliated space $((M,K), E,{\mathcal F})$ of generalized hypersurface type.
\end{remark}

\subsection{Notations on a Reduction of Singularities}
\label{Notations on a Reduction of Singularities}

Let us introduce some useful notations concerning a given reduction of singularities $\pi$ as in Equation \ref{eq:reducciondesingularidades}.
The morphism $\pi$ is a finite composition  $\pi=\pi_1\circ\pi_2\circ\cdots\circ\pi_s$, where
$$
\pi_j:((M_j,K_{j}), E^j, {\mathcal F}_j)\rightarrow ((M_{j-1},K_{j-1}), E^{j-1},{\mathcal F}_{j-1}),\quad j=1,2,\ldots,s,
$$
is the admissible blowing-up with center $Y_{j-1}\subset M_{j-1}$. The initial and final foliated spaces are given by
\begin{eqnarray*}
((M_0,K_0),E^0,{\mathcal F}_0)&=&((M,K),E,{\mathcal F}),\\
((M_s,K_s),E^s,{\mathcal F}_s)&=&((M',K'),E',{\mathcal F}')
\end{eqnarray*}
The exceptional divisor of $\pi_j$ is $E^j_j=\pi_j^{-1}(Y_{j-1})$.
 Moreover, for any $j=1,2,\ldots, s$  we write the decomposition into irreducible components of $E^{j-1}$ and $E^j$ as
 $$
 E^{j-1}=\cup_{i\in I_0\cup \{1,2,\ldots,j-1\}}E^{j-1}_i,\quad
 E^{j}=\cup_{i\in I_0\cup \{1,2,\ldots,j\}}E^{j}_i,
 $$
where $E^{j}_i$ is the strict transform of $E^{j-1}_i$, for $i\in I_0\cup \{1,2,\ldots,j-1\}$.
If we denote $I=I_0\cup \{1,2,\ldots,s\}$ and $E'_i=E^s_i$ for $i\in I$, we have that $E'=\cup_{i\in I}E'_i$. In the same way, we can express the decomposition of $E$ into irreducible components as $E=\cup_{i\in I_0}E_i$, where $E_i=E^0_i$, for $i\in I_0$.

The inductive arguments on the length of $\pi$ are just based on the fact that after a first blowing-up, we have a reduction of singularities of smaller length. That is, when $s\geq 1$, we consider the decomposition $\pi=\pi_1\circ \sigma$, where $\sigma=\pi_2\circ\pi_3\circ\cdots\circ\pi_s$. Thus, we have
\begin{equation}
\begin{array}{ccc}
\pi_1: ((M_1,K_1), E^1,{\mathcal F}_1)&\rightarrow& ((M,K), E,{\mathcal F}),
\\
\sigma: ((M',K'), E',{\mathcal F}')&\rightarrow& ((M_1,K_1), E^1,{\mathcal F}_1).
\end{array}
\end{equation}
Note that $\sigma$ is a reduction of singularities of length $s-1$.
\begin{remark}
For the shake of simplicity,
we do not detail certain properties about the germs of spaces.
We will just use expressions as ``a point close enough to the germification set'' or ``by taking appropriate representatives''. In each case, we hope the reader to supply the exact meaning of these expressions.
\end{remark}

Take a point $p\in M$, close enough to the germification set $K$. Then $\pi$ induces a reduction of singularities over the ambient space $(M,p)$ that we denote
\begin{equation}
\label{eq:pisobrep}
\pi_p: ((M',K'_p), E',{\mathcal F}')\rightarrow ((M,p),E,{\mathcal F}),\quad  K'_p=\pi^{-1}(p).
\end{equation}
We can decompose it as $\pi_p=\pi_{1,p}\circ\sigma_p$, where
\begin{equation}
\label{eq.redsingsobrep}
\begin{array}{cccc}
\pi_{1,p}: ((M_1,K_{1,p}), E^1,{\mathcal F}_1)&\rightarrow& ((M,p), E,{\mathcal F}),&
K_{1,p}=\pi_1^{-1}(p),
\\
\sigma_p: ((M',K'_p), E',{\mathcal F}')&\rightarrow& ((M_1,K_{1,p}), E^1,{\mathcal F}_1).&
\end{array}
\end{equation}

Let us unify next the notations between the components of the exceptional divisors and the irreducible invariant hypersurfaces, not necessarily contained in them.
Denote by $S'\subset M'$ the union of invariant hypersurfaces of ${\mathcal F}'$ not contained in the  divisor $E'$. We know that $S'$ is a disjoint union of non singular hypersurfaces and $D'=E'\cup S'$ is also a normal crossings divisor on $M'$. Since the irreducible components of $E'$ are invariant, we have that $D'$ is the union of all invariant hypersurfaces of ${\mathcal F}'$. Let us denote
$$
S'=\cup_{b\in B}S'_b
$$
the decomposition into irreducible components of $S'$, where we choose the set of indices $B$ in such a way that $B\cap I=\emptyset$. Denote $D'_i=E'_i$ if $i\in I$ and $D'_b=S'_b$ if $b\in B$. We have that
$$
D'=\cup_{j\in I\cup B}D'_j,
$$
is the decomposition into irreducible components of $D'$. Moreover, let us denote by $D_j=\pi(D'_j)$, for $j\in I_0\cup B$. Then $D=\cup_{j\in I_0\cup B}D_j\subset M$ is the union of the irreducible invariant hypersurfaces of $\mathcal F$.

\subsection{Equidesingularization}

We recall here the concept of {\em equireduction point} for a foliated space $((M,K),E,{\mathcal F})$ of generalized hypersurface type. This idea has already been useful in \cite{Can-M} and \cite{Can-M-RV}.
\begin{remark}
In our applications we will consider points that can be outside of the germification set, but close enough to it. So, if we say ``take a point $p\in M$'' we understand that it is ``close enough to $K$''.
\end{remark}

Let us take a point $p\in M$. We say that $p$ is an {\em even point} for $((M,K),E,{\mathcal F})$ if either $p\not\in  \operatorname{Sing}({\mathcal F})$ (see Remark \ref{rk:regular implicasimple}) or $p\in \operatorname{Sing}({\mathcal F})$ and  the singular locus $\operatorname{Sing}({\mathcal F})$ satisfies the following properties, locally at $p$:
\begin{enumerate}
\item[a)] The singular locus  $\operatorname{Sing}({\mathcal F})$ has codimension two in $M$, it is non-singular and it has normal crossings with $E$.
\item[b)] The foliation ${\mathcal F}$ is equimultiple along $\operatorname{Sing}({\mathcal F})$. In particular, each irreducible component of $E$ through $p$ contains $\operatorname{Sing}({\mathcal F})$ and there are at most two of them.
\end{enumerate}
We say that $p$ is an {\em equireduction point}, or a point of {\em $2$-equireduction}, for the foliated space $(M,E,{\mathcal F})$ if it is an even point and this is stable under blowing-ups centered at the singular locus. More precisely, we say that an even point $p\in M$ is an {\em equireduction point} for $((M,K), E, {\mathcal F})$ if for any finite sequence of local blowing-ups over $p$
\begin{equation}
\label{eq:sucesiondeequirreduccion}
((M,p),E,{\mathcal F})\stackrel{\sigma_1}{\leftarrow}((M_1,p_1),E^1,{\mathcal F}_1)
\stackrel{\sigma_2}{\leftarrow}\cdots
\stackrel{\sigma_m}{\leftarrow}
((M_m,p_m),E^m,{\mathcal F}_m)
\end{equation}
such that the center of $\sigma_i$ is $\operatorname{Sing}({\mathcal F}_{i-1})$, for $i=1,2,\ldots,m$, we have the following properties:
 \begin{enumerate}
 \item The point $p_m$ is an even point for $((M_m,p_m),E^m,{\mathcal F}_m)$.
 \item If $p_m\in \operatorname{Sing}({\mathcal F}_m)$, the induced morphism
 $
\operatorname{Sing}({\mathcal F}_{m})\rightarrow
\operatorname{Sing}({\mathcal F}_{0})
$ is étale.
 \end{enumerate}

 Let us recall that a {\em local blowing-up} is the composition of a blowing-up
 $$\pi:(M',\pi^{-1}(p))\rightarrow (M,p)$$ with an immersion of germs $(M',p')\rightarrow (M',\pi^{-1}(p))$.

Next two results may be obtained by a direct adaptation of the statements proved in \cite{Can-M} to the case of generalized hypersurfaces:

\begin{proposition}
 \label{pro:codimensionnoequireduccion}
 Let $((M,K),E,{\mathcal F})$ be a foliated space of generalized hypersurface type. The set $\operatorname{Z}({\mathcal F}, E)$ of non-equireduction points is a closed analytic subset of $M$ of codimension at least three.
 \end{proposition}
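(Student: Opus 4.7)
The strategy is to describe $\operatorname{Z}({\mathcal F}, E)$ as a closed analytic subset of $M$, contained in the singular locus of $\mathcal F$ (which has codimension at least $2$), and then to show that it is a proper analytic subset of every codimension $2$ component of $\operatorname{Sing}({\mathcal F})$. I would begin by observing that every regular point of $\mathcal F$ is a simple point (Remark \ref{rk:regular implicasimple}), and any simple point is trivially an equireduction point, since the admissible blowing-ups centered at the singular locus leave a regular foliated space invariant. Consequently $\operatorname{Z}({\mathcal F}, E) \subset \operatorname{Sing}({\mathcal F})$, and by Proposition \ref{prop:redsinghypgeneralizada} this singular locus is contained in the union of the invariant hypersurfaces of $\mathcal F$, with codimension at least $2$ in $M$.

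Next, the plan is to analyze the locus of non-even points. This locus is the union of the following four explicit closed analytic subsets of $\operatorname{Sing}({\mathcal F})$: the singular locus of $\operatorname{Sing}({\mathcal F})$ itself; the union of the irreducible components of $\operatorname{Sing}({\mathcal F})$ of codimension at least $3$ in $M$; the locus where $\operatorname{Sing}({\mathcal F}) \cup E$ fails to have normal crossings; and the non-equimultiplicity locus of ${\mathcal F}$ along the codimension $2$ part of $\operatorname{Sing}({\mathcal F})$. Each of these is proper in any codimension $2$ irreducible component of $\operatorname{Sing}({\mathcal F})$: generic non-singularity and generic equimultiplicity hold by general principles together with Proposition \ref{prop:multiplicidadgenericagh}; normal crossings fail only on a proper analytic subset. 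Hence the non-even locus is closed analytic of codimension at least $3$ in $M$.

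It remains to handle the contribution to $\operatorname{Z}({\mathcal F}, E)$ coming from even points at which equireduction fails after some finite sequence of blowings-up centered at the iterated singular loci. The key tool here will be Mattei--Moussu transversality (Theorem~3) together with Proposition \ref{prop:pullbackgeneralizedcurve}: at an even point $p$, locally $\operatorname{Sing}({\mathcal F})$ is a non-singular codimension $2$ germ $Y$ along which $\mathcal F$ is equimultiple and has normal crossings with $E$, and a transverse $2$-plane through a generic point of $Y$ produces a generalized curve whose reduction of singularities coincides, up to étale factor, with the trace of the sequence of blowings-up centered at iterated singular loci. Since in dimension $2$ any generalized curve admits a reduction of singularities by finitely many point blowings-up, equireduction at $p$ follows once this $2$-dimensional slice is everywhere equireduction, that is, away from isolated points in the slice, which correspond to a codimension $\geq 1$ analytic subset of $Y$.

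The main obstacle is to prove that the failure of equireduction is detected by an analytic condition on $M$ rather than only pointwise. The plan is to induct on an invariant measuring the number of blowings-up needed to reach simple points on the transverse $2$-slice, showing that the locus where this invariant jumps is closed analytic and is preserved by deformation in the $n-2$ parameters transverse to $Y$; this is essentially the $2$-equireduction step of \cite{Can-M}, adapted to generalized hypersurfaces via Proposition \ref{prop:pullbackgeneralizedcurve}, and it yields an analytic subset of $Y$ of codimension at least $1$, hence of codimension at least $3$ in $M$. Combining with the estimate on the non-even locus gives the statement.
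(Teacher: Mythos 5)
Your outline is correct and matches the paper's intent: the paper gives no proof of this proposition at all, stating only that it ``may be obtained by a direct adaptation of the statements proved in \cite{Can-M}'', and your decomposition (regular points are equireduction points, the non-even locus splits into the four explicit codimension-$\geq 3$ analytic pieces, and the iterated condition is controlled via Mattei--Moussu transverse slices) is precisely the skeleton of that adaptation. The one caveat is that the genuinely hard step --- that the locus where the equireduction invariant of the transverse two-dimensional slice jumps is a \emph{closed analytic} subset of each codimension-two component of $\operatorname{Sing}({\mathcal F})$, stably under the iterated blowing-ups --- is still only asserted by appeal to the $2$-equireduction argument of \cite{Can-M}; but since the paper itself defers exactly this to the same reference, your proposal is no less complete than the source.
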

 \begin{proposition}
  \label{pro:secciontransversalequirreduccion}
  Let $p\in M$ be a singular equireduction point for a foliated space $((M,K),E,{\mathcal F})$ of generalized hypersurface type. Any two dimensional section
  $$
  (\Delta,p)\subset (M,p)
  $$ transverse to $\operatorname{Sing}({\mathcal F})$ is a Mattei-Moussu transversal for ${\mathcal F}$ and it induces a foliated space $((\Delta,p), E\cap \Delta,{\mathcal F}\vert_{{\Delta}})$ that is a generalized curve.
\end{proposition}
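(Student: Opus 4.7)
The plan is the following. Let $\omega$ be a reduced holomorphic generator of $\mathcal F$ at $p$, denote by $\phi\colon (\Delta,p)\hookrightarrow (M,p)$ the inclusion, and set $Y=\operatorname{Sing}(\mathcal F)$. The equireduction hypothesis guarantees that $Y$ is a non-singular codimension-two analytic subvariety through $p$, and by Proposition~\ref{prop:redsinghypgeneralizada} the singular locus is contained in the union $S$ of the irreducible invariant hypersurfaces of $\mathcal F$. Combined with the normal crossings structure built into the definition of equireduction point (at most two components of $E$ through $p$, each containing $Y$, and the analogous property for the invariant hypersurfaces not contained in $E$), this forces every invariant hypersurface of $\mathcal F$ through $p$ to contain $Y$. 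Since $T_p\Delta+T_pY=T_pM$, the section $\Delta$ is then automatically transverse to every such hypersurface at $p$; in particular $\phi$ is $S$-transverse, so Proposition~\ref{prop:pullbackgeneralizedcurve} immediately gives that $\phi^{*}\mathcal F$ is a generalized curve, which settles the second assertion of the statement.

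For the Mattei--Moussu transversality assertion one has to verify the two non-trivial conditions $\operatorname{Sing}(\phi^{*}\omega)\subset\{p\}$ and $\nu_{p}(\phi^{*}\omega)=\nu_{p}(\omega)$; the set-theoretic identity $\phi^{-1}(\operatorname{Sing}(\mathcal F))=\phi^{-1}(Y)=\{p\}$ follows from transversality, and gives automatically $\operatorname{Sing}(\phi^{*}\omega)\supset\phi^{-1}(\operatorname{Sing}(\mathcal F))$. My approach is induction on the length of a reduction of singularities of $((M,p),E,\mathcal F)$ obtained by admissible blowings-up with centers in the successive singular loci, whose existence in the equireduction context is precisely the content of the stability clause in the definition. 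The base case is when the point has already been turned into a simple point of dimensional type at most two, which must happen in the equireduction regime since at every step the singular locus must be non-singular of codimension exactly two; at such a simple point a direct computation with the logarithmic normal form in equation~(\ref{simplecorners}) shows that any two-dimensional section transverse to the singular locus is indeed Mattei--Moussu.

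For the inductive step I blow up the center $Y$ to obtain $\pi_{1}\colon ((M_{1},\pi_{1}^{-1}(p)),E^{1},\mathcal F_{1})\to ((M,p),E,\mathcal F)$. By Proposition~\ref{prop:blowingupgeneralizedhip} the transformed foliated space is still of generalized hypersurface type, and by the equireduction hypothesis each point of $\pi_{1}^{-1}(p)\cap\operatorname{Sing}(\mathcal F_{1})$ is again an equireduction point, but with strictly smaller reduction length. The strict transform $\widetilde\Delta$ of $\Delta$ under $\pi_{1}$ is canonically identified with the blowing-up of $(\Delta,p)$ at $p$, and the restriction of $\mathcal F_{1}$ to $\widetilde\Delta$ agrees with the transform of $\phi^{*}\mathcal F$ by that same blowing-up. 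Applying the inductive hypothesis at each of the finitely many points of $\widetilde\Delta\cap\pi_{1}^{-1}(p)$ lying on the singular locus of $\mathcal F_{1}$, and descending through Proposition~\ref{prop:multiplicidadgenericagh} together with the standard blowing-up formulas for multiplicity and for singular locus of a reduced $1$-form, one recovers the two Mattei--Moussu equalities at $p$. The main technical obstacle I anticipate is precisely this descent: identifying the strict transform of $\Delta$ inside $M_{1}$ with the blowing-up of $(\Delta,p)$ and ruling out ``hidden'' zeros of $\phi^{*}\omega$ away from $p$. Both of these rest crucially on the étaleness of $\operatorname{Sing}(\mathcal F_{1})\to Y$ from the equireduction hypothesis, which is exactly what prevents tangential singular branches of $\phi^{*}\omega$ from appearing after the blowing-up and degrading the Mattei--Moussu conditions.
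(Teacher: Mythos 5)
First, a point of reference: the paper does not prove this proposition. It is listed, together with Proposition \ref{pro:codimensionnoequireduccion}, as obtainable by ``a direct adaptation of the statements proved in \cite{Can-M}'', so there is no internal argument to measure yours against; your proposal has to stand on its own. Your treatment of the second assertion is essentially sound: every invariant hypersurface through $p$ contains the germ of $\operatorname{Sing}({\mathcal F})$ (two distinct invariant hypersurfaces meet inside the singular locus, which is irreducible of codimension two at an equireduction point), and the equimultiplicity of $S$ along $\operatorname{Sing}({\mathcal F})$ --- which follows from condition b) of the definition of even point via Proposition \ref{prop:multiplicidadgenericagh} --- is what forces $f\vert_\Delta\not\equiv 0$ for a reduced equation $f$ of $S$, after which Proposition \ref{prop:pullbackgeneralizedcurve} applies. (Your appeal to a ``normal crossings structure'' of the invariant hypersurfaces is not literally part of the definition of equireduction point; equimultiplicity is what does the work.)

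The Mattei--Moussu part has two genuine gaps. First, your induction runs on ``the length of a reduction of singularities obtained by blowing up successive singular loci'', but the definition of equireduction point only asserts that every \emph{finite} such sequence stays even and \'etale; it does not assert that the process terminates at simple points. In the paper, termination of the equireduction sequence is \emph{deduced} from this very proposition (via the diagram in Equation \ref{eq:equrreducciondomtwo} and Seidenberg's theorem applied to ${\mathcal F}\vert_\Delta$), so you cannot assume a finite reduction length without a bootstrapping argument that proves transversality and termination simultaneously. Second, the descent step --- ``one recovers the two Mattei--Moussu equalities'' --- is precisely the content of the proposition and is not carried out. Concretely: writing $\pi_1^*\omega=x'^{\,m}\omega'$ with $m=\nu_Y(\omega)=\nu_p(\omega)$ (exact division by ${x'}^{m}$ uses non-dicriticalness, Proposition \ref{prop:blowingupgeneralizedhip}), one gets $\sigma_1^*(\phi^*\omega)=v^{m}\,\bigl(\omega'\vert_{\widetilde\Delta}\bigr)$ on the blown-up section, and $\nu_p(\phi^*\omega)=m$ follows only once one shows that $\omega'\vert_{\widetilde\Delta}$ is not divisible by $v$, i.e.\ that $\omega'$ does not vanish identically along the line $\widetilde\Delta\cap\pi_1^{-1}(p)$; this is exactly where the \'etaleness clause enters, since it makes $\operatorname{Sing}({\mathcal F}_1)\cap\pi_1^{-1}(p)$ finite. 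Likewise, $\operatorname{Sing}(\phi^*\omega)\subset\{p\}$ requires excluding a common factor $h$ of the coefficients of $\phi^*\omega$ (a curve of tangency of $\Delta$ with ${\mathcal F}$): its strict transform would divide $\omega'\vert_{\widetilde\Delta}$ and meet $\pi_1^{-1}(p)$ either at a singular point of ${\mathcal F}_1$, excluded by the inductive hypothesis, or at a regular point of the invariant divisor $E^1_1$, where its branch would be an invariant curve forced into $E^1_1$ and hence into $(v=0)$, a contradiction. You correctly identify \'etaleness as the key ingredient, but none of these verifications appear in the proposal, and they are the substance of the statement.
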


Consider a singular equireduction point $p\in M$ for the generalized hypersurface type foliated space $((M,K),E,{\mathcal F})$. Let us perform the blowing-up with center at the whole singular locus $(\operatorname{Sing}(\mathcal F), p)\subset (M,p)$:
$$
\varsigma_1: ((M_1,\varsigma_1^{-1}(p)),E^1,{\mathcal F}_1))
\rightarrow
((M,p),E,{\mathcal F}).
$$
There are only finitely many points $\{p_{j}\}_{j=1}^{n_1}$ over $p$ in the singular locus $\operatorname{Sing}({\mathcal F}_1)$ and the morphism of germs
$$
 (\operatorname{Sing}({\mathcal F}_1),p_{j})
 \rightarrow (\operatorname{Sing}({\mathcal F}),p)
$$
is étale for all $j=1,2,\ldots,n_1$. Then, we can blow-up $((M_1,\varsigma_1^{-1}(p)),E^1,{\mathcal F}_1))$ with center $\operatorname{Sing}({\mathcal F}_1)$ to obtain a morphism
$$
\varsigma_2: ((M_2,\varsigma_2^{-1}(\varsigma_1^{-1}(p))),E^2,{\mathcal F}_2))
\rightarrow ((M_1,\varsigma_1^{-1}(p)),E^1,{\mathcal F}_1)).
$$
Note that the center of $\varsigma_2$ has exactly $n_1$ connected components, each one passing through a point $p_{j}$, for $j=1,2,\ldots,n_1$. Locally at each point $p_{j}$ we have an induced blowing-up with center $(\operatorname{Sing}({\mathcal F}_1),p_{j})$:
$$
\varsigma_{p_{j}}:((M_2,\varsigma_2^{-1}(p_{j})),E^2,{\mathcal F}_2))
\rightarrow ((M_1,p_{j}),E^1,{\mathcal F}_1)).
$$
Continuing indefinitely in this way, we obtain the {\em equireduction sequence}
\begin{equation}
\label{eq:sucesiondeequirreduccion2}
{\mathcal E}_{M,E,{\mathcal F}}^p:
((M,p),E,{\mathcal F})\stackrel{\varsigma_1}{\leftarrow}((M_1,\varsigma_1^{-1}(p)),E^1,{\mathcal F}_1)
\stackrel{\varsigma_2}{\leftarrow}
\cdots .
\end{equation}
The {\em infinitely near points of $p$} in $M_\ell$ are the points in
$$
(\varsigma_1\circ\varsigma_2\circ \cdots\circ \varsigma_{\ell})^{-1}(p)\cap
\operatorname{Sing}({\mathcal F}_\ell)
$$
that we can write as $p_{j_1j_2\cdots j_\ell}$, with the ``dendritic'' property that
$$
\varsigma_\ell(p_{j_1j_2\cdots j_{\ell-1} j_\ell})=p_{j_1j_2\cdots j_{\ell-1}}.
$$

Let us detail some consequences of Proposition \ref{pro:secciontransversalequirreduccion} relative to plane sections at an equireduction point $p\in M$. Take a two dimensional section $(\Delta,p)\subset (M,p)$ transverse to $\operatorname{Sing}({\mathcal F})$. First of all, we consider the following remark:
\begin{remark}
\label{rk:sectiondimdos}
 In view of \cite{Mat}, we know that $p$ is a simple point for  $((M,p),E,{\mathcal F})$ if and only if it is a simple point for the restriction
$((\Delta,p),E\cap \Delta,{\mathcal F}\vert_{\Delta})
$.
\end{remark}
If we consider a local holomorphic generator $\omega$ of ${\mathcal F}$ at $p$, without common factors in its coefficients,
 we know that $\eta=\omega\vert_{\Delta}$ is a local generator of ${\mathcal F}\vert_{\Delta}$ and moreover
$$
\nu_{\Sigma}(\omega)=\nu_p(\omega)=\nu_p(\eta), \quad \Sigma=(\operatorname{Sing}({\mathcal F}), p).
$$
This makes the blowing-ups $\varsigma_i$ in the equireduction sequence \ref{eq:sucesiondeequirreduccion2} to be ``compatible'' with the transversal section $\Delta$. More precisely, the equireduction sequence induces a sequence of blowing-ups $\bar\varsigma_i$ of the two-dimensional section $((\Delta,p),E\cap \Delta,{\mathcal F}\vert_{\Delta})$ with center at the points $p_{j_1j_2\cdots j_\ell}$, in such a way that the following diagram is commutative:
\begin{equation}
\label{eq:equrreducciondomtwo}
\begin{array}{ccccc}
((\Delta,p),E\cap \Delta,{\mathcal F}\vert_{\Delta})&
\stackrel{\bar\varsigma_1}{\longleftarrow}&
((\Delta_1,\varsigma_1^{-1}(p)),E^1\cap \Delta_1,{\mathcal F}_1\vert_{\Delta_1})&
\stackrel{\bar\varsigma_2}{\longleftarrow}&
\cdots
\\
\downarrow&&\downarrow&&
\\
((M,p),E,{\mathcal F})&\stackrel{\varsigma_1}{\longleftarrow}&
((M_1,\varsigma_1^{-1}(p)),E^1,{\mathcal F}_1)&
\stackrel{\varsigma_2}{\longleftarrow}&\cdots
\end{array}
\end{equation}
Looking at the diagram in Equation \ref{eq:equrreducciondomtwo}, we know that the sequence of the $\bar\varsigma_i$ desingularizes the two dimensional foliated space $((\Delta,p),E\cap \Delta,{\mathcal F}\vert_{\Delta})$, since we blow-up each time at the singular points; hence we apply the existence of reduction of singularities in dimension two, see \cite{Sei,Can-C-D}. As a consequence, at a finite step of the equireduction sequence given in Equation \ref{eq:sucesiondeequirreduccion2}, we reach a reduction of singularities of $((M,p), E,{\mathcal F})$.
\subsection{Relative Equireduction Points and Relative Transversality}
Let us introduce a version of the equireduction points relative to a fixed reduction of singularities
\begin{equation*}
\pi: ((M',K'),E',{\mathcal F}')\rightarrow ((M,K),E,{\mathcal F})
\end{equation*}
as in Equation \ref{eq:reducciondesingularidades}. We also introduce the locus of {\em $\pi$-good} points that will be essential in our proof of the existence of divisorial models.

Let us take the notations introduced in Subsection
\ref{Notations on a Reduction of Singularities}.
We define the locus $Z_\pi({\mathcal F},E)$ of the {\em points that are not points of $\pi$-equireduction} as being
$$
Z_\pi({\mathcal F},E)=Z({\mathcal F},E)\cup B_\pi({\mathcal F}, E),
$$
with $B_\pi({\mathcal F}, E)=\cup_{j\in J_\pi}(\pi_1\circ\pi_2\circ\cdots\circ\pi_j)(Y_j)
$,
where $J_\pi$ is the set of indices $j$ in $\{0,1,\ldots,s-1\}$ such that the center $Y_j$ of $\pi_{j+1}$ has codimension at least three.

 The complement $M\setminus Z_\pi({\mathcal F},E)$ is the set of {\em $\pi$-equireduction points}.
\begin{remark}
\label{rk:codimpiequireduccion}
  We have that $Z_\pi({\mathcal F},E)$ is a closed analytic subset of $M$ of codimension at least three.
  \end{remark}

  If we take a point $p\in M\setminus Z_\pi({\mathcal F},E)$, the morphism
  $$
  \pi_p:((M',\pi^{-1}(p)), E',{\mathcal F}')\rightarrow ((M,p), E,{\mathcal F})
  $$
is a ``part of the equireduction sequence'' in Equation \ref{eq:sucesiondeequirreduccion2}, in the sense that there is a finite step in the equireduction sequence that can be obtained from
$$
((M',\pi^{-1}(p)), E',{\mathcal F}')
$$
 by repeatedly blowing-up the singular locus.

 Recall now that we have the decomposition $\pi=\pi_1\circ \sigma$ and that $\pi_1^{-1}(Y)=E^1_1$ is the exceptional divisor of the first blowing-up $\pi_1$, where $Y=Y_0$.    Take a point $q\in E^1_1$. Let us put $p=\pi_1(q)$ and denote by $F_p=\pi_1^{-1}(p)$ the fiber of $p$ by $\pi_1$. Recall that $F_p$ is isomorphic to a complex projective space of dimension equal to $n-d-1$, where $n=\dim M$ and $d=\dim Y$. We say that $q$ is a point of {\em $\pi_1$-transversality} if $q\notin \operatorname{Sing}({\mathcal F}_1)$ or $q\in \operatorname{Sing}({\mathcal F}_1)$ and the germ $(\Sigma,q)$ of the singular locus   $\operatorname{Sing}({\mathcal F}_1)$ at $q$ is non singular, it is contained in $(E^1_1,q)$, it has codimension two in $(M_1,q)$ and, moreover, we have the transversality property with respect to the fiber $F_p$ given by
$T_q(F_p)\not\subset T_q\Sigma$,
where $T_q$ stands for the tangent space. Let us denote by $T^1_\pi \subset E^1_1$ the locus of points that are not of $\pi_1$-transversality.

 Note that we have the closed analytic set
$
Z_\sigma({\mathcal F}_1,E^1)
$
defining the locus of points in $M_1$ that are not of $\sigma$-equireduction. The codimension of $Z_\sigma({\mathcal F}_1,E^1)$ in $M_1$ is at least three.
We say that a point $p\in Y$ is a {\em $\pi$-bad point}  if and only if
$$
\dim
\left(
Z_\sigma({\mathcal F}_1,E^1)
\cup
T^1_\pi
\right)
\cap F_p
\geq n-d-2,\quad d=\dim Y.
$$
Denote by $B_\pi\subset Y$ the set of $\pi$-bad points.
\begin{lemma} The set $B_\pi$ is a closed analytic subset of $Y$ such that
$B_\pi\ne Y$.
\end{lemma}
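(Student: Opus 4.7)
The plan is to prove both assertions by applying upper semi-continuity of fibre dimension to the projective bundle $\pi_1|_{E^1_1}\colon E^1_1\to Y$, whose fibres are the projective spaces $F_p\cong \mathbb{P}^{n-d-1}$.

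First I would check that $W:=(Z_\sigma({\mathcal F}_1,E^1)\cup T^1_\pi)\cap E^1_1$ is a closed analytic subset of $E^1_1$. The set $Z_\sigma({\mathcal F}_1,E^1)$ is closed analytic by the analogue of Proposition \ref{pro:codimensionnoequireduccion} applied to $\sigma$, as already recorded in Remark \ref{rk:codimpiequireduccion}. For $T^1_\pi$, each of the conditions whose failure at $q$ defines a non-transversality point is analytic in $q\in E^1_1$: belonging to $\operatorname{Sing}({\mathcal F}_1)$, singularity of the germ $(\Sigma,q)$, excess codimension, non-containment of $\Sigma$ in $E^1_1$ locally at $q$, and the tangency condition $T_qF_{\pi_1(q)}\subset T_q\Sigma$ (the latter is a rank condition on a natural linear map between tangent spaces that varies holomorphically with $q$). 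Since $\pi_1|_{E^1_1}$ is proper, Remmert's semi-continuity of fibre dimension shows that for each integer $k$ the set $\{p\in Y:\dim(W\cap F_p)\ge k\}$ is closed analytic in $Y$; taking $k=n-d-2$ gives that $B_\pi$ is closed analytic in $Y$.

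To show $B_\pi\ne Y$, it suffices to exhibit one $p\in Y$ at which $\dim(W\cap F_p)\le n-d-3$. The ingredients are the dimension bounds $\dim(Z_\sigma\cap E^1_1)\le\dim Z_\sigma\le n-3$ and a careful analysis of $T^1_\pi$. Writing $\Sigma=\operatorname{Sing}({\mathcal F}_1)$ as a union of irreducible components, a component $\Sigma_0\not\subset E^1_1$ contributes to $T^1_\pi$ at most $\Sigma_0\cap E^1_1$, of dimension $\le n-3$. A component $\Sigma_0\subset E^1_1$ mapping dominantly to $Y$ contributes only through its singular part, its intersections with other components of $\Sigma$, and the non-submersion locus of $\pi_1|_{\Sigma_0}$; a direct tangent-space count at a submersion point $q$ gives
$$
\dim(T_qF_p\cap T_q\Sigma_0)= (n-2)-d=n-d-2<n-d-1=\dim T_qF_p,
$$
so the tangency condition fails at a generic such $q$, which therefore does not lie in $T^1_\pi$. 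Thus this contribution also has dimension $\le n-3$. Finally, a component $\Sigma_0\subset E^1_1$ not dominant over $Y$ projects into a proper closed analytic subset of $Y$.

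Let $Y_0\subsetneq Y$ be the closed analytic set formed by the projections of the non-dominant strata of $W$ together with the locus where the proper morphism $\pi_1|_W\colon W\to Y$ has fibre dimension strictly larger than the generic value (this locus is proper by Remmert again). For $p\in Y\setminus Y_0$, only the strata of total dimension $\le n-3$ projecting dominantly onto $Y$ contribute to $W\cap F_p$, and the generic fibre bound for a proper morphism yields $\dim(W\cap F_p)\le (n-3)-d=n-d-3$, hence $p\notin B_\pi$. This shows $B_\pi\subset Y_0\subsetneq Y$. The main technical obstacle will be the bookkeeping of the components of $\Sigma$ occurring in $T^1_\pi$ and the tangent-space count that guarantees generic failure of the tangency condition on dominant components.
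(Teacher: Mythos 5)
Your proposal is correct and follows essentially the same strategy as the paper: closedness via upper semicontinuity of fibre dimension for the projection $E^1_1\to Y$, and $B_\pi\ne Y$ via dimension counts on $Z_\sigma({\mathcal F}_1,E^1)$ and on the components of $\operatorname{Sing}({\mathcal F}_1)$ relative to the fibres. The only point where you diverge is the treatment of a codimension-two component $\Sigma_i\subset E^1_1$ dominating $Y$: you show transversality holds at submersion points of $\pi_1\vert_{\Sigma_i}$ by a tangent-space count, whereas the paper argues that $\Sigma_i\subset T^1_\pi$ would force $\Sigma_i$ to be a union of fibres and hence equal to $E^1_1$ --- two equivalent readings of the same dimension inequality.
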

\begin{proof}
We know  that
$\dim
\left(
Z_\sigma({\mathcal F}_1,E^1)
\cup
T^1_\pi
\right)
\cap F_p$ is the maximum
$$
\max\{
\dim
(Z_\sigma({\mathcal F}_1,E^1)
\cap F_p),
\dim(
T^1_\pi
\cap F_p)\}.
$$
Hence $B_\pi=B'\cup B''$ where
\begin{eqnarray*}
B'
&=&
\{p\in Y;\; \dim
(Z_\sigma({\mathcal F}_1,E^1)
\cap F_p)
\geq n-d-2\},
\\
B''&=&\{p\in Y;\; \dim
(T^1_\pi
\cap F_p)
\geq n-d-2 \}.
\end{eqnarray*}
Recall that the projection $E^1_1\rightarrow Y$ is a fibration with fiber ${\mathbb P}^{n-d-1}_{\mathbb C}$. Since the dimension of the fibers is upper semicontinuous, we see that both $B'$ and $B''$ are closed analytic subsets of $Y$. The codimension of $Z_\sigma({\mathcal F}_1,E^1)\cap E^1_1$ in $E^1_1$ is at least two, hence there is a closed subset $Z'\subset Y$, with $Z'\ne Y$ such that the codimension of $Z_\sigma({\mathcal F}_1,E^1)\cap F_p$ in $F_p$ is at least two, for any $p\in Y\setminus Z'$; in particular, we have that $B'\ne Y$.

Let us show now that $B''\ne Y$.
Decompose the analytic subset $\operatorname{Sing}({\mathcal F}_1)\cap E^1_1$ of $E^1_1$ as a union
$$
\operatorname{Sing}({\mathcal F}_1)\cap E^1_1= \Sigma_1\cup \Sigma_2\cup\cdots\Sigma_t\cup R_1,
$$
where the $\Sigma_i$ are the irreducible components of  $\operatorname{Sing}({\mathcal F}_1)$ that have codimension two and that are contained in $E^1_1$, for $i=1,2,\ldots,t$. The closed analytic set $R_1\subset E^1_1$ is the union of the intersection with $E^1_1$ of the other irreducible components of  $\operatorname{Sing}({\mathcal F}_1)$. Let us note that $R_1$ has codimension at least two in $E_1$ and that $R_1\subset T^1_\pi$. Moreover, we have that
$$
T^1_\pi=R_1\cup\bigcup_{i=1}^t(\Sigma_i\cap T^1_\pi).
$$
We have that $B''=B''_{1}\cup B''_{2}\cup\cdots\cup B''_{t}\cup B'''$, where
\begin{eqnarray*}
B''_{i}&=&
\{p\in Y;\; \dim
(\Sigma_i\cap T^1_\pi
\cap F_p)\geq n-d-2 \},
 \\
B'''&=&
\{p\in Y;\; \dim
(R_1\cap F_p)
\geq n-d-2 \}.
\end{eqnarray*}

Since the codimension of $R_1$ in $E^1_1$ is at least two, we have that $B'''\ne Y$.

Let us show that $B''_{i}\ne Y$, for $i=1,2,\ldots,t$. If $\dim \Sigma_i\cap T^1_\pi\leq n-3$, we are done. Thus, we assume that $\dim\Sigma_i\cap T^1_\pi= n-2$ and hence $\Sigma_i\subset T^1_\pi$. Take $U=\Sigma_i\setminus \Upsilon$, where $\Upsilon$ is the set of singularities of the closed analytic set $\operatorname{Sing}({\mathcal F}_1)$. The fact that $U\subset T^1_\pi$ means that for any point $q\in U$ we have that $T_q(F_{\pi_1(q)})\subset T_q\Sigma_i$. This property implies that $\Sigma_i$ is a union of fibers. Then, if $B''_i=Y$, we have that $\Sigma_i=E^1_1$, contradiction, since $\Sigma_i$ is a hypersurface of $E^1_1$.

We have a decomposition of $B_\pi$ into a finite union of strict closed analytic subsets of $Y$. Since $Y$ is irreducible, we conclude that $B_\pi\ne Y$.
\end{proof}
Next Corollary is an important step in our proof of the existence of divisorial models in higher dimension:
\begin{corollary}
\label{cor: equirrducciongenerica}
There is a strict closed analytic subset $Z\subset Y$, $Z\ne Y$ such that any point $p\in Y\setminus Z$ satisfies the following properties
\begin{enumerate}
  \item The center $Y$ is equimultiple for $\mathcal F$ at the point $p$.
  \item There is a Mattei-Moussu transversal $(\Delta,p)$ of ${\mathcal F}$ at $p$ such that the strict transform $\Delta^1$ of $\Delta$ by $\pi_{1,p}$ intersects $\operatorname{Sing}({\mathcal F}_1)$ transversely and only in points of $\sigma$-equireduction.
\end{enumerate}
\end{corollary}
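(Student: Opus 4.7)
The plan is to take
$$
Z = Z_{\mathrm{em}} \cup (Z({\mathcal F},E)\cap Y) \cup B_\pi,
$$
where $Z_{\mathrm{em}}\subset Y$ is the locus at which the center $Y$ fails to be equimultiple for $\mathcal F$. By upper semicontinuity of the generic multiplicity on the irreducible variety $Y$, the set $Z_{\mathrm{em}}$ is a strict closed analytic subset of $Y$. By Proposition \ref{pro:codimensionnoequireduccion}, $Z({\mathcal F},E)$ has codimension at least three in $M$, while $Y$ has codimension at most $n-2$ in $M$, so $Z({\mathcal F},E)\cap Y$ is strict closed in $Y$. Finally $B_\pi\ne Y$ by the preceding lemma. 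Hence $Z$ is a strict closed analytic subset of $Y$, and condition (1) holds automatically for any $p\in Y\setminus Z$.

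For condition (2), I fix a point $p\in Y\setminus Z$ and search for $\Delta$ among germs of smooth $2$-disks through $p$, parameterized by their tangent planes $V=T_p\Delta$ in the Grassmannian $\mathrm{Gr}(2,T_pM)$. I will impose four conditions on $V$, each defining a nonempty Zariski open subset of the Grassmannian. The first, $W_1$, asks that $V$ lies in the Mattei-Moussu open set; this set is nonempty by the Mattei-Moussu Transversality Theorem, so $\Delta$ is a Mattei-Moussu transversal for $\mathcal F$ at $p$. The second, $W_2$, asks that $V\cap T_pY=0$, nonempty and open by direct linear algebra using that $Y$ has codimension at least two in $M$. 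Under $W_2$ the image $\bar V$ of $V$ in $T_pM/T_pY$ is a $2$-plane, so the strict transform $\Delta^1$ by $\pi_{1,p}$ meets the fiber $F_p=\pi_1^{-1}(p)\cong \mathbb{P}^{n-d-1}$ exactly along the projective line $\mathbb{P}(\bar V)\cong \mathbb{P}^1$.

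The third condition $W_3$ asks that $\mathbb{P}(\bar V)$ avoids $(Z_\sigma({\mathcal F}_1,E^1)\cup T^1_\pi)\cap F_p$. Because $p\notin B_\pi$, this last set has dimension at most $n-d-3$ in $F_p$, hence codimension at least two; a generic projective line in a projective space avoids any fixed subvariety of codimension at least two, so $W_3$ is nonempty and open. The fourth condition $W_4$ asks that, at every point $q\in \mathbb{P}(\bar V)\cap \operatorname{Sing}({\mathcal F}_1)$, the $2$-dimensional variety $\Delta^1$ meets $\operatorname{Sing}({\mathcal F}_1)$ transversally in $M_1$. Granting $W_1\cap W_2\cap W_3\cap W_4$ to be nonempty, the $\Delta$ associated to any such $V$ will work: away from $F_p$, the map $\pi_{1,p}$ identifies $\Delta^1\setminus F_p$ with $\Delta\setminus\{p\}$ and the Mattei-Moussu property implies $(\Delta\setminus\{p\})\cap \operatorname{Sing}({\mathcal F})=\emptyset$, so $\Delta^1\cap \operatorname{Sing}({\mathcal F}_1)$ lies entirely in $F_p$; there, $W_3$ forces every intersection point to be a $\sigma$-equireduction point and $W_4$ gives transversality.

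The main obstacle is to verify that $W_4$ is nonempty and Zariski open. At an intersection point $q=(p,[v])\in \mathbb{P}(\bar V)\cap \operatorname{Sing}({\mathcal F}_1)$, the condition $W_3$ ensures $q\notin T^1_\pi$, so $\operatorname{Sing}({\mathcal F}_1)$ is smooth of codimension two at $q$ in $M_1$, with $T_qF_p\not\subset T_q\operatorname{Sing}({\mathcal F}_1)$. I would take coordinates $(x_1,\dots,x_n)$ at $p$ with $Y=\{x_1=\cdots=x_{n-d}=0\}$, parameterize $\Delta$ by the 2-plane $V$, and compute in a chart of $\pi_1$ that $T_q\Delta^1=T_q\mathbb{P}(\bar V)\oplus \langle u(V)\rangle$, where $u(V)$ is a normal direction depending holomorphically on $V$. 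Transversality of $\Delta^1$ and $\operatorname{Sing}({\mathcal F}_1)$ in $M_1$ at $q$ then reduces to the non-inclusion $u(V)\notin T_q\operatorname{Sing}({\mathcal F}_1)$; this is a nontrivial linear condition on $V$ because $\operatorname{Sing}({\mathcal F}_1)$ has pure codimension two outside the bad locus and therefore cannot contain the strict transform of every $2$-plane through $p$ with prescribed tangent. Hence $W_4$ is cut out by the non-vanishing of finitely many polynomials on $\mathrm{Gr}(2,T_pM)$, proving that $W_1\cap W_2\cap W_3\cap W_4$ is a nonempty Zariski open set, and completes the proof.
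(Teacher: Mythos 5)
Your proof is correct and follows essentially the same route as the paper: there one takes $Z=C\cup B_\pi$ (the non-equimultiplicity locus together with the bad locus) and then chooses a Mattei--Moussu transversal $(\Delta,p)$ whose strict transform cuts the fiber $F_p$ along a generic line avoiding $\left(Z_\sigma({\mathcal F}_1,E^1)\cup T^1_\pi\right)\cap F_p$ (possible because $p\notin B_\pi$ makes this set of codimension at least two in $F_p$) and meeting $\operatorname{Sing}({\mathcal F}_1)$ transversely. Two minor remarks: the summand $Z({\mathcal F},E)\cap Y$ in your $Z$ is unnecessary (and your justification that it is strict is not needed), and in the $W_4$ step the transversality condition is $T_q\Delta^1\cap T_q\operatorname{Sing}({\mathcal F}_1)=0$, which involves the tangent direction of the line $\mathbb{P}(\bar V)$ as well as $u(V)$, not merely $u(V)\notin T_q\operatorname{Sing}({\mathcal F}_1)$ --- but the genericity argument you give is the same one the paper invokes.
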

\begin{proof} Let $C\subset Y$ be the points where $Y$ is not equimultiple for $\mathcal F$. We know that $C\ne Y$ is a strict closed analytic subset of $Y$. Now, take $Z=C\cup B_\pi$, that is also a strict closed analytic subset of $Y$. Let us show that $Z$ satisfies the statement. Take a point $p\in Y\setminus Z$. Since $p\notin C$, it is a point of equimultiplicity of $Y$ with respect to $\mathcal F$. Consider the fiber $F_p$ of $p$; we know that it is a projective space of dimension $n-d-1$. Since $p\notin B_\pi$, we have that
$$
\dim
\left(
Z_\sigma({\mathcal F}_1,E^1)
\cup
T^1_\pi
\right)
\cap F_p
\leq n-d-3,\quad d=\dim Y.
$$
This means that for a nonempty Zariski open set $U$ of the grassmanian of lines $\ell$ in the projective space $F_p$, we have the property that
$$
\ell\cap (Z_\sigma({\mathcal F}_1,E^1)
\cup
T^1_\pi)=\emptyset.
$$
We also know that for a nonempty Zariski open set $W$ of the grassmanian of lines $\ell$ in the projective space $F_p$, we have the property that $\ell$ meets transversely $\operatorname{Sing}({\mathcal F}_1)$.

By the generic properties of Mattei-Moussu transversals, we can choose $(\Delta,p)$ for $\mathcal F$ such that the line $\ell=F_p\cap \Delta_1$ lies in $U\cap W$. The desired property follows from the definition of the sets $Z_\sigma({\mathcal F}_1,E^1)$ and $T^1_\pi$.
\end{proof}
\section{Divisorial Models For Generalized Hypersurfaces}
\label{Logarithmic Models For Generalized Hypersurfaces}
 We introduce the definition of divisorial model as follows:
 \begin{definition}
Consider a generalized hypersurface $\mathcal F$ on  a non-singular complex analytic variety $M$ and a $\mathbb C$-divisor $\mathcal D$ on $M$. We say that $\mathcal D$
  is a {\em divisorial model for ${\mathcal F}$ at a point $p$ in $M$} if the following conditions hold:
   \begin{enumerate}
   \item The support $\operatorname{Supp}({\mathcal D}_p)$  of the germ ${\mathcal D}_p$ of $\mathcal D$ at $p$ is the union of the  germs at $p$ of invariant hypersurfaces of $\mathcal F$.
   \item For any ${\mathcal D}$-transverse map $\phi:({\mathbb C}^2,0)\rightarrow (M,p)$, the ${\mathbb C}$-divisor $\phi^*{\mathcal D}$ is a divisorial model for $\phi^*{\mathcal F}$.
   \end{enumerate}
   We say that $\mathcal D$ is a divisorial model for $\mathcal F$ if it fulfils the above conditions at every point $p$ in $M$.
  \end{definition}
   In view of Proposition  \ref{prop:pullbacklogmod}, this definition extends the one for dimension two, stated in Definition \ref{def:modelodimensiondos}.

 This Section is devoted to giving a proof of the following results:
 \begin{theorem}
 \label{teo:main}
 Every  generalized hypersurface on $({\mathbb C}^n,0)$ has a divisorial model.
 \end{theorem}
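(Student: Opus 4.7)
The plan is to construct a divisorial model for $\mathcal F$ by working along a fixed reduction of singularities and bootstrapping the two-dimensional existence theorem (Theorem~\ref{th:existenciayunicidadendimensiondos}), then to verify the higher-dimensional definition by testing against arbitrary two-dimensional transverse maps.

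First, I would fix a reduction of singularities $\pi : ((M',K'),E',\mathcal F') \to (({\mathbb C}^n,0), \emptyset, \mathcal F)$; this exists by Proposition~\ref{prop:redsinghypgeneralizada}. The goal is to build a ${\mathbb C}$-divisor $\mathcal D$ on $({\mathbb C}^n,0)$ (supported on the invariant hypersurfaces of $\mathcal F$) whose transform $\pi^*\mathcal D$ is a ``local divisorial model'' at every simple point of $\mathcal F'$, in the sense that at a simple corner with normal form $\sum_{i=1}^\tau(\lambda_i + a_i)dx_i/x_i$ the germ of $\pi^*\mathcal D$ is projectively equivalent to $\sum_{i=1}^\tau\lambda_i(x_i=0)$. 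This is the role of the preliminary Theorem~\ref{teo:pilogarithmicmodel}. Its proof assigns a coefficient to each irreducible invariant hypersurface $D_j$ of $\mathcal F$ by applying Theorem~\ref{th:existenciayunicidadendimensiondos} to a Mattei-Moussu transversal $(\Delta,p)$ furnished by Corollary~\ref{cor: equirrducciongenerica} at a $\pi$-equireduction point $p\in D_j$; since $D_j$ is irreducible and $Z_\pi({\mathcal F},E)$ has codimension at least three (Remark~\ref{rk:codimpiequireduccion}), the uniqueness up to projective equivalence in dimension two forces a well-defined global coefficient once one normalization is fixed per connected component of the support.

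With $\mathcal D$ in hand, the second step is to verify that $\mathcal D$ is a divisorial model for $\mathcal F$: for every $\mathcal D$-transverse map $\phi:({\mathbb C}^2,0)\to (M,p)$, the pull-back $\phi^*\mathcal D$ must be a two-dimensional divisorial model for $\phi^*\mathcal F$. The existence and generalized-curve character of $\phi^*{\mathcal F}$ are ensured by Proposition~\ref{prop:pullbackgeneralizedcurve}. By Proposition~\ref{prop:appdos}, after a finite sequence of point blow-ups $\sigma:(N,\sigma^{-1}(0))\to ({\mathbb C}^2,0)$ there is a lift $\psi:(N,\sigma^{-1}(0))\to (M',\pi^{-1}(p))$ with $\pi\circ\psi=\phi\circ\sigma$. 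Remark~\ref{rk:transversemaps} yields the identities
\begin{equation*}
\sigma^*(\phi^*\mathcal D)=\psi^*(\pi^*\mathcal D), \qquad \sigma^*(\phi^*\mathcal F)=\psi^*\mathcal F'.
\end{equation*}
Now $\mathcal F'$ is simple and $\pi^*\mathcal D$ is a local divisorial model at every point of $\psi(\sigma^{-1}(0))$, so the computation carried out at the end of the proof of Proposition~\ref{prop:pullbacklogmod}---handling the normal forms at regular points and at simple singular points of a two-dimensional foliation---shows that $\psi^*(\pi^*\mathcal D)$ is a divisorial model for $\psi^*\mathcal F'$ at every point of $\sigma^{-1}(0)$. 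Finally, Corollary~\ref{cor:modlogsucblowing} descends through the point blow-ups $\sigma$ to conclude that $\phi^*\mathcal D$ is a divisorial model for $\phi^*\mathcal F$.

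The principal obstacle lies in Theorem~\ref{teo:pilogarithmicmodel}, specifically in the numerical consistency of the coefficient assignment: the residues obtained from different Mattei-Moussu transversals along the same irreducible hypersurface $D_j$ must scale by a single constant per connected component of the support, and the coefficients of the transformed divisor must satisfy all the Camacho-Sad compatibilities imposed by the successive blow-ups constituting $\pi$. This is a linear-algebra problem whose solvability is the higher-dimensional analogue of Lemma~\ref{lema:matrices}; it rests on the codimension-three bound for $Z_\pi({\mathcal F},E)$ (which allows the coefficients to be defined on a dense open set and extended), on the rigidity of divisorial models in dimension two, and on a Noether-type identity $\mu_{E^j_j}=\sum_H\nu_{Y_{j-1}}(H)\mu_H$ at every blow-up $\pi_j$ of $\pi$, which propagates consistency from the strict transforms to the exceptional divisors. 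Once this numerical coherence is established, the remaining verification is a direct application of the two-dimensional machinery of Section on Divisorial Models in Dimension Two.
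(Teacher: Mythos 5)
Your overall architecture is the paper's: fix a reduction of singularities $\pi$, produce a $\mathbb C$-divisor $\mathcal D$ on $({\mathbb C}^n,0)$ whose transform agrees with the natural divisor attached to the desingularized foliated space, and then verify the transverse-map definition by lifting $\phi$ through $\pi$ via Proposition \ref{prop:appdos}, computing at simple points after a further desingularization of the list $\{x_i\circ\psi\}$ (Proposition \ref{prop:appuno}), and descending with Corollary \ref{cor:modlogsucblowing}. Your verification step is essentially identical to Subsection \ref{Existence of Logarithmic Models} of the paper and is correct as sketched.

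The gap is in the construction step, which you yourself flag as the ``principal obstacle'' but do not resolve. Gluing the projective classes of two-dimensional divisorial models read off from Mattei--Moussu transversals at scattered equireduction points does not by itself produce a well-defined global $\mathbb C$-divisor: each transversal only determines the ratios of the coefficients of the components meeting the corresponding fiber, different hypersurfaces $D_j$ need not be linked through such fibers in any controlled way, and the ``higher-dimensional analogue of Lemma \ref{lema:matrices}'' you appeal to is neither proved in the paper nor obviously available (there is no symmetric intersection matrix governing the coefficients once $n\geq 3$). The paper sidesteps this entirely: since $K=\{0\}$, there is a logarithmic $1$-form $\eta=\omega/f$ fully associated to $\mathcal F$, its pullback $\eta'=\pi^*\eta$ remains fully associated to ${\mathcal F}'$ (Proposition \ref{prop:pullbackoflogaritmicformsfullyassociated}), and the coefficients $\lambda_j$ are \emph{defined} as the Saito residues of $\eta'$ along the $D'_j$ --- honest numbers, constant along each $D'_j\cap K'$ because that set is connected, so global consistency is automatic rather than something to be proved. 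What then remains is only the Noether-type identity $\lambda_1=\sum_{j\in J_Y}\lambda_j\nu_Y(D_j)$ at each blow-up, established by induction on the length of $\pi$: it is checked on a generic Mattei--Moussu transversal to the center (Lemma \ref{lema:thconequrreduccion}) and transferred from the generic point back to the actual residues by following a chain of intersections in the dual graph of the fiber and using that the transversal Camacho--Sad indices $-\lambda_{t_{u+1}}/\lambda_{t_u}$ can be read from $\eta'$ along $K'$ (Lemma \ref{lema.csindices}). Without this residue-theoretic input, or some substitute for it, your coefficient assignment is not well defined, so the construction half of the argument is missing.
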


  \begin{proposition}
 \label{pro: uniquenessandnondicriticalness}
 Let $\mathcal D$ be a divisorial model for a generalized hypersurface $\mathcal F$ on $({\mathbb C}^n,0)$. Then  $\mathcal D$ is a non-dicritical $\mathbb C$-divisor. Moreover, any other $\mathbb C$-divisor is a divisorial model  for $\mathcal F$ if and only if it is projectively equivalent to
 $\mathcal D$.
 \end{proposition}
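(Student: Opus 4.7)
The plan is to handle non-dicriticalness and uniqueness separately, using the definition of divisorial model to reduce both assertions to the already-established dimension-two statements (Theorem \ref{th:existenciayunicidadendimensiondos} and the corollary that a divisorial model in dimension two is non-dicritical).

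For non-dicriticalness, I would argue by contradiction. Suppose $\mathcal D$ is dicritical, so there exists a $\mathcal D$-transverse map $\phi:(\mathbb C^2,0)\rightarrow (\mathbb C^n,0)$ with $\phi^*\mathcal D=0$ and $\phi(y=0)\subset \operatorname{Supp}(\mathcal D)$. Since $\operatorname{Supp}(\mathcal D)$ equals the union $S$ of invariant hypersurfaces of $\mathcal F$ (condition (1) in the definition), $\phi$ is $S$-transverse, hence by Proposition \ref{prop:pullbackgeneralizedcurve} the pull-back $\phi^*\mathcal F$ exists and is a generalized curve. By the definition of divisorial model, $\phi^*\mathcal D$ is a divisorial model for $\phi^*\mathcal F$. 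But $\phi(y=0)\subset S$ forces $y=0$ to be an invariant branch of $\phi^*\mathcal F$, so the support of any divisorial model at $0$ is non-empty, contradicting $\phi^*\mathcal D=0$.

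For uniqueness, let $\mathcal D=\sum_{i=1}^s\lambda_i H_i$ and $\mathcal D'=\sum_{i=1}^s\lambda'_i H_i$ be two divisorial models for $\mathcal F$; they share the same support $S=\cup_i H_i$ by condition (1). The goal is to show that on each connected component of $S$ the ratio $\lambda'_i/\lambda_i$ is constant. Since any two components $H_i$ and $H_j$ in the same connected component of $S$ can be joined by a chain of pairwise intersecting components, it suffices to prove: if $H_i\cap H_j\ne \emptyset$ near the origin, then $\lambda'_i/\lambda_i=\lambda'_j/\lambda_j$. For this I would choose a generic smooth point $p\in H_i\cap H_j$ avoiding all other $H_k$ (which is possible because the triple intersections $H_i\cap H_j\cap H_k$ have codimension at least one in $H_i\cap H_j$) and avoiding the singular locus of $H_i\cup H_j$. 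Then select a Mattei-Moussu transversal $\phi:(\mathbb C^2,0)\rightarrow (M,p)$ of $\mathcal F$ whose tangent plane is generic enough so that $\phi^{-1}(H_i)$ and $\phi^{-1}(H_j)$ are transverse smooth branches $\Gamma_i,\Gamma_j$ with multiplicity one. Then $\phi^*\mathcal D=\lambda_i\Gamma_i+\lambda_j\Gamma_j$ and $\phi^*\mathcal D'=\lambda'_i\Gamma_i+\lambda'_j\Gamma_j$ are both divisorial models of the two-dimensional generalized curve $\phi^*\mathcal F$ at $0$, whose support has connected germ. Theorem \ref{th:existenciayunicidadendimensiondos} then forces $\phi^*\mathcal D'=c\,\phi^*\mathcal D$ for a single scalar $c$, giving $\lambda'_i/\lambda_i=\lambda'_j/\lambda_j=c$, as desired. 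The converse direction is immediate: projective equivalence scales each $\phi^*\mathcal D$ by a constant on each connected component of its support, and the defining conditions (support and indices in the two-dimensional sense) are invariant under such scalings.

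The main obstacle I anticipate is arranging a suitable generic point $p$ together with a Mattei-Moussu transversal $\phi$ through $p$ satisfying simultaneously three requirements: transversality to $H_i\cap H_j$ so that $\Gamma_i,\Gamma_j$ are smooth transverse branches with multiplicity one; the Mattei-Moussu condition for $\mathcal F$ at $p$ ensuring $\phi^*\mathcal F$ is a generalized curve controlled enough to apply the two-dimensional uniqueness; and avoidance of all other components of $\operatorname{Supp}(\mathcal D)$. The dimensional counts and the openness results in the Mattei-Moussu Transversality Theorem make each of these a generic condition, so their intersection is still generic, and the argument closes cleanly.
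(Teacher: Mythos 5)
Your proof is correct, and both halves ultimately reduce to dimension two as the paper does, but the route differs in detail. For non-dicriticalness you argue directly from the definition: a $\mathcal D$-transverse $\phi$ with $\phi^*{\mathcal D}=0$ and $\phi(y=0)\subset\operatorname{Supp}({\mathcal D})$ would make the zero divisor a divisorial model of the generalized curve $\phi^*{\mathcal F}$, which is impossible since such a foliation has a nonempty set of invariant branches; this is exactly the paper's two-dimensional corollary lifted verbatim, whereas the paper itself obtains non-dicriticalness in dimension $n$ as a by-product of the construction in Theorem \ref{teo:pilogarithmicmodel} together with uniqueness. For uniqueness the paper's one-line argument is simpler than yours: a single Mattei--Moussu transversal $\phi$ at the origin already meets every component $H_i$ (each $\phi^{-1}(H_i)$ is a nonempty curve through $0\in{\mathbb C}^2$), the germ of the support of $\phi^*{\mathcal D}$ at $0$ is automatically connected because all its branches pass through the origin, and one application of Theorem \ref{th:existenciayunicidadendimensiondos} yields a single constant $c$ with $\lambda_i'=c\lambda_i$ for all $i$ simultaneously; your chain of pairwise intersections and generic points of $H_i\cap H_j$ is therefore an unnecessary detour. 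Two incidental remarks: your claim that $H_i\cap H_j\cap H_k$ has codimension at least one in $H_i\cap H_j$ is false in general (three hyperplanes through a common codimension-two subspace), though this does not damage your argument since the two-dimensional uniqueness applies whatever branches pass through the chosen point; and working at points $p\neq 0$ of a germ at the origin requires choosing representatives, which your base point at a generic smooth point of $H_i\cap H_j$ quietly assumes, another reason the transversal at $0$ itself is preferable.
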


 In order to build a divisorial model, we take a reduction of singularities $\pi$ of the generalized hypersurface $\mathcal F$. There is a natural projective class of $\mathbb C$-divisors associated to the desingularized foliated space obtained from $\pi$. The divisorial model will be a $\mathbb C$-divisor that is transformed into this class by the reduction of singularities. For technical convenience, we systematically consider foliated spaces including a highlighted  normal crossings divisor, although the concept of divisorial model does not involve such a normal crossings divisor.

  Note that the uniqueness part in Proposition \ref{pro: uniquenessandnondicriticalness} comes from the corresponding fact in dimension two, just by taking a Mattei-Moussu transversal.

\subsection{$\mathbb C$-Divisors Associated to Desingularized Foliated Spaces}
\label{DivisorsAssociatedtoDesingularizedFoliateSpaces}
Let us consider a foliated space $((M',K'), E', {\mathcal F}')$ of generalized hypersurface type, where $K'$ is a connected and compact analytic subset of $M'$.  Let us  take  the following hypothesis:
\begin{itemize}
\item There is a logarithmic $1$-form $\eta'$ on $(M',K')$  fully associated to ${\mathcal F}'$.
\item The foliated space $((M',K'),E',{\mathcal F}')$ is desingularized.
\end{itemize}
 \begin{remark}
 \label{rk:existenciadeetaprima}
 Consider a foliated space $((M,K), E, {\mathcal F})$ of generalized hypersurface type,  where $K$ is a connected and compact analytic subset of $M$. Assume that there is a logarithmic $1$-form $\eta$ on $(M,K)$  fully associated to ${\mathcal F}$ and consider a reduction of singularities
 $$
 \pi:((M',K'), E',{\mathcal F}')\rightarrow ((M,K),E,{\mathcal F})
 $$
 as in Equation \ref{eq:reducciondesingularidades}.
 Then $\eta'=\pi^*{\eta}$ is a logarithmic $1$-form on $(M',K')$  fully associated to ${\mathcal F}'$, in view of Proposition
\ref{prop:pullbackoflogaritmicformsfullyassociated}. We note that the existence of such $\eta$, and hence $\eta'$, is assured when $K=\{0\}$ is a single point.
 \end{remark}

 In this subsection we build a $\mathbb C$-divisor ${\mathcal D}^{\eta'}$ on $(M',K')$,
 defining a divisorial model for ${\mathcal F}'$ on $(M',K')$, that is
 obtained from $\eta'$ in terms of residues.

Taking notations compatible with Subsection \ref{Notations on a Reduction of Singularities}, we denote by $S'\subset M'$ the union of invariant hypersurfaces of ${\mathcal F}'$ not contained in the  divisor $E'$. We know that $S'$ is a disjoint union of non singular hypersurfaces and $D'=E'\cup S'$ is also a normal crossings divisor on $M'$. Since the irreducible components of $E'$ are invariant, we have that $D'$ is the union of all invariant hypersurfaces of ${\mathcal F}'$. Accordingly with Subsection  \ref{Notations on a Reduction of Singularities}, we denote
$D'=\cup_{j\in I\cup B}D'_j$
the decomposition of $D'$ into irreducible components, where $E'=\cup_{i\in I}E'_i$ and $S'=\cup_{b\in B}S'_b$.

 Taking in account Saito's residue theory in \cite{Sai} and noting that $D'$ has the normal crossings property, the residue $\operatorname{Res}_p(\eta')$ of the germ of $\eta'$  at a point $p\in K'\subset M'$ is an element
$$
\operatorname{Res}_p(\eta')\in \oplus_{j\in I_p\cup B_p}{\mathcal O}_{D'_j,p},
$$
where $I_p=\{i\in I; p\in E'_i\}$ and $B_p=\{b\in B; p\in H'_b\}$. Note that $B_p$ is empty (corner points) or it has exactly one element (trace points) in view of the description of simple points in Subsection  \ref{definicionsimplepoint}. More generally, the residues induce global holomorphic functions
\begin{equation}
\label{eq:residuefunctions}
f_j:D'_j\rightarrow {\mathbb C},\quad j\in I\cup B.
\end{equation}
Let us note that each $D'_j$ is a germ along $D'_j\cap K'$.
Thus, the functions $f_j$ are constant along the connected components of the compact sets $D'_j\cap K'$.

More precisely, following Saito's Theory, we have local coordinates at $p$ that can be labelled as $(x_j; j\in I_p\cup B_p)\cup (y_s;s\in A)$ such that the germ $\eta'_p$ of $\eta'$ at $p$ can be written as
\begin{equation*}
\eta'_p=\sum_{j\in I_p\cup B_p}\tilde f_j\frac{dx_j}{x_j}+\alpha,\quad \tilde f_j\vert_{D'_j}=f_j,
\end{equation*}
where $\alpha$ is a germ of holomorphic $1$-form and moreover, the functions $\tilde f_j$ satisfy that
$\partial \tilde f_j/\partial x_j=0$, that is $\tilde f_j$
does not depend on the coordinate  $x_j$.
By evaluating $f_j$ at the point $p$, we get a local $\mathbb C$-divisor ${\mathcal D}_{\eta',p}$   defined by
\begin{equation}
\label{eq:descripciondedeetaprima}
{\mathcal D}_{\eta',p}=\sum_{j\in I_p\cup B_p} f_j(p)\operatorname{Div}_p(D'_j).
\end{equation}

\begin{proposition} There is a $\mathbb C$-divisor ${\mathcal D}^{\eta'}$ on $(M',K')$ such that the germ ${\mathcal D}^{\eta'}_p$ of ${\mathcal D}^{\eta'}$  at any $p\in K'$ satisfies that ${\mathcal D}^{\eta'}_p={\mathcal D}_{\eta',p}$.
\end{proposition}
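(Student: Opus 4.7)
The plan for the proof: the main input is that by Saito's residue theory (together with the hypothesis that $D'$ has normal crossings), the residue functions $f_j$ from Equation \ref{eq:residuefunctions} are already well defined and holomorphic on each irreducible component $D'_j$ of $D'$. The strategy is to promote the pointwise definition of ${\mathcal D}_{\eta', p}$ given in Equation \ref{eq:descripciondedeetaprima} to a globally well-defined $\mathbb C$-divisor by showing that the values $f_j(p)$, for $p\in D'_j\cap K'$, are locally constant in $p$, hence constant on each connected component.

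First, I would observe that for each $j\in I\cup B$, the restriction $f_j|_{D'_j\cap K'}$ is locally constant on the compact analytic set $D'_j\cap K'$: a holomorphic function on a connected compact reduced complex analytic space is constant, by the maximum modulus principle applied componentwise. Next, I would refine the decomposition of $D'$ into irreducible hypersurfaces in the germ category $(M',K')$: for each $j\in I\cup B$, I decompose the germ of $D'_j$ along $D'_j\cap K'$ into its finitely many pieces $D'_{j,c}$ corresponding to the connected components $c$ of $D'_j\cap K'$, so that each $D'_{j,c}$ is an irreducible hypersurface of $(M',K')$. On each $D'_{j,c}$, the function $f_j$ takes a single constant value $\lambda_{j,c}\in\mathbb C$.

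Then I would define
$$
{\mathcal D}^{\eta'}=\sum_{j,c}\lambda_{j,c}D'_{j,c},
$$
a finite formal $\mathbb C$-linear combination of irreducible hypersurfaces of $(M',K')$ (terms with $\lambda_{j,c}=0$ may be omitted); finiteness follows from the compactness of $K'$ and the analyticity of $D'$. To verify the germ identity, fix $p\in K'$. Only those indices $(j,c)$ with $p\in D'_{j,c}$ contribute to the germ at $p$, and for each such pair we have $\lambda_{j,c}=f_j(p)$ by construction. Comparing with Equation \ref{eq:descripciondedeetaprima} gives
$$
{\mathcal D}^{\eta'}_p=\sum_{(j,c):\,p\in D'_{j,c}}\lambda_{j,c}\operatorname{Div}_p(D'_{j,c})=\sum_{j\in I_p\cup B_p}f_j(p)\operatorname{Div}_p(D'_j)={\mathcal D}_{\eta',p}.
$$

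The only real subtlety—and the sole point worth care—is tracking the correspondence between the global irreducible components $D'_j$ of $D'$ and their (possibly several) connected components along $K'$, which must be separated for the formal sum to qualify as a $\mathbb C$-divisor on $(M',K')$ in the sense fixed earlier in the paper. Once this bookkeeping is in place, the proof is essentially a direct gluing argument driven by the constancy of the holomorphic residue functions $f_j$ on the compact components of $D'_j\cap K'$.
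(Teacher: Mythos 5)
Your proof is correct and rests on the same key fact as the paper's: the residue functions $f_j$ are holomorphic on $D'_j$ and therefore constant on each connected component of the compact analytic set $D'_j\cap K'$. The only divergence is in the bookkeeping of connectedness: you refine the decomposition of $D'$ by splitting each $D'_j$ according to the connected components of $D'_j\cap K'$, whereas the paper observes that this refinement is vacuous --- since the $D'_j$ are by definition the irreducible components of $D'$ in the germ category over $K'$, each compact set $D'_j\cap K'$ is automatically connected (a disconnection would split the germ of $D'_j$ into distinct germs of hypersurface, contradicting irreducibility), so $f_j$ takes a single value $\lambda_j$ on $D'_j\cap K'$ and one writes ${\mathcal D}^{\eta'}=\sum_j\lambda_j D'_j$ directly. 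Note that your own requirement that each piece $D'_{j,c}$ be an irreducible hypersurface of $(M',K')$ is precisely what forces the index $c$ to be unique, so your refined sum collapses to the paper's.
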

\begin{proof}
  The residual functions $f_j$ of Equation
\ref{eq:residuefunctions} are constant along each connected component of $K'\cap D'_j$, for $j\in I\cup B$. We have only to remark that the compact set $K'\cap D'_j $ is connected for any $j\in I\cap B$; otherwise $D'_j$ would not be irreducible, since each connected component of $K'\cap D'_j$ determines a connected component of $D'_j$ as a germ of hypersuface.
\end{proof}

\begin{remark} The $\mathbb C$-divisor ${\mathcal D}^{\eta'}$ is a divisorial model for ${\mathcal F}'$. The proof of this statement is a consequence of the more general results in Subsection \ref{Existence of Logarithmic Models}, by considering a trivial reduction of singularities.
\end{remark}

\subsection{The $\mathbb C$-Divisor Induced by a Reduction of Singularities}
\label{The Divisor Induced by a Reduction of Singularities}

Let us consider a foliated space $((M,K),E,{\mathcal F})$ of generalized hypersurface type, where $K$ is a connected and compact analytic subset $K\subset M$. Assume that we have a logarithmic $1$-form $\eta$ on $(M,K)$ fully associated to $\mathcal F$. It is not evident how to define a ${\mathbb C}$-divisor associated to $\eta$ as in Subsection
\ref{DivisorsAssociatedtoDesingularizedFoliateSpaces}, unless we are in the situation of normal crossings outside a subset of codimension $\geq 3$, described in Saito \cite{Sai} and also in \cite{Cer-LN}. In this subsection  we do it, once we have fixed a reduction of singularities of $((M,K), E,{\mathcal F})$. More precisely, this subsection is devoted to the proof of the following result:

\begin{theorem}
 \label{teo:pilogarithmicmodel}
Consider a foliated space $((M,K),E,{\mathcal F})$ of generalized hypersurface type, where $K$ is a connected and compact analytic subset of $M$.  Let $\eta$ be a logarithmic differential $1$-form on $(M,K)$ fully associated to ${\mathcal F}$. Given a  reduction of singularities
$$
\pi:((M',K'), E',{\mathcal F}')\rightarrow ((M,K), E,{\mathcal F}),
$$
there is a unique $\mathbb C$-divisor ${\mathcal D}_\pi^{\eta}$ of $(M,K)$ such that $\pi^*({\mathcal D}_\pi^{\eta})={\mathcal D}^{\eta'}$, where $\eta'=\pi^*\eta$.  Moreover ${\mathcal D}_\pi^\eta$ is non dicritical.
\end{theorem}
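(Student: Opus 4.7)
I will prove existence, uniqueness and non-dicriticalness of $\mathcal{D}_{\pi}^{\eta}$ simultaneously by induction on the length $s$ of the reduction of singularities $\pi=\pi_{1}\circ\cdots\circ\pi_{s}$. For $s=0$ the foliated space is already desingularized and I set $\mathcal{D}_{\pi}^{\eta}:=\mathcal{D}^{\eta}$, built from the residues of $\eta$ as in Subsection \ref{DivisorsAssociatedtoDesingularizedFoliateSpaces}; the local description of $\eta$ at a simple point (Equation \ref{simplecorners}) together with the non-resonance condition of Subsection \ref{definicionsimplepoint} forbids any non-negative resonance among the residues, so Corollary \ref{cor:dicriticonormalcrossings} delivers non-dicriticalness. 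For $s\geq 1$ I decompose $\pi=\pi_{1}\circ\sigma$, where $\pi_{1}$ is the first admissible blowing-up, along a center $Y\subset M$, and $\sigma$ is a reduction of singularities of length $s-1$ of $((M_{1},K_{1}),E^{1},\mathcal{F}_{1})$. Since $\eta_{1}=\pi_{1}^{*}\eta$ is fully associated to $\mathcal{F}_{1}$ by Proposition \ref{prop:pullbackoflogaritmicformsfullyassociated}, the inductive hypothesis produces a non-dicritical $\mathbb{C}$-divisor $\mathcal{D}_{\sigma}^{\eta_{1}}$ on $(M_{1},K_{1})$ with $\sigma^{*}\mathcal{D}_{\sigma}^{\eta_{1}}=\mathcal{D}^{\eta'}$.

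Writing $\mathcal{D}_{\sigma}^{\eta_{1}}=\mu E^{1}_{1}+\sum_{j}\lambda_{j}D^{1}_{j}$, with $E^{1}_{1}=\pi_{1}^{-1}(Y)$ and the $D^{1}_{j}$ the strict transforms of the finitely many irreducible invariant hypersurfaces $D_{j}$ of $\mathcal{F}$, the only possible candidate on $M$ is $\mathcal{D}_{\pi}^{\eta}:=\sum_{j}\lambda_{j}D_{j}$. Its pullback under $\pi_{1}$ matches $\mathcal{D}_{\sigma}^{\eta_{1}}$ precisely when the residue identity
\begin{equation*}
\mu=\sum_{j}\lambda_{j}\nu_{Y}(D_{j})
\end{equation*}
holds, and verifying it is the principal obstacle of the proof. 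The plan is to reduce the identity to dimension two. Corollary \ref{cor: equirrducciongenerica} provides a point $p\in Y$ at which $Y$ is equimultiple for every $D_{j}$ and which admits a Mattei--Moussu transversal $(\Delta,p)$ whose strict transform $\Delta^{1}$ by $\pi_{1,p}$ meets $\operatorname{Sing}(\mathcal{F}_{1})$ only transversely and only at $\sigma$-equireduction points. Along $\Delta$, $\mathcal{F}\vert_{\Delta}$ is a generalized curve by Proposition \ref{prop:pullbackgeneralizedcurve}, the form $\eta\vert_{\Delta}$ is fully associated to it, and $\pi$ restricts to a composition of point blowing-ups. By Corollary \ref{cor:modlogsucblowing}, $\mathcal{D}_{\sigma}^{\eta_{1}}\vert_{\Delta^{1}}$ is a two-dimensional divisorial model of $\mathcal{F}_{1}\vert_{\Delta^{1}}$, so Theorem \ref{th:existenciayunicidadendimensiondos} together with Proposition \ref{pro:modelostrasunaexplosion} identifies it (up to scaling) with the pullback under a single point blowing-up of a two-dimensional divisorial model of $\mathcal{F}\vert_{\Delta}$. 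The Mattei--Moussu transversality of $\Delta$ and the equimultiplicity of $p$ give $\nu_{Y}(D_{j})=\nu_{p}(D_{j}\cap\Delta)$, and the desired identity collapses to the two-dimensional relation encoded by Equation \ref{eq:lambdamultiplicidadesmenosuno} in the proof of Proposition \ref{pro:modelostrasunaexplosion}.

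Uniqueness is immediate from the birationality of $\pi$: for any irreducible hypersurface $H\subset M$, the strict transform $H'\subset M'$ appears with coefficient $1$ in $\pi^{*}H$, so $\pi^{*}\mathcal{D}=0$ forces every coefficient of $\mathcal{D}$ to vanish. For non-dicriticalness, the same two-dimensional reduction shows $\mu\neq 0$: otherwise $\mathcal{D}_{\pi}^{\eta}\vert_{\Delta}$ would be dicritical, contradicting the fact (established at the end of the two-dimensional theory) that any divisorial model of a generalized curve is a non-dicritical $\mathbb{C}$-divisor. Hence $\pi_{1}$ is a non-dicritical admissible blowing-up for $\mathcal{D}_{\pi}^{\eta}$, and Corollary \ref{dicriticidadexplosionnodicritica} transports the non-dicriticalness of $\mathcal{D}_{\sigma}^{\eta_{1}}$ back to $\mathcal{D}_{\pi}^{\eta}$, closing the induction.
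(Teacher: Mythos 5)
Your overall strategy coincides with the paper's: induction on the length of $\pi$, reduction of the key identity $\mu=\sum_j\lambda_j\nu_Y(D_j)$ to dimension two via a Mattei--Moussu transversal at a generic point $p$ of the center $Y$, and the same uniqueness and non-dicriticalness arguments. However, there is a genuine gap at the step where you assert that ${\mathcal D}^{\eta_1}_\sigma\vert_{\Delta^1}$ is a divisorial model of ${\mathcal F}_1\vert_{\Delta^1}$. To deduce this from Corollary \ref{cor:modlogsucblowing} you would need its pullback to $\Delta'$, namely ${\mathcal D}^{\eta'}\vert_{\Delta'}$, to be a divisorial model of ${\mathcal F}'\vert_{\Delta'}$. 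The coefficients of ${\mathcal D}^{\eta'}$ are the values of the residue functions $f_j$ on $D'_j\cap K'$, whereas the divisorial model of ${\mathcal F}'\vert_{\Delta'}$ is governed by the residues of $\eta'$ at the points of $D'_j\cap\pi^{-1}(p)$. These coincide automatically only if $p$ can be chosen inside $K$; but Corollary \ref{cor: equirrducciongenerica} only provides $p\in Y\setminus Z$, and it may happen that $Y\cap K\subset Z$, forcing $p\notin K$. Since the $f_j$ are merely holomorphic on $D'_j$ and only known to be constant on $D'_j\cap K'$, the residue vector $(\lambda_j(p))_j$ over $p$ need not equal $(\lambda_j)_j$. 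Your two-dimensional computation therefore only yields the relation $\lambda_1(p)=\sum_j\lambda_j(p)\nu_Y(D_j)$ among the residues over $p$, not the identity you need for the global coefficients.

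The paper closes exactly this gap with Lemma \ref{lema.csindices}: for consecutive components $D'_{t_u}$ and $D'_{t_{u+1}}$ meeting in the fiber over $p$, the ratio $-\lambda_{t_{u+1}}(p)/\lambda_{t_u}(p)$ is the Camacho--Sad index of the transversal type of ${\mathcal F}'$ along the connected codimension-two stratum $D'_{t_u}\cap D'_{t_{u+1}}$, hence can also be read at points of that stratum lying in $K'$, where it equals $-\lambda_{t_{u+1}}/\lambda_{t_u}$. Chaining these equalities through the connected dual graph of $\bar\pi_p^{-1}(p)$ gives $\lambda_j(p)/\lambda_1(p)=\lambda_j/\lambda_1$ for all $j$, and combining this proportionality with the homogeneous relation over $p$ yields the identity for the global coefficients. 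You need to supply this step (or an equivalent comparison of residues over $p$ with those along $K'$) for the induction to close.
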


\begin{remark} We know that Theorem  \ref{teo:pilogarithmicmodel} is true when $\dim M=2$.
Let us see this. Assume that $\dim M=2$ and take a logarithmic differential $1$-form $\eta$ fully associated to $\mathcal F$.
We have that $\eta'=\pi^*\eta$ is locally given at the singular points $p\in K'$ as
$$
\eta'=(\lambda+a(x,y))\frac{dx}{x}+ (\mu+b(x,y))\frac{dy}{y},
$$
where $xy=0$ is a local equation of $S' \cup E'$ at $p$.
The coefficient in ${\mathcal D}^{\eta'}$ of the irreducible component of $S'\cup E'$ of equation $x=0$ is equal to $\lambda$.
This tells us that ${\mathcal D}^{\eta'}$ is a divisorial model for ${\mathcal F}'$.
Consider now a divisorial model ${\mathcal D}$ for  ${\mathcal F}$, it exists in view of Theorem  \ref{th:existenciayunicidadendimensiondos}.
By Corollary \ref{cor:modlogsucblowing}, the pullback $\pi^*{\mathcal D}$ is a divisorial model for ${\mathcal F}'$. Recall that two divisorial models are projectively equivalent, by Theorem \ref{th:existenciayunicidadendimensiondos}. Noting that the supports are connected (each component of the support intersects the connected subset $K'=\pi^{-1}(0)$), there is a non-zero scalar $\mu\in {\mathbb C}$ such that $ {\mathcal D}^{\eta'}=\mu \pi^*{\mathcal D}$. If we take
${\mathcal D}_\pi^{\eta}=\mu{\mathcal D}$, we obtain that $\pi^*({\mathcal D}_\pi^{\eta})={\mathcal D}^{\eta'}$. The non-dicriticalness of ${\mathcal D}_\pi^{\eta}$ is also a consequence of the fact that it is a divisorial model for $\mathcal F$, in view of the statement of Theorem  \ref{th:existenciayunicidadendimensiondos}.
\end{remark}

From now on, let us take the general notations introduced in Subsection
\ref{Notations on a Reduction of Singularities}.

Let us recall that
$I\setminus I_0=\{1,2,\ldots,s\}$,
where $s$ is the length of $\pi$ as a composition of a sequence of blowing-ups. We assume that $s\geq 1$, otherwise we take ${\mathcal D}^{\eta}_\pi={\mathcal D}^{\eta}$ and we are done. Let us recall also the decomposition $\pi=\pi_1\circ\sigma$, where $\pi_1$ is the first blowing-up and $\sigma$ is a composition of $s-1$ blowing-ups.

Let us show that  ${\mathcal D}_\pi^{\eta}$ is necessarily unique. If we  write ${\mathcal D}^{\eta'}=\sum_{j\in I\cup B}\lambda_j D'_j$,
the condition  $\pi^*({\mathcal D}_\pi^\eta)={\mathcal D}^{\eta'}$ implies that
\begin{equation}
\label{eq:depieta}
{\mathcal D}_\pi^\eta=\sum_{j\in I_0\cup B}\lambda_j D_j.
\end{equation}
Then ${\mathcal D}_\pi^{\eta}$ is unique, if it exists. From now on, we fix ${\mathcal D}_\pi^\eta$ as being the one given by Equation \ref{eq:depieta}.

Let us see that ${\mathcal D}_\pi^\eta$ is a non-dicritical $\mathbb C$-divisor, under the assumption that $\pi^*({\mathcal D}^\eta_\pi)={\mathcal D}^{\eta'}$. We know that ${\mathcal D}^{\eta'}$ is non-dicritical, by applying Corollary \ref{cor:dicriticonormalcrossings}. Moreover, for any $i\in \{1,2,\ldots,s\}$, the $i$-blowing-up is non dicritical, since $\lambda_i\ne 0$. Hence $\pi$ is a composition of admissible non-dicritical blowing-ups for ${\mathcal D}^\eta_\pi$. Since ${\mathcal D}^{\eta'}$ is non-dicritical,
 by Corollary \ref{dicriticidadexplosionnodicritica} we conclude that ${\mathcal D}^\eta_\pi$ is non-dicritical.

Finally, we have to verify that $\pi^*({\mathcal D}_\pi^\eta)={\mathcal D}^{\eta'}$.

We proceed by induction on the length $s$ of the reduction of singularities $\pi$, using the fact that Theorem \ref{teo:pilogarithmicmodel} is true when $n=2$. Let us put $\eta_1=\pi_1^*\eta$.
We have that the logarithmic 1-form $\eta_1$ is fully associated to ${\mathcal F}_1$
and
$\eta'=\sigma^*\eta_1$.
Our induction hypothesis implies that the statement of Theorem \ref{teo:pilogarithmicmodel} is true for the morphism
$$
\sigma:((M',K'), E',{\mathcal F}')\rightarrow ((M_1,K_1), E^1,{\mathcal F}_1).
$$
This means that $\sigma^*({\mathcal D}^{\eta_1}_\sigma)={\mathcal D}^{\eta'}$. Then, in order to show that $\pi^*({\mathcal D}^{\eta}_\pi)={\mathcal D}^{\eta'}$, we have only to verify that $\pi_1^*({\mathcal D}^{\eta}_\pi)={\mathcal D}^{\eta_1}_\sigma$. This is equivalent to saying that the following equality holds:
\begin{equation}
\label{eq:lambdauno}
\lambda_1=\sum_{j\in J_Y}\lambda_j\nu_Y(D_j),
\end{equation}
where $J_Y=\{j\in I_0\cup B;\; Y\subset D_j\}$.
\begin{remark} Take a point $p\in Y$, not necessarily in $Y\cap K$, but close enough to $K$. Assume that $p$ is a point of equimultiplicity for $\mathcal F$; then, for any $D_j$, with  $j\in I_0\cup B$, we have that $\nu_p(D_j)=\nu_Y(D_j)$. In this case, we have the equality
\begin{equation}
\label{eq:indicesdey}
J_Y=\{j\in I_0\cup B;\quad p\in D_j\}.
\end{equation}
\end{remark}

Let us take a point $p\in Y\setminus Z$ and a Mattei-Moussu transversal $(\Delta,p)$ as stated in Corollary \ref{cor: equirrducciongenerica}.
We recall that $\pi$ induces a morphism $\pi_p$ as in Equation \ref{eq:pisobrep} that splits as $\pi_p=\pi_{1,p}\circ\sigma_p$ as in Equation \ref{eq.redsingsobrep}. Moreover, the
morphism $\pi$ induces a two-dimensional reduction of singularities of foliated spaces of generalized curve type that we denote as:
\begin{equation}
\label{eq:pibarrap}
\bar\pi_p: ((\Delta',E'(p)), E'(p), {\mathcal F}'\vert_{\Delta'})
\rightarrow ((\Delta,p), \emptyset, {\mathcal F}\vert_\Delta),\quad E'(p)=\Delta'\cap\pi^{-1}(p),
\end{equation}
where $\Delta'$ is the strict transform of $\Delta$ by $\pi$.
The reader may find similar situations in \cite{Can-M-RV}.
In view of the equireduction properties, there is an increasing sequence of integers
\begin{equation}
\label{eq:listaele}
\ell_1=1<\ell_2< \cdots<\ell_r\leq s,\quad T_Y=\{1,\ell_2,\ldots,\ell_s\},
\end{equation}
depending only on $Y$ with the following property: for any $j=1,2,\ldots,s$, the inverse image $K'_p=\pi^{-1}(p)$ intersects $D'_j$ if and only if $j\in T_Y$. We have that
$$
E'(p)=\cup_{t\in T_Y} (\Delta'\cap E'_t)
$$
is the decomposition into irreducible components of $E'(p)=\bar\pi^{-1}_p(p)$. We can decompose $\bar\pi_p$ as $\bar\pi_p=\bar\pi_{1,p}\circ \bar\sigma_p$ where
\begin{eqnarray*}
\bar\pi_{1,p}:((\Delta_1,E^1(p)), E^1(p),{\mathcal F}_1\vert_{\Delta_1})&\rightarrow &
((\Delta,p), \emptyset,{\mathcal F}\vert_{\Delta}), \quad E^1(p)=\Delta_1\cap \pi_1^{-1}(p),
\\
\bar\sigma_p: ((\Delta',E'(p)), E'(p),{\mathcal F}'\vert_{\Delta'})&\rightarrow&((\Delta_1,E^1(p)), E^1(p),{\mathcal F}_1\vert_{\Delta_1}),
\end{eqnarray*}
where $\Delta_1$ is the strict transform of $\Delta$ by $\pi_1$.

Since $(\Delta,p)$ is a Mattei-Moussu transversal, we have that
$
\bar\eta=\eta\vert_\Delta
$
is a logarithmic differentiable $1$-form fully associated to ${\mathcal F}\vert_{\Delta}$. Moreover, we know that $\Delta_1$ is also a Mattei-Moussu transversal for ${\mathcal F}_1$ in all the points of $E^1(p)$, then $\bar\eta_1=\eta_1\vert_{\Delta_1}$ is also a logarithmic $1$-form fully associated to ${\mathcal F}_1\vert_{\Delta_1}$. By the same reason, we see that $\bar\eta'=\eta'\vert_{\Delta'}$ is a logarithmic $1$-form fully associated to ${\mathcal F}'\vert_{\Delta'}$. On the other hand, an elementary functoriality assures that
\begin{equation}
\bar\eta'=\bar\pi_p^*(\bar\eta)=\bar\sigma_p^*(\bar\eta_1),
\quad \bar\eta_1=\bar\pi_{1,p}^*(\bar\eta).
\end{equation}

Note that $((M', K'_p ),E',{\mathcal F}')$ is desingularized. Then, we can follow the construction in Subsection
\ref{DivisorsAssociatedtoDesingularizedFoliateSpaces} to obtain a $\mathbb C$-divisor
$
{\mathcal D}^{\eta'}_p
$,
defined in $(M', K'_p )$  and  associated to the logarithmic $1$-form $\eta'$. To be precise, the $\mathbb C$-divisor
$
{\mathcal D}^{\eta'}_p
$
is associated to the germ of $\eta'$ along $K'_p$; this germ may be considered for $p$ close enough to $K$, by taking appropriate representatives.

We can write
 the $\mathbb C$-divisor ${\mathcal D}^{\eta'}_p$ on $(M', K'_p)$ and the ${\mathbb C}$-divisor  ${\mathcal D}^{\eta}_{\pi_p}$ on $(M,p)$ as
\begin{eqnarray}
\label{eq:enlafibra}
 {\mathcal D}^{\eta'}_p&=&\sum_{j\in J_Y}\lambda_j(p)\operatorname{Div}_{K'_p}(D'_j)
 +
 \sum_{j\in T_Y}\lambda_j(p)\operatorname{Div}_{K'_p}(D'_j),
 \\
 {\mathcal D}^{\eta}_{\pi_p}&=&\sum_{j\in J_Y}\lambda_j(p)\operatorname{Div}_{p}(D_j).
\end{eqnarray}
We denote by $\operatorname{Div}_{K'_p}(D'_j)$ the germ of $D'_j$ along $K'_p=\pi^{-1}(p)$.
 In the same way,
we denote by $\operatorname{Div}_{p}(D_j)$ the germ of $D_j$ at the point $p$.  Note that Equation \ref{eq:enlafibra} is written without null coefficients.
\begin{remark}
\label{rk:igualdaddecoefficientes}
 Recall that $K'_p=\pi^{-1}(p)$. If $p\in K$, we have that $K'_p\subset K'$ and the reader can verify that ${\mathcal D}^{\eta'}_p$ is just the germ of ${\mathcal D}^{\eta'}$ along $\pi^{-1}(p)$. In this case, we have that
\begin{equation}
\label{eq:igualdaddecoeficientes}
\lambda_j(p)=\lambda_j, \mbox{ for any } j\in J_Y\cup T_Y.
\end{equation}
When $p$ is not in the germification set $K$, we describe further the relationship between ${\mathcal D}^{\eta'}_p$ and the germ of ${\mathcal D}^{\eta'}$ along $\pi^{-1}(p)$.
\end{remark}
The following Lemma \ref{lema:thconequrreduccion} is our first step in the proof of Theorem \ref{teo:pilogarithmicmodel}:
\begin{lemma}
\label{lema:thconequrreduccion} Under the induction hypothesis, we have that
\begin{equation}
\label{eq:teoenpuntosdeequirreduccion}
\lambda_1(p)=\sum_{j\in J_Y}\lambda_j(p)\nu_Y(D_j)
\end{equation}
and thus $\pi_p^*({\mathcal D}^{\eta}_{\pi_p})={\mathcal D}^{\eta'}_{p}$.
\end{lemma}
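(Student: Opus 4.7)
The plan is to reduce Equation~\ref{eq:teoenpuntosdeequirreduccion} to the $2$-dimensional case of Theorem~\ref{teo:pilogarithmicmodel} — already established in the remark following its statement — via the Mattei-Moussu transversal provided by Corollary~\ref{cor: equirrducciongenerica}. First I would pick a point $p\in Y$ outside the bad locus $Z$ of that corollary, together with a two-plane section $(\Delta,p)$ whose strict transform $\Delta^1$ by $\pi_{1,p}$ meets $\operatorname{Sing}({\mathcal F}_1)$ transversely and only in $\sigma$-equireduction points. By Proposition~\ref{prop:secciontransversalequirreduccion} the restriction ${\mathcal F}\vert_\Delta$ is a generalized curve, and iterating Proposition~\ref{prop:pullbackoflogaritmicformsfullyassociated} along the induced equireduction sequence shows that $\bar\eta=\eta\vert_\Delta$, $\bar\eta_1=\eta_1\vert_{\Delta_1}$ and $\bar\eta'=\eta'\vert_{\Delta'}$ are logarithmic and fully associated to the corresponding restricted foliations. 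Since $\dim\Delta=2$ and $\dim Y\leq n-2$, the transversal $\Delta$ can be arranged so that $Y\cap\Delta=\{p\}$ with $T_pY\cap T_p\Delta=0$, whence $\bar\pi_{1,p}:\Delta_1\to\Delta$ is simply the blowing-up of $\Delta$ at $p$.

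The main auxiliary step is the compatibility between Saito's residue and restriction to $\Delta'$. At a simple point $q\in K'_p\cap D'_j$ where $\Delta'$ is transverse to $D'_j$, the germ of $\eta'$ at $q$ reads $\tilde f_j\,dx_j/x_j+\alpha$ with $D'_j=(x_j=0)$ and $\tilde f_j\vert_{D'_j}=f_j$; restricting to $\Delta'$ gives a logarithmic form whose residue along the trace curve $D'_j\cap\Delta'$ equals $f_j(q)$ at $q$. From the construction in Subsection~\ref{DivisorsAssociatedtoDesingularizedFoliateSpaces}, this identifies the coefficient of $D'_j\cap\Delta'$ in ${\mathcal D}^{\bar\eta'}$ with $\lambda_j(p)$ for every $j\in J_Y\cup T_Y$.

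Now I would apply the $2$-dimensional version of Theorem~\ref{teo:pilogarithmicmodel} to the reduction $\bar\pi_p=\bar\pi_{1,p}\circ\bar\sigma_p$: it yields a unique $\mathbb C$-divisor $\bar{\mathcal D}^{\bar\eta}_{\bar\pi_p}$ on $(\Delta,p)$ with $\bar\pi_p^*\bar{\mathcal D}^{\bar\eta}_{\bar\pi_p}={\mathcal D}^{\bar\eta'}$; tracking strict-transform coefficients along $\bar\pi_p$ and using the previous step, the coefficient of $D_j\cap\Delta$ in $\bar{\mathcal D}^{\bar\eta}_{\bar\pi_p}$ is $\lambda_j(p)$ for $j\in J_Y$. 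The blowing-up formula for the point blow-up $\bar\pi_{1,p}$ then gives that the coefficient of the exceptional line $E^1(p)$ in $\bar\pi_{1,p}^*\bar{\mathcal D}^{\bar\eta}_{\bar\pi_p}$ equals $\sum_{j\in J_Y}\lambda_j(p)\,\nu_p(D_j\cap\Delta)$. On the other hand, applying $\bar\sigma_p^*$ to this and using $\bar\pi_p^*\bar{\mathcal D}^{\bar\eta}_{\bar\pi_p}={\mathcal D}^{\bar\eta'}$, the same quantity must equal the coefficient of the strict transform of $E^1(p)$ in ${\mathcal D}^{\bar\eta'}$, which is $\lambda_1(p)$. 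Equimultiplicity of $p$ and the Mattei-Moussu choice of $\Delta$ give $\nu_p(D_j\cap\Delta)=\nu_p(D_j)=\nu_Y(D_j)$, producing Equation~\ref{eq:teoenpuntosdeequirreduccion}. The conclusion $\pi_p^*({\mathcal D}^\eta_{\pi_p})={\mathcal D}^{\eta'}_p$ then follows by combining this equation with the pull-back formula for $\pi_{1,p}^*$ and the inductive hypothesis $\sigma_p^*{\mathcal D}^{\eta_1}_{\sigma_p}={\mathcal D}^{\eta'}_p$. The hardest part, I expect, is the residue-restriction compatibility: one must carefully verify that $\Delta'$ remains transverse to every $D'_j$ at each evaluation point of $K'_p$, which relies on the $\sigma$-equireduction clause of Corollary~\ref{cor: equirrducciongenerica} and the stability of the Mattei-Moussu property along the equireduction sequence.
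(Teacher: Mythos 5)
Your proposal is correct and follows essentially the same route as the paper: restrict to the Mattei--Moussu transversal $(\Delta,p)$ furnished by Corollary \ref{cor: equirrducciongenerica}, invoke the two-dimensional case of Theorem \ref{teo:pilogarithmicmodel} for $\bar\pi_p=\bar\pi_{1,p}\circ\bar\sigma_p$, and compare the coefficient of the exceptional line $E^1_1\cap\Delta_1$ on both sides of $\bar\pi_{1,p}^*({\mathcal D}^{\bar\eta}_{\bar\pi_p})={\mathcal D}^{\bar\eta_1}_{\bar\sigma_p}$, using transversality and equimultiplicity to identify $\nu_p(D_j\cap\Delta)$ with $\nu_Y(D_j)$. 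The residue-restriction compatibility you single out as the delicate point is exactly what the paper encodes in the identities ${\mathcal D}^{\bar\eta'}={\mathcal D}^{\eta'}_{p}\vert_{\Delta'}$, ${\mathcal D}^{\bar\eta_1}_{\bar\sigma_p}={\mathcal D}^{\eta_1}_{\sigma_p}\vert_{\Delta_1}$ and ${\mathcal D}^{\bar\eta}_{\bar\pi_p}={\mathcal D}^{\eta}_{\pi_p}\vert_{\Delta}$.
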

 \begin{proof}  By induction hypothesis, we have that $\sigma_p^*({\mathcal D}^{\eta_1}_{\sigma_p})= {\mathcal D}^{\eta'}_{p}$. Reasoning as before, we have that  Equation \ref{eq:teoenpuntosdeequirreduccion} holds if and only if  $\pi_p^*({\mathcal D}^{\eta}_{\pi_p})={\mathcal D}^{\eta'}_{p}$.

 We know that
 the $\mathbb C$-divisor ${\mathcal D}^{\bar\eta'}$ on $(\Delta', K'_p)$,
 the $\mathbb C$-divisor ${\mathcal D}^{\bar\eta_1}_{\bar\sigma_p}$ on $(\Delta_1, K_{1,p})$
 and
 the $\mathbb C$-divisor ${\mathcal D}^{\bar\eta}_{\bar\pi_p}$ on $(\Delta, p)$
 are given by
$$
{\mathcal D}^{\bar\eta'}={\mathcal D}^{\eta'}_{p}\vert_{\Delta'},
\quad
{\mathcal D}^{\bar\eta_1}_{\bar\sigma_p}={\mathcal D}^{\eta_1}_{\sigma_p}\vert_{\Delta_1},
\quad
{\mathcal D}^{\bar\eta}_{\bar\pi_p}= {\mathcal D}^{\eta}_{\pi_p}\vert_{\Delta}
.
$$
Recalling that  Theorem \ref{teo:pilogarithmicmodel} is true for
two dimensional ambient spaces, we have
$$
\bar\pi_p^*({\mathcal D}^{\bar\eta}_{\bar\pi_p})={\mathcal D}^{\bar\eta'},
\quad
\bar\sigma_p^*({\mathcal D}^{\bar\eta_1}_{\bar\sigma_p})={\mathcal D}^{\bar\eta'},
\quad
\bar\pi_{1,p}^*({\mathcal D}^{\bar\eta}_{\bar\pi_p})={\mathcal D}^{\bar\eta_1}_{\bar\sigma_p}.
$$
The property $\bar\pi_{1,p}^*({\mathcal D}^{\bar\eta}_{\bar\pi_p})={\mathcal D}^{\bar\eta_1}_{\bar\sigma_p}$ gives us that Equation \ref{eq:teoenpuntosdeequirreduccion} holds, as follows.
The coefficient $\mu(p)$ of
$\operatorname{Div}(E^1_1\cap \Delta_1)$ in $\pi_{1,p}^*({\mathcal D}^{\bar\eta}_{\bar\pi_p})$ is given by
$$
\mu(p)=
\sum_{j\in J_Y}\lambda_j(p)\nu_p(D_j\cap \Delta).
$$
Recalling that $(\Delta,p)$ is a Mattei-Moussu transversal, we have that
$$
\nu_p(D_j\cap \Delta)=\nu_p(D_j),\quad \mbox{ for any } j\in J_Y.
$$
Since $p$ is a point of $Y$-equimultiplicity for the generalized hypersurface $\mathcal F$, we have that $\nu_p(D_j)=\nu_Y(D_j)$, for all $j\in J_Y$. We conclude that
$$
\mu(p)= \sum_{j\in J_Y}\lambda_j(p)\nu_Y(D_j).
$$
On the other hand, the coefficient of $\operatorname{Div}(E^1_1\cap \Delta_1)$ in ${\mathcal D}^{\bar\eta_1}_{\bar\sigma_p}$ is equal to $\lambda_1(p)$. Since  $\bar\pi_{1,p}^*({\mathcal D}^{\bar\eta}_{\bar\pi_p})={\mathcal D}^{\bar\eta_1}_{\bar\sigma_p}$, he have that $\lambda_1(p)=\mu(p)$ and we are done.
\end{proof}

In view of Remark \ref{rk:igualdaddecoefficientes} and  Equation \ref{eq:igualdaddecoeficientes}, if we can choose $p\in K$, the equality in Equation
\ref{eq:lambdauno} holds and we are done. This is the situation when $Y=\{p\}$ and more generally when $Y\subset K$.

We have to consider the case when we cannot chose $p\in K$. This means that $Y\cap K\subset Z$, where $Z\subset Y$ is the closed analytic subset presented in Corollary \ref{cor: equirrducciongenerica}.
We end with the following lemma:
\begin{lemma}
\label{lema.csindices} For any $j\in J_Y$ and $p\in Y\setminus Z$, we have that
$
\lambda_j/\lambda_1=\lambda_j(p)/\lambda_1(p)$.
\end{lemma}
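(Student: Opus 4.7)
The plan is to reduce the claim to the two-dimensional situation via a Mattei-Moussu transversal, apply the uniqueness of $2$D divisorial models, and then identify the common value of the ratios by continuity as $p$ approaches $Y\cap K$.

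First, I would take the Mattei-Moussu transversal $(\Delta,p)$ at $p\in Y\setminus Z$ furnished by Corollary \ref{cor: equirrducciongenerica}, together with its induced two-dimensional reduction of singularities $\bar\pi_p$. Since $\bar\eta=\eta\vert_\Delta$ is logarithmic and fully associated to the generalized curve $\mathcal F\vert_\Delta$, the two-dimensional case of Theorem \ref{teo:pilogarithmicmodel} (established in the remark following its statement) produces a $\mathbb C$-divisor $\mathcal D^{\bar\eta}_{\bar\pi_p}$ on $(\Delta,p)$ satisfying $\bar\pi_p^*(\mathcal D^{\bar\eta}_{\bar\pi_p})=\mathcal D^{\bar\eta'}$, whose coefficients on the branches $D_j\cap\Delta$ are precisely the numbers $\lambda_j(p)$ for $j\in J_Y$. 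By Theorem \ref{th:existenciayunicidadendimensiondos}, $\mathcal D^{\bar\eta}_{\bar\pi_p}$ is a divisorial model of $\mathcal F\vert_\Delta$ and is unique up to projective equivalence; in particular, the tuple of ratios $(\lambda_j(p)/\lambda_1(p))_{j\in J_Y}$ is intrinsic to the $2$D generalized curve $\mathcal F\vert_\Delta$.

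Next, I would show that this tuple is the same for all $p\in Y\setminus Z$. At an equireduction point, the singular locus $\operatorname{Sing}(\mathcal F)$ is non-singular of codimension two with $\mathcal F$ equimultiple along it, and moreover $Y$ is equimultiple for $\mathcal F$ at $p$ by Corollary \ref{cor: equirrducciongenerica}. These conditions together yield a local product structure for $\mathcal F$ in the $Y$-direction, under which nearby Mattei-Moussu transversals $\Delta_p$ and $\Delta_{p'}$ carry analytically equivalent generalized curves. By the $2$D uniqueness their divisorial models are projectively equivalent, so the ratios are locally constant; since $Y$ is irreducible and $Z$ is a proper closed analytic subset, $Y\setminus Z$ is connected, and the ratios are globally constant on $Y\setminus Z$.

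Finally, I would identify the value of this constant by continuity as $p$ approaches a point $p_0\in Y\cap K$ (which is non-empty, since $Y$ is an admissible center in a reduction of singularities over $K$). The residue function $f_j$ is holomorphic on $D'_j$ and constant with value $\lambda_j$ on the connected compact set $D'_j\cap K'$. As $p\to p_0$, the fiber $D'_j\cap \pi^{-1}(p)$ converges to $D'_j\cap\pi^{-1}(p_0)\subset D'_j\cap K'$, so continuity of $f_j$ gives $\lambda_j(p)\to\lambda_j$ and likewise $\lambda_1(p)\to\lambda_1$. The constant ratio on $Y\setminus Z$ is therefore $\lambda_j/\lambda_1$.

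The main obstacle is the local constancy step: extracting from the equireduction conditions a genuine local product structure for $\mathcal F$ along $Y$ that produces an analytic equivalence between the $2$D restrictions $\mathcal F\vert_{\Delta_p}$ at nearby equireduction points. Without such a trivialization the Camacho-Sad matrix governing the $2$D divisorial model could in principle vary holomorphically with $p$, and the ratios $\lambda_j(p)/\lambda_1(p)$ might fail to be constant; the careful verification of this local triviality, built from the codimension-two non-singular structure of $\operatorname{Sing}(\mathcal F)$, the equimultiplicity of $\mathcal F$ along it, and the equimultiplicity of $Y$ at $p$, is where the bulk of the technical work lies.
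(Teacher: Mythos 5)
The reduction to the transversal $(\Delta,p)$ and the concluding continuity argument are reasonable, but the middle step --- the constancy of the ratios $\lambda_j(p)/\lambda_1(p)$ as $p$ ranges over $Y\setminus Z$ --- is precisely the content of the lemma, and your proposal does not prove it. You propose to extract it from a local analytic product structure for $\mathcal F$ along $Y$, deduced from equireduction and equimultiplicity, and you yourself flag this as the unproved ``bulk of the technical work''. That mechanism is not available: equireduction controls the combinatorics and the multiplicities of the reduction of singularities along $Y$, not the analytic type of the transversal restrictions, and equisingular families of plane generalized curves are in general not locally analytically trivial (already a linear simple singularity $\lambda\, y\,dx+\mu\, x\,dy$ carries the continuous modulus $-\lambda/\mu$, which no equireduction hypothesis fixes). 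So ``analytically equivalent generalized curves at nearby transversals'' cannot be obtained from the quoted hypotheses, and without it the ratios are a priori only holomorphic functions of $p$.

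What actually forces constancy is the integrability of $\eta'$ in the ambient $n$-dimensional space, and this is how the paper argues --- with no limit process at all. One joins $D'_1$ to $D'_j$ by a chain $t_1=1,\ldots,t_{v+1}=j$ in the connected dual graph of the exceptional divisor of the two-dimensional reduction $\bar\pi_p$ of Equation \ref{eq:pibarrap}. Each consecutive ratio $-\lambda_{t_{u+1}}(p)/\lambda_{t_u}(p)$ is the Camacho--Sad index at a crossing point $q_u\in D'_{t_u}\cap D'_{t_{u+1}}\cap\Delta'$ (Remark \ref{rk:indicessingsimples}); this is the index of the transversal type of ${\mathcal F}'$ along the codimension-two stratum $D'_{t_u}\cap D'_{t_{u+1}}$, and the integrability $\eta'\wedge d\eta'=0$ forces the residue relation $f_{t_u}\,df_{t_{u+1}}-f_{t_{u+1}}\,df_{t_u}=0$ there, so the index is constant along that connected stratum, which also meets $K'$. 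Evaluating it over $K'$ gives $-\lambda_{t_{u+1}}/\lambda_{t_u}$, and multiplying the chain yields $\lambda_j(p)/\lambda_1(p)=\lambda_j/\lambda_1$ directly. Even if you insisted on your ``constancy plus limit'' architecture, the constancy would still have to be proved by essentially this residue argument, so the detour through a product structure buys nothing and leaves the real gap open.
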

\begin{proof}
Consider the reduction of singularities $\bar\pi_p$ as in Equation \ref{eq:pibarrap}. For any index $\ell\in J_Y\cup T_Y$, let us denote $D'_\ell(p)=D'_\ell\cap \Delta'$. The exceptional divisor of $\bar\pi_p$ is
$$
E'(p)=\bar\pi_p^{-1}(p)=\cup_ {t\in T_Y}D'_t(p)
$$
 and  $\bar\pi_p$ is a composition of $r$ blowing-ups, the morphisms corresponding to the indices $t\in T_Y$. By connectedness of the dual graph of $\bar\pi_p$, there is a finite sequence $(t_u)_{u=1}^{v+1}$ of elements of $T_Y\cup \{j\}$, with $t_1=1$, $t_{v+1}=j$ such that
 $$
 D'_{t_u}(p)\cap D'_{t_{u+1}}(p)\ne \emptyset, \quad u=1,2,\ldots,v.
 $$
Take a point $q_{u}\in D'_{t_u}(p)\cap D'_{t_{u+1}}(p)$, for $u=1,2,\ldots,v$.
By the local description of simple singularities in dimension two, we have that
$$
\operatorname{CS}_{q_u}({\mathcal F}'\vert_{\Delta'}, D'_{t_u}(p))
=-\lambda_{t_{u+1}}(p)/\lambda_{t_{u}}(p)
,
$$
see Remark \ref{rk:indicessingsimples}.
The Camacho-Sad index $\operatorname{CS}_{q_u}({\mathcal F}'\vert_{\Delta'}, D'_{t_u}(p))$ is the Camacho-Sad index of the transversal type of ${\mathcal F}'$ along
$D'_{t_u}\cap D'_{t_{u+1}}$. It can be read locally at the points in
$
(D'_{t_u}\cap D'_{t_{u+1}})\cap K'
$,
from the germ of the $1$-form $\eta'$ along $K'$. We deduce that
$$
\operatorname{CS}_{q_u}({\mathcal F}'\vert_{\Delta'}, D'_{t_u}(p))=-\lambda_{t_{u+1}}/\lambda_{t_{u}}.
$$
Hence $\lambda_{t_{u+1}}(p)/\lambda_{t_{u}}(p)=\lambda_{t_{u+1}}/\lambda_{t_{u}}$ for any $u=1,2,\ldots,v$. Making the product of these equalities, we conclude that $\lambda_{j}(p)/\lambda_{1}(p)=\lambda_j/\lambda_1$ and we are done.
\end{proof}
As a consequence of Lemmas  \ref{lema:thconequrreduccion} and \ref{lema.csindices},
we obtain that Equation \ref{eq:lambdauno} holds. This ends the proof of Theorem \ref{teo:pilogarithmicmodel}.

\subsection{Existence of Divisorial Models}
\label{Existence of Logarithmic Models}

 We apply first Theorem \ref{teo:pilogarithmicmodel} to a reduction of singularities
$$
\pi:((M',K'), E',{\mathcal F}')\rightarrow (({\mathbb C}^n,0), \emptyset,{\mathcal F}),\quad K'= \pi^{-1}(0),
$$
of the foliated space  $(({\mathbb C}^n,0), \emptyset,{\mathcal F})$. That is, we take an integrable logarithmic differential $1$-form $\eta$ fully associated to ${\mathcal F}$, we consider the $\mathbb C$-divisor ${\mathcal D}'={\mathcal D}^{\eta'}$ on $M'$, where $\eta'=\pi^*\eta$ and finally, we consider the ${\mathbb C}$-divisor ${\mathcal D}$ on $({\mathbb C}^n,0)$ defined by the property $\pi^*{\mathcal D}={\mathcal D}'$, whose existence has been shown in Theorem  \ref{teo:pilogarithmicmodel}. We are going to verify that ${\mathcal D}$ is a divisorial model for $\mathcal F$.
Note that the support of $\mathcal D$ is the union $D$ of the invariant hypersurfaces of $\mathcal F$.

Consider a ${\mathcal D}$-transverse holomorphic map $\phi:({\mathbb C}^2,0)\rightarrow ({\mathbb C}^n,0)$. By Proposition  \ref{prop:appdos} in the Appendix I, there is a commutative diagram of holomorphic maps
\begin{equation}
\begin{array}{ccc}
({\mathbb C}^2,0)&\stackrel{\sigma}{\longleftarrow}& (N',\sigma^{-1}(0))
\\
\downarrow \phi& & \downarrow \psi
\\
({\mathbb C}^n,0)&\stackrel{\pi}{\leftarrow}& (M',\pi^{-1}(0))
\end{array}
\end{equation}
such that $\sigma$ is the composition of  a finite sequence of blowing-ups.
\begin{lemma}
\label{lem:conmutatividad}
 In the above situation, we have that $\phi^*{\mathcal F}$ exists and
$\sigma^*({\phi^*{\mathcal F}})=\psi^*{\mathcal F}'$.
\end{lemma}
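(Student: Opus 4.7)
The proof consists of two parts, both of which reduce to applying results already established.

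First, for the existence of $\phi^*{\mathcal F}$: since $\phi$ is ${\mathcal D}$-transverse and the support of ${\mathcal D}$ equals the union $D$ of the invariant hypersurfaces of ${\mathcal F}$ (by construction of ${\mathcal D}$ via Theorem \ref{teo:pilogarithmicmodel}, whose support pulls back to $\operatorname{Supp}({\mathcal D}') \subseteq E' \cup S'$, i.e.\ the full invariant locus), the image of $\phi$ is not contained in $D$. Proposition \ref{prop:pullbackgeneralizedcurve} then yields that $\phi^*{\mathcal F}$ exists (and is in fact a generalized curve).

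Second, for the equality $\sigma^*(\phi^*{\mathcal F}) = \psi^*{\mathcal F}'$: I would use the commutative identity $\pi \circ \psi = \phi \circ \sigma$ and apply the functoriality properties collected in Remark \ref{rk:pullbackdicriticao}. On the one hand, $\sigma$ is a composition of blowing-ups, so $(\phi \circ \sigma)^*{\mathcal F}$ exists and equals $\sigma^*(\phi^*{\mathcal F})$ by the ``blowing-up'' clause of that remark. On the other hand, $\pi$ is a composition of non-dicritical admissible blowing-ups for ${\mathcal F}$: indeed, $\pi$ is a reduction of singularities of a generalized hypersurface, so every center is invariant and each elementary step is non-dicritical by Proposition \ref{prop:blowingupgeneralizedhip}. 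The non-dicritical clause of Remark \ref{rk:pullbackdicriticao} therefore gives $(\pi \circ \psi)^*{\mathcal F} = \psi^*(\pi^*{\mathcal F}) = \psi^*{\mathcal F}'$. Combining both chains via $\pi \circ \psi = \phi \circ \sigma$ yields the desired identity.

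There is no real obstacle here: the content is entirely bookkeeping on pullback foliations. The only point that might appear delicate is checking that the ``non-dicritical'' hypothesis needed to commute $\pi^*$ past $\psi^*$ is actually in force, but this is immediate from the fact that ${\mathcal F}$ is a generalized hypersurface together with Proposition \ref{prop:blowingupgeneralizedhip}. Once this is noted, the two assertions of the lemma follow in two lines each.
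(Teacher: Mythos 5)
Your proof is correct, but it routes the argument through previously stated results where the paper argues directly with the defining $1$-form. For existence, you invoke Proposition \ref{prop:pullbackgeneralizedcurve}; the paper instead repeats, inside the proof of the lemma, the same contradiction argument that underlies that proposition (if $\phi^*\omega=0$ for a reduced generator $\omega$, then a branch $\Gamma$ with $\phi(\Gamma)\not\subset D$ would give an invariant curve of $\mathcal F$ outside $D$, contradicting Lemma \ref{lema:curvainvariante}), so the two are mathematically identical. For the equality, the paper simply writes the chain $\sigma^*(\phi^*\omega)=(\phi\circ\sigma)^*\omega=(\pi\circ\psi)^*\omega=\psi^*(\pi^*\omega)$ at the level of forms: once $\phi^*\omega\ne 0$, all four forms are nonzero and the outer two define $\sigma^*(\phi^*{\mathcal F})$ and $\psi^*{\mathcal F}'$ respectively (the latter because $\pi^*\omega$ differs from a reduced generator of ${\mathcal F}'$ only by a function factor that cannot pull back to zero). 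Your version replaces this one-line computation by an iterated application of the two clauses of Remark \ref{rk:pullbackdicriticao}, which forces you to check that every elementary blowing-up in $\pi$ is non-dicritical; you correctly supply this via Proposition \ref{prop:blowingupgeneralizedhip}. Both arguments are sound; the paper's form-level computation is slightly more economical because it sidesteps the question of whether the intermediate pullback foliations are defined, while yours makes the functorial bookkeeping explicit and reusable.
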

\begin{proof} Let $\omega$ be an integrable holomorphic $1$-form defining $\mathcal F$, without common factors in its coefficients. Assume that $\phi^*\omega\ne 0$, then we are done. Indeed, in this case $\phi^*{\mathcal F}$ is defined by $\phi^*\omega$, since $\sigma$ is a sequence of blowing-ups, we have that $\sigma^*(\phi^*\omega)\ne 0$ and $\sigma^*(\phi^*{\mathcal F})$ is defined by the nonzero $1$-form $\sigma^*(\phi^*\omega)$. Noting that
$$
\sigma^*(\phi^*\omega)=(\phi\circ\sigma)^*\omega=(\pi\circ\psi)^*\omega=\psi^*(\pi^*\omega),
$$
we conclude that $\phi^*{\mathcal F}$ exists and
$\sigma^*({\phi^*{\mathcal F}})=\psi^*{\mathcal F}'$.

Let us show that $\phi^*\omega\ne 0$. Assume by contradiction that $\phi^*\omega= 0$. Take an analytic germ of curve $(\Gamma, 0)$ such that $\phi(\Gamma)\not\subset D$. The existence of such a $(\Gamma, 0)$ is guarantied by the hypothesis that $\operatorname{Im}(\phi)\not\subset D$.
The assumption that
$\phi^*\omega=0$ implies that $(\phi(\Gamma),0)$ is an invariant germ of curve of $\mathcal F$. Taking a reduction of singularities of $D$, that induces a reduction of singularities of $\mathcal F$, we see that any germ of invariant curve must be contained in $D$. Contradiction.
\end{proof}
In view of Proposition \ref{pro:modelostrasunaexplosion}, in order to prove that $\phi^*{\mathcal D}$ is a divisorial model for $\phi^*{\mathcal F}$, it is enough to prove that
$
\sigma^*(\phi^*{\mathcal D})
$
is a divisorial model for $\sigma^*(\phi^*{\mathcal F})$. By Lemma \ref{lem:conmutatividad} above, we have that $\sigma^*({\phi^*{\mathcal F}})=\psi^*{\mathcal F}'$. Moreover, we also have
$$
\sigma^*(\phi^*{\mathcal D})=\psi^*{\mathcal D}'.
$$
 Thus,  we have to verify that $\psi^*{\mathcal D}'$ is a divisorial model for $\psi^*{\mathcal F}'$.

Let us work locally at a point $p\in\sigma^{-1}(0)$ and put $q=\psi(p)$. First of all, we take local coordinates $(x_1,x_2,\ldots,x_n)$ at $q$  such that the foliation ${\mathcal F}'$ is locally given at $q$ by an integrable meromorphic $1$-form
$$
\eta'=\sum_{i=1}^\tau (\lambda_i+f_i(x_1,x_2,\ldots,x_\tau))\frac{dx_i}{x_i},\quad f_i(0,0,\ldots,0)=0,
$$
  where  $\sum_{i=1}^\tau n_i\lambda_i\ne 0$ for any $\mathbf{n}\in {\mathbb Z}^\tau_{\geq 0}\setminus\{0\}$. Recall that then the total transform of $D$ is locally given at $q$ by the union of the hyperplanes $x_i=0$, with  $i=1,2,\ldots,\tau$. Moreover, we know that the $\mathbb C$-divisor ${\mathcal D}'$ is locally given at $q$ by
$$
{\mathcal D}'=\sum_{i=1}^\tau \lambda_i (x_i=0).
$$
We have to show that $\psi^*{{\mathcal D}'}$ is a divisorial model for $\psi^*{{\mathcal F}'}$ at $p$.  We apply now Proposition \ref{prop:appuno} in the Appendix I as follows. Take the list of functions
$$
{\mathcal L}'_p=\{\psi_{i,p}\}_{i=1}^\tau,
$$
 where $\psi_i=x_i\circ\psi$, for $i=1,2,\ldots,\tau$ and $\psi_{i,p}$ denotes the germ at $p$ of $\psi_i$.  There is a composition of blowing-ups centered at points
$$
\sigma': (N'',\sigma'^{-1}(p))\rightarrow (N',p)
$$
in such a way that the transformed list ${\mathcal L}''=\{f_i\}_{i=1}^\tau$  is desingularized, where $f_i=\psi_{i,p}\circ \sigma'$, see Appendix I. In particular, for any point $p'\in \sigma'^{-1}(p)$, there are local coordinates $u,v$ such
that
for any $i\in\{1,2,\ldots,\tau\}$ with $f_{i,p'}\ne 0$,  there is a unit $U_{i,p'}\in {\mathcal O}_{N'',p'}$ and  $(a_i,b_i)\in {\mathbb Z}^2_{\geq 0}$ with $f_{i,p'}=U_{i,p'} u^{a_i}v^{b_i}$; note also that we have $a_i+b_i\geq 1$, since $\sigma'(p')=p$.

 Now, in order to prove that $\psi^*{{\mathcal D}'}$ is a divisorial model for $\psi^*{{\mathcal F}'}$ at $p$, it is enough to prove that  ${\sigma'}^*(\psi^*{{\mathcal D}'})$ is a divisorial model for ${\sigma'}^*(\psi^*{{\mathcal F}'})$ at any point $p'$ of
 ${\sigma'}^{-1}(p)$.
 By the local expression of $\psi\circ\sigma'$ at $p'$ in appropriate local coordinates $u,v$, we conclude that ${\sigma'}^*\psi^*{{\mathcal F}'}$ is generated by

$$
{\sigma'}^*\psi^*\eta'=
\left(\sum_{i=1}^\tau a_i\lambda_i+g(u,v) \right)
\frac{du}{u}+\left(\sum_{i=1}^\tau b_i\lambda_i+h(u,v) \right)
\frac{dv}{v}+\alpha,
$$
where $\alpha$ is a holomorphic $1$-form.

Put $\mu_1=\sum_{i=1}^\tau a_i\lambda_i$ and $\mu_2=\sum_{i=1}^\tau b_i\lambda_i$. We know that not all the germs of functions $f_{i,p'}$ are identically zero, for $i=1,2,\ldots\tau$; this would imply that ${\sigma'}^*\psi^* \eta'=0$ and this is not possible since we know that $\psi^*\eta'\ne 0$. Then, some of the $a_i, b_i$ are nonzero and by the non resonance properties either $\mu_1$ or $\mu_2$ are nonzero. Say that $\mu_1\ne 0$, since we are dealing with generalized hypersurfaces, there are no saddle-nodes, and then $\mu_1\mu_2\ne 0$ or we have a non-singular foliation, in the sense that $\mu_2+h(u,v)$ is identically zero. Now, we have that
$$
{\sigma'}^*\psi^*{\mathcal D}'=\mu_1(u=0)+\mu_2(v=0)
$$
 locally at $p'$. Hence ${\sigma'}^*\psi^*{\mathcal D}'$ is a divisorial model for ${\sigma'}^*(\psi^*{\mathcal F}')$ at $p'$. This ends the proof of Theorem  \ref{teo:main}.

\section{Logarithmic Models}
\label{Logarithmic Models}
Let $\mathcal F$ be a generalized hypersurface over $(M,K)$ where $M$ is a germ of non-singular complex analytic variety over a connected and compact analytic subset $K\subset M$. Let us assume that $\mathcal D$ is a divisorial model for ${\mathcal F}$. By definition, any $\mathcal D$-logarithmic foliation $\mathcal L$ on $(M,K)$ is a {\em logarithmic model} for $\mathcal F$.

In the case of $K=\{0\}$ and hence $(M,K)=({\mathbb C}^n,0)$, the existence of logarithmic models is assured. Indeed, by Theorem \ref{teo:main} there is a divisorial model $\mathcal D$ for $\mathcal F$, that we can write
$$
{\mathcal D}=\sum_{i=1}^s\lambda_iS_i.
$$
Choosing a reduced local equation $f_i=0$ for each $S_i$, the closed logarithmic $1$-form $\eta=\sum_{i=1}^sdf_i/f_i$ generates a logarithmic model $\mathcal L$. This gives sense to the main theorem stated in the Introduction.

Let us state certain properties of logarithmic models that are directly deduced from the results presented in this work:
\begin{enumerate}
\item If $(M,K)=({\mathbb C}^2,0)$, a logarithmic foliation $\mathcal L$ is a logarithmic model for a generalized curve $\mathcal F$ if and only if ${\mathcal L}$ and $\mathcal F$ have the same Camacho-Sad indices with respect to the invariant branches.
\item Assume that $\pi: ((M',K'),E',{\mathcal F}')\rightarrow ((M,K),E,{\mathcal F})$ is the composition of a finite sequence of admissible blowing-ups, where $((M,K),E,{\mathcal F})$ is a foliated space of generalized hypersurface type. If $\mathcal L$ is a logarithmic model for $\mathcal F$, then $\pi^*{\mathcal L}$ is a logarithmic model for ${\mathcal F}'$.
\item  Let $\mathcal F$ be a generalized hypersurface on $({\mathbb C}^n,0)$ and denote by $S$ the union of its invariant hypersurfaces. Consider a logarithmic foliation $\mathcal L$ on $({\mathbb C}^n,0)$. The following statements are equivalent:
    \begin{enumerate}
    \item $\mathcal L$ is a logarithmic model for $\mathcal F$.
    \item For any $S$-transverse map $\phi:({\mathbb C}^2,0)\rightarrow ({\mathbb C}^n,0)$ we have that $\phi^*{\mathcal L}$ is a logarithmic model for $\phi^*{\mathcal F}$.
    \end{enumerate}
\end{enumerate}
A question for the future is to develop a similar theory concerning the dicritical case. In dimension two, some results are known \cite{Can-Co}.

\section{Appendix I}
We recover here results in Proposition \ref{prop:appuno} and Proposition \ref{prop:appdos}  concerning the reduction of singularities of lists of functions in dimension two and the lifting of morphisms by a sequence of blowing-ups. These results are well known. We just state the first one and we prove the second one as a consequence of it.
Let us note that there are stronger results on monomialization of morphisms by D. Cutkosky \cite{Cut} and Akbulut and King (Chapter 7 of \cite{Akb-K}, according to \cite{Cut}); we do not need the use of such strong versions in this work.

Let $N$ be a two dimensional complex analytic variety and consider a finite list
${\mathcal L}=\{f_i\}_{i=1}^t$, where $f_i;N\rightarrow {\mathbb C}$ is a holomorphic function for $i=1,2,\ldots,t$. Given a point $p\in N$, denote by ${\mathcal L}_p=\{f_{i,p}\}_{i=1}^t$ the list of germs at $p$ of the functions $f_i$. We say that ${\mathcal L}$ is {\em desingularized } at $p\in N$, or equivalently that {\em the list ${\mathcal L}_p$ is desingularized,\/} if and only if there are local coordinates $(u,v)$ at $p$ such that the following two properties hold:
\begin{enumerate}
 \item For any $i\in\{1,2,\ldots,t\}$ with $f_{i,p}\ne 0$,  there is a unit $U_{i,p}\in {\mathcal O}_{N,p}$ and  $(a_i,b_i)\in {\mathbb Z}^2_{\geq 0}$ such that $f_{i,p}=U_{i,p} u^{a_i}v^{b_i}$.
 \item Given  $i,j\in\{1,2,\ldots,t\}$, if $f_{i,p}$ does not divide $f_{j,p}$, then $f_{j,p}$ divides $f_{i,p}$.
 \end{enumerate}
We say that {\em ${\mathcal L}$ is desingularized} when it is desingularized at any point $p\in N$. This is an open property, in the sense that the points where ${\mathcal L}$ is desingularized are the points of an open subset of $N$. Given a morphism $\sigma:N'\rightarrow N$, the transform $\sigma^*{\mathcal L}$ of the $\mathcal L$ is the list in $N'$ defined by
$$
\sigma^*{\mathcal L}=\{f_i\circ\sigma\}_{i=1}^t
$$
Next, we state a well known result of desingularization of lists of functions:
\begin{proposition}
\label{prop:appuno}
Let ${\mathcal L}=\{f_i\}_{i=1}^t$ be a list of functions on a  non singular two-dimensional holomorphic variety $(N,C)$, that is a germ along a compact set $C\subset N$. There is a morphism
$$
\sigma:(N',\sigma^{-1}(C))\rightarrow (N,C),
$$
that is the composition of a finite sequence of blowing-ups centered at points, in such a way that the transformed list $\sigma^*{\mathcal L}=\{f_i\circ \sigma\}_{i=1}^t$ is desingularized.
\end{proposition}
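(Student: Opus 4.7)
The proof naturally falls into three steps: a local-to-global reduction using compactness of $C$, a normal crossings desingularization handling condition (1), and a combinatorial stage handling condition (2).

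First, I would make a local-to-global reduction. The set of points at which the current list is desingularized is open in $N$, and further blowings-up preserve this property over the open set. Hence by the compactness of $C$, it suffices to produce, for each $p\in C$, a finite sequence of blowings-up centred at points above $p$ after which ${\mathcal L}$ is desingularized in a neighbourhood of the fibre over $p$; the global $\sigma$ is then assembled by doing these local sequences in turn, using the fact that at each stage the already-desingularized locus remains desingularized.

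Second, for the local problem at $p$ I would begin with the classical Hironaka reduction of singularities of plane curves (a finite sequence of point blowings-up) applied to the reduced zero set of $\prod_{f_{i,p}\ne 0}f_{i,p}$; this brings the support into normal crossings. Consequently, at each point $q$ of the preimage of $p$ there exist local coordinates $(u,v)$ in which each non-zero transform of $f_{i,p}$ is a unit times a monomial $u^{a_i}v^{b_i}$, which is exactly condition (1) of the definition. Thus after this first sequence of blowings-up, the only remaining task is to enforce condition (2), which in these coordinates translates to the requirement that the finite family of exponent vectors $\{(a_i,b_i)\}$ be totally ordered under the coordinatewise partial order on ${\mathbb Z}_{\geq 0}^2$.

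Third, I would enforce total ordering by further point blowings-up. At a point $q$ where some pair $\{(a,b),(c,d)\}$ is incomparable, one blowing-up produces an exceptional ${\mathbb P}^1$ on which every point different from the two chart-centres has the monomials automatically totally ordered (they all become powers of a single exceptional coordinate times units); so only the two chart-centres matter, and there the exponent pairs transform by $(a,b)\mapsto(a+b,b)$ and $(a,b)\mapsto(a,a+b)$ respectively. For a single incomparable pair these transformations are exactly a step of the Euclidean algorithm applied to $(|a-c|,|b-d|)$, so finitely many blowings-up render that pair comparable. I would introduce a positive integer invariant $\Phi$ summing, over ordered pairs that are currently incomparable, an Euclidean-type quantity, and check that $\Phi$ strictly drops at every point of the exceptional divisor whenever incomparability persists there; termination then gives condition (2) everywhere over $q$.

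The main obstacle is the termination argument in the third step: for a single pair the transformation $(a,b)\mapsto(a+b,b)$ can enlarge individual differences, so the candidate invariant must be chosen with some care. The essential point is that whenever an incomparable pair survives at one chart-centre, it is precisely the larger entry of $(|a-c|,|b-d|)$ that gets replaced by the strictly smaller difference, so an invariant that combines \emph{both} the number of incomparable pairs and a decreasing Euclidean datum for each surviving incomparability does the job. The remaining verifications are routine because all pairs other than the one being blown-up either remain comparable or fit into the same termination scheme in parallel.
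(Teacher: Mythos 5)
Your proof is correct and follows the route the paper itself intends: the paper's entire proof is the one-line remark that the statement is ``an easy consequence of the classical results of desingularization for plane curves'' with a reference to Lipman, so your write-up simply supplies the details that are left to the reader — namely the reduction to normal crossings of the reduced product $\prod_{f_{i,p}\ne 0} f_{i,p}$ for condition (1), and the Euclidean/corner-blow-up argument that totally orders the exponent vectors for condition (2). All the individual claims check out (comparable pairs stay comparable under $(a,b)\mapsto(a+b,b)$, incomparability survives at most at one of the two corners, and the quantity $\sum |a_i-a_j|+|b_i-b_j|$ taken over the incomparable pairs already serves as a strictly decreasing invariant, so the ``care'' you worry about in the last step is not really needed).
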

\begin{proof} This result is an easy consequence of the classical results of desingularization for plane curves. The reader may look at \cite{Lip}.
\end{proof}
\begin{remark}
The above proposition \ref{prop:appuno} is true without restriction on the dimension of $N$ (with similar definition of what is a desingularized list). This is a consequence of Hironaka's reduction of singularities in \cite{Hir}.
\end{remark}

 \begin{proposition}
  \label{prop:appdos} Let $\phi:(N,C)\rightarrow (M,K)$ be a holomorphic map between connected germs of non-singular analytic varieties along compact sets, where $\dim N=2$. Consider a morphism $\pi:(M',\pi^{-1}(K))\rightarrow (M,K) $ that is the composition of a finite sequence of blowing-ups with non singular centers. Let us assume that the image of $\phi$ is not contained in the projection by $\pi$ of the centers of blowing-up. Then, there is a morphism
 $$
 \sigma: (N',\sigma^{-1}(C))\rightarrow (N,C)
 $$
 that is the composition of a finite sequence of blowing-ups centered at points, in such a way that there is a unique morphism
 $
\psi: (N',\sigma^{-1}(C))\rightarrow (M',\pi^{-1}(K))
 $
 such that $\phi\circ\sigma=\pi\circ\psi$.
 \end{proposition}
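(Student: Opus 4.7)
The plan is to induct on the length $s$ of the factorization $\pi=\pi_1\circ\pi_2\circ\cdots\circ\pi_s$ into individual blowing-ups, reducing the problem to the case of a single blowing-up, which is handled by Proposition \ref{prop:appuno} together with the universal property of blowing-ups.

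For the base case $s=1$, $\pi$ is a single blowing-up with nonsingular center $Y\subset M$. Working locally around each point of $K$, choose generators $g_1,\ldots,g_k$ of the ideal sheaf $I_Y$ and form the list $\mathcal L=\{g_j\circ\phi\}_{j=1}^k$ on $(N,C)$. The hypothesis $\phi(N)\not\subset Y$ guarantees that at each connected component the $g_j\circ\phi$ are not simultaneously zero, so $\mathcal L$ is a well-posed list of holomorphic functions. Apply Proposition \ref{prop:appuno} to obtain $\sigma:(N',\sigma^{-1}(C))\to (N,C)$, a composition of point blowing-ups, after which the transformed list is desingularized; in particular, at each point of $\sigma^{-1}(C)$ the pulled-back ideal $(g_j\circ\phi\circ\sigma)_j$ is monomial, hence locally principal. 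By the universal property of the blowing-up with center $Y$, this forces a unique factorization $\phi\circ\sigma=\pi\circ\psi$ through $\psi:N'\to M'$.

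For the inductive step $s\geq 2$, write $\pi=\pi_1\circ\tau$ with $\tau=\pi_2\circ\cdots\circ\pi_s$. Apply the base case to $\phi$ and $\pi_1$ to get $\sigma_1:N_1\to N$ and a lift $\psi_1:N_1\to M_1$ satisfying $\pi_1\circ\psi_1=\phi\circ\sigma_1$. Before invoking the induction hypothesis on $\tau$ and $\psi_1$, we must verify that $\psi_1(N_1)$ is not contained in $(\pi_2\circ\cdots\circ\pi_{i-1})(Y_{i-1})$ for each center $Y_{i-1}$ of $\pi_i$, $i\geq 2$. If such a containment held, composing with $\pi_1$ would give
\[
\phi(N)=\phi(\sigma_1(N_1))=\pi_1(\psi_1(N_1))\subset (\pi_1\circ\pi_2\circ\cdots\circ\pi_{i-1})(Y_{i-1}),
\]
contradicting the hypothesis on $\phi$. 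The induction hypothesis then yields $\sigma_2:N'\to N_1$ (a composition of point blowing-ups) and $\psi:N'\to M'$ with $\tau\circ\psi=\psi_1\circ\sigma_2$. Setting $\sigma=\sigma_1\circ\sigma_2$ gives the required factorization $\pi\circ\psi=\phi\circ\sigma$.

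Uniqueness follows because $\pi$ restricts to a biholomorphism over the complement of the (proper analytic) union $C_\pi\subset M$ of the projections of the successive centers; the hypothesis ensures that $(\phi\circ\sigma)^{-1}(M\setminus C_\pi)$ is a dense open subset of $N'$ on which any two lifts coincide, hence they coincide on all of $N'$ by analytic continuation. The main obstacle is the base case, where two-dimensionality of $N$ is essential: it is precisely Proposition \ref{prop:appuno} that lets us render the pulled-back ideal of the center locally principal by point blowing-ups, thereby satisfying the hypothesis of the universal property of blowing-ups; the inductive bookkeeping, including preservation of the noninclusion hypothesis through the lift, is then straightforward.
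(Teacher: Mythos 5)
Your proposal is correct and follows essentially the same route as the paper: induction on the number of blowing-ups in $\pi$, reduction to a single blowing-up treated by desingularizing the list of pulled-back generators of the center's ideal via Proposition \ref{prop:appuno}, and uniqueness of the lift by density of the locus over which $\pi$ is an isomorphism. The only (minor) divergence is that you invoke the universal property of the blowing-up to produce $\psi$ --- where principality of the pulled-back ideal relies not just on monomiality but on the total divisibility ordering in condition (2) of the definition of a desingularized list --- whereas the paper constructs $\psi$ explicitly in the Proj-type coordinates of the blow-up and then glues over the compact set $C$ using uniqueness; both are sound.
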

 \begin{proof} Let us show first that the result is true in the special case that
 $$
 (N,C)=({\mathbb C}^2,0), \quad  (M,K)=({\mathbb C}^n,0)
 $$
 and $\pi$ is the single blowing-up with center $Y=(x_1=x_2=\cdots=x_t=0)$. Consider the
 list of functions ${\mathcal L}=\{\phi_i\}_{i=1}^t$, where  $\phi_i=x_i\circ \phi$. Take a desingularization
 $$
 \sigma: (N',\sigma^{-1}(0))\rightarrow ({\mathbb C}^2,0),
 $$
  of $\mathcal L$ as stated in Proposition \ref{prop:appuno}, where ${\mathcal L}'=\{\varphi_i\}_{i=1}^t$ is the transformed list, recalling that $\varphi_i=\phi_i\circ\sigma$.

  Let us represent $\phi$ by a morphism
 $
 \phi_U:U\rightarrow V
 $
 where $U\subset{\mathbb C}^2$ is a connected open neighborhood of the origin $0\in {\mathbb C}^2$ and $V={\mathbb D}_\epsilon^n\subset  {\mathbb C}^n$ is a poly-cylinder around the origin in such a way that the center of $\pi$ is represented by $$
 Y=(x_1=x_2=\cdots=x_t=0)\subset V.
 $$
 We also consider the morphism $\sigma_U:U'\rightarrow U$ obtained by the same blowing-ups indicated by $\sigma$ and we denote by $\pi_V:V'\rightarrow V$ the blowing-up with center $Y$.

 Let us put $\phi_{i,U}=x_i\circ\phi_U$ and $\varphi_{i,U'}= \phi_{i,U}\circ \sigma_U$. Since the property of being desingularized is open, by taking $U$ small enough, we can assume that the list ${\mathcal L}_{U'}'=\{\varphi_{i,U'}\}_{i=1}^t$ is desingularized at any point of $U'$.

 In this situation, let us show that there is a unique holomorphic map
 $$
 \psi_{U'}:U'\rightarrow V'
 $$ such that $\phi_U\circ \sigma_U=\pi_V\circ\psi_{U'}$. More precisely, we are going to prove that given any nonempty open subset $W\subset U'$, there is a unique holomorphic map
 $$
 \psi_{W}:W\rightarrow V'
 $$
 such that $\phi_U\circ ({\sigma_U}\vert_W)=\pi_V\circ\psi_W$.

 Let us recall how is constructed the blowing-up $\pi_V$. Take the projective space ${\mathbb P}^{t-1}_{\mathbb C}$ and consider the closed subset $Z\subset {\mathbb P}^{t-1}_{\mathbb C}\times {\mathbb D}_\epsilon^t$ given by
 $$
 Z=\{([a_1,a_2,\ldots,a_t], (b_1,b_2,\ldots, b_t)); \quad a_ib_j=a_jb_i, \; 1\leq i,j\leq t\}.
 $$
 The blowing-up $\tilde\pi$ of the origin of ${\mathbb D}^t_{\epsilon}$ is the second projection
 $
 \tilde\pi: Z\rightarrow {\mathbb D}^t_{\epsilon}
 $. The blowing-up of $V={\mathbb D}^t_{\epsilon}\times {\mathbb D}^{n-t}_{\epsilon}$ with center $Y$ is the product
 $$
 \pi=\tilde\pi\times \operatorname{id}_{{\mathbb D}^{n-t}_{\epsilon}}: Z\times {\mathbb D}^{n-t}_{\epsilon}\rightarrow {\mathbb D}^{t}_{\epsilon}\times {\mathbb D}^{n-t}_{\epsilon}=V.
 $$
 Now, let us show the existence and uniqueness of $\psi_{W}$.

 Let us consider the open subset $W_0\subset W$ defined by
 $
 W_0=W\setminus \sigma_{U}^{-1}(\phi_U^{-1}(Y))
 $.
 By hypothesis, we know that $W_0$ is a dense open subset of $W$. The uniqueness of $\psi_W$ is then implied by the uniqueness of $\psi_{W_0}$. Take $p\in W_0$ and consider the vectors
 \begin{eqnarray*}
 {\mathbf v}^t(p)&=&(\varphi_{1,U'}(p), \varphi_{2,U'}(p), \ldots, \varphi_{t,U'}(p))\\
 {\mathbf v}^{n-t}(p)&=&(\varphi_{t+1,U'}(p), \varphi_{t+2,U'}(p), \ldots, \varphi_{n,U'}(p)).
 \end{eqnarray*}
 Since $p\in W_0$, we see that ${\mathbf v}^t(p)$ is not identically zero and  we necessarily have that
 \begin{equation}
 \label{eq:uniciddlevantamiento}
 \psi_W(p)=([{\mathbf v}^t(p)], {\mathbf v}^t(p), {\mathbf v}^{n-t}(p) ).
 \end{equation}
 This shows the uniqueness of $\psi_W$.

 Now take a point $p\in W$; we denote $\varphi_{i,p}$ the germ of $\varphi_{i,U'}$ at $p$, even in the case when $p\notin \sigma^{-1}(0)$. Consider the set
 $$
 I_p=\{i\in \{1,2,\ldots,t\}; \quad \varphi_{i,p} \mbox{ divides }  \varphi_{j,p},\;  \mbox{ for any }  j\in \{1,2,\ldots,t\} \}.
 $$
 Let us note that $I_p\ne \emptyset$. Indeed, saying that $I_p=\emptyset$ means that $\varphi_{i,p}=0$ for all $i=1,2\ldots,t$, this implies that $\varphi_{i,U'}=0$ and thus $\phi_{i,U}=0$  for any $i=1,2,\ldots,t$; this contradicts the hypothesis that the image of $\phi$ is not contained in $Y$.  Let us choose an index $i\in I_p$. Define the germs
 $$
 \varphi_{ji,p}=\left\{
 \begin{array}{ccc}
  \varphi_{j,p}/\varphi_{i,p}&\mbox{ if }& 1\leq j\leq t,\\
  \varphi_{j,p}&\mbox{ if }& t+1\leq j\leq n.\\
 \end{array}
 \right.
 $$
 Note that $\varphi_{ii,p}=1$. We can define the germ $\psi_p$ of $\psi_W$ by
 $$
 \psi_p= ([\varphi_{1i,p},\varphi_{2i,p},\ldots,\varphi_{ti,p}], \varphi_{1i,p},\varphi_{2i,p},\ldots,\varphi_{ti,p},\varphi_{t+1,i,p},\ldots, \varphi_{ni,p}).
 $$
 The definition does not depend on the index $i\in I_p$ and the uniqueness is guarantied since the restriction to $W_0$ is as indicated in Equation  \ref{eq:uniciddlevantamiento}.

Let us  consider now the case where $\pi:(M',\pi^{-1}(K))\rightarrow (M,K) $ is a single blowing-up with center $(Y, Y\cap K)$ and $\phi: (N,C)\rightarrow (M,K)$ is as in the statement. Once this case is solved, we obtain directly the general case by induction on the number of blowing-ups in $\pi$.

In view of the previous result, for any point $p\in N$ there is an open set $U_p\subset N$ with $p\in U_p$ and a finite sequence of blowing-ups over $p$
$$
\sigma_{U_p}:U'_p\rightarrow U_p
$$
such that for any open subset $W'\subset U'_p$ there is a unique map $\psi_{W'}:W'\rightarrow M'$ such that $\phi\circ (\sigma_{U_p}\vert_{W'})=\pi\circ \psi_{W'}$. Note that
$$
\psi_{W'}=\psi_{U'_p}\vert_{W'},
$$
for any open set $W'\subset U'_p$. By the compactness of $C\subset N$, we can cover $C$ by finitely many open subsets of the type $U_p$, with $p\in C$. That is, there are finitely many points $p_1,p_2,\ldots,p_r$ in $C$ such that
$$
C\subset \cup_{i=1}^r U_i,\quad U_i=U_{p_i},\; i=1,2,\ldots,r.
$$
Without loss of generality, we assume that $p_j\notin U_i$, if $j\ne i$. We can glue the morphisms $\sigma_{U_i}: U'_i\rightarrow U_i$ into a morphism
$$
\sigma_U: U'\rightarrow U=\cup_{i=1}^rU_i,
$$
is such a way that we identify $U'_i=\sigma_U^{-1}(U_i)$. Of course, the morphism $\sigma_U$ is the composition of a sequence of blowing-ups points, over the points $p_1,p_2,\ldots,p_r$.
Note that $\sigma_U$ induces a morphism of germs
$$
\sigma: (N',\sigma^{-1}(C))\rightarrow (N,C),
$$
 where $(N',\sigma^{-1}(C))$ is represented by $(U',\sigma_U^{-1}(C)$ and $(N,C)$  by $(U,C)$. On the other hand, by the uniqueness property, we have that
$$
\psi_{U'_i}\vert_{U'_i\cap U'_j}=\psi_{U'_j}\vert_{U'_i\cap U'_j}.
$$
Then, we can also glue the morphisms $\psi_{U'_i}$ to a morphism $\psi_{U'}:U'\rightarrow M'$ such that
$$
\phi\circ \sigma_{U}=\pi\circ \psi_{U'}.
$$
We have an induced morphism of germs $\psi:(N',\sigma^{-1}(C))\rightarrow (M',\pi^{-1}(K))$ with the property that $\pi\circ\psi=\phi\circ\sigma$. This ends the proof.
 \end{proof}

\section{Appendix II}
Here we provide a proof of Proposition \ref{prop:simplepointsandlogorder}.
  Recall that we have  a foliated space $((M,K), E, {\mathcal F})$  of generalized hypersurface type and a point $p\in K$. We have to show that the following statements are equivalent
\begin{enumerate}
\item The point $p$ is a simple point for $((M,K), E, {\mathcal F})$.
\item $\operatorname{LogOrd}_p({\mathcal F},E)=0$.
\end{enumerate}
We have that (1) implies (2) as a direct consequence of the definition of simple point in Subsection  \ref{definicionsimplepoint}.

Let us suppose that $\operatorname{LogOrd}_p({\mathcal F},E)=0$. We assume that the dimensional type $\tau$ is equal to $n$. The case when $\tau<n$ may be done in the same way. Moreover, since we are working locally at $p$, we identify $(M,p)=({\mathbb C}^n,0)$ and we work at the origin of ${\mathbb C}^n$. Choose local coordinates $x_1,x_2,\ldots,x_n$ such that $E=\left(\prod_{i=1}^ex_i=0\right)$.

Let us see first that $n-1\leq e\leq n$. If this is not the case, we have $e\leq n-2$ and one of the following expressions holds for a local generator $\eta$ of $\mathcal F$ (up to a reordering and to multiply by a unit)
\begin{enumerate}
\item[a)]$\eta=dx_1/x_1+\sum_{i=2}^e a_idx_i/x_i+a_{e+1}dx_{e+1}+
a_{e+2}dx_{e+2}+\sum_{i=e+3}^n a_idx_i$.
\item[b)] $\eta=\sum_{i=1}^e a_idx_i/x_i+dx_{e+1}+
a_{e+2}dx_{e+2}+\sum_{i=e+3}^n a_idx_i$.
\end{enumerate}
This situation does not hold, since we can find a non-singular vector field $\xi$ such that $\eta(\xi)=0$, hence $\xi$ trivializes the foliation and $\tau<n$. The vector field $\xi$ can be taken as follows
$$
\xi=\left\{
\begin{array}{cc}
a_{e+1}x_1\partial/\partial x_1-\partial /\partial x_{e+1},&\mbox{ in case a)}\\
a_{e+2}\partial/\partial x_{e+1}-\partial /\partial x_{e+2} ,&\mbox{ in case b)}
\end{array}
\right.
$$
Thus, we conclude that $n-1\leq e\leq n$. Note that, even when $e=n-1$, the case a) above does not hold. Then, we have that $e=n$ or $e=n-1$ and one of the following situations holds:
\begin{enumerate}
\item[i)]$\eta=dx_1/x_1+\sum_{i=2}^n a_idx_i/x_i$.
\item[ii)] $\eta=\sum_{i=1}^{n-1} a_idx_i/x_i+dx_{n}$, with $a_i(0)=0$, for $i=1,2,\ldots,n-1$.
\end{enumerate}
Assume first we are in the situation i) and put $\lambda_1=1$, $\lambda_i=a_i(0)$, for $i=2,3,\ldots,n$. We have to show that there is no resonance $\sum_{i=1}^{n}m_i\lambda_i=0$ with $\mathbf{m}\ne 0$, for $\mathbf{m}=(m_1,m_2,\ldots,m_n)$.
Let us reason by contradiction, assuming that there is such a resonance. Note that there is at least one $m_i\ne 0$ with $2\leq i\leq n$.   Up to a reordering, we assume that $m_2m_3\cdots m_\ell\ne0$ and $m_i=0$ for $\ell <i\leq n$. Consider the map $\phi:({\mathbb C}^2,0)\rightarrow ({\mathbb C}^n,0)$ given by
$$
x_1=uv^{m_1}, x_2=v^{m_2},\ldots,x_\ell=v^{m_\ell}; \quad x_i=0,\mbox{ if }\ell< i\leq n.
$$
Then we have
$$
\phi^*\eta=\frac{du}{u}+b(u,v)\frac{dv}{v},
$$
where $b(u,v)=m_1\lambda_1+\sum_{i=2}^\ell m_i a_i(uv^{m_1},v^{m_2},\ldots,v^{m_\ell},0,\ldots,0)$. Note that $b(0,0)=0$, since $\sum_{i=1}^nm_i\lambda_i=0$. We have two possible situations:
\begin{enumerate}
\item The function $b(u,v)$ is not divisible by $v$. In this case, we have that $\phi^*\eta$ defines a saddle-node. This is not possible, since $\mathcal F$ is complex-hyperbolic.
\item We have that $b(u,v)=vb'(u,v)$. Then $\phi^*{\mathcal F}$ is defined by the non-singular $1$-form  $du+ub'(u,v)dv$. We know that there is a unit $U(u,v)$ such that $du+ub'(u,v)dv=d(uU(u,v))$ and we take new local coordinates $u^*=uU(u,v)$ and $v$. We have that $\phi^*{\mathcal F}=(du^*=0)$ and $\phi(v=0)$ is the origin. This implies that $\mathcal F$ is dicritical, but this is not possible since it is a generalized hypersurface.
\end{enumerate}
Assume now that we are in the situation ii). We are going to show the existence of a non-singular hypersurface $H$ having normal crossings with $E$ such that we get a simple corner for $\mathcal F$ with respect to $E\cup H$. Note that $H$ should have an equation $x_n=f(x_1,x_2,\ldots,x_{n-1})$. If we do this, we are done, since we are in situation i) with respect to $E\cup H$. In particular, we are done when $x_n$ divides $a_i$ for $i=1,2,\dots,n-1$.
Let us rename the variables as $$
\mathbf{y}=(y_1,y_2,\ldots,y_{n-1})=(x_1,x_2,\ldots,x_{n-1}),\quad z=x_n.
$$
We end the proof by providing a coordinate change $z^*=z-f(\mathbf{y})$ with the property that $z^*=0$ is an invariant hypersurface of $\mathcal F$. The existence of such a coordinate change depends upon certain non-resonances in $\eta$, that will nor occur thanks to the hypothesis that $\mathcal F$ is a generalized hypersurface. Let us precise this. We write
\begin{equation}
\label{eq:omega}
\eta=z(\omega_0+\tilde\omega)+\omega'+dz,
\end{equation}
where
%\begin{equation*}
$\omega_0=\sum_{i=1}^{n-1}\mu_i{dy_i}/{y_i}$,
$\tilde\omega=\sum_{i=1}^{n-1}\tilde a_i(\mathbf{y},z){dy_i}/{y_i}$ and
$\omega'=\sum_{i=1}^{n-1} a'_i(\mathbf{y}){dy_i}/{y_i}$,
%\end{equation*}
with $\mu_i\in {\mathbb C}$,  $\tilde a_i(\mathbf{0}, 0)=0$ and $a'_i(\mathbf{0})=0$, for $i=1,2,\ldots,n-1$.
\begin{lemma}
 \label{lema:noresonanciatotal}
 In the above situation, for any $ \mathbf{m}=(m_1,m_2,\ldots,m_{n-1})\in {\mathbb Z}^{n-1}_{\geq 0}$, with $\mathbf{m}\not=\mathbf{0}$, we have that
%\begin{equation}
%\label{eq:noresonanciatotal}
$\omega_0+\sum_{i=1}^{n-1}m_i{dy_i}/{y_i}\ne 0$.
%\end{equation}
\end{lemma}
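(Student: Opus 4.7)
The plan is to argue by contradiction. Suppose there exists $\mathbf{m}\in\mathbb{Z}_{\geq 0}^{n-1}\setminus\{\mathbf 0\}$ with $\omega_0 + \sum_i m_i\,dy_i/y_i = 0$, equivalently $\mu_i = -m_i$ for every $i$, and put $K := \sum_{i=1}^{n-1} m_i\geq 1$. The strategy is to pull $\mathcal F$ back by a well-chosen two-dimensional map $\phi$ and reduce the obstruction to the same lemma in dimension two, which I take as an already-settled base case.

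For generic nonzero constants $c = (c_1,\dots,c_{n-1})\in(\mathbb C^*)^{n-1}$, define
\[
\phi\colon(\mathbb C^2,0)\longrightarrow(\mathbb C^n,0), \qquad \phi(u,v) = (c_1 v, c_2 v, \ldots, c_{n-1} v, u).
\]
By Proposition \ref{prop:redsinghypgeneralizada} the set $S$ of invariant hypersurfaces of $\mathcal F$ has finitely many irreducible components, so for generic $c$ the two-dimensional image of $\phi$ is contained in none of them, and $\phi$ is $S$-transverse. Proposition \ref{prop:pullbackgeneralizedcurve} then yields that $\phi^*\mathcal F$ is a generalized curve on $(\mathbb C^2,0)$.

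A direct computation gives $\phi^*(dy_i/y_i) = dv/v$ and $\phi^*(dz) = du$, whence
\[
\phi^*\eta \;=\; \Bigl(\sum_{i=1}^{n-1} a_i(c_1 v,\ldots,c_{n-1} v, u)\Bigr)\frac{dv}{v} + du .
\]
Expanding $a_i = \mu_i z + z\tilde a_i + a'_i$ with $\mu_i = -m_i$ and using $\tilde a_i(\mathbf 0,0) = a'_i(\mathbf 0) = 0$, the coefficient of $dv/v$ equals $-Ku$ plus terms of strictly higher order. Thus $\phi^*\mathcal F$ falls within the case-(ii) situation of the current proof in ambient dimension two, with residue $\mu' = -K$ a negative integer. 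Applying Lemma \ref{lema:noresonanciatotal} in dimension two to the generalized curve $\phi^*\mathcal F$, the equality $\mu' + K = 0$ is the forbidden resonance, a contradiction.

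The main obstacle is to establish the base case $n = 2$, where the lemma asserts that no case-(ii) generalized curve can have a negative-integer residue $\mu_1$. The cleanest route is via the classical Seidenberg-type reduction of the singularity $\omega = (\mu_1 z + \cdots)\,dy + y\,dz$: iterated blowing-up at the origin yields strict transforms whose residue in the $z = yw$ chart increments by $1$ at each step, so after $|\mu_1|$ blow-ups one arrives at the stage with vanishing residue; the strict transform there takes the form $v\,C(v,w)\,dv + v\,dw + \cdots$, which either is a genuine saddle-node (violating the complex-hyperbolic hypothesis) or reveals a preceding dicritical blowing-up via a one-parameter family of formal invariant branches (violating non-dicriticalness). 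Either alternative contradicts $\mathcal F$ being a generalized hypersurface.
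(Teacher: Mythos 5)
Your reduction to dimension two is essentially the paper's own argument: the authors use exactly the test map $y_i=v$, $z=u$ (your generic constants $c_i$ are a harmless variant, and your appeal to Proposition \ref{prop:pullbackgeneralizedcurve} via $S$-transversality is a reasonable way to ensure $\phi^*{\mathcal F}$ is a generalized curve; the paper does not even need this, since the $du$-coefficient of $\phi^*\eta$ is a unit, so the pullback exists and any dicriticalness or hidden saddle-node of $\phi^*{\mathcal F}$ is inherited by $\mathcal F$ directly from Definition \ref{def:foldicr} and the definition of complex hyperbolicity). Both routes arrive at the same two-dimensional singularity $(-Ku+h(v)+ug(u,v))\frac{dv}{v}+du$, whose eigenvalue ratio is the positive integer $K$.

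Where you diverge is the endgame, and that is where the gap lies. The paper simply cites \cite{Can-C-D}: a pre-simple singularity that is not simple (eigenvalue ratio in ${\mathbb Q}_{>0}$) is either dicritical or carries a hidden saddle-node. You instead try to prove this ``base case'' from scratch, and your sketch does not work as written. First, after one blow-up of $\omega=(\mu_1 z+c_1y+\cdots)dy+y\,dz$ in the chart $z=yw$ the strict transform is $\bigl((\mu_1+1)w+c_1+\cdots\bigr)dy+y\,dw$; when $c_1\neq 0$ the origin of that chart is not even singular, and the point carrying the residue $\mu_1+1$ sits at $w=-c_1/(\mu_1+1)$, so ``the residue increments by $1$ at each step'' requires tracking a moving singular point, which your induction does not set up. Second, the case $\mu_1=-1$ with $c_1\neq 0$ (non-diagonalizable linear part) escapes that description entirely: the relevant singularity after blowing up is the corner point in the other chart, where a saddle-node appears at once. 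Third, the concluding dichotomy ``genuine saddle-node, or a one-parameter family of formal invariant branches revealing a dicritical blow-up'' is precisely the formal classification of resonant singularities (resonant monomial absent $\Rightarrow$ linearizable $\Rightarrow$ first integral $z/y^{K}$ $\Rightarrow$ dicritical; resonant monomial present $\Rightarrow$ saddle-node after reduction); it does not fall out of the displayed expression $vC(v,w)dv+v\,dw+\cdots$ without further argument. If you replace your sketch of the base case by a citation of this classical fact, your proof becomes complete and coincides with the one in the paper.
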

\begin{proof} Let us reason by contradiction, assuming that there is  $\mathbf{m}\in {\mathbb Z}^{n-1}_{\geq 0}$, with $\sum_{i=1}^{n-1}m_i=m>0$, such that
$\omega_0=-\sum_{i=1}^{n-1}m_i{dy_i}/{y_i}$. Consider the map
$$
\phi:({\mathbb C}^2,0)\rightarrow ({\mathbb C}^n,0)
$$
 given by $z=u$ and $y_i=v$, for $i=1,2,\ldots,n-1$.
Then we have
$$
\phi^*\eta=
(-mu+ h(v)+ug(u,v))\frac{dv}{v}+du
,
$$
where $g(0,0)=0$ and $h(0)=0$. In particular, this singularity is a pre-simple singularity in dimension two (non-nilpotent linear part) that is not simple, since we have the resonance $1\cdot(-m)+m\cdot 1=0$. These singularities are either dicritical or they have a hidden saddle-node, \cite{Can-C-D}. This is the desired contradiction.
\end{proof}
Let us perform the coordinate change $z\mapsto z^*$ as a Krull limit, in the following way. Assume that the order $\nu_0(\omega')$ of the coefficients of $\omega'$ is $\nu_0(\omega')=m>0$ and let us write
$$
\omega'= \bar\omega+\omega'',
$$
where $\nu_0(\omega'')>m$
and the coefficients of $\bar\omega$ are homogeneous of degree $m$. We are going to show that there is a homogeneous polynomial $p_m(\mathbf{y})$ of degree $m$ such that if $z^{(m)}=z-p_m(\mathbf{y})$, then
$$
\eta=z^{(m)}\left\{\omega_0+\tilde\omega^{(m)}\right\}+{\omega'}^{(m)}+dz^{(m)},
$$
with the same structure as in Equation \ref{eq:omega} but with $\nu_0({\omega'}^{(m)})>m$. Now, we are done by taking the Krull limit of the $z^{(m)}$. Of course, we obtain a formal invariant hypersurface $z^*=0$, but we know that all the formal invariant hypersurfaces of generalized hypersurfaces are in fact convergent ones and thus we are done. Now, looking at the $\mathbf{y}$-homogeneous part of degree $m$ in the Frobenius integrability condition $\eta\wedge d\eta=0$, we have that
$
d\bar\omega=\bar\omega\wedge\omega_0
$.
Write $\eta$ in the coordinates $\mathbf{y},z^{(m)}$:
$$
\eta=z^{(m)}(\omega_0+\tilde\omega)+(p_m\tilde\omega+\omega'')+dz^{(m)}
+ (p_m\omega_0+\bar\omega + dp_m).
$$
If there is $p_m$ such that $p_m\omega_0+\bar\omega + dp_m=0$, then we are done. Let us write
$$
\bar\omega=\sum_{\vert\mathbf{m}\vert=m}\mathbf{y}^\mathbf{m} \bar\omega_\mathbf{m},
\quad
d\bar\omega=\sum_{\vert\mathbf{m}\vert=m}\mathbf{y}^\mathbf{m}\frac{d \mathbf{y}^\mathbf{m}}{\mathbf{y}^\mathbf{m}}\wedge\bar\omega_\mathbf{m},
$$
where the $1$-forms $\bar\omega_\mathbf{m}$ have constant coefficients. Moreover
$$
\bar\omega\wedge\omega_0= \sum_{\vert\mathbf{m}\vert=m}\mathbf{y}^\mathbf{m}
\bar\omega_\mathbf{m}\wedge\omega_0.
$$
Since $
d\bar\omega=\bar\omega\wedge\omega_0
$, we conclude that
$$
 \left\{ ({d \mathbf{y}^\mathbf{m}}/{\mathbf{y}^\mathbf{m}})+\omega_0\right\}\wedge
 \bar\omega_\mathbf{m}=0,
 \quad
 \mbox{ for all } \mathbf{m}\in {\mathbb Z}^{n-1}_{\geq 0} , \mbox{ with } \vert{\mathbf{m}}\vert=m.
$$
By Lemma \ref{lema:noresonanciatotal}, we know that
$
0\ne d \mathbf{y}^\mathbf{m}/\mathbf{y}^\mathbf{m}+\omega_0
$.
Hence, there are constants $c_\mathbf{m}\in {\mathbb C}$ such that
$$
 \bar\omega_\mathbf{m}+c_\mathbf{m} \left\{ ({d \mathbf{y}^\mathbf{m}}/{\mathbf{y}^\mathbf{m}})+\omega_0\right\}=0,
 \quad
 \mbox{ for all } \mathbf{m}\in {\mathbb Z}^{n-1}_{\geq 0} , \mbox{ with } \vert{\mathbf{m}}\vert=m.
$$
Taking $p_m=\sum_{\vert\mathbf{m}\vert=m}c_{\mathbf{m}}\mathbf{y}^{\mathbf{m}}$, we obtain that $p_m\omega_0+\bar\omega + dp_m=0$ and we are done.

\bigskip
{\small 
\noindent{\sc Felipe Cano.} 
{Departamento de
\'Algebra, An\'alisis Matem\'atico,
Geometr\'\i a y Topolog\'\i a. Universidad de Valladolid.
Paseo de Bel\'en 7,
47011 -- Valladolid, SPAIN}  \\
fcano@agt.uva.es \\

\noindent {\sc Nuria Corral.} 
{Departamento de Matem\'aticas, Estad\'\i stica y Computaci\'on.
Universidad de Cantabria.
Avda. de los Castros s/n, 39005 -- Santander, SPAIN} \\
nuria.corral@unican.es

}
\end{document}